\documentclass[11pt,a4paper,twoside]{article}
\usepackage{amssymb}
\usepackage{amsfonts}
\usepackage{geometry}
\usepackage{color}
\usepackage{mathtools}
\usepackage{amsmath,amsthm,amssymb,amsfonts,enumitem}
\usepackage{CJK}
\usepackage{latexsym}
\usepackage{graphicx}
\usepackage{pgfpages}
\usepackage{mathrsfs}
\usepackage{ytableau}
\usepackage{enumerate}
\usepackage{cite}
\usepackage[hyphens]{url}
\usepackage[bookmarks=false,hidelinks]{hyperref}
\usepackage[affil-it]{authblk}
\usepackage{appendix}
\usepackage{amsmath}
\allowdisplaybreaks

\newtheorem{definition}{Definition}[section]
\newtheorem{theorem}[definition]{Theorem}
\newtheorem{lemma}[definition]{Lemma}
\newtheorem{proposition}[definition]{Proposition}
\newtheorem{corollary}[definition]{Corollary}
\newtheorem{remark}[definition]{Remark}
\newtheorem{condition}{Assumption}

\numberwithin{equation}{section}

\begin{document}
	\title{Invariant and periodic measures in classical spin systems on infinite lattices with highly degenerate noise 
    %driven stochastic differential equations
    }
	\author[a]{Tong Lu,}
    \author[b,a]{Huaizhong Zhao}
    \affil[a] {Research Centre for Mathematics and Interdisciplinary Sciences, Shandong University, Qingdao 266237, China}
    \affil[b]{Department of Mathematical Sciences, Durham University, DH1 3LE, UK}
    \affil[ ]{lutong1019@mail.sdu.edu.cn, huaizhong.zhao@durham.ac.uk}
	\date{}
	\maketitle
	
	\begin{abstract}
		In this paper, we consider the classical spin systems on unbounded lattices given by infinite-dimensional stochastic differential equations (SDEs). We assume that the stochastic forcing acts only on one particle. The other particles are not subject to stochastic forcing directly, but interact with their nearest neighbouring particles. Under the above highly degenerate noise setting, with some mild assumptions on the local interaction of each particle such as weak dissipation,
        we obtain the existence, uniqueness and the Markovian property of weak martingale solutions. We prove that the one-dimensional noise can propagate to any spin particle in the system in the sense that there exists a unique invariant/periodic measure and geometric ergodicity holds for the Markovian system when restricted to any finite volume. We then prove the finite-dimensional invariant measure and the average of lifted periodic measure are tight, and weak convergent subsequence gives an invariant and periodic measures of the infinite spin systems, respectively, in the time-homogeneous or time-periodic cases.

		\medskip
		
		\noindent
{\bf Keywords:} Spin systems;  martingale problem; highly degenerate noise; weak dissipativity; invariant measures; periodic measures; Markovian processes
	\end{abstract}
	
	\tableofcontents

    \section{Introduction}
    Spin particle systems on lattices have a long and significant research history. Here, spin is the value of the configuration at each atom of the infinite lattice and represents the state of each of the infinite number of individual particles. 
    The dynamics in the continuous case are normally modelled by stochastic differential equations (SDEs) with solutions taking values in an infinite-dimensional space, while in the discrete case, it is normally described as a Markov chain, e.g. in the infinite ferromagnetic Ising model. Its invariant measure, if it exists, gives the equilibrium of configurations and is a Gibbs state if a certain consistency condition is satisfied when conditioned each finite subsystem (see Holley and Stroock \cite{holley1981diffusions,holley1976l2,H1987}). It can be investigated as the long-time limit in the sense distributions and through weak approximations by a sequence of finite subsystems. 
    Therefore, studies of invariant measures of infinite-dimensional SDEs are of physical significance and have attracted considerable attention from mathematicians and physicists. Periodically driven spin systems have emerged as a fundamental paradigm in nonequilibrium statistical mechanics, exhibiting rich physical phenomena and important applications in Floquet engineering, time crystals, and topological phase transitions (see Abanin et al. \cite{abanindehuveexponentiallyslow2015}, Goldman and Dalibard \cite{goldmandalibard2014}), but there is little understanding of the periodic measure or Gibbs state with time periodic pattern, which, though, is highly expected given the recent work on random periodicity that we will mention below.
   In this paper, we take only the first step in this analysis to carry out the existence of periodic measures of spin systems on the infinite lattice.
    
    To investigate the long-term behaviour of spin systems, two primary approaches have been developed. One is to interpret the classical Gibbs state as an invariant measure of a stochastic Langevin equation via ``stochastic quantization method"  (see Parisi and Wu  \cite{parisi1980perturbation}, Damgaard and  H{\"u}ffel\cite{damgaard1988stochastic}, Albeverio et al.  \cite{albeverio1994stochastic,albeverio2009statistical,albeverio1991stochastic}); the other one is to describe the particle dynamics through the generator of a Markov process, thereby characterizing the long-time behaviour of the spin system via the ergodic properties of the process (see Holley and Stroock \cite{H1987}, Zegarlinski \cite{zegarlinski1996strong}, Liggett \cite{Liggett1985Interacting}, Chen \cite{chen2004from}, Xu and Zegarlinski \cite{xu2010ergodicity}).  
    In particular, Da Prato and Zabczyk \cite{da1996ergodicity} employed the theory of dissipative operators in weighted Banach spaces to study the asymptotics of the associated transition semigroups. Albeverio et al. \cite{albeverio1994stochastic} and Zegarlinski \cite{zegarlinski1996strong} developed methodologies to construct a sequence of stochastic equations defined on finite volumes. In these works, convergence and ergodicity properties are established through the application of dissipative properties and the logarithmic Sobolev inequality, respectively. Won \cite{won20212} developed an $L^2$-approximation strategy to study the Markov semi-group generated by an infinite system of elliptic diffusion processes on a lattice. Bakhtin and Chen \cite{bakhtin2022dynamic} used Galerkin approximation techniques combined with infinite-volume polymer measures to prove the existence and uniqueness of invariant measures in polymer dynamics. In a related development, Xu and Zegarlinski \cite{xu2010ergodicity} examined the existence of invariant measures for an infinite-dimensional stochastic system driven by white $\alpha$-stable noise.
    %by perturbing Ornstein-Uhlenbeck $\alpha$-stable processes. 
    These works were all conducted under the consideration of non-degenerate noise. {\v{Z}}{\'a}k \cite{vzak2016existence} addressed the existence of equilibrium states for interacting particle systems linked to a specific class of degenerate elliptic operators using a probabilistic technique. However, it should be noted that the adopted noise structure is based on infinitely many noise sources and still falls short of achieving high degeneracy. %and incorporates nonlinearities with linear growth characteristic.

    In this work, we focus on the second approach to investigate the long-time behaviour of a specific spin system.  Its generator acts on any twice Fr\'{e}chet differentiable function $h$ on $\mathbb{Z}^d$ belonging to a properly chosen Hilbert space (to be rigorously defined in Chapter 3) as follows
	$$
	\mathscr{L}h(x):=(a_{1,2} x_2+f(x_1))\partial_{1}h(x)+\sum_{i=2}^{\infty}(a_{i,i-1}x_{i-1}+a_{i,i+1} x_{i+1}+f(x_i))\partial_{i}h(x)+ \frac{1}{2} {\partial_{1}}\partial_{1}h(x),
	$$
    where $\partial_i$ denotes the G\^{a}teaux derivative along a family of orthogonal basis directions.
    This generates an infinite-dimensional SDE 
    \begin{equation}
    \label{infinit sde1}
			dX(t)=\left(AX(t)+F(X(t))\right)dt+BdW(t) ,  \ t\geq 0,
	\end{equation}
    on the chosen space, where 
    \begin{equation*}
	     \begin{cases}
		A(x)=\left(a_{\gamma,\gamma-1}x_{\gamma-1}+a_{\gamma,\gamma+1}x_{\gamma+1}\right)_\gamma,x=\left(x_\gamma\right)_\gamma,\\
		F(x)=\left(f_\gamma(x_\gamma)\right)_\gamma,x=\left(x_\gamma\right)_\gamma,\\
		B=(b_{\gamma,j})_{\gamma,j}, \quad \text{with} \quad b_{1,1}=1,b_{\gamma,j}=0, \gamma,j\in \left\{\mathbb{Z}^++1\right\}.
	\end{cases}
	\end{equation*}
    The noise term in this equation is highly degenerate, acting only on the first particle, with the stochasticity propagated to other particles through interactions with nearest neighbours. Since our conditions hold uniformly for all $f_i$ and each local interaction term $f_i$ plays a symmetric role, we denote them collectively as $f$ for notational simplicity, so that $F(x)=\left(f(x_\gamma)\right)_\gamma$. 
    %We provide a physical interpretation of the above degenerate noise-driven SDE, where the noise acts exclusively on a single particle and propagates throughout the system via local interparticle interactions. 
    %Assume the local interaction term is of weak dissipation. 
    For simplicity of presentation, we restrict our discussion to the results concerning $\mathbb{Z}^+$, as the general case for arbitrary $\mathbb{Z}^d$ would require more elaborate formulations.
    %but there is no essential difference.  
    %We assume that $B$ is degenerate, that $A$ has some form of non-degeneracy, and that $F$ is weakly dissipative. 
    
    The first objective of this paper is to establish the existence and uniqueness of the solution to (\ref{infinit sde1}), and further to demonstrate the existence of invariant measures. We prove that the one-dimensional noise can propagate to any spin particle in the system in the sense that there exists a unique invariant/periodic measure and geometric ergodicity holds for the Markovian system when restricted to any finite volume. The solution is given in the sense of a weak martingale solution or the solution of a martingale problem. The weak solution framework makes it convenient to consider the approximation of the infinite lattice by a sequence of finite lattices. We first consider the spin system restricted to finite lattices and then study the limit when these bounded lattices increase to the infinite lattice. We prove that when on a finite lattice, when the volume is getting sufficiently large, the spin on any fixed atom becomes asymptotically the same. This result leads to the convergence in law and their limit is the unique solution to the martingale problem and is Markovian. Then the tightness of the invariant measures on the finite-volume lattices gives the result of a weak limit which is the invariant measure of the spin system on the infinite lattice. It is important to note that this paper does not address the uniqueness of the invariant measure.

   The second aim of this paper is to study infinite-dimensional time-inhomogeneous stochastic differential equations as follows:
    \begin{equation}
      \label{introdunctionperiodicinfinite}
      dX(t)=\left(A(t)X(t)+F(t,X(t))\right)dt+BdW(t) ,  \ t\geq s,
    \end{equation}
    where both $A(t)x=\left(a_{\gamma,\gamma-1}(t)x_{\gamma-1}+a_{\gamma,\gamma+1}(t)x_{\gamma+1}\right)_\gamma$ and $F(t,x)=\left(f(t,x_\gamma)\right)_\gamma$ coefficients are $T$-periodic in the time variable.
    % The analysis we developed for (\ref{infinit sde1}) can be extended to the non-equilibrium particle systems, where interparticle interactions vary over time. 
    Consequently, the interaction term in the corresponding generator becomes time-dependent. 
        Although the long-time behaviour of time-homogeneous systems is well-characterized by invariant measures, the corresponding theory for time-inhomogeneous systems like our periodically forced SDE (\ref{introdunctionperiodicinfinite}) requires a more general framework of periodic measures and their ergodic theory that was first rigorously developed in Feng and Zhao \cite{fengzhao2020randomperiodic}. Recent theoretical advances have significantly extended the understanding of such periodic measures. Notable progress includes breakthroughs in the existence and uniqueness of periodic measures of Markov processes (Feng et al. \cite{fengzhaozhong2023existence}), studies on quasi-periodic systems (Feng et al. \cite{fengquzhao2021randomquasiperiodic}), and extensions to infinite-dimensional settings, as exemplified by results on the 2D stochastic Navier–Stokes equations (Liu and Lu \cite{liu2025exponential}).

    For our problem, we establish the existence of periodic measures for equation (\ref{introdunctionperiodicinfinite}). Our approach proceeds by first considering finite-dimensional approximations and their associated periodic measures $\mu_{n,s}$. This is followed by proving the tightness of the lifted measures $\bar{\tilde{\mu}}_{n}:=\frac{1}{T}\int_0^T \tilde{\mu}_{n,s} ds$ on an extended state space, from which we extract a subsequence $\mu_{n,s_n}$ that converges to a measure $\nu_{s_*}$. The latter is invariant under the one-step transition probability kernel $\mathcal{P}(s_*,s_*+T)$ of (\ref{introdunctionperiodicinfinite}), which ultimately yields the desired periodic measure. This construction not only provides a new result of the existence, but also clarifies the fundamental pattern of stochastic periodic systems on infinite lattice with the limiting distribution which is invariant with respect to the discrete time Markovorvian semigroup $\mathcal{P}(s_*,s_*+nT), n\in {\mathbb N}$, and varies periodically in time. The periodic measure serves as the periodic "equilibria" in the nonequilibrium spin system.
    
    Although the finite-volume approximation is a classical idea used also in the study of the time-homogeneous infinite-lattice spin system, the tightness of the average lifted invariant measures of the finite-volume lattice system and its disintegration procedure introduced in this paper for periodic measures are highly nontrivial and novel. In the absence of direct access to tightness results for finite-dimensional periodic measures, we proceed by first lifting the finite-dimensional truncation of the time-inhomogeneous equation to an time-homogeneous system. For the lifted finite-dimensional time-homogeneous system, we establish the existence of invariant measures and consequently derive their tightness. Using this tightness property, we then construct invariant measures for the one-step transition probabilities of the infinite-dimensional time-inhomogeneous equation. This approach allows us to establish the existence of periodic measures using the framework developed by Feng et al.  \cite{fengzhaozhong2023existence}.  

    The organization of this paper is as follows. In the second section, we investigate the time-homogeneous finite-dimensional systems and verify, under certain conditions, that the transition semi-group corresponding to the finite-dimensional SDEs satisfies the criteria of the Harris-Meyn-Tweedie theorem, thus establishing the existence, uniqueness, and geometric ergodicity of invariant measures. In the third section, we approximate the solutions of time-homogeneous infinite-dimensional SDEs using the tightness of the solution's distributions from time-homogeneous finite-dimensional equations, proving the existence, uniqueness, and Markov property of these solutions, which leads to the establishment of invariant measures. The fourth section is devoted to studying periodic measures for finite-dimensional equations with periodic coefficients and demonstrates the existence of periodic measures.  In the last section, we give a proof of the main result in Section 2.

	%%%%%%%%%%%%%%%%%%%%%%%%%%%%%%%%%%%%%%%%%%%%%%%%%%%%%%%%%%%%%%%%%%%%%%%%%%%%%%%%%%%%%%%%%%%%%%%%%%%%%%%%%%
	\section{Invariant measures of the time-homogeneous finite-dimensional systems}
	In this section, we investigate invariant measures for finite-dimensional SDEs driven by degenerate noise. We will use the Meyn-Tweedie theory to study its long-time behaviour. In contrast to the dissipativity method developed by Da Prato and Zabczyk \cite{da1996ergodicity}, the Markov chain approach only requires the drift term to meet a weaker condition, specifically the weak dissipative condition.
	
	The corresponding finite system of equations can be regarded as the following $\mathbb{R}^n$-valued stochastic evolution equations,
	\begin{equation}
		\label{FSDE}
		\begin{cases}
			dX^{(n)}(t)=\left(A_nX^{(n)}(t)+F_n(X^{(n)}(t))\right)dt+B_ndW^{(n)}(t) ,  \quad t\geq 0,\\
			X^{(n)}(0)=x \in \mathbb{R}^n,
		\end{cases}
	\end{equation}
	where $W^{(n)}(t)$ is an $n$-dimensional Wiener process on a probability space $(\Omega,\mathcal{F},\mathbb{P})$ with identity covariance. The operator $A_n$, $F_n$ and $B_n$ is given by the formula
	\begin{equation}
		\label{AnFnBn}
			A_n=\begin{pmatrix}
				0 & a_{1,2} & & & & \\
				a_{2,1} & 0 & a_{2,3}& & & \\
				& \ddots &\ddots& \ddots& \\
				& & \ddots& \ddots& a_{n-1,n} \\
				& & & a_{n,n-1}&0&\\
			\end{pmatrix}, 
			F_n(x)=\begin{pmatrix}
				f(x_1)\\
				f(x_2)\\
				\vdots \\
				f(x_n)\\
			\end{pmatrix}, 
			B_n=\begin{pmatrix}
				1 & 0  &\cdots  & 0& \\
				0 & 0 &  &0 & \\
				\vdots&  &\ddots& \vdots& \\
				0&0 & \cdots &0&\\
			\end{pmatrix},
	\end{equation}
	where $f$ is a function that maps $\mathbb{R}$ to $\mathbb{R}$.
	\begin{remark}
		To simplify the calculation, we take $A_n$ as shown in (\ref{AnFnBn}). In the general setting $A_n=(b_{i,j})_{1\leq i,j \leq n}$, more complicated conditions need to be imposed on $A_n$ and $f$.
	\end{remark}
	We now impose additional conditions on the coefficients $A_n$ and $F_n$ in the equation, requiring $f$ to satisfy the weak dissipation condition given in Feng, Zhao and Zhong \cite{fengzhaozhong2023existence}. The precise assumptions are stated as follows:
	\begin{condition}
		\label{A1}
		\begin{description}
			\item [(i)] All the elements $\left\{a_{i,i-1}\in \mathbb{R},2\leq i\leq n\right\} $ of the matrix $A_n$ are nonzero, and all the elements of $A_n$ are bounded by a constant $M>0$, i.e.
			$$
			\max \left\{\vert a_{i-1,i}\vert ,\vert a_{i,i-1} \vert; 2\leq i\leq n \right\}\leq M. 
			$$
			\item [(ii)] The function $f\in C^\infty (\mathbb{R} , \mathbb{R})$ is weakly dissipative, i.e. there exist constants $\eta >0$ and $\lambda>\frac{1}{2}$ such that
            \begin{equation}\label{weaklydissipative}
                 f(z)z \leq \eta -\lambda \vert z \vert^2 ,z\in \mathbb{R}.
            \end{equation}
            \item[(iii)] The constants $M$ and $\lambda$ satisfy $2M< \lambda-\frac{1}{2}$.
		\end{description}
	\end{condition} 
	Under Assumption \ref{A1}, it is possible to prove the global in time existence and uniqueness of solutions to (\ref{FSDE}), see Khasminskii \cite[Theorem 3.5]{khasminskii2011stochastic}. Consequently, the solution $X^{(n),x}(t)$ of Equation (\ref{FSDE}) constitutes a Markov process, which enables us to define the corresponding transition semi-group $\{\mathcal{P}^{(n)}_t;t\geq 0\}$ on $B_b(\mathbb{R}^n)$, the space of bounded measurable functions on $(\mathbb{R}^n,\mathscr{B}(\mathbb{R}^n))$, and the transition probability kernel $\mathcal{P}^{(n)}_t(x,\Gamma)$, $t\geq 0$, $x\in\mathbb{R}^n$, $\Gamma\in \mathscr{B}(\mathbb{R}^n)$ for Equation (\ref{FSDE}) as follows: 
    \begin{equation*}
        \mathcal{P}^{(n)}_t\phi(x)=\mathbb{E}[\phi(X^{(n),x}(t))], \quad t\geq 0,x\in\mathbb{R}^n,\phi\in B_b(\mathbb{R}^n),
    \end{equation*}
    and 
    \begin{equation*}
        \mathcal{P}_t^{(n)}(x,\Gamma)=\mathcal{P}_t\mathbf{1}_\Gamma(x)=Law(X^{(n),x}(t))(\Gamma),\quad t\geq 0,x\in\mathbb{R}^n,\Gamma\in\mathscr{B}(\mathbb{R}^n),
    \end{equation*}
    where $\mathbf{1}_{\Gamma}$ represents the indicator function. The following theorem establishes that the transition semi-group $\{\mathcal{P}^{(n)}_t\}$ admits a unique invariant measure, which is geometrically ergodic. In order not to interrupt the main focus for the tightness argument on the convergence to infinite dimensional setting, we postpone the proof to Section 6, where the theorem is proved by adapting the methodology developed by Meyn and Tweedie.
	\begin{theorem}\label{finitdimensionalergodicresult}
		(finite-dimensional result) Assume  SDE (\ref{FSDE}) coefficients satisfy  Assumption \ref{A1}. Then there exists a unique invariant measure $\mu$ for $X^{(n),x}(t)$. Specifically, there exist constant $K,\alpha>0$ such that for the function $V_n(x)=1+x_1^2+\cdots +x_n^2$,
		\begin{equation}
			\label{EConverge}
			\sup\limits_{\{g:\vert g(x) \vert \leq V_n(x)\}}\vert \mathbb{E}g(X^{(n),a}(t))-\mu (g) \vert \leq KV_n(a)e^{-\alpha t},
		\end{equation}
		  for any $a\in \mathbb{R}^n$. Notice that (\ref{EConverge}) implies convergence of the transition probability kernel $\mathcal{P}^{(n)}_t(x,\Gamma)$, $ t\geq 0$, $\Gamma \in \mathscr{B}(\mathbb{R}^n)$ under the total variation norm, i.e.
		$$
		\Vert \mathcal{P}^{(n)}_t(x,\cdot)-\mu \Vert_{TV}\leq KV_n(x)e^{-\alpha t}.
		$$
	\end{theorem}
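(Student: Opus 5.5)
The plan is to verify the two hypotheses of the Harris--Meyn--Tweedie theorem for the skeleton chain $\{X^{(n)}(k\Delta)\}_{k\ge 0}$ for some fixed $\Delta>0$. The first is a Lyapunov drift condition for $V_n$. The second is a minorization condition on sublevel sets of $V_n$, which will come from a Stroock--Varadhan support argument together with hypoellipticity. Harris' theorem in the weighted form then gives a unique invariant measure $\mu$ for the skeleton and geometric convergence in the $V_n$-weighted norm at times $k\Delta$. We then pass to continuous time and to $\{\mathcal{P}^{(n)}_t\}$ in the standard way: for $t=k\Delta+r$ with $r\in[0,\Delta)$, we apply the estimate to $\mathcal{P}^{(n)}_r g$ and use $\mathcal{P}^{(n)}_r V_n\le CV_n$. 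Uniqueness for the semigroup follows because any invariant measure of the semigroup is invariant for the skeleton. Invariance of $\mu$ under every $\mathcal{P}^{(n)}_r$ follows from uniqueness of the skeleton's invariant measure, since $\mu\mathcal{P}^{(n)}_r$ is again skeleton-invariant.

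\textbf{Drift.} Let $\mathscr{L}_n$ denote the generator of (\ref{FSDE}). Using Assumption \ref{A1}(ii),
\begin{equation*}
\mathscr{L}_n V_n(x)=2\langle x,A_nx\rangle+2\sum_{i=1}^n f(x_i)x_i+1\le 4M|x|^2+2n\eta-2\lambda|x|^2+1 .
\end{equation*}
Here we used $|\langle x,A_nx\rangle|\le \sum_i M(|x_ix_{i+1}|+|x_{i}x_{i-1}|)\le 2M|x|^2$. By Assumption \ref{A1}(iii) we have $\lambda-2M>\frac12$, so $\mathscr{L}_nV_n\le -cV_n+C_n$ with $c>0$. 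Itô's formula combined with Gronwall (after localisation) gives $\mathbb{E}V_n(X^{(n),x}(t))\le e^{-ct}V_n(x)+C_n/c$, which is the discrete-time Lyapunov condition for the skeleton. The same computation also yields non-explosion.

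\textbf{Minorization.} This is the main obstacle, because the noise acts only on $x_1$.

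First, we show smoothness of transition densities via Hörmander's condition. Write the drift vector field as $X_0=\sum_i (A_nx+F_n(x))_i\partial_i$ and the noise field as $X_1=\partial_1$. Then $[X_1,X_0]=\sum_i \partial_1(A_nx+F_n(x))_i\,\partial_i=f'(x_1)\partial_1+a_{2,1}\partial_2$. Modulo $\partial_1$, this is $a_{2,1}\partial_2$. Inductively, $[\partial_k,X_0]\equiv a_{k+1,k}\partial_{k+1}$ modulo $\mathrm{span}\{\partial_1,\dots,\partial_k\}$, since $f$ acts diagonally. More precisely, one takes brackets of $X_0$ with the fields already obtained, and these fields have the form $c\,\partial_k$ plus lower-order terms with polynomial-type coefficients. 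Because $a_{k+1,k}\neq 0$ by Assumption \ref{A1}(i), the Lie algebra spans $\mathbb{R}^n$ at every point. Hence $\mathcal{P}^{(n)}_t(x,dy)=p_t(x,y)dy$ with $p_t$ smooth, in particular jointly continuous in $(x,y)$; the $C^\infty$ smoothness of $f$ is used here.

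Second, we show irreducibility through the controlled system $\dot x=A_nx+F_n(x)+u(t)e_1$, where $u$ plays the role of $\dot W$. Given endpoints $x,z$ and a time $\Delta$, we build a control inductively from the last coordinate. Choose a smooth path for $x_n$ from $x_n$ to $z_n$. Then $x_{n-1}=(\dot x_n-f(x_n))/a_{n,n-1}$ is determined, and we modify the path so that its endpoints are correct; this requires the higher derivatives of the chosen path to be prescribed at the endpoints. Continue down to $x_1$, and finally read off $u=\dot x_1-a_{1,2}x_2-f(x_1)$. This is a standard flatness or triangular-structure argument using $a_{i,i-1}\neq0$.

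By the Stroock--Varadhan support theorem, $p_\Delta(x,z^*)>0$ for a fixed target point $z^*$ (say $0$) and every $x$. By continuity there is a neighbourhood $U$ of $z^*$ such that $p_\Delta(x,y)\ge\delta>0$ for all $x$ in the compact set $\{V_n\le R\}$ and all $y\in U$. This gives the minorization $\mathcal{P}^{(n)}_\Delta(x,\cdot)\ge \delta\,\mathrm{Leb}(U)\,\nu(\cdot)$ on $\{V_n\le R\}$, with $R>2C_n/c$ as Harris' theorem requires.

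\textbf{Conclusion.} Harris' theorem yields (\ref{EConverge}) for the skeleton, with constants depending on $n$; the continuous-time extension is as described in the first paragraph. Taking $g=\mathbf{1}_\Gamma$ with $|g|\le 1\le V_n$ gives the total variation statement. The delicate points are the Hörmander bracket computation and the control construction. Both rely essentially on the nonvanishing subdiagonal entries $a_{i,i-1}$, and I would handle them carefully in the triangular ordering described above.
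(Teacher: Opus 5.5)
Your route is the paper's: the Lyapunov function $V_n=1+|x|^2$ with $\mathscr{L}_nV_n\le -V_n+2n\eta+2$, the bracket computation driven by $a_{i,i-1}\neq 0$, exact controllability by back-substitution from the last coordinate, and then Harris--Meyn--Tweedie. The paper applies the continuous-time Meyn--Tweedie theorem (irreducibility, jointly continuous density, Lyapunov condition) as a black box. You instead unpack it into a skeleton chain plus a minorization. Your passage from the skeleton back to continuous time, including the invariance of $\mu$ under every $\mathcal{P}^{(n)}_r$, is fine.

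One step in that unpacking does not follow as written. The Stroock--Varadhan support theorem, or your controllability argument, does not give $p_\Delta(x,z^*)>0$ at a prescribed point such as $z^*=0$. It only gives $\mathcal{P}^{(n)}_\Delta(x,U)=\int_U p_\Delta(x,y)\,dy>0$ for every open $U$, i.e.\ $z^*$ lies in the support of $p_\Delta(x,\cdot)$. A continuous density can vanish at a point while charging every neighbourhood of it. Strict pointwise positivity of hypoelliptic densities is a harder statement (Ben Arous--L\'eandre type criteria on nondegenerate controls). Your uniform bound $p_\Delta\ge\delta$ on $\{V_n\le R\}\times U$ rests entirely on this claim.

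The standard repair uses Chapman--Kolmogorov.
\begin{itemize}
\item Fix any $z^*$. Since $\int p_\Delta(z^*,y)\,dy=1$, there is $y^+$ with $p_\Delta(z^*,y^+)>0$. By joint continuity, $p_\Delta\ge\varepsilon$ on $B(z^*,r)\times B(y^+,r)$ for some $\varepsilon,r>0$.
\item By Fatou and continuity of $p_\Delta$, the map $x\mapsto\mathcal{P}^{(n)}_\Delta(x,B(z^*,r))$ is lower semicontinuous. By irreducibility it is strictly positive, so it is bounded below by some $\kappa>0$ on the compact set $\{V_n\le R\}$.
\item Hence $\mathcal{P}^{(n)}_{2\Delta}(x,\cdot)\ge\varepsilon\kappa\,\mathrm{Leb}(\cdot\cap B(y^+,r))$ for all $x\in\{V_n\le R\}$. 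You then run Harris for the $2\Delta$-skeleton.
\end{itemize}
This is the minorization argument that Mattingly--Stuart--Higham prove on the way to the theorem the paper cites. With this fix your proof is complete.
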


	%%%%%%%%%%%%%%%%%%%%%%%%%%%%%%%%%%%%%%%%%%%%%%%%%%%%%%%%%%%%%%%%%%%%%%%%%%%%%%%%%%%%%%%%%%%%%%%%    
	%%%%%%%%%%%%%%%%%%%%%%%%%%%%%%%%%%%%%%%%%%%%%%%%%%%%%%%%%%%%%%%%%%%%%%%%%%%%%%%%%%%%%%%%%%%%%%%%%%%%    
	%%%%%%%%%%%%%%%%%%%%%%%%%%%%%%%%%%%%%%%%%%%%%%%%%%%%%%%%%%%%%%%%%%%%%%%%%%%%%%%%%%%%%%%%%%%%%%%%%    

	\section{Time-homogeneous infinite-dimensional systems}
    This section is devoted to the study of invariant measures for an infinite-dimensional  SDEs driven by highly degenerate noise, which corresponds to a spin system on lattices $\mathbb{Z}^+$. Our analysis proceeds in two stages: first, we establish the existence and uniqueness of solutions to the SDEs along with their Markov property; second, we study the existence of invariant measures for the system. The SDE under consideration is formulated as follows:
	\begin{equation}
		\label{INFSDE1}
		\begin{cases}
			dX_1(t)=\left(a_{1,2}X_{2}(t)+f(X_1(t))\right)dt +dW_1(t)\\
			dX_i(t)=\left(a_{i,i-1}X_{i-1}(t)+a_{i,i+1}X_{i+1}(t)+f(X_i(t))\right)dt , \ i \in \left\{\mathbb{Z}^++1\right\},t\geq 0,\\
			X_\gamma(0)=x_\gamma \in \mathbb{R},\ \gamma \in \mathbb{Z}^+,
		\end{cases}
	\end{equation}
	where $W_1(t)$ is a one-dimensional Wiener process on a probability space $(\Omega,\mathcal{F},\mathbb{P})$ with identity covariance. We rewrite equation (\ref{INFSDE1}) in a concise form as
	\begin{equation}
		\label{INFSDE2}
		\begin{cases}
			dX(t)=\left(AX(t)+F(X(t))\right)dt+BdW(t) ,  \ t\geq 0,\\
			X(0)=x \in K,
		\end{cases}
	\end{equation}
	where $K=\ell_\rho^{2\theta}(\mathbb{Z}^+)\coloneqq \left\{x=\left(x_\gamma\right)_\gamma;\Vert x \Vert_{2\theta,\rho}^{2\theta}:=\sum_{\gamma\in \mathbb{Z}^+}\vert x_\gamma \vert^{2\theta} \rho(\gamma)< \infty \right\}$ is a weighted  space of sequences $\left(x_\gamma\right)_\gamma$ with a positive summable weight $\rho:\mathbb{Z}^+\rightarrow\mathbb{R}^+$, the constant $\theta$ will be taken to be the same as in Assumption \ref{A2}, $W(t)$ is a Wiener process on a probability space $(\Omega,\mathcal{F},\mathbb{P})$ taking values in $U=\ell^2 (\mathbb{Z}^+)\coloneqq \left\{x=\left(x_\gamma\right)_\gamma;\Vert x \Vert^2=\sum_{\gamma\in \mathbb{Z}^+}\vert x_\gamma \vert^2 < \infty \right\}$ and with identity covariance, and the operators $A$, $F$ and $B$ are given by the following formula, 
	\begin{equation}\label{AFBdef}
	     \begin{cases}
		A(x)=\left(a_{\gamma,\gamma-1}x_{\gamma-1}+a_{\gamma,\gamma+1}x_{\gamma+1}\right)_\gamma,x=\left(x_\gamma\right)_\gamma,\\
		F(x)=\left(f(x_\gamma)\right)_\gamma,x=\left(x_\gamma\right)_\gamma,\\
		B=(b_{\gamma,j})_{\gamma,j},
	\end{cases}
	\end{equation}
   where $ b_{1,1}=1,b_{\gamma,j}=0, \gamma,j\in \left\{\mathbb{Z}^++1\right\}$. In what follows, the symbol $x=\left(x_\gamma\right)_\gamma$ will represent a function defined on the positive integers $\mathbb{Z}^+$, while $x^{(n)}$ will denote an 
   $n$-dimensional vector. Analogously to Assumption \ref{A1}, we give the counterpart assumptions for the infinite-dimensional case.	
	\begin{condition}
		\label{A2}
		\begin{description}
			\item [(i)] All the elements $\left\{a_{i,i-1}\in \mathbb{R}, i\geq 2\right\} $ of the operator $A$ are nonzero, and all the elements of $A$ are bounded by a constant $M>0$, i.e.
			$$
			\max \left\{\vert a_{i-1,i}\vert ,\vert a_{i,i-1} \vert\right\}\leq M,\   i\geq 2.
			$$
			\item[(ii)] There exist constants $R,N>0$ such that
			$$
			\left\vert \frac{\rho(\gamma)}{\rho(j)} \right\vert\leq N \ \text{if} \ \vert\gamma-j\vert \leq R,
			$$
			$$
			\sum\limits_{i=1}^\infty \rho(i)< +\infty,\rho(i)>0. 
			$$
			\item [(iii)] The function $f\in C^\infty (\mathbb{R} , \mathbb{R})$ is weakly dissipative, i.e. there exists constants $\eta >0$ and $\lambda>\frac{1}{2}$ such that
			$$
			  f(z)z \leq \eta -\lambda \vert z \vert^2 ,z\in \mathbb{R}.
			$$
			Moreover there exists some constants $\theta\geq 1$, and $\eta_0\ge 0$ such that
			\begin{equation}
				\label{fdissipa}
				\left|f(z)\right|\leq \eta_0\left(1+\left\vert z\right\vert^\theta \right), z\in \mathbb{R}.
			\end{equation}
            \item[(iv)] The constants $M$ and $\lambda$ satisfy $2M< \lambda-\frac{1}{2}$.
		\end{description}
	\end{condition} 
	\begin{remark}\label{remarkdefl2}
        For notational simplicity, we set $a_{1,0}=0$ in the remainder of this paper. From Proposition 12.2.1 of Da Prato and Zabczyk \cite{da1996ergodicity}, we conclude that the operator $A$ is linear and bounded on $\ell_\rho^{p}(\mathbb{Z}^+)\coloneqq \left\{x=\left(x_\gamma\right)_\gamma;\right.$ $\left.\Vert x \Vert_{p,\rho}^{p}:=\sum_{\gamma\in \mathbb{Z}^+}\vert x_\gamma \vert^{p} \rho(\gamma)< \infty \right\}$ under Assumptions \ref{A2} (i) and (ii), for $p\in [1,\infty)$.
	\end{remark}
    
    \subsection{The concepts of solutions: weak martingale solutions and solutions to a martingale problem}
    The objective of this section is to introduce two distinct definitions of solutions corresponding to equation (\ref{INFSDE2}): weak martingale solutions and solutions to a martingale problem. Under our setting, $f$ is locally Lipschitz, but this condition does not guarantee that $F$ is locally Lipschitz, nor does the weak dissipativity condition (\ref{weaklydissipative}) on $f$ yield pathwise uniqueness of solutions. Consequently, the study of strong solutions faces obstacles, and we therefore turn to investigating the existence and uniqueness of weak solutions. 
    
    The definition of the martingale problem depends on the choice of test functions. Following the approach of Kallianpur and Xiong \cite[pp. 166-167]{kallianpur1995stochastic}, we adopt the cylinder functions specified in Definition \ref{clydinderfunctiononell2} as our test functions. This selection serves two key purposes: (i). It ensures that the application of the Kolmogorov operator $\mathscr{L}$ yields a finite summation.
    (ii). The resulting class of test functions is sufficiently restrictive to facilitate the proof of uniqueness in Section 3.4. Since the mathematical framework of our problem differs fundamentally from that in  \cite[Theorem 5.2.7]{kallianpur1995stochastic} - with neither set of conditions implying the other - the proof of equivalence between weak solutions and solutions to the martingale problem requires substantially different technical approaches.   Accordingly, this section provides a rigorous proof of this equivalence by adapting classical techniques from finite-dimensional SDE theory (see Karatzas and Shreve \cite[Proposition 4.6]{karatzas1998brownian}) to our framework. 
    
    We consider $\Omega_{Spin}=C([0,\infty),\ell_\rho^{2}(\mathbb{Z}^+))$ and denote by $(X(t))_{t\geq0}$ the canonical process on $\Omega_{Spin}$, that is, $X(t,\omega):=\omega(t)$, for $\omega\in\Omega_{Spin}$, by $\mathscr{B}^{Spin}$ the Borel $\sigma$-algebra in $\Omega_{Spin}$ and by $\mathscr{B}_t^{Spin}=\sigma(X|_{[0,t]};X\in \Omega_{Spin})$. Before presenting the solution concept, we verify the Borel measurability of several sets that will be used in our study. Since the proof technique parallels that of Flandoli and Romito \cite[Lemma 2.1]{FlandoliRomito2008Markovselections}, we shall not reproduce the arguments here.
    \begin{lemma}\label{borelmeasurablesetinomega}
        The set $L^{\infty}_{loc}([0,\infty);\ell^{2\theta}_{\rho}(\mathbb{Z}^{+})])\bigcap\Omega_{Spin}$ is a Borel set in $\Omega_{Spin}$. Moreover, 
        \begin{equation*}
            L^{\infty}_{loc}([0,\infty);\ell^{2\theta}_{\rho}(\mathbb{Z}^+))\bigcap \Omega_{Spin}=C([0,\infty);\ell^{2\theta}_{\rho,\sigma}(\mathbb{Z}^+))\bigcap \Omega_{Spin},
        \end{equation*}
        where $\ell^{2\theta}_{\rho,\sigma}(\mathbb{Z}^+)$ denotes the space $\ell^{2\theta}_{\rho}(\mathbb{Z}^+)$ endowed with the weak topology.
    \end{lemma}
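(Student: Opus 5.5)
The plan follows Flandoli--Romito. Write $H=\ell^2_\rho(\mathbb{Z}^+)$ and $E=\ell^{2\theta}_\rho(\mathbb{Z}^+)$. Since $\theta\ge 1$ and $\rho$ is summable, $E\hookrightarrow H$ continuously. Indeed, by Hölder with respect to the finite measure $\rho$ we have $\|x\|_{2,\rho}\le (\sum_\gamma\rho(\gamma))^{\frac12-\frac1{2\theta}}\|x\|_{2\theta,\rho}$.

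\textbf{Step 1: the norm is lower semicontinuous on $H$.} Extend $\|\cdot\|_{2\theta,\rho}$ to $H$ with value $+\infty$ off $E$. It is then the supremum over $m$ of the continuous functionals $x\mapsto(\sum_{\gamma\le m}|x_\gamma|^{2\theta}\rho(\gamma))^{1/2\theta}$, because coordinate evaluations are continuous on $H$. Hence $\|\cdot\|_{2\theta,\rho}$ is lower semicontinuous, and therefore Borel, on $H$.

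\textbf{Step 2: Borel measurability.} For $\omega\in\Omega_{Spin}$ and $N\in\mathbb{N}$ put
\[
\Phi_N(\omega)=\sup_{t\in[0,N]\cap\mathbb{Q}}\|\omega(t)\|_{2\theta,\rho}.
\]
Each $\omega\mapsto\omega(t)$ is continuous, so $\Phi_N$ is a countable supremum of Borel functions and hence Borel. By lower semicontinuity and continuity of $\omega$ in $H$, we get $\|\omega(t)\|_{2\theta,\rho}\le\liminf_{q\to t}\|\omega(q)\|_{2\theta,\rho}\le\Phi_N(\omega)$ for every $t\in[0,N]$. So the essential supremum over $[0,N]$ (indeed the pointwise supremum) is controlled by the rational supremum, and the reverse inequality is trivial. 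Therefore
\[
L^\infty_{loc}([0,\infty);E)\cap\Omega_{Spin}=\bigcap_N\{\Phi_N<\infty\},
\]
which is Borel. One small point: for an $L^\infty_{loc}$ element we must check that the rational supremum is bounded even though $L^\infty$ only controls an a.e.\ version. We handle this by using lower semicontinuity again, since rationals in $[0,N]$ are limits of points from the full-measure set where the bound holds.

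\textbf{Step 3: the identity.} For ``$\supseteq$'': if $\omega$ is weakly continuous into $E$, then on each compact $[0,N]$ the image is weakly compact, hence norm-bounded by the uniform boundedness principle. So $\omega\in L^\infty_{loc}$; measurability follows because $E$ is separable and reflexive (it is an $L^{2\theta}$-space), so weak measurability gives strong measurability by Pettis.

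For ``$\subseteq$'': let $\omega\in\Omega_{Spin}$ with $\sup_{[0,N]}\|\omega(t)\|_{2\theta,\rho}\le C$ (by Step 2 this holds pointwise). Take $t_k\to t$. The sequence $\omega(t_k)$ is bounded in the reflexive space $E$, so every subsequence has a weakly convergent sub-subsequence in $E$. Weak convergence in $E$ implies weak convergence in $H$, since $H^*$ embeds into $E^*$ via the continuous injection. Meanwhile $\omega(t_k)\to\omega(t)$ strongly in $H$. So every such limit equals $\omega(t)$, and hence $\omega(t_k)\rightharpoonup\omega(t)$ in $E$. This gives $\omega\in C([0,\infty);\ell^{2\theta}_{\rho,\sigma})$.

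The main obstacle is this last step: we need the duality pairing argument identifying weak limits in $E$ with the strong $H$-limit. This requires a dense set of test functionals common to both spaces; the finitely supported sequences suffice. The rest is routine measurability bookkeeping.
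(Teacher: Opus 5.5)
Your proof is correct and is essentially the Flandoli--Romito argument that the paper cites in place of writing out a proof. Lower semicontinuity of $\|\cdot\|_{2\theta,\rho}$ on $\ell^2_\rho(\mathbb{Z}^+)$ reduces the set to $\bigcap_N\{\Phi_N<\infty\}$ over rational times, and paths that are bounded in $\ell^{2\theta}_\rho$ and continuous in $\ell^2_\rho$ are weakly continuous in $\ell^{2\theta}_\rho$; whether you identify the weak limit by your sub-subsequence argument or by testing against finitely supported sequences is immaterial, and the strong measurability needed for $\bigcap_N\{\Phi_N<\infty\}\subseteq L^\infty_{loc}$ in Step 2 comes from the weak continuity of Step 3 together with Pettis' theorem.
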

    
    \begin{definition}\label{weakmartingalesolutiondef}(weak martingale solutions). Given a probability measure $\mu$ on $\ell_\rho^{2\theta}(\mathbb{Z}^+)$, a weak martingale solution of equation (\ref{INFSDE2}) with initial condition $\mu$ is a triple $(\tilde{X},\tilde{W})$, $(\tilde{\Omega},\tilde{\mathscr{F}},\tilde{\mathbb{P}})$, $\{\tilde{\mathscr{F}}_t\}$, where
    \begin{itemize}
        \item[(i)] $(\tilde{\Omega},\tilde{\mathscr{F}},\tilde{\mathbb{P}})$ is a probability space, and $\{\tilde{\mathscr{F}}_t\}$ is a filtration of sub-$\sigma$-fields of $\tilde{\mathscr{F}}$ satisfying the usual conditions,
        \item[(ii)] $\tilde{W}=\{\tilde{W}(t),\tilde{\mathscr{F}_t};0\leq t\leq \infty\}$ is cylindrical Wiener process on $\ell^{2}(\mathbb{Z}^+)$, $\tilde{X}=\{\tilde{X}(t),\tilde{\mathscr{F}_t};0\leq t\leq \infty\}$ is an adapted process in $\ell_\rho^{2}(\mathbb{Z}^+)$ and 
        \begin{equation*}
             \tilde{X}(\cdot,\tilde{\omega})\in L^{\infty}(0,T;\ell_\rho^{2\theta}(\mathbb{Z}^+))\bigcap C([0,T];\ell_\rho^{2}(\mathbb{Z}^+)) \quad \tilde{\mathbb{P}}-a.s.
        \end{equation*}
        for every $T>0$,
        \item[(iii)] the integral version of (\ref{INFSDE2})
        \begin{equation*}
            \tilde{X}(t)=\tilde{X}(0)+\int_0^tA\tilde{X}(s)+F(\tilde{X}(s))ds+\int_0^tBd\tilde{W}(s), t\in [0,\infty),
        \end{equation*}
        holds $\tilde{\mathbb{P}}$ almost surely in $\ell_\rho^{2}(\mathbb{Z}^+)$,
        \item[(iv)] $\tilde{X}(0)$ has law $\mu$.
    \end{itemize}
    \end{definition}
    Building upon the framework of {\v{Z}}{\'a}k \cite{vzak2016existence}, we systematically define three fundamental function classes that will facilitate both the precise formulation of the solutios of martingale problems and the subsequent notational efficiency.
	\begin{definition}
    \label{clydinderfunctiononell2}
		A function $h:\ell_\rho^{2}(\mathbb{Z}^+) \rightarrow\mathbb{R}$ is called a cylinder function if there exists a finite subset $\Lambda\subset\mathbb{Z}^+$ and $g:\mathbb{R}^{\Lambda}\rightarrow\mathbb{R}$ such that $h(x)=g(\Pi_{\Lambda}x)$, for $x=(x_\gamma)_{\gamma\in\mathbb{\mathbb{Z}^+}}\in\ell_\rho^{2}(\mathbb{Z}^+)$, where $\Pi_{\Lambda}:\ell_\rho^{2}(\mathbb{Z}^+)\rightarrow\mathbb{R}^{\Lambda}$ is the restriction mapping, i.e. $\Pi_{\Lambda}x=(x_\gamma)_{\gamma\in\Lambda}\in\mathbb{R}^{\Lambda}$. Moreover,
		\begin{itemize}
			%\item [(1)]If $g$ is Lipschitz  and compactly supported i.e. there are constants $N\in\mathbb{N}$ and $L>0$ such that $supp(g)\subset[-N,N]^{\Lambda}$ and 
			%$$
			%|g(z) - g(\tilde{z})| \leq L\|z - \tilde{z}\|_{\mathbb{R}^{\Lambda}} \ \forall %z, \tilde{z} \in \mathbb{R}^{\Lambda},
			%$$
			%we say that $h\in C_{c,Lip}^{Cyl}(\ell_\rho^{2}(\mathbb{Z}^+) )$.
			\item[(1)]If $g\in C^2(\mathbb{R}^{\Lambda},\mathbb{R})$, we say that $h\in C^{2,Cyl}(\ell_\rho^{2}(\mathbb{Z}^+))$;
			\item[(2)] If $g\in C^2_c(\mathbb{R}^{\Lambda},\mathbb{R})$, we say that $h\in C_{c}^{2,Cyl}(\ell_\rho^{2}(\mathbb{Z}^+) )$.
		\end{itemize}
	\end{definition}
	The Kolmogorov operator $\mathscr{L}$ corresponding to (\ref{INFSDE2}) is defined as follows: for any twice Fr\'{e}chet differentiable function $h$ on $\ell^2_{\rho}(\mathbb{Z}^+)$,
	$$
	\mathscr{L}h(x):=(a_{1,2} x_2+f(x_1))\partial_{1}h(x)+\sum_{i=2}^{\infty}(a_{i,i-1}x_{i-1}+a_{i,i+1} x_{i+1}+f(x_i))\partial_{i}h(x)+ \frac{1}{2} {\partial_{1}}\partial_{1}h(x),
	$$
	where $\partial_i$ are given by that for Fr\'{e}chet differentiable functions $h$ with Fr\'{e}chet derivative $h^{\prime}$, $\partial_ih:=<h^{\prime},e^i>_{2,\rho}$ where $e^i=(e^i_j)_{j\in\mathbb{Z}^+}$ with $e^i_i=1$ and $e^i_j=0$ for $j\neq i$.
	
    \begin{definition}\label{solutiontothemartingaleproblemdef}(solution to the martingale problem). Given a probability measure $\mu$ on $\ell_\rho^{2\theta}(\mathbb{Z}^+)$, a probability measure $\mathbf{P}$ on $(\Omega_{Spin},\mathscr{B}^{Spin})$ is called a solution to the martingale problem associated to (\ref{INFSDE2}) with initial law $\mu$ if 
    \begin{itemize}
        \item[(i)] for every $T>0$
        \begin{equation*}
            \mathbf{P}\left(\sup\limits_{t\in[0,T]}\Vert X(t)\Vert_{2\theta,\rho}<\infty\right)=1;
        \end{equation*}
        \item[(ii)] for every $h\in C_c^{2,Cly}(\ell_\rho^{2}(\mathbb{Z}^+))$(or $h\in C^{2,Cyl}(\ell_\rho^{2}(\mathbb{Z}^+) )$), the process $M^h(t)$ defined $\mathbf{P}-a.s.$ on $(\Omega_{Spin},\mathscr{B}^{Spin})$ as 
        \begin{equation*}
            M^h(t):=h(X(t))-h(X(0))-\int_0^t\mathscr{L}h(X(s))ds,
        \end{equation*}
        is a continuous (local) martingale under $(\Omega_{Spin},\mathscr{B}^{Spin}, \mathbf{P},\{\mathscr{B}^{Spin}_t\})$;
        \item[(iii)] $\mu=\mathbf{P}\circ X(0)^{-1}$.
    \end{itemize}
	\end{definition}
        The boundedness of the operator $B$ implies the equivalence between the two alternative conditions imposed on $h$ in Definition \ref{solutiontothemartingaleproblemdef} (ii). We now present the theorem connecting the notions of weak martingale solutions and solutions to the martingale problem. 
	\begin{theorem}
    \label{eqalweakandmartingale}
	    The existence of a solution $\mathbf{P}$ to the martingale problem is equivalent to the existence of a weak martingale solution $(\tilde{X},\tilde{W})$, $(\tilde{\Omega},\tilde{\mathscr{F}},\tilde{\mathbb{P}})$, $\{\tilde{\mathscr{F}}_t\}$. The two solutions are related by $\mathbf{P}=\tilde{\mathbb{P}}\circ \tilde{X}^{-1}$.
	\end{theorem}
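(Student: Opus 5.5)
The equivalence is proved in two directions, following the finite-dimensional argument of Karatzas and Shreve \cite[Proposition 4.6]{karatzas1998brownian}. The key simplification is structural. The noise acts only on the first coordinate, and a cylinder function depends on finitely many coordinates. So every statement reduces to a finite-dimensional one on $\mathbb{R}^{\Lambda}$ with the single Brownian motion $\tilde W_1$.

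\textbf{Weak solution $\Rightarrow$ martingale problem.} Let $(\tilde X,\tilde W)$ be a weak martingale solution and set $\mathbf{P}=\tilde{\mathbb{P}}\circ\tilde X^{-1}$. This is a Borel probability measure on $\Omega_{Spin}$ because $\tilde X$ has continuous paths in $\ell^2_\rho(\mathbb{Z}^+)$.
\begin{itemize}
\item Condition (i) of Definition \ref{solutiontothemartingaleproblemdef} follows from the $L^\infty(0,T;\ell^{2\theta}_\rho)$ property of $\tilde X$. Lemma \ref{borelmeasurablesetinomega} guarantees that the event in (i) is measurable.
\item Condition (iii) is immediate.
\item For (ii), take $h=g\circ\Pi_\Lambda$. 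Each coordinate of $\tilde X$ satisfies the scalar equations (\ref{INFSDE1}); projecting the $\ell^2_\rho$-valued identity onto $e^\gamma$ gives these. Applying It\^o's formula to $g(\Pi_\Lambda\tilde X(t))$ then yields
\[ h(\tilde X(t))-h(\tilde X(0))-\int_0^t\mathscr{L}h(\tilde X(s))\,ds=\int_0^t\partial_1 h(\tilde X(s))\,d\tilde W_1(s). \]
The right-hand side is a continuous local martingale, and a true martingale when $g\in C^2_c$ since then $\partial_1 h$ is bounded.
\item The remaining point is to transfer the martingale property to the canonical filtration $\mathscr{B}^{Spin}_t$. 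For $s<t$ and any bounded continuous $\Phi$ of $X|_{[0,s]}$, we have $\mathbb{E}[(M^h(t)-M^h(s))\Phi]=0$, which is a statement about the law only. The local case is handled by the stopping times $\inf\{t:|\Pi_\Lambda X(t)|\ge k\}$, which are defined on path space.
\end{itemize}

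\textbf{Martingale problem $\Rightarrow$ weak solution.} Given $\mathbf{P}$, the plan is as follows.
\begin{enumerate}
\item Apply (ii) to the cylinder functions $h(x)=x_\gamma$ and $h(x)=x_\gamma x_\delta$, using the $C^{2,Cyl}$ local version justified by boundedness of $B$. This shows that $M_\gamma(t):=X_\gamma(t)-X_\gamma(0)-\int_0^t(AX+F(X))_\gamma\,ds$ are continuous local martingales with $\langle M_1\rangle_t=t$ and $\langle M_\gamma,M_\delta\rangle=0$ whenever $(\gamma,\delta)\neq(1,1)$. The covariation identity comes from expanding $\mathscr{L}(x_\gamma x_\delta)$ and the product rule, exactly as in Karatzas--Shreve.
\item The drift integrals are well defined. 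By (\ref{fdissipa}) and (i), $|f(X_\gamma(s))|\le\eta_0(1+|X_\gamma(s)|^\theta)$ is locally bounded in time, and by Remark \ref{remarkdefl2}, $\|F(X)\|_{2,\rho}^2\lesssim\sum_\gamma\rho(\gamma)(1+|X_\gamma|^{2\theta})<\infty$. Hence $\int_0^t F(X(s))\,ds$ converges in $\ell^2_\rho$, and $AX$ is bounded there.
\item For $\gamma\ge2$, $M_\gamma$ is a continuous local martingale with zero quadratic variation, so $M_\gamma\equiv0$. By Lévy's characterization, $M_1$ is a standard Brownian motion.
\item Take $\tilde\Omega=\Omega_{Spin}$ with the $\mathbf{P}$-augmented canonical filtration, $\tilde X=X$, and $\tilde W=(M_1,\beta_2,\beta_3,\dots)$, where the $\beta_j$ are independent Brownian motions carried on a product extension. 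Because $B$ only reads the first component, the extra components are irrelevant to the equation but make $\tilde W$ a cylindrical Wiener process on $\ell^2$. The coordinatewise equations then assemble into the $\ell^2_\rho$-valued integral equation, and the $L^\infty(0,T;\ell^{2\theta}_\rho)$ property is (i).
\end{enumerate}
This construction gives $\mathbf{P}=\tilde{\mathbb{P}}\circ\tilde X^{-1}$, as the theorem asserts.

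\textbf{Main obstacle.} The delicate step is the assembly in the second direction. It has two parts. First, the coordinatewise identities must be shown to imply the integral equation in $\ell^2_\rho$ almost surely and simultaneously for all $t$. This is handled by continuity of paths and the countability of $\mathbb{Z}^+$, together with convergence of $\sum_\gamma\rho(\gamma)|\cdot|^2$ via (i) and (\ref{fdissipa}). Second, the joint measurability of the drift must be justified using Lemma \ref{borelmeasurablesetinomega}. I would prove the $\ell^2_\rho$ convergence first, using dominated convergence in $\gamma$, before invoking the stochastic-calculus identifications.
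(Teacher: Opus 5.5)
Your proposal is correct and follows essentially the same route as the paper. In one direction you use It\^o's formula for cylinder functions, localization, and a law-level transfer of the martingale property to $\mathscr{B}^{Spin}_t$; the paper cites Flandoli's Lemma 4.1 for that transfer. In the other direction you use the test functions $x_i$ and $x_ix_j$ to identify $[M^{(i)},M^{(j)}]$, L\'evy's characterization for $M^{(1)}$, and an extension carrying a cylindrical Wiener process whose first component is $M^{(1)}$. You also make explicit two steps the paper leaves implicit: that $M_\gamma\equiv 0$ for $\gamma\ge 2$, and that the coordinatewise identities assemble into the $\ell^2_\rho$-valued integral equation.
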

    \begin{proof}
        Step 1. Let $\mu$ be a probability measure on $\ell_\rho^{2\theta}(\mathbb{Z}^+)$ and $(\tilde{X},\tilde{W})$, $(\tilde{\Omega},\tilde{\mathscr{F}},\tilde{\mathbb{P}})$, $\{\tilde{\mathscr{F}}_t\}$ be a weak martingale solution of equation (\ref{INFSDE2}) with initial condition $\mu$. Let $\mathbf{P}:=\tilde{\mathbb{P}}\circ \tilde{X}^{-1}$ be the law of the canonical  process $X$ on $\Omega_{Spin}$. Our objective is to prove that $\mathbf{P}$ constitutes a solution to the martingale problem associated to (\ref{INFSDE2}) with initial law $\mu$. From conditions (ii) and (iii) of Definition \ref{weakmartingalesolutiondef}, we deduce
        \begin{equation*}
            \mathbf{P}\left(\sup\limits_{t\in[0,T]}\Vert X(t)\Vert_{2\theta,\rho}<\infty\right)=\tilde{\mathbb{P}}\left(\sup\limits_{t\in[0,T]}\Vert \tilde{X}(t)\Vert_{2\theta,\rho}<\infty\right)=1
        \end{equation*}
        and 
        \begin{equation*}
            \mathbf{P}\circ(X(0))^{-1}=\tilde{\mathbb{P}}\circ(\tilde{X}(0))^{-1}=\mu.
        \end{equation*}
        Moreover, for a function $h\in C^{2,Cyl}(\ell_\rho^{2}(\mathbb{Z}^+))$, there exist a finite subset $\Lambda\in\mathbb{Z}^+$ and a function $g\in C^2(\mathbb{R}^{\Lambda},\mathbb{R})$ such that $h(x)=g(\Pi_{\Lambda}x)$. By It$\hat{\text{o}}$'s formula, the process $\tilde{M}^h=\{\tilde{M}_t^h,\tilde{\mathscr{F}}_t;0\leq t<\infty\}$ given by
        \begin{equation*}
            \tilde{M}_t^h:=h(\tilde{X}(t))-h(\tilde{X}(0))-\int_0^t\mathscr{L}h(\tilde{X}(s))ds
        \end{equation*}
        can be expressed as 
        \begin{equation*}
            \tilde{M}_t^h=\int_0^t<h^{\prime}(\tilde{X}(s)),BdW(s)>_{2,\rho}=\int_0^t\partial_1h(\tilde{X}(s))dW_1(s),
        \end{equation*}
        where $<\cdot,\cdot>_{2,\rho}$ represents the inner product in $\ell_\rho^{2}(\mathbb{Z}^+)$.
        Introducing the stopping time 
        \begin{equation*}
            S_m:=\inf\{t\geq 0;\Vert \tilde{X}(t)\Vert_{\mathbb{R}^{\Lambda}}\geq m \},
        \end{equation*}
        condition (ii) of Definition \ref{weakmartingalesolutiondef} yields $\lim\limits_{m\rightarrow\infty}S_m=\infty$ $\tilde{\mathbb{P}}-a.s.$ Therefore, the processes
        \begin{equation*}
            \tilde{M}_t^h(m):=\tilde{M}_{t\wedge S_m}^h=\int_0^{t\wedge S_m}\partial _1h(\tilde{X}(s))dW_1(s)=\int_0^{t\wedge S_m}\frac{\partial g}{\partial x_1}(\Pi_{\Lambda}\tilde{X}(s))dW_1(s)
        \end{equation*}
        are continuous martingales, and so $\tilde{M}^h$ is continuous local martingale. Based on Lemma 4.1 of Flandoli \cite{flandolianintroduction2008}, we conclude that
        \begin{equation*}
            M_t^h:=h(X(t))-h(X(0))-\int_0^t\mathscr{L}h(X(s))ds
        \end{equation*}
        is a local martingale under $(\Omega_{Spin},\mathscr{B}^{Spin},\mathbf{P},\{\mathscr{B}^{Spin}_t\})$. For $h\in C_c^{2,Cly}(\ell_\rho^{2}(\mathbb{Z}^+))$, analogous arguments apply, and thus we omit the details. \\
        Step 2. Let now $\mathbf{P}$ be a solution to the martingale problem associated to (\ref{INFSDE2}) with initial law $\mu$.   The particular selection $h(x) = x_i$ combined with the assumption yields that
        \begin{equation*}
            M_t^{(i)}:=X_i(t)-X_i(0)-\int_0^t\left(a_{i,i-1}X_{i-1}(s)+a_{i,i+1}X_{i+1}(s)+f(X_i(s))\right)ds
        \end{equation*}
        are continuous local martingales under $\mathbf{P}$. With $h(x)=x_ix_j$, we see that 
        \begin{equation*}
        \begin{split}
           M_t^{(i,j)}:&=X_i(t)X_j(t)-X_i(0)X_j(0)-\int_0^t\left(a_{i,i-1}X_{i-1}(s)+a_{i,i+1}X_{i+1}(s)+f(X_i(s))\right)X_j(s)ds \\
           &\quad -\int_0^t\left(a_{j,j-1}X_{j-1}(s)+a_{j,j+1}X_{j+1}(s)+f(X_j(s))\right)X_i(s)ds, \quad \text{for}\quad (i,j)\neq (1,1),
        \end{split}
        \end{equation*}
        and 
        \begin{equation*}
        \begin{split}
            M_t^{(i,j)}:=X_1(t)X_1(t)-X_1(0)X_1(0)-2\int_0^t(a_{1,2}X_{2}(s)+f&(X_1(s)))X_1(s)ds-t, \\
            &\quad \text{for} \quad (i,j)=(1,1),
        \end{split}
        \end{equation*}
        are also continuous local martingales. For $i\neq j$, we have 
        \begin{equation*}
            M_t^{(i)}M_t^{(j)}=M_t^{(i,j)}-X_i(0)M_t^{(j)}-X_j(0)M_t^{(i)}+U_t=:V_t^{(i,j)}+U_t^{(i,j)},
        \end{equation*}
        where 
        \begin{equation*}
        \begin{split}
           U_t^{(i,j)}&=\int_0^t(X_i(s)-X_i(t))\left(a_{j,j-1}X_{j-1}(s)+a_{j,j+1}X_{j+1}(s)+f(X_j(s))\right)ds\\
           &\quad +\int_0^t(X_j(s)-X_j(t))\left(a_{i,i-1}X_{i-1}(s)+a_{i,i+1}X_{i+1}(s)+f(X_i(s))\right)ds\\
           &\quad +\int_0^t\left(a_{j,j-1}X_{j-1}(s)+a_{j,j+1}X_{j+1}(s)+f(X_j(s))\right)ds\int_0^t(a_{i,i-1}X_{i-1}(s)\\
           &\quad +a_{i,i+1}X_{i+1}(s)+f(X_i(s))ds\\
           &=\int_0^t(M_s^{(i)}-M_t^{(i)})\left(a_{j,j-1}X_{j-1}(s)+a_{j,j+1}X_{j+1}(s)+f(X_j(s))\right)ds\\
           &\quad +\int_0^t(M_s^{(j)}-M_t^{(j)})\left(a_{i,i-1}X_{i-1}(s)+a_{i,i+1}X_{i+1}(s)+f(X_i(s))\right)ds\\
           &=-\int_0^t\int_0^s\left(a_{j,j-1}X_{j-1}(u)+a_{j,j+1}X_{j+1}(u)+f(X_j(u))\right)dudM_s^{(i)}\\
           &\quad -\int_0^t\int_0^s\left(a_{i,i-1}X_{i-1}(u)+a_{i,i+1}X_{i+1}(u)+f(X_i(u))\right)dudM_s^{(j)}.
        \end{split}
        \end{equation*}
        The continuity and local martingale properties of both $V_t^{(i,j)}$ and $U_t^{(i,j)}$ with respect to measure $\mathbf{P}$ immediately imply that $M_t^{(i)}M_t^{(j)}$ shares these same properties under $\mathbf{P}$. An analogous argument applied to the case $i = j = 1$ shows that $M_t^{(1)}M_t^{(1)}-t$ is likewise a $\mathbf{P}$-continuous local martingale. These results immediately imply the following quadratic variation structure:
        \begin{equation*}
        \begin{split}
            &[M^{(i)},M^{(j)}]_t=0,\quad \text{for} \quad (i,j)\neq (1,1), \\
            &[M^{(1)},M^{(1)}]_t=t,\quad \text{for} \quad (i,j)=(1,1).
        \end{split}
        \end{equation*}
         By L\'{e}vy martingale characterization of Brownian motion it follows that $M_t^{(1)}$ is a 1-dimensional Brownian motion on $(\Omega_{Spin},\mathscr{B}^{Spin},\mathbf{P},\{\mathscr{B}^{Spin}_t\})$. Thus, there is a cylindrical Wiener process $\tilde{W}=\{\tilde{W}(t),\tilde{\mathscr{F}_t};0\leq t\leq \infty\}$ defined on an extension $(\tilde{\Omega},\tilde{\mathscr{F}},\tilde{\mathbb{P}})$ of $(\Omega_{Spin},\mathscr{B}^{Spin},\mathbf{P})$ with $\tilde{W}_1(t)=M_t^{(1)}$ such that $(\tilde{X}(t)=X(t),\tilde{W}_t)$, $(\tilde{\Omega},\tilde{\mathscr{F}},\tilde{\mathbb{P}})$, $\{\tilde{\mathscr{F}}_t\}$ is a weak martingale solution of equation (\ref{INFSDE2}) with initial condition $\mu$.
    \end{proof}
   \begin{corollary}
   \label{equvalentuniquepropertyweakmartingale}
       Given a probability measure $\mu$ on $\ell_\rho^{2\theta}(\mathbb{Z}^+)$, the following statements are equivalent:
       \begin{itemize}
           \item[(i)] Uniqueness holds for solutions of the martingale problem associated to (\ref{INFSDE2}) with initial law $\mu$;
           \item[(ii)] Equation (\ref{INFSDE2}) with initial condition $\mu$ admits a weak martingale solution which is unique in the sense of probability law.
       \end{itemize}
   \end{corollary}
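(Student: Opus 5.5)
The plan is to deduce the corollary directly from Theorem \ref{eqalweakandmartingale}, using the correspondence $\mathbf{P}=\tilde{\mathbb{P}}\circ\tilde{X}^{-1}$ in both directions. Here "unique in the sense of probability law" means that any two weak martingale solutions with initial law $\mu$ induce the same law on $\Omega_{Spin}$. The one point needing care is that a weak solution's paths, a priori an element of $L^{\infty}(0,T;\ell^{2\theta}_\rho)\cap C([0,T];\ell^2_\rho)$, really give a random variable in $(\Omega_{Spin},\mathscr{B}^{Spin})$. This follows from Definition \ref{weakmartingalesolutiondef}(ii) together with the Borel measurability in Lemma \ref{borelmeasurablesetinomega}.

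\emph{(i) $\Rightarrow$ (ii).} Let $(\tilde{X}^1,\tilde{W}^1)$ and $(\tilde{X}^2,\tilde{W}^2)$ be two weak martingale solutions with initial condition $\mu$, possibly on different probability spaces. By Step 1 of the proof of Theorem \ref{eqalweakandmartingale}, both $\mathbf{P}^k:=\tilde{\mathbb{P}}^k\circ(\tilde{X}^k)^{-1}$, $k=1,2$, solve the martingale problem with initial law $\mu$. Uniqueness in (i) gives $\mathbf{P}^1=\mathbf{P}^2$, so the two solutions have the same law. Existence in (ii) I read as part of the standing hypothesis: whenever a martingale solution exists, Step 2 of Theorem \ref{eqalweakandmartingale} produces a weak martingale solution. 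Uniqueness "in law" is then exactly this equality of path laws.

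\emph{(ii) $\Rightarrow$ (i).} Let $\mathbf{P}^1$ and $\mathbf{P}^2$ solve the martingale problem with initial law $\mu$. By Step 2 of Theorem \ref{eqalweakandmartingale}, each $\mathbf{P}^k$ yields a weak martingale solution on an extension $(\tilde{\Omega}^k,\tilde{\mathscr{F}}^k,\tilde{\mathbb{P}}^k)$ of $(\Omega_{Spin},\mathscr{B}^{Spin},\mathbf{P}^k)$, with $\tilde{X}^k=X$ the canonical process. Hence $\tilde{\mathbb{P}}^k\circ(\tilde{X}^k)^{-1}=\mathbf{P}^k$. Uniqueness in law from (ii) then gives $\mathbf{P}^1=\mathbf{P}^2$.

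\emph{Main obstacle.} The only delicate step is checking, in Step 2, that the law of $\tilde{X}$ on the extended space coincides with $\mathbf{P}^k$. This is immediate because the extension is a product with an auxiliary space carrying the remaining independent Brownian coordinates, and $\tilde{X}$ depends only on the $\Omega_{Spin}$ component. Beyond that, the corollary is a formal consequence of the bijective correspondence between the two solution concepts.
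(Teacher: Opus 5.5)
Your argument is correct and follows the route the paper intends. The paper gives no separate proof of this corollary and treats it as an immediate consequence of Theorem \ref{eqalweakandmartingale}: Step~1 passes uniqueness for the martingale problem to the laws of weak martingale solutions, and Step~2, together with $\mathbf{P}=\tilde{\mathbb{P}}\circ\tilde{X}^{-1}$, gives the converse. You were also right to flag that (i), read literally, does not supply the existence claimed in (ii), so the equivalence should be read as one between the two uniqueness notions; existence then follows from Theorem \ref{eqalweakandmartingale} whenever a martingale solution exists, e.g.\ via Theorem \ref{existencesolution} for $\mu=\delta_x$.
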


    \subsection{Preliminary results}
	This subsection is devoted to analyzing the finite-dimensional truncations of equation (\ref{INFSDE2}). By establishing the tightness of the corresponding solution distributions, we lay the groundwork for proving the existence of solutions to the full system.
	Let $\{\Lambda_n:=\{1,\cdots,n\};n\in \mathbb{N}\}$ be a sequence of finite subsets of $\mathbb{Z}^+$, which obviously satisfies $\Lambda_n\uparrow\mathbb{Z}^+$, as $n\rightarrow+\infty$. For any fixed initial data $x\in \ell_\rho^{2\theta}(\mathbb{Z}^+)$ and any $\Lambda_n$ let us consider the following system of finite-dimensional SDEs, derived from (\ref{INFSDE2}) by a cutoff procedure,
	\begin{equation}
		\label{CUTSDE}
		\begin{cases}
			dX_1^{(n)}(t)=\left(a_{1,2}X_{2}^{(n)}(t)+f(X_1^{(n)}(t))\right)dt +dW_1(t),\\
			dX_i^{(n)}(t)=\left(a_{i,i-1}X_{i-1}^{(n)}(t)+a_{i,i+1}X_{i+1}^{(n)}(t)+f(X_i^{(n)}(t))\right)dt  , \ 2\leq i \leq n-1,\\
			dX_n^{(n)}(t)=\left(a_{n,n-1}X_{n}^{(n)}(t)+f(X_n^{(n)}(t))\right)dt, \\
			X_\gamma^{(n)}(0)=x_\gamma \in \mathbb{R},\ 1\leq\gamma\leq n, t\geq 0.   
		\end{cases}
	\end{equation}
	Building upon the analysis in Section 2, we observe that under Assumption \ref{A2}, the finite-dimensional system (\ref{CUTSDE}) automatically verifies all conditions of Assumption \ref{A1}. Hence, it admits a unique solution denoted by $X^{(n)}(t)\in \mathbb{R}^n$, as well as a unique invariant measure $\mu_n$.  Set $\tilde{X}^{(n)}\coloneqq\left(X^{(n)},0_{i\in\left\{\mathbb{Z}^+\backslash \Lambda_n\right\}}\right)$, then it is obvious that each $\tilde{X}^{(n)}(t)$ takes values in $\ell_\rho^{2}(\mathbb{Z}^+)$ and therefore $\tilde{X}^{(n)}$ lives in $\Omega_{Spin}=C([0,\infty),\ell_\rho^{2}(\mathbb{Z}^+))$. In order to obtain the tightness of the above approximations, we introduce the following version of the Arzel$\grave{a}$-Ascoli theorem (see Munkres \cite{munkres2019topology}). 
	\begin{theorem}
		\label{AAthm}
		(Arzel$\grave{a}$-Ascoli). Let $Y$ be a complete metric space and $f_n\in C([0,\infty),Y)$ be a sequence of equicontinuous functions, where $C([0,\infty),Y)$ is endowed with the topology of uniform convergence on compacts. If $\left\{f_n(t)\right\}$ is precompact in $Y$ on a dense set of $t\in [0,\infty)$, then $\left\{f_n(t)\right\}$ is precompact in $C([0,\infty),Y)$.
	\end{theorem}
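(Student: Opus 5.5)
The plan is to reduce this version of Arzelà–Ascoli to the classical compact-interval statement. We then pass to $[0,\infty)$ by a diagonal argument, since the topology of uniform convergence on compacts is metrizable, e.g. by $d(f,g)=\sum_{k\ge1}2^{-k}\min\{1,\sup_{t\in[0,k]}d_Y(f(t),g(t))\}$. Precompactness in this metric space is therefore equivalent to sequential precompactness: every subsequence of $\{f_n\}$ should have a further subsequence converging uniformly on each $[0,k]$ to some $f\in C([0,\infty),Y)$. I read ``equicontinuous'' as equicontinuity at each point of $[0,\infty)$. On each compact $[0,k]$ this upgrades to uniform equicontinuity by the usual Lebesgue-number argument.

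\textbf{Step 1 (diagonal extraction on the dense set).} Let $D\subset[0,\infty)$ be the given dense set on which $\{f_n(t)\}$ is precompact in $Y$. Since $[0,\infty)$ is separable, I first pass to a countable dense subset $D_0=\{t_1,t_2,\dots\}\subset D$; this is possible because any subset of a separable metric space is separable and $D_0$ remains dense in $[0,\infty)$. Given a subsequence, I use precompactness of $\{f_n(t_1)\}$ to extract a subsequence converging at $t_1$, then a further one converging at $t_2$, and so on. The diagonal subsequence $(f_{n_j})$ then converges at every $t_m\in D_0$.

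\textbf{Step 2 (uniform Cauchy on compacts).} Fix $k$ and $\varepsilon>0$. By uniform equicontinuity on $[0,k]$, pick $\delta>0$ with $d_Y(f_n(s),f_n(t))<\varepsilon/3$ for all $n$ whenever $|s-t|<\delta$ and $s,t\in[0,k]$. By density and compactness, finitely many points $t_{m_1},\dots,t_{m_r}\in D_0\cap[0,k+1]$ are $\delta$-dense in $[0,k]$. I choose $J$ so that $d_Y(f_{n_i}(t_{m_l}),f_{n_j}(t_{m_l}))<\varepsilon/3$ for $i,j\ge J$ and all $l$. The triangle inequality then gives $\sup_{[0,k]}d_Y(f_{n_i},f_{n_j})<\varepsilon$. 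To handle points of $D_0$ slightly outside $[0,k]$, I apply the equicontinuity on $[0,k+1]$ instead.

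\textbf{Step 3 (limit).} By completeness of $Y$, the sequence $f_{n_j}(t)$ converges for every $t$ to some $f(t)$, uniformly on each $[0,k]$. A uniform limit of continuous functions is continuous, so $f\in C([0,\infty),Y)$ and $f_{n_j}\to f$ in the metric $d$. This proves precompactness. Completeness of $Y$ is used only here, and it is essential, since precompactness is needed only on $D$.

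The main obstacle is only bookkeeping. The key step is Step 2, transferring convergence on a countable dense set to uniform convergence. It hinges on uniform equicontinuity on compacts, which I would derive first from pointwise equicontinuity via a finite subcover of $[0,k+1]$ by $\delta_t$-balls. Everything else is the standard argument, which is why the statement is quoted from Munkres.
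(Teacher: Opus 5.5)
Your proof is correct. The paper gives no argument of its own here; it only cites Munkres. The textbook proof of Ascoli's theorem for the compact-convergence topology (Munkres) takes a different route:
\begin{itemize}
\item take the closure $\mathcal{G}$ of the family in the product space $Y^{[0,\infty)}$;
\item get compactness of $\mathcal{G}$ from Tychonoff's theorem, since it sits inside $\prod_t \overline{\{f_n(t)\}}$;
\item show $\mathcal{G}$ is still equicontinuous;
\item check that on an equicontinuous family the product topology and the compact-convergence topology coincide.
\end{itemize}
That route works for arbitrary domain spaces, with no separability or metrizability needed. However, it assumes $\{f_n(t)\}$ has compact closure for \emph{every} $t$. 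So to apply it to the statement as written, one first has to extend the hypothesis from the dense set to all of $[0,\infty)$. Equicontinuity at $t$, combined with total boundedness at a nearby point of the dense set, gives total boundedness of $\{f_n(t)\}$, and completeness of $Y$ then gives compact closure.

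Your diagonal argument over a countable dense $D_0$ uses the dense-set hypothesis directly. It is more elementary, relying on separability of $[0,\infty)$ and metrizability of $C([0,\infty),Y)$ instead of Tychonoff, and it shows exactly where completeness enters.

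Two small remarks. First, the detour through $[0,k+1]$ is unnecessary, since $D_0\cap[0,k]$ is already dense in $[0,k]$. Second, if ``precompact'' is read as ``totally bounded'' rather than ``relatively compact'', completeness of $Y$ is also used in Step 1, to make the Cauchy subsequences at the points $t_m$ converge.
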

	By virtue of Theorem \ref{AAthm}, our objective now turns to studying the equicontinuity of $\tilde{X}^{(n)}$ and the precompactness of $\tilde{X}^{(n)}(t)$ in $\ell_\rho^{2}(\mathbb{Z}^+)$, for $t\in[0,\infty)\bigcap\mathbb{Q}$. In order to demonstrate equicontinuity, we introduce a variant of the Kolmogorov continuity theorem in the following form (see Bass \cite{bass2011stochastic}).
	
	\begin{theorem}
		\label{Kolmogorovthm}
		Let $\{X^{(n)}\}_n$ be a set of continuous processes taking values in some metric space $(S,d)$. Suppose that, for any $T>0$, there exist constants $C(T),\epsilon>0$ and $p>0$ such that 
		$$
		\sup\limits_n \mathbb{E}\left[d (X_s^{(n)},X_t^{(n)})^p\right]\leq C(T)\vert t-s\vert^{1+\epsilon},0\leq s\leq t \leq T.	
		$$
		Then $\left\{X^{(n)}\right\}$ is an equicontinuous family of processes with probability 1.
	\end{theorem}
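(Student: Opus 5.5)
The plan is a dyadic chaining argument in the style of the classical Kolmogorov--Chentsov proof, run with constants that do not depend on $n$. Fix $T>0$, which after rescaling time we may take equal to $1$, and an exponent $\gamma\in(0,\epsilon/p)$. Let $D_k=\{j2^{-k};0\le j\le 2^k\}$ be the dyadic points of level $k$. For each $n$ and $k$ consider the event
\begin{equation*}
E^{(n)}_k:=\Bigl\{\max_{1\le j\le 2^k} d\bigl(X^{(n)}_{(j-1)2^{-k}},X^{(n)}_{j2^{-k}}\bigr)\ge 2^{-\gamma k}\Bigr\}.
\end{equation*}
By Chebyshev's inequality and the hypothesis,
\begin{equation*}
\mathbb{P}(E^{(n)}_k)\le 2^k\,2^{\gamma pk}\,C(T)\,2^{-k(1+\epsilon)}=C(T)\,2^{-k(\epsilon-\gamma p)}.
\end{equation*}
The bound on the right is independent of $n$ and summable in $k$.

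The standard chaining step then applies. Outside $\bigcup_{k\ge m}E^{(n)}_k$, any two dyadic points $s,t$ with $|t-s|\le 2^{-m}$ can be joined by a chain that uses at most two steps at each level $k\ge m$. This gives
\begin{equation*}
d\bigl(X^{(n)}_s,X^{(n)}_t\bigr)\le \frac{2}{1-2^{-\gamma}}\,|t-s|^{\gamma}.
\end{equation*}
Because each $X^{(n)}$ is continuous, the same bound extends from dyadic points to all $s,t\in[0,1]$. Set $m_n(\omega)$ to be the first level beyond which no $E^{(n)}_k$ occurs. On the whole interval, each $X^{(n)}$ then has a $\gamma$-Hölder modulus with constant $H_n\le C_\gamma\, 2^{\gamma m_n}$, where $C_\gamma$ depends only on $\gamma$. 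The tail bound $\mathbb{P}(m_n>m)\le C(T)\,2^{-m(\epsilon-\gamma p)}/(1-2^{-(\epsilon-\gamma p)})$ is uniform in $n$.

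To pass from individual processes to the whole family with probability $1$, the plan is to apply Borel--Cantelli jointly in $(n,k)$. One needs $\mathbb{P}\bigl(\bigcup_n\bigcup_{k\ge m}E^{(n)}_k\bigr)\to 0$ as $m\to\infty$. On the complementary event, a single random level $m(\omega)$ works for every $n$. Then the common modulus $w(\delta)=C_\gamma\,\delta^{\gamma}$ holds for all $\delta\le 2^{-m(\omega)}$ and all $n$. For $\delta>2^{-m(\omega)}$, one chains at most $2^{m(\omega)}$ short steps, giving a bound that is still uniform in $n$. This yields a.s. equicontinuity on $[0,T]$. Taking $T\in\mathbb{N}$ and intersecting countably many full-measure events then gives a.s. equicontinuity on $[0,\infty)$.

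The main obstacle is exactly this joint union over $n$. The moment bound controls each $\mathbb{P}(E^{(n)}_k)$ uniformly, but summing over infinitely many $n$ is not automatic. I would try two things in turn. First, I would exploit the freedom in the moment exponent: the estimate in the hypothesis, used for the processes $X^{(n)}$ in the paper, comes from polynomial moment bounds that hold for all $p$. With $p$ large, the events $E^{(n)}_k$ can be taken at the coarser thresholds $2^{-\gamma k}n^{-1}$ or similar, making $\sum_n\sum_k\mathbb{P}(E^{(n)}_k)<\infty$ while preserving uniform control of the modulus. Second, failing that, I would read ``equicontinuous with probability 1'' as the statement that each sample family satisfies the uniform modulus above with a random constant whose law is controlled uniformly in $n$. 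That is precisely what feeds into Theorem \ref{AAthm} through the tightness argument that follows.
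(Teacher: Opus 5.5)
Your main line does not work. You want a single random level $m(\omega)$ that serves every $n$, obtained from $\mathbb{P}\bigl(\bigcup_n\bigcup_{k\ge m}E^{(n)}_k\bigr)\to 0$. This cannot be done, because the statement read literally (for a.e.\ $\omega$ the family $\{X^{(n)}(\omega)\}_n$ is equicontinuous) is false under the stated hypothesis. Take $X^{(n)}$ to be independent real Brownian motions. Then $\mathbb{E}|X^{(n)}_t-X^{(n)}_s|^4=3|t-s|^2$ for every $n$. Yet for each fixed $h>0$ the events $\{|X^{(n)}_h-X^{(n)}_0|\ge 1\}$ are independent with a common positive probability. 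By the second Borel--Cantelli lemma infinitely many of them occur a.s., so $\sup_n\sup_{|t-s|\le h}|X^{(n)}_t-X^{(n)}_s|\ge 1$ a.s. for every $h$.

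Your proposed repair also fails on its own terms. Lowering the threshold to $2^{-\gamma k}n^{-1}$ multiplies the Chebyshev bound by $n^{p}$, which makes the sum over $n$ worse. Raising it to $2^{-\gamma k}n^{a}$ with $ap>1$ does make $\sum_n\sum_k\mathbb{P}(E^{(n)}_k)$ finite. But then chaining only gives $d(X^{(n)}_s,X^{(n)}_t)\le Cn^{a}|t-s|^{\gamma}$, which is not uniform in $n$. The hypothesis also supplies a single exponent $p$, not all of them, and the Brownian example has all moments anyway. So this part should be dropped, not patched.

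The paper gives no proof of this statement; it cites Bass. It uses the result only through (\ref{T1}) in Corollary \ref{tightness}: for every $\beta>0$,
$\lim_{h\downarrow 0}\sup_n\mathbb{P}\bigl(\sup_{s,t\in[0,T],|t-s|\le h}d(X^{(n)}_s,X^{(n)}_t)>\beta\bigr)=0$.

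That is exactly your second reading, and your argument already proves it. With $\gamma\in(0,\epsilon/p)$ and $m_n$ as you define it, you have $\sup_n\mathbb{P}(m_n>m)\le C(T)2^{-m(\epsilon-\gamma p)}/(1-2^{-(\epsilon-\gamma p)})$. On $\{m_n\le m\}$ your chaining gives $d(X^{(n)}_s,X^{(n)}_t)\le\frac{2}{1-2^{-\gamma}}|t-s|^{\gamma}$ whenever $|t-s|\le 2^{-m}$. Given $\beta,\eta>0$, first choose $m$ so the tail bound is below $\eta$. Then choose $h\le 2^{-m}$ with $\frac{2}{1-2^{-\gamma}}h^{\gamma}<\beta$. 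This yields $\sup_n\mathbb{P}\bigl(\sup_{|t-s|\le h}d(X^{(n)}_s,X^{(n)}_t)>\beta\bigr)<\eta$, which is (\ref{T1}).

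This is the standard Kolmogorov--Chentsov tightness criterion. The paper then works on $[0,k]$ for each $k$ with a union bound, so no almost-sure intersection over $T$ is needed.

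One small correction: on $\{m_n\le m\}$ the H\"older constant on the whole of $[0,1]$ is in general only $O(2^{(1-\gamma)m})$, not $C_\gamma 2^{\gamma m}$. This comes from chaining $2^{m}$ steps of size $2^{-\gamma m}$; a linear path of slope $2^{(1-\gamma)m}$ shows it is sharp. The correction is harmless, since only the small-scale modulus enters (\ref{T1}).
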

	Moreover, to better characterize precompact sets in $\ell_\rho^{p}(\mathbb{Z}^+)$ with $p\geq 1$, we provide a necessary and sufficient condition for a set to be precompact in $\ell_\rho^{p}(\mathbb{Z}^+)$.
	\begin{lemma}
		\label{Precompactcondition}
		Let $M$ be a subset of the metric space $ \ell_\rho^{p}(\mathbb{Z}^+)$ for $p\geq 1$. Then $M$ is precompact in $\ell_\rho^{p}(\mathbb{Z}^+)$ if and only if $M$ is bounded and the following condition holds: 
		$$
		\text{for any }  \epsilon> 0, \ \text{there exists } n_0\in \mathbb{N} \  \ \text{such that for any }   x\in M:\sum\limits_{i=n_0}^{\infty}\vert x_i\vert^{p}\rho(i)  <\epsilon.
		$$
	\end{lemma}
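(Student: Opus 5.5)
The plan is to reduce the statement to the classical characterization of precompact sets in $\ell^p$ (the Fréchet–Kolmogorov criterion for sequence spaces). The reduction uses the isometry
\[
J:\ell_\rho^{p}(\mathbb{Z}^+)\to\ell^{p}(\mathbb{Z}^+),\qquad (Jx)_i=\rho(i)^{1/p}x_i .
\]
Since $\rho(i)>0$, $J$ is a bijective linear isometry, so $M$ is precompact in $\ell^p_\rho$ if and only if $J(M)$ is precompact in $\ell^p$. The tail condition in the statement is exactly the uniform tail condition $\sum_{i\ge n_0}|(Jx)_i|^p<\epsilon$ for $J(M)$. I will nevertheless argue directly in $\ell^p_\rho$, because the argument is short.

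\textbf{Necessity.} If $M$ is precompact, it is totally bounded, hence bounded. Fix $\epsilon>0$ and take a finite $\epsilon^{1/p}/2$-net $\{y^1,\dots,y^k\}\subset\ell^p_\rho$ for $M$. Each $y^j$ lies in $\ell^p_\rho$, so its tail sum tends to zero, and we can choose one $n_0$ with $\big(\sum_{i\ge n_0}|y^j_i|^p\rho(i)\big)^{1/p}<\epsilon^{1/p}/2$ for all $j$. For $x\in M$, pick $j$ with $\|x-y^j\|_{p,\rho}<\epsilon^{1/p}/2$. Minkowski's inequality applied to the tail seminorm $x\mapsto(\sum_{i\ge n_0}|x_i|^p\rho(i))^{1/p}$ then gives a tail of $x$ strictly below $\epsilon^{1/p}$, that is, $\sum_{i\ge n_0}|x_i|^p\rho(i)<\epsilon$.

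\textbf{Sufficiency.} Assume $\sup_{x\in M}\|x\|_{p,\rho}\le C$ together with the uniform tail condition, and show total boundedness. Given $\epsilon>0$, choose $n_0$ with uniform tails below $(\epsilon/3)^p$, and let $P$ be the projection onto the coordinates $i<n_0$. The set $P(M)$ lies in the finite-dimensional space $\mathbb{R}^{n_0-1}$, normed by the weighted $p$-norm, and it is bounded by $C$. All norms on a finite-dimensional space are equivalent, so $P(M)$ is totally bounded and admits a finite $\epsilon/3$-net $\{z^1,\dots,z^k\}$, where each $z^j$ is extended by zeros to an element of $\ell^p_\rho$. For $x\in M$ we split $x=Px+(x-Px)$, and
\[
\|x-z^j\|_{p,\rho}\le\|Px-z^j\|_{p,\rho}+\|x-Px\|_{p,\rho}<\epsilon/3+\epsilon/3<\epsilon .
\]
So $M$ has a finite $\epsilon$-net, hence is totally bounded. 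Since $\ell^p_\rho$ is complete (via $J$ it is isometric to $\ell^p$), total boundedness is equivalent to precompactness.

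There is no real obstacle here. The only points needing care are taking a single $n_0$ that works simultaneously for the finitely many net elements in the necessity part, and using completeness of $\ell^p_\rho$ to pass from total boundedness to precompactness in the sufficiency part.
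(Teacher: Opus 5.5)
Your proof is correct. The necessity half is the same argument as the paper's: a finite net, a single tail index $n_0$ that works for all finitely many centres, then a triangle inequality. The difference is how that triangle inequality is applied. You apply it to the tail seminorm $\bigl(\sum_{i\ge n_0}|x_i|^p\rho(i)\bigr)^{1/p}$ with radius $\epsilon^{1/p}/2$. The paper instead adds $p$-th powers directly, writing $\sum_{i\ge n_0}|x_i|^p\rho(i)\le\|x-x^j\|_{p,\rho}^p+\sum_{i\ge n_0}|x^j_i|^p\rho(i)$. That inequality is false for $p>1$ unless a factor $2^{p-1}$ is inserted; take $x=2x^j$ with $x^j$ supported in $\{i\ge n_0\}$, which gives $2^p$ against $2$ times the tail of $x^j$. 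Your Minkowski version is the correct way to carry out this step.

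For sufficiency the paper only cites \v{Z}\'ak, whereas you give the standard self-contained argument. The projected set $P(M)$ is bounded in a finite-dimensional space, hence totally bounded. The uniform tail condition controls $\|x-Px\|_{p,\rho}$. Completeness of $\ell^p_\rho$ (via the isometry $J$ onto $\ell^p$) then turns total boundedness into precompactness. So your proposal proves both directions, not just the one the paper proves.
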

	\begin{proof}
		The sufficiency of the result has been established by {\v{Z}}{\'a}k  \cite{vzak2016existence}, we only need to demonstrate the necessity. If $M$ is a precompact set, then $M$ is totally bounded. The fact that $M$ is bounded is a consequence of the fact that $M$ is totally bounded. For every $\epsilon>0$, there is a finite cover of $M$ by $\sqrt[p]{\frac{\epsilon}{2}}$-balls $\{B(x^j,\sqrt[p]{\frac{\epsilon}{2}})\}_{j=1,\cdots,m}$, where $x^j\in M,j=1,\cdots,m$. Moreover, for the given $\epsilon$, there is a constant $n_0$ such that
        \begin{equation*}
            \sup\limits_{j=1,\cdots,m}\sum\limits_{i=n_0}^{\infty}\vert x^j_i\vert^{p}\rho(i) <\frac{\epsilon}{2}.
        \end{equation*}
		Therefore, for every $x\in M$, there is a $j\in\{1,\cdots,m\}$ such that $x\in B(x^j,\sqrt[p]{\frac{\epsilon}{2}})$. Hence, we conclude that 
        \begin{equation*}
            \sum\limits_{i=n_0}^{\infty}\vert x_i\vert^{p}\rho(i)\leq \|x-x^j\|^{p}_{p,\rho}+	\sum\limits_{i=n_0}^{\infty}\vert x^j_i\vert^{p}\rho(i)<\frac{\epsilon}{2}+\frac{\epsilon}{2}=\epsilon.
        \end{equation*}
	\end{proof}
    We now proceed to verify that $\tilde{X}^{(n)}$ satisfies all the conditions required by Theorem \ref{Kolmogorovthm} and Lemma \ref{Precompactcondition}, which enables us to establish the tightness of the distributions of $\tilde{X}^{(n)}$ through an application of Theorem \ref{AAthm}. First, we establish the moment estimates of the approximations $\{\tilde{X}^{(n)}\}_n$ based on Assumption \ref{A2}.
	\begin{lemma}
		\label{ineqXn}
		Let $x\in\ell_\rho^{2\theta}(\mathbb{Z}^+)$, and for $n\in \mathbb{N}$, $X^{(n)}$ the solution of (\ref{CUTSDE}) with initial condition $X^{(n)}(0)=\Pi_{\Lambda_n}(x)$. Let Assumption \ref{A2} hold and $T>0,p\geq1$ be given, then there exists constant $C(T)>0$ and $C(t,p)>0$ such that
		\begin{align}
            &\label{ineq3}
            \forall n\in\mathbb{N}:\mathbb{E}\sup\limits_{0\leq s\leq T}\Vert \tilde{X}^{(n)}(s)\Vert_{2\theta,\rho}^p\leq C(T,p)(1+\Vert x \Vert_{2\theta,\rho}^p),\\
            &\label{ineq4}
            \forall n\in\mathbb{N}:\mathbb{E}\sup\limits_{0\leq s\leq T}\Vert \tilde{X}^{(n)}(s)\Vert_{2,\rho}^p\leq C(T,p)(1+\Vert x \Vert_{2,\rho}^p),\\
			&\label{ineq1}
			\forall 0\leq s \leq t\leq T:\sup\limits_n \mathbb{E}\Vert \tilde{X}^{(n)}(t)-\tilde{X}^{(n)}(s) \Vert_{2,\rho}^4 \leq C(T)(1+ \Vert x \Vert_{2\theta,\rho}^{4\theta})\vert t-s \vert^2,\\
			&\label{ineq2}
			\forall \delta > 0 \ \forall t\geq 0 \ \exists N_0(t,\delta):\sup\limits_n \mathbb{E}\sum\limits_{i=N_0+1}^\infty\vert \tilde{X}^{(n)}_{i}(t) \vert^2\rho(i)< \delta.
		\end{align}
	\end{lemma}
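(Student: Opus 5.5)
The plan is to derive all four estimates from It\^o's formula applied to weighted sums of powers of the coordinates, using the weak dissipativity of $f$ together with the bounded nearest-neighbour interaction. The key input is Assumption \ref{A2}(ii), which makes the weighted cross terms comparable to the diagonal ones.

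For (\ref{ineq3}) and (\ref{ineq4}), apply It\^o's formula to $\Phi_q(X^{(n)}(t))=\sum_{i=1}^n |X_i^{(n)}(t)|^{q}\rho(i)$ with $q=2\theta$, respectively $q=2$. Only the first coordinate carries noise. The drift contribution is
\[
q\sum_i \rho(i)|x_i|^{q-2}x_i\bigl(a_{i,i-1}x_{i-1}+a_{i,i+1}x_{i+1}+f(x_i)\bigr).
\]
The weak dissipativity gives
\[
|x_i|^{q-2}x_if(x_i)\le \eta|x_i|^{q-2}-\lambda|x_i|^q .
\]
For the cross terms I would use Young's inequality,
\[
|x_i|^{q-1}|x_{i\pm1}|\le \tfrac{q-1}{q}|x_i|^q+\tfrac1q|x_{i\pm1}|^q,
\]
and then shift indices, using $\rho(i)\le N\rho(i\pm1)$. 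This bounds them by $C(M,N)\,\Phi_q$.

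In contrast to the coefficient condition $2M<\lambda-\frac12$, which may be weakened by the factor $N$, here I only need a Gronwall-type bound on finite intervals. A linear bound $d\Phi_q\le C(1+\Phi_q)\,dt+dM_t$ therefore suffices; the terms $\eta|x_i|^{q-2}\rho(i)$ are summable because $\rho$ is summable. The It\^o correction is $\frac{q(q-1)}2|x_1|^{q-2}\rho(1)$, and the martingale is $q\int\rho(1)|X_1|^{q-2}X_1\,dW_1$.

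To get the $p$-th moment of the supremum, I raise to the power $p/q$, or apply It\^o's formula to $(1+\Phi_q)^{p/q}$. I then control the martingale by the Burkholder--Davis--Gundy inequality, absorb $\frac12\mathbb{E}\sup(1+\Phi_q)^{p/q}$ into the left-hand side, and close with Gronwall. This requires localizing with stopping times, since a priori finiteness is needed. All constants are independent of $n$, because the truncation only removes terms.

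For (\ref{ineq1}), write
\[
\tilde X^{(n)}(t)-\tilde X^{(n)}(s)=\int_s^t\bigl(A_nX^{(n)}+F_n(X^{(n)})\bigr)\,du+e^1(W_1(t)-W_1(s)).
\]
By H\"older's inequality,
\[
\mathbb{E}\Bigl\|\int_s^t\cdots\Bigr\|_{2,\rho}^4\le |t-s|^3\int_s^t\mathbb{E}\|AX+F(X)\|_{2,\rho}^4\,du .
\]
Here $\|Ax\|_{2,\rho}\le C\|x\|_{2,\rho}$ by Remark \ref{remarkdefl2}. By (\ref{fdissipa}),
\[
\|F(x)\|_{2,\rho}^2\le C\sum_i(1+|x_i|^{2\theta})\rho(i)\le C\bigl(1+\|x\|_{2\theta,\rho}^{2\theta}\bigr).
\]
Hence (\ref{ineq3}) with $p=4\theta$ bounds the integrand by $C(T)(1+\|x\|_{2\theta,\rho}^{4\theta})$. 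The noise term contributes $3\rho(1)^2|t-s|^2$, and since $|t-s|\le T$ the drift part is also $\le C|t-s|^2$.

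The tail estimate (\ref{ineq2}) is the main obstacle, since it requires uniformity in $n$. The plan is to apply It\^o's formula to the tail functional $\Psi_{N_0}=\sum_{i>N_0}|X_i^{(n)}|^2\rho(i)$. For $N_0\ge1$ it has no It\^o correction and no martingale term. The dissipative part gives $-2\lambda\Psi_{N_0}+2\eta\sum_{i>N_0}\rho(i)$. The cross terms inside the tail are bounded by $C\Psi_{N_0}$, while the boundary term at $i=N_0+1$ couples to $X_{N_0}$ and is bounded by $C\bigl(|X_{N_0}|^2\rho(N_0)+|X_{N_0+1}|^2\rho(N_0+1)\bigr)$.

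Rather than exploiting dissipativity precisely, I would simply bound $\frac{d}{dt}\mathbb{E}\Psi_{N_0}$ by
\[
C\,\mathbb{E}\Psi_{N_0}+2\eta\sum_{i>N_0}\rho(i)+C\,\mathbb{E}\bigl[|X_{N_0}|^2\rho(N_0)\bigr].
\]
Gronwall then yields
\[
\mathbb{E}\Psi_{N_0}(t)\le e^{Ct}\Bigl(\sum_{i>N_0}x_i^2\rho(i)+C t\sum_{i\ge N_0}\rho(i)+C\int_0^t\mathbb{E}|X_{N_0}|^2\rho(N_0)\,du\Bigr).
\]
The first two terms vanish as $N_0\to\infty$.

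The boundary term is the delicate one. I would control it through the $2\theta$-moment: by H\"older's inequality,
\[
\rho(N_0)|X_{N_0}|^2\le \rho(N_0)^{1-1/\theta}\bigl(|X_{N_0}|^{2\theta}\rho(N_0)\bigr)^{1/\theta}\le \rho(N_0)^{1-1/\theta}\|X\|_{2\theta,\rho}^2 .
\]
By (\ref{ineq3}) its expectation is uniformly bounded times $\rho(N_0)^{1-1/\theta}\to0$ when $\theta>1$. In the case $\theta=1$ this gives no decay, so I would instead average the boundary index over a window $N_0\in[K,2K]$, using $\sum_{N}\mathbb{E}|X_N|^2\rho(N)\le C$ uniformly, and pick a good $N_0$ in that window. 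Alternatively, I would apply the whole argument directly to $\mathbb{E}\sum_{i>N_0}|X_i|^{2\theta}\rho(i)$ and deduce the $\ell^2_\rho$ tail from it by H\"older's inequality with the summable tail of $\rho$.
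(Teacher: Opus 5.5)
Your proposal is correct. For (\ref{ineq3})--(\ref{ineq4}) it is essentially the paper's argument: Lyapunov function $(1+\sum_i|x_i|^{2\theta}\rho(i))^p$, Young's inequality together with $\rho(i)\le N\rho(i\pm1)$ for the neighbour terms, then localization and BDG. For the other two estimates you take different routes, and in both cases yours is the more robust one.

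For (\ref{ineq1}), set $\Delta_i:=X^{(n)}_i(t)-X^{(n)}_i(s)$. The paper expands $\|\cdot\|_{2,\rho}^4$ into the double sum (\ref{CALEXPECT1}) and bounds each summand by $\frac12\bigl(|\Delta_i|^4\rho^2(i)+|\Delta_j|^4\rho^2(j)\bigr)$. Summing this over $i,j\le n$ costs a factor $n$. Your Minkowski--H\"older bound works directly on the $\ell^2_\rho$-valued increment. Combined with $\|F(x)\|_{2,\rho}^4\le C(1+\|x\|_{2\theta,\rho}^{4\theta})$ and (\ref{ineq3}) with $p=4\theta$, it is uniform in $n$ from the outset.

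For (\ref{ineq2}), the paper uses the two-weight functional $W$, with weight $1/N_1$ on sites $i\le N_0$ and weight $1$ beyond. It asserts $\mathscr{L}_nW\le cW$ with $c$ independent of $N_1$. Your tail computation isolates exactly the term this overlooks. The flux $2a_{N_0+1,N_0}y_{N_0}y_{N_0+1}\rho(N_0+1)$ enters $\mathscr{L}_nW$ with an $O(1)$ coefficient, but $W$ sees $|y_{N_0}|^2$ only through $\rho(N_0)/N_1$. So the constant in (\ref{inequalityLnWn}) cannot be independent of $N_1$.

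A concrete check: take $f(z)=-\lambda z$ and initial datum $Ke^{N_0}$. Then $\mathbb{E}|X_{N_0+1}(t)|^2\ge|\mathbb{E}X_{N_0+1}(t)|^2\gtrsim K^2t^2a_{N_0+1,N_0}^2$ for small $t$. The right-hand side of (\ref{expectationboundedbyinitialcondition}) is at most $e^{ct}\bigl(K^2\rho(N_0)/N_1+\sum_i\rho(i)\bigr)$, which is incompatible once $N_1$ is large and $K\to\infty$. Bounding the flux by uniform global moments, as you do, is what actually makes the tail estimate work.

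Two small points on your version:
\begin{enumerate}
\item In the window argument, the good index $N_0(n)\in[K,2K]$ depends on $n$. Conclude by monotonicity of $N_0\mapsto\sum_{i>N_0}|\tilde X^{(n)}_i|^2\rho(i)$ and state the bound at $2K$.
\item That argument needs only $\sup_n\int_0^t\mathbb{E}\|\tilde X^{(n)}(u)\|_{2,\rho}^2\,du<\infty$ from (\ref{ineq4}), so it covers every $\theta\ge1$. Your suggested alternative with $2\theta$-tails gains nothing, since its boundary term $|X_{N_0}|^{2\theta}\rho(N_0)$ has no H\"older decay either.
\end{enumerate}
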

    
	\begin{proof}
     We proceed in three steps:
    
    Step 1: Proof of (\ref{ineq3}). We begin by defining a twice continuously differentiable function on the space $\ell_\rho^{2\theta}(\mathbb{Z}^+)$,
    \begin{equation*}
    V(x):=\sum\limits_{i=1}^{\infty}\vert x_i\vert^{2\theta}\rho(i)+1, x=(x_i)_{i\in\mathbb{Z}^+}\in\ell_\rho^{2\theta}(\mathbb{Z}^+).
    \end{equation*}
    By the hypotheses of Assumption \ref{A2} and Young's inequality, we establish that for any fixed $n\in \mathbb{N}$ and $p\geq 1$
    \begin{align}
        \mathscr{L}_nV^p(x)&=pV^{p-1}(x)2\theta(a_{1,2}x_2+f(x_1))x_1^{2\theta-1}\rho(1)\nonumber\\
        &\quad +pV^{p-1}(x)2\theta(a_{2,1}x_1+a_{2,3}x_3+f(x_2))x_2^{2\theta-1}\rho(2)+\cdots \nonumber\\
        & \quad +pV^{p-1}(x)2\theta(a_{n,n-1}x_{n-1}+f(x_n))x_n^{2\theta-1}\rho(n)+\frac{1}{2}pV^{p-1}(x)2\theta(2\theta-1)x_1^{2\theta-2}\rho(1)\nonumber\\
        &\quad +\frac{1}{2}p(p-1)V^{p-2}(x)\left(2\theta x_1^{2\theta-1}\rho(1)\right)^2\nonumber\\
        & \leq pV^{p-1}(x)2\theta \left\{\left[M(\vert x_2\vert^{2\theta}+\vert x_1\vert^{2\theta})+(\eta-\lambda\vert x_1\vert^{2})x_1^{2\theta-2}\right]\rho(1) \right. \nonumber\\
        &\quad + \left[M(\vert x_1\vert^{2\theta}+\vert x_2\vert^{2\theta}+\vert x_3\vert^{2\theta}+\vert x_2\vert^{2\theta})+(\eta-\lambda\vert x_2\vert^{2})x_2^{2\theta-2}\right]\rho(2)+\cdots \nonumber\\
        &\quad +\left[M(\vert x_{n-1}\vert^{2\theta}+\vert x_n\vert^{2\theta})+(\eta-\lambda\vert x_n\vert^{2})x_n^{2\theta-2}\right]\rho(n)\nonumber\\
        &\quad +\frac{1}{2}pV^{p-1}(x)2\theta(2\theta-1)(x_1^{2\theta}+1)\rho(1)+\frac{1}{2}p(p-1)V^{p-2}(x)4\theta^2\left( x_1^{4\theta}+1\right)\rho^2(1)\nonumber\\
        &\leq CV^p(x),\label{ineqCVx}
    \end{align}
    where $C=8\theta MNp+2\theta p \eta  (1+\sum_{i=1}^{\infty}\rho(i))+\theta(2\theta-1)p(\rho(1)+1)+2\theta^2p(p-1)(\rho^2(1)+1)$ and $N$ is the constant in Assumption \ref{A2} (ii). From (\ref{ineqCVx}) and $V(x)>0$ it follows that the function
    \begin{equation*}
    W(t,x):=V^p(x)\exp \{-Ct\}
    \end{equation*}
    satisfies $\mathscr{L}_nW(t,x)+\frac{\partial W}{\partial t}(t,x)\leq 0$.
    We define the first exit time of $\tilde{X}^{(n)}$
    \begin{equation*}
    \tau^n_R:=\inf \left\{t\geq 0: \Vert \tilde{X}^{(n)}(t) \Vert^2_{2\theta,\rho}>R \right\}.
    \end{equation*}
    By Lemma 3.2 of Khasminskii \cite{khasminskii2011stochastic}, we can derive 
    \begin{equation*}
    \begin{split}
    \mathbb{E}W(s\wedge\tau^n_R,\tilde{X}^{(n)}(s\wedge\tau^n_R))-W(0,\Pi_n x)&=\mathbb{E}\int_0^{s\wedge\tau^n_R}\mathscr{L}W(r,\tilde{X}^{(n)}(r))\\
    &\quad +\frac{\partial W}{\partial r}(r,\tilde{X}^{(n)}(r))dr \leq 0,
    \end{split}
    \end{equation*}
    Therefore,
    \begin{equation*}
    \mathbb{E}V^p(\tilde{X}^{(n)}(s\wedge\tau^n_R))\exp \{-Cs\}\leq V^p(\Pi_nx)\leq V^p(x),
    \end{equation*}
    where $\Pi_n:x=(x_i)_{i\in\mathbb{Z}^+}\mapsto (x_i)_{i=1}^n$ is a restriction mapping from $\ell_\rho^{p}(\mathbb{Z}^+)$ to $\mathbb{R}^n$. Since $s\wedge\tau^n_R\leq T$ and $V\geq 0$, we have 
    \begin{equation*}
    \mathbb{E}V^p(\tilde{X}^{(n)}(s\wedge\tau^n_R))\leq V^p(x)e^{CT}.
    \end{equation*}
    Moreover, since $\mathbb{P}(\lim\limits_{R\rightarrow+\infty}\tau^n_R=+\infty)=1$ holds, Fatou’s lemma yields
    \begin{equation}
    \label{iVpconclusion}
    \mathbb{E}V^p(\tilde{X}^n(s))\leq V^p(x)e^{CT}.
    \end{equation}
    By applying It\^{o}'s formula together with the BDG inequality and (\ref{iVpconclusion}), we obtain the existence of a constant $C_1>0$ such that
    \begin{align*}
        &\mathbb{E}\sup\limits_{0\leq s\leq T}W(s\wedge\tau^n_R,\tilde{X}^{(n)}(s\wedge\tau^n_R))-W(0,\Pi_n x)\\
        &=\mathbb{E}\sup\limits_{0\leq s\leq T}\int_0^{s\wedge\tau^n_R}\mathscr{L}W(r,\tilde{X}^{(n)}(r))+\frac{\partial W}{\partial r}(r,\tilde{X}^{(n)}(r))dr\\
        &\quad +\mathbb{E}\sup\limits_{0\leq s\leq T}\int_0^{s\wedge\tau^n_R} 2\theta pV^{p-1}(\tilde{X}^{(n)}(r))(\tilde{X}^{(n)}_1(r))^{2\theta-1}\rho(1)dW_1(r)\\
        &\leq C_1\mathbb{E}\left[\int_0^T \left(2\theta pV^{p-1}(\tilde{X}^{(n)}(r))(\tilde{X}^{(n)}_1(r))^{2\theta-1}\rho(1)\right)^2dr\right]^{1/2}\\
        &\leq C_1[\int_0^T \mathbb{E}V^{2p}(\tilde{X}^{(n)}(r))dr]^{1/2}\\
        &\leq C_1V^{p}(x).
    \end{align*}
    Similarly, applying Fatou's lemma yields
    \begin{equation*}
        \mathbb{E}\sup\limits_{0\leq s\leq T}V^p(\tilde{X}^n(s))\leq (1+C_1)V^p(x)e^{CT}.
    \end{equation*}
    This immediately gives,
    \begin{equation*}
    \begin{split}
    \mathbb{E}\sup\limits_{0\leq s\leq T}&\left[\sum\limits_{i=1}^{\infty}\vert \tilde{X}^{(n)}(s)\vert^{2\theta}\rho(i)\right]^p\leq \mathbb{E}\sup\limits_{0\leq s\leq T}\left[\sum\limits_{i=1}^{\infty}\vert \tilde{X}^{(n)}(s)\vert^{2\theta}\rho(i)+1\right]^p=\mathbb{E}\sup\limits_{0\leq s\leq T}V^p(\tilde{X}^{(n)}(s))\\
    &\leq (1+C_1)\left[\sum\limits_{i=1}^{\infty}\vert x_i\vert^{2\theta}\rho(i)+1\right]^pe^{CT}\leq 2^{p}(1+C_1)e^{CT}(1+\Vert x\Vert_{2\theta,\rho}^p)
    \end{split}
    \end{equation*}
    which is precisely the statement of (\ref{ineq3}). The proof of (\ref{ineq4}) can be obtained by the same method as in Step 1, therefore, we omit the details here.

		Step 2: Proof of (\ref{ineq1}). Assuming that $0\leq s\leq t\leq T$, we have
		\begin{equation}
			\label{CALEXPECT1} 
            \begin{split}
                \mathbb{E}\Vert \tilde{X}^{(n)}(t)-\tilde{X}^{(n)}(s) \Vert_{2,\rho}^{4}  	&=\mathbb{E}\left[\left(\sum\limits_{i=1}^n\vert {X}_i^{(n)}(t)-{X}_i^{(n)}(s)\vert^{2}\rho(i)\right)^2\right]\\
                &=\mathbb{E}\left[ \sum\limits_{i,j=1}^n\vert {X}_i^{(n)}(t)-{X}_i^{(n)}(s)\vert^{2}\vert {X}_j^{(n)}(t)-{X}_j^{(n)}(s)\vert^{2}\rho(i)\rho(j) \right].	
            \end{split}	
		\end{equation} 
        The estimate of each term in the summation of (\ref{CALEXPECT1}) is obtained by employing (\ref{fdissipa}), H$\ddot{\text{o}}$lder and Burkholder-Davis-Gundy inequalities,
		\begin{align}
				&\mathbb{E}\vert {X}_i^{(n)}(t)-{X}_i^{(n)}(s)\vert^2\vert {X}_j^{(n)}(t)-{X}_j^{(n)}(s)\vert^{2}\rho(i)\rho(j)\nonumber\\
                &\leq \frac{1}{2}\mathbb{E}\vert {X}_i^{(n)}(t)-{X}_i^{(n)}(s)\vert^4\rho^2(i)+\frac{1}{2}\mathbb{E}\vert {X}_j^{(n)}(t)-{X}_j^{(n)}(s)\vert^{4}\rho^2(j)\nonumber\\
                &=\frac{1}{2}\mathbb{E}\left(\int_{s}^{t}a_{i,i-1}{X}_{i-1}^{(n)}(u)+a_{i,i+1}{X}_{i+1}^{(n)}(u)+f({X}_{i}^{(n)}(u))du+\mathbf{1}_{\{i=1\}}\int_{s}^{t}dW_1(u)\right)^4\rho^2(i)\nonumber\\
                & \quad +\frac{1}{2}\mathbb{E}\left(\int_{s}^{t}a_{j,j-1}{X}_{j-1}^{(n)}(u)+a_{j,j+1}{X}_{j+1}^{(n)}(u)+f({X}_{j}^{(n)}(u))du+\mathbf{1}_{\{j=1\}}\int_{s}^{t}dW_1(u)\right)^4\rho^2(j)\nonumber\\
				&\leq \tilde{C} \left[\mathbb{E}\left(\int_{s}^{t}a_{i,i-1}{X}_{i-1}^{(n)}(u)+a_{i,i+1}{X}_{i+1}^{(n)}(u)+f({X}_{i}^{(n)}(u))du\right)^4\rho^2(i) \nonumber\right.\\
                &\quad  +\mathbb{E}\left(\int_{s}^{t}a_{j,j-1}{X}_{j-1}^{(n)}(u)+a_{j,j+1}{X}_{j+1}^{(n)}(u)+f({X}_{j}^{(n)}(u))du\right)^4\rho^2(j) \nonumber\\
                &\quad \left.+\mathbb{E}\left(\mathbf{1}_{\{i=1\}}\int_{s}^{t}dW_1(u)\right)^4\rho^2(i) +\tilde{C}\mathbb{E}\left(\mathbf{1}_{\{j=1\}}\int_{s}^{t}dW_1(u)\right)^4\rho^2(j)\right]\nonumber\\
				&\leq \tilde{C}\left[ \vert t-s \vert^3 \mathbb{E}\int_{s}^{t}\left(a_{i,i-1}{X}_{i-1}^{(n)}(u)+a_{i,i+1}{X}_{i+1}^{(n)}(u)+f({X}_{i}^{(n)}(u))\right)^4du\rho^2(i)\right.\nonumber\\
                &\quad + \vert t-s \vert^3 \mathbb{E}\int_{s}^{t}\left(a_{j,j-1}{X}_{j-1}^{(n)}(u)+a_{j,j+1}{X}_{j+1}^{(n)}(u)+f({X}_{j}^{(n)}(u))\right)^4du\rho^2(j)\nonumber\\
                &\quad \left.+\mathbf{1}_{\{i=1\}}\left(\int_{s}^{t}du\right)^2\rho^2(i)+\mathbf{1}_{\{j=1\}}\left(\int_{s}^{t}du\right)^2\rho^2(j)\right]\nonumber\\
				&\leq \tilde{C}\left[ \vert t-s \vert^3 \int_{s}^{t}\mathbb{E} \left(\vert{X}_{i-1}^{(n)}(u)\vert^4+\vert{X}_{i+1}^{(n)}(u)\vert^4+1+\vert {X}_{i}^{(n)}(u)\vert^{4\theta} \right)du\rho^2(i) +\mathbf{1}_{\{i=1\}}\vert t-s \vert^2\rho^2(i)\right.\nonumber\\
                &\quad \left.+\vert t-s \vert^3 \int_{s}^{t}\mathbb{E} \left(\vert{X}_{j-1}^{(n)}(u)\vert^4+\vert{X}_{j+1}^{(n)}(u)\vert^4+1+\vert {X}_{j}^{(n)}(u)\vert^{4\theta} \right)du\rho^2(j)+\mathbf{1}_{\{j=1\}}\vert t-s \vert^2\rho^2(j)\right],\label{CALEXPECT2}
		\end{align}
		where $\mathbf{1}$ denotes the indicator function and $\tilde{C}$ represents a positive constant that can take different values at different occurrences. An application of (\ref{ineq3}) and (\ref{ineq4}), together with the substitution of (\ref{CALEXPECT2}) into (\ref{CALEXPECT1}), yields positive constants $C_1(T),C_2(T)$ and $C(T)>0$ satisfying
		\begin{equation*}
			\begin{split}
				&\mathbb{E}\Vert \tilde{X}^{(n)}(t)-\tilde{X}^{(n)}(s) \Vert_{2,\rho}^{4} \\
                &\leq C_1(T) \vert t-s\vert^3\int_{s}^{t}\mathbb{E} \left(\Vert{X}^{(n)}(u)\Vert_{2,\rho}^4+\Vert{X}^{(n)}(u)\Vert_{2,\rho}^4+1+\Vert {X}^{(n)}(u)\Vert_{2\theta,\rho}^{4\theta} \right)du\\
                &\quad +C_1(T)\vert t-s \vert^2\rho^2(1) \leq C_2(T)\vert t-s \vert^4 \left(1+\Vert x \Vert_{2,\rho}^4+ \Vert x \Vert_{2\theta,\rho}^{4\theta} \right)+C_2(T)\vert t-s \vert^2\rho^2(1)\\
				&\leq C(T)(1+ \Vert x \Vert_{2\theta,\rho}^{4\theta})\vert t-s \vert^2.
			\end{split}
		\end{equation*}
        Thus, (\ref{ineq1}) is proved.

        Step 3: Proof of (\ref{ineq2}). In order to establish the validity of (\ref{ineq2}), we claim that 
        \begin{equation}
            \label{expectationboundedbyinitialcondition}
            \begin{split}
                &\mathbb{E}\sum\limits_{i=1}^{N_0(t,\delta)}\frac{1}{N_1(t,\delta)}\left(\vert \tilde{X}_i^{(n)}(t) \vert^2+1\right)\rho(i)+\mathbb{E}\sum\limits_{i=N_0(t,\delta)+1}^{\infty}\left(\vert \tilde{X}_i^{(n)}(t) \vert^2+1\right)\rho(i)\\
                &\leq \frac{1}{N_1(t,\delta)}\sum\limits_{i=1}^{N_0(t,\delta)}e^{ct}\left(\vert x_i \vert^2+1\right)\rho(i)+\sum\limits_{i=N_0(t,\delta)+1}^{\infty}e^{ct}\left(\vert x_i \vert^2+1\right)\rho(i),
            \end{split}
        \end{equation}
        where $c=4M(2N+1)+\eta\sum\limits_{i=1}^{\infty}\rho(i)+1$, $M$ and $N$ are given in Assumption \ref{A2}, $N_0(t,\delta)$ and $N_1(t,\delta)$ will be determined in the following way.
		  For arbitrary $x\in \ell_\rho^{2\theta}(\mathbb{Z}^+)\subset \ell_\rho^{2}(\mathbb{Z}^+)$,  $\delta>0$ and $t\geq 0$, there exist constants $N_0(t,\delta),N_1(t,\delta)\in \mathbb{N}$ such that
		\begin{align}
			\label{deltainequality}
			  \sum\limits_{i=N_0(t,\delta)+1}^\infty e^{ct}\left(\vert x_i \vert^{2}+1\right)\rho(i)<\frac{\delta}{2}, \ \frac{1}{N_1(t,\delta)}\sum\limits_{i=1}^{N_0(t,\delta)} e^{ct} \left(\vert x_i \vert^{2}+1\right)\rho(i)<\frac{\delta}{2}.
		\end{align}
        Then, we get the desired result by combining (\ref{expectationboundedbyinitialcondition}) and (\ref{deltainequality}), leading to the following estimate,
        \begin{align*}
                \mathbb{E}\sum\limits_{i=N_0(t,\delta)+1}^\infty\vert \tilde{X}_{i}^{(n)}(t) \vert^2\rho(i)&\leq \frac{1}{N_1(t,\delta)}\sum\limits_{i=1}^{N_0(t,\delta)}e^{ct}\left(\vert x_i \vert^2+1\right)\rho(i)+\sum\limits_{i=N_0(t,\delta)+1}^{\infty}e^{ct}\left(\vert x_i \vert^2+1\right)\rho(i)\\
				&\leq \delta.
        \end{align*}
	      To complete the proof of this lemma, we finally demonstrate the claim (\ref{expectationboundedbyinitialcondition}). For $y\in \ell^2_{\rho}(\mathbb{Z^+})$, let 
          \begin{equation*}
             W(y):=\frac{1}{N_1(t,\delta)}\sum\limits_{i=1}^{N_0(t,\delta)}\left(\vert y_i \vert^2+1\right)\rho(i)+\sum\limits_{i=N_0(t,\delta)+1}^{\infty}\left(\vert y_i \vert^2+1\right)\rho(i). 
          \end{equation*}
          We obtain the following estimate,
              \begin{align}
                  &\mathscr{L}_nW(y)\\
                  &=\frac{2}{N_1(t,\delta)}\left[(a_{1,2}y_2+f(y_1))y_1\rho(1)+\sum\limits_{i=2}^{N_0(t,\delta)}(a_{i,i-1}y_{i-1}+a_{i,i+1}y_{i+1}+f(y_i))y_i\rho(i)\right] \nonumber\\
                  &\quad +2\left[\sum\limits_{i=N_0(t,\delta)+1}^{n-1}(a_{i,i-1}y_{i-1}+a_{i,i+1}y_{i+1}+f(y_i))y_i\rho(i)+(a_{n,n-1}y_{n-1}+f(y_n))y_n\rho(n)\right] \nonumber\\
                  &\quad +\frac{2}{N_1(t,\delta)}\rho(1)\nonumber\\
                  &\leq \frac{4M}{N_1(t,\delta)}\sum\limits_{i=1}^{N_0(t,\delta)}\rho(i)(\vert y_{i-1}\vert^2+\vert y_{i}\vert^2+\vert y_{i+1}\vert^2)+\frac{2}{N_1(t,\delta)}\sum\limits_{i=1}^{N_0(t,\delta)}(\eta-\lambda\vert y_i\vert^2)\rho(i) \nonumber\\
                  &\quad +4M\sum\limits_{i=N_0(t,\delta)+1}^{n-1}\rho(i)(\vert y_{i-1}\vert^2+\vert y_{i}\vert^2+\vert y_{i+1}\vert^2)+2\sum\limits_{i=N_0(t,\delta)+1}^{n}(\eta-\lambda\vert y_i\vert^2)\rho(i) +\frac{2}{N_1(t,\delta)}\rho(1)\nonumber\\ 
                  &\leq cW(y), \label{inequalityLnWn}
              \end{align}
          where $c=4M(2N+1)+\eta\sum\limits_{i=1}^{\infty}\rho(i)+1$, as it is in (\ref{expectationboundedbyinitialcondition}). From (\ref{inequalityLnWn}) it follows that the function 
          \begin{equation*}
              G(t,y):=W(y)e^{-ct}
          \end{equation*}
          satisfies $\mathscr{L}_nG+\frac{\partial G}{\partial t}\leq 0$. Let the first exit times of $X^{(n)}$ from the set $\{y\in \ell^{2}_{\rho}(\mathbb{Z}^+);\Vert y \Vert_{2,\rho}^2<m\}$ be defined as $\iota^n_m:=\inf\limits_{t}\{t\geq0;\Vert \tilde{X}^{(n)}(t)\Vert_{2,\rho}^2\geq m\}$. Hence, for $\iota^n_m(t):=min(\iota^n_m,t)$, we have 
          \begin{equation*}
              \mathbb{E}\{W(\tilde{X}^{(n)}(\iota^n_m(t)))e^{c\iota^n_m(t)}\}-\mathbb{E}W(\tilde{X}^{(n)}(0))=\mathbb{E}\int_0^{\iota^n_m(t)}\mathscr{L}_nG(u,\tilde{X}^{(n)}(u))+\frac{\partial G}{\partial t}(u,\tilde{X}^{(n)}(u))du\leq 0.
          \end{equation*}
          This, together with the inequalities $\iota^n_m(t)\leq t,W\geq 0,$ implies
          \begin{equation*}
              \mathbb{E}W(\tilde{X}^{(n)})(\iota^n_m(t)))\leq e^{ct}\mathbb{E}W(\tilde{X}^{(n)}(0)).
          \end{equation*}
          Since we know that $\iota^n_m(t)\rightarrow t,a.s.$ see Khasminskii \cite{khasminskii2011stochastic}, letting $m\rightarrow\infty$, by Fatou's lemma, we now get
          \begin{equation*}
              \mathbb{E}W(\tilde{X}^{(n)}(t))\leq e^{ct}\mathbb{E}W(\tilde{X}^{(n)}(0)).
          \end{equation*}
          Then the desired result follows.
        \end{proof}
        \begin{remark}
            The constants $C(T)$, $C(T,p)$, and $N_0(t,\delta)$ obtained in Lemma \ref{ineqXn} are independent of the dimension of the finite dimensional approximation scheme.
        \end{remark}
        For convenience in the narration, we present the following lemma.
    \begin{lemma}
    \label{inequalityofP}
        For arbitrary $\epsilon>0$ and $t,\delta\in (0,\infty)\bigcap\mathbb{Q}$, there exists a constant $N(t,\delta,\epsilon)\in\mathbb{N}$ such that 
        \begin{equation*}
                \sup\limits_{n}\mathbb{P}\left( \sum\limits_{i=N(t,\delta,\epsilon)+1}^\infty \vert\tilde{X}_i^{(n)}(t) \vert^2 \rho(i)\geq\delta \right)\leq \frac{\epsilon}{2^{m(t)+m(\delta)}},
        \end{equation*}
        where $m$ is a bijection between the rational numbers in $(0,+\infty)$ and the natural numbers $\mathbb{N}$.
    \end{lemma}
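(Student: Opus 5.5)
The plan is to combine Markov's inequality with the uniform tail estimate (\ref{ineq2}) of Lemma \ref{ineqXn}; the dimension-independence of $N_0(t,\delta)$ noted in the Remark after that lemma is what makes the result uniform in $n$.

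Fix $\epsilon>0$ and rational $t,\delta>0$. Set the target probability bound
\[
\kappa:=\frac{\epsilon}{2^{m(t)+m(\delta)}}>0 ,
\]
which depends only on $(t,\delta,\epsilon)$, and the auxiliary tolerance $\delta':=\delta\kappa$. Apply (\ref{ineq2}) with tolerance $\delta'$ at the time $t$. This gives an integer $N_0(t,\delta')$ such that
\[
\sup_n \mathbb{E}\sum_{i=N_0(t,\delta')+1}^\infty |\tilde{X}^{(n)}_i(t)|^2\rho(i)<\delta'.
\]
Define $N(t,\delta,\epsilon):=N_0(t,\delta')$. It depends only on $t,\delta,\epsilon$ and the fixed initial datum $x$, not on $n$.

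Markov's inequality then gives, for every $n$,
\[
\mathbb{P}\Bigl(\sum_{i=N+1}^\infty |\tilde{X}^{(n)}_i(t)|^2\rho(i)\ge \delta\Bigr)\le \frac{1}{\delta}\,\mathbb{E}\sum_{i=N+1}^\infty |\tilde{X}^{(n)}_i(t)|^2\rho(i)<\frac{\delta'}{\delta}=\kappa .
\]
Taking the supremum over $n$ yields the claimed bound.

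There is no real obstacle here. The only points to check are that (\ref{ineq2}) is valid for every tolerance $\delta'>0$, which it is because it is stated for arbitrary $\delta$ and the tail sum of the initial datum can be made arbitrarily small, and that the weights $2^{-m(t)-m(\delta)}$ are fixed positive numbers once $t,\delta$ are fixed. Restricting $t,\delta$ to rationals and using the bijection $m$ only prepares a countable union bound in the later tightness argument, where the total exceptional probability will be at most $\epsilon\bigl(\sum_k 2^{-k}\bigr)^2$.
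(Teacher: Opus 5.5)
Your proof is correct and is essentially the paper's argument: apply (\ref{ineq2}) with tolerance $\delta\epsilon/2^{m(t)+m(\delta)}$ to obtain an $n$-independent $N(t,\delta,\epsilon)$, then conclude by Chebyshev/Markov. The only cosmetic difference is that the paper writes the expectation bound with $(\vert\tilde{X}_i^{(n)}(t)\vert^2+1)\rho(i)$ inside the tail sum, which is what the proof of (\ref{ineq2}) actually controls; this changes nothing in the argument.
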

    \begin{proof}
    By (\ref{ineq2}), we can deduce that, for arbitrary $\epsilon,t,\delta$, there exists a constant $N(t,\delta,\epsilon)$ such that
    \begin{equation}
    \label{estimateofENinfity}
        \sup\limits_{n}\mathbb{E} \left[\sum\limits_{i=N(t,\delta,\epsilon)+1}^\infty (\vert\tilde{X}_i^{(n)}(t) \vert^2 +1)\rho(i)\right]\leq \frac{\delta\epsilon}{2^{m(t)+m(\delta)}}.
    \end{equation}
    Then, by Chebyshev's inequality and (\ref{estimateofENinfity}), we can obtain 
    \begin{equation*}
        \sup\limits_{n}\mathbb{P}\left( \sum\limits_{i=N(t,\delta,\epsilon)+1}^\infty \vert\tilde{X}_i^{(n)}(t) \vert^2 \rho(i)\geq \delta \right)\leq\sup\limits_{n} \frac{1}{\delta}\mathbb{E}\sum\limits_{i=N(t,\delta,\epsilon)+1}^\infty (\vert\tilde{X}_i^{(n)}(t) \vert^2 +1)\rho(i)\leq \frac{\epsilon}{2^{m(t)+m(\delta)}}.
    \end{equation*}
    \end{proof}
    
	\begin{corollary}
		\label{tightness}
		Let $x\in\ell_\rho^{2\theta}(\mathbb{Z}^+)$ and $\tilde{X}^{(n)}=\left(X^{(n)},0_{i\in\left\{\mathbb{Z}^+\backslash \Lambda_n\right\}}\right)$ where $X^{(n)}$ is the solution of SDE (\ref{CUTSDE}). Then $\tilde{P}_n\coloneqq \mathbb{P}\circ (\tilde{X}^{(n)})^{-1},n\geq 1$ is a tight sequence of measures in $\Omega_{Spin}$.
	\end{corollary}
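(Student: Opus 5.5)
We prove tightness by exhibiting, for each $\epsilon>0$, a compact set $\mathcal{K}_\epsilon\subset\Omega_{Spin}$ with $\sup_n\tilde{P}_n(\mathcal{K}_\epsilon^c)\leq\epsilon$. Compactness will come from Theorem \ref{AAthm} with $Y=\ell_\rho^2(\mathbb{Z}^+)$ and the dense set $D=[0,\infty)\cap\mathbb{Q}$. We therefore need two ingredients, each holding uniformly in $n$ outside a set of probability at most $\epsilon/2$: a uniform modulus of continuity on every $[0,T]$, and, for each $t\in D$, membership of $\tilde X^{(n)}(t)$ in a fixed precompact subset of $\ell^2_\rho(\mathbb{Z}^+)$ as characterized by Lemma \ref{Precompactcondition}.

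\emph{Equicontinuity.} By (\ref{ineq1}) we have $\sup_n\mathbb{E}\Vert\tilde X^{(n)}(t)-\tilde X^{(n)}(s)\Vert_{2,\rho}^4\leq C(T)(1+\Vert x\Vert^{4\theta}_{2\theta,\rho})|t-s|^2$. This is the hypothesis of Theorem \ref{Kolmogorovthm} with $p=4$ and $\epsilon=1$. To make the conclusion quantitative, we use the standard proof of the Kolmogorov–Chentsov criterion: the Hölder constant of exponent $\beta<1/4$ on $[0,T]$ has a $4$th moment bounded uniformly in $n$. Chebyshev's inequality then gives an $L_T$ with
\[
\sup_n\mathbb{P}\Big(\sup_{s\neq t\le T}\Vert\tilde X^{(n)}(t)-\tilde X^{(n)}(s)\Vert_{2,\rho}/|t-s|^\beta>L_T\Big)\leq\epsilon 2^{-T-2}.
\]
Taking $T\in\mathbb{N}$ and summing over $T$ yields a single equicontinuous family of paths $E_\epsilon$ with $\sup_n\tilde P_n(E_\epsilon^c)\le\epsilon/4$. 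We expect this step to be the main point needing care, since Theorem \ref{Kolmogorovthm} as stated only gives almost-sure equicontinuity. The event must instead be uniform in $n$ with small probability, and this uniform Hölder-moment bound is what delivers it.

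\emph{Pointwise precompactness.} For $t\in D$ we need boundedness and uniform tails. For boundedness, (\ref{ineq4}) with $p=2$ and Chebyshev give $R(t)$ with $\sup_n\mathbb{P}(\Vert\tilde X^{(n)}(t)\Vert_{2,\rho}>R(t))\le\epsilon 2^{-m(t)-2}$. For the tails, for each rational $\delta$ Lemma \ref{inequalityofP}, applied with $\epsilon/4$ in place of $\epsilon$, gives $N(t,\delta,\epsilon)$ such that the tail $\sum_{i>N}|\tilde X^{(n)}_i(t)|^2\rho(i)$ is at least $\delta$ with probability at most $\epsilon 2^{-m(t)-m(\delta)-2}$. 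Define
\[
\mathcal{K}_\epsilon=\bigl\{\omega\in E_\epsilon:\ \forall t\in D,\ \Vert\omega(t)\Vert_{2,\rho}\le R(t),\ \forall\delta\in\mathbb{Q}^+,\ \textstyle\sum_{i>N(t,\delta,\epsilon)}|\omega_i(t)|^2\rho(i)<\delta\bigr\}.
\]
A union bound over the countably many pairs $(t,\delta)$ uses $\sum_{t,\delta}2^{-m(t)-m(\delta)}\le 1$, which holds because $m$ is a bijection onto $\mathbb{N}$. This gives $\sup_n\tilde P_n(\mathcal{K}_\epsilon^c)\le\epsilon$.

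\emph{Compactness of $\mathcal{K}_\epsilon$.} For each $t\in D$, the set $\{\omega(t):\omega\in\mathcal{K}_\epsilon\}$ is precompact in $\ell^2_\rho(\mathbb{Z}^+)$ by Lemma \ref{Precompactcondition}. Together with the equicontinuity of $E_\epsilon$, Theorem \ref{AAthm} shows that $\mathcal{K}_\epsilon$ is precompact in $\Omega_{Spin}$. Its closure is compact, and the defining conditions are closed: the Hölder bound, the norm bound and the tail bound are each preserved under locally uniform limits, with the strict tail inequality replaced by $\le\delta$, which is harmless. Hence $\overline{\mathcal{K}_\epsilon}$ is a compact set with $\sup_n\tilde P_n(\overline{\mathcal{K}_\epsilon}^{\,c})\le\epsilon$, and this proves tightness.
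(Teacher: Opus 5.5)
Your proof is correct and follows the same architecture as the paper's: Kolmogorov's criterion via (\ref{ineq1}) for equicontinuity, Lemma \ref{inequalityofP} with the weights $2^{-m(t)-m(\delta)}$ and a union bound for uniform tails, and Theorem \ref{AAthm} together with Lemma \ref{Precompactcondition} for compactness. It differs from the paper in two places, and both differences repair steps the paper leaves unjustified.

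First, the paper goes from Theorem \ref{Kolmogorovthm}, which as stated only gives almost-sure equicontinuity, directly to the uniform-in-$n$ statement (\ref{T1}), and then chooses $h_k$. Your uniform bound on the $4$th moment of the $\beta$-H\"older constant is exactly what makes that passage rigorous.

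Second, and more importantly, the paper's set $K_\epsilon$ imposes only the modulus-of-continuity condition and the tail conditions $\sum_{i>N(t,\delta,\epsilon)}|\omega_i(t)|^2\rho(i)<\delta$. Nothing bounds the low coordinates. Since $N(t,\delta,\epsilon)\ge 1$, every constant path $\omega\equiv c\,e^1$, $c\in\mathbb{R}$, lies in the paper's $K_\epsilon$. So that set is not precompact, because Lemma \ref{Precompactcondition} also requires boundedness. Your extra condition $\Vert\omega(t)\Vert_{2,\rho}\le R(t)$, obtained from (\ref{ineq4}) and Chebyshev, supplies this. An alternative repair would be to pin down $\omega(0)$ in the compact set $\{\chi_n\Pi_n x\}_n\cup\{x\}$ and let the uniform modulus propagate the bound to $[0,T]$.

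Two cosmetic points:
\begin{itemize}
\item The bijection $m$ is only defined on $(0,\infty)\cap\mathbb{Q}$, so take $D=(0,\infty)\cap\mathbb{Q}$, which is still dense.
\item Your closedness discussion at the end is unnecessary. The inclusion $\overline{\mathcal{K}_\epsilon}\supseteq\mathcal{K}_\epsilon$ already gives $\inf_n\tilde P_n(\overline{\mathcal{K}_\epsilon})\ge 1-\epsilon$.
\end{itemize}
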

	
	\begin{proof}
		The estimate (\ref{ineq1}) implies, according to Theorem \ref{Kolmogorovthm}, that the equicontinuity condition is satisfied, that is to say, for arbitrary $\beta>0$ and $T>0$
		\begin{equation}
			\label{T1}
			\lim\limits_{h\downarrow0}\sup\limits_{n}\mathbb{P}\left(\sup\limits_{s,t\in[0,T],\vert t-s \vert \leq h}\Vert \tilde{X}^{(n)}(t)-\tilde{X}^{(n)}(s)\Vert_{2,\rho}^2 > \beta \right)=0.
		\end{equation}
        According to (\ref{T1}), we can deduce that for arbitrary $\epsilon>0$ and $k\in\mathbb{N}$, there exists $h_k\downarrow0$, for any $n\geq1$
        \begin{equation*}
            \mathbb{P}\left(\sup\limits_{s,t\in[0,k],\vert t-s \vert \leq h_k}\Vert \tilde{X}^{(n)}(t)-\tilde{X}^{(n)}(s)\Vert_{2,\rho}^2 > \frac{1}{k}\right)\leq \frac{\epsilon}{2^{k+1}}.
        \end{equation*}
        Hence, we obtain 
        \begin{equation*}
            \mathbb{P}\left(\bigcap\limits_{k=1}^{\infty}\left\{\sup\limits_{s,t\in[0,k],\vert t-s \vert \leq h_k}\Vert \tilde{X}^{(n)}(t)-\tilde{X}^{(n)}(s)\Vert_{2,\rho}^2 \leq \frac{1}{k}\right\}\right)>1-\frac{\epsilon}{2}.
        \end{equation*}
		Moreover, set
        \begin{equation*}
            A_n:=\bigcap\limits_{\delta\in(0,\infty)\bigcap\mathbb{Q}}\bigcap\limits_{t\in(0,\infty)\bigcap\mathbb{Q}}\left\{\sum\limits_{i=N(t,\delta,\epsilon)+1}^{\infty}\vert \tilde{X}^{(n)}_i(t)\vert^2\rho(i)<\delta\right\},
        \end{equation*}
        where $N(t,\delta,\epsilon)$ as in lemma \ref{inequalityofP}. Then we can deduce
        \begin{align}
            &\inf\limits_n \mathbb{P}\left(\forall \delta \in (0,+\infty)\bigcap \mathbb{Q},\forall t \in (0,+\infty)\bigcap \mathbb{Q}, \sum\limits_{i=N(t,\delta,\epsilon)+1}^{+\infty}\vert\tilde{X}_i^{(n)}(t) \vert^2 \rho(i)<\delta \right)\nonumber\\
            &=\inf\limits_n \mathbb{P}(A_n)=1-\sup\limits_n \mathbb{P}(A_n^c)\nonumber\\
            &=1-\sup\limits_n \mathbb{P}\left(\bigcup\limits_{\delta\in(0,\infty)\bigcap\mathbb{Q}}\bigcup\limits_{t\in(0,\infty)\bigcap\mathbb{Q}}\left\{\sum\limits_{i=N(t,\delta,\epsilon)+1}^{\infty}\vert \tilde{X}^{(n)}_i(t)\vert^2\rho(i)\geq\delta\right\}\right)\nonumber\\
            &\geq1-\sum\limits_{\delta\in(0,\infty)\bigcap\mathbb{Q}}\sum\limits_{t\in(0,\infty)\bigcap\mathbb{Q}}\sup\limits_n \mathbb{P}\left(\sum\limits_{i=N(t,\delta,\epsilon)+1}^{\infty}\vert\tilde{X}^{(n)}_i(t)\vert^2\rho(i)\geq\delta\right)\nonumber\\
            &\geq 1-\sum\limits_{\delta\in(0,\infty)\bigcap\mathbb{Q}}\sum\limits_{t\in(0,\infty)\bigcap\mathbb{Q}}\frac{\epsilon}{2^{m(t)+m(\delta)}}>1-\frac{\epsilon}{2}.\label{T2}
        \end{align}
		Set
		\begin{equation*}
			\begin{split}
				K_\epsilon&\coloneqq  \left\{\bigcap_{k=1}^\infty\left\{ \omega :\sup\limits_{s,t\in[0,k],\vert t-s \vert \leq h_k}\Vert \omega(t)-\omega(s)\Vert_{2,\rho}^2\leq \frac{1}{k} \right\}\right\} \\
				&\quad \bigcap \left\{\forall \delta \in (0,+\infty)\bigcap \mathbb{Q},\forall t \in (0,+\infty)\bigcap \mathbb{Q}, \sum\limits_{i=N(t,\delta,\epsilon)+1}^{+\infty}\vert\omega_i(t) \vert^2 \rho(i)<\delta  \right\}
			\end{split}	
		\end{equation*}
		By Lemma \ref{Precompactcondition} and Theorem \ref{AAthm}, we obtain that the set $K_\epsilon$ is precompact in $\Omega_{Spin}=C([0,\infty),$ $\ell_\rho^{2}(\mathbb{Z}^+))$. In addition, (\ref{T1}) and (\ref{T2}) yield
		$$
		\inf\limits_{n}\tilde{P}_n(K_\epsilon)\geq 1-\epsilon.
		$$
		By definition, $\left\{\tilde{P}_n\right\}$ is tight.
	\end{proof}
	
	%%%%%%%%%%%%%%%%%%%%%%%%%%%%%%%%%%%%%%%%%%%%%%
	%%%%%%%%%%%%%%%%%%%%%%%%%%%%%%%%%%%%%%%%%%%%%%%%%%%%%%%%%%%%%%%%
	%%%%%%%%%%%%%%%%%%%%%%%%%%%%%%%%%%%%%%%%%%%%%%%%%%%%%%%%%%%%%%%%%%%%%%%%%%%%%
    
	\subsection{Existence of solutions of the spin system in infinite lattice}
	In this subsection, we examine the existence of solutions to equation (\ref{INFSDE2}). For $x\in  \ell_\rho^{2\theta}(\mathbb{Z}^+)$, we will show that the weak limit $\mathbf{P}^x$ of the sequence $P_n=\left\{\mathbb{P} \circ\left(\tilde{X}^{(n)}\right)^{-1}\right\}$  serves as a solution to the martingale problem defined by equation (\ref{INFSDE2}) with the initial law $\delta_x$.
	
	For the sake of clarity, we establish the following symbols before presenting the main conclusions and their proofs. Recall that $X(t,\omega)=\omega(t)$ denotes the canonical process on the measurable space $(\Omega_{Spin},\mathscr{B}^{Spin})$ with filtration $\{\mathscr{B}^{Spin}_t\}$. We further introduce spaces $\Omega_n=C\left([0, \infty),\mathbb{R}^{\Lambda_n}\right)$, $X_n(t,\omega_n)=\omega_n(t)$, $\omega_n\in\Omega_n$ the canonical process on $\Omega_n$ and the embedding mappings
    \begin{align}
        & \chi_n:\mathbb{R}^{\Lambda_n} \rightarrow \mathbb{R}^{\mathbb{Z}^+}, \quad \chi_n\left(x_1, \ldots, x_n\right)=\left(x_1, \ldots, x_n, 0_{i \in \mathbb{Z}^+ \backslash \Lambda_n}\right), \nonumber\\
		& \psi_n: \Omega_n \rightarrow \Omega, \quad \psi_n(\omega_n)= \left(\omega_n(\cdot), 0_{i \in \mathbb{Z}^+ \backslash \Lambda_n}\right).\label{chidefinition}
    \end{align}
	For a given $x \in \ell_\rho^{2\theta}(\mathbb{Z}^+)$, we denote $X^{(n), x}$ and $\tilde{X}^{(n), x}$ the processes constructed in the previous section to accentuate their dependence on $x$, i.e. $X^{(n), x}$ is the solution to SDE (\ref{CUTSDE}) with $ X^{(n),x}(0)=\Pi_{\Lambda_n}(x)$ and $\tilde{X}^{(n),x}=\left(X^{(n),x}, 0_{i \in \mathbb{Z}^+ \backslash \Lambda_n}\right)$. To simplify the notation, we denote $P_n^x=\mathbb{P} \circ\left(X^{(n), x}\right)^{-1}$ on $\Omega_n$ and $\tilde{P}_n^x=\mathbb{P} \circ(\tilde{X}^{(n), x})^{-1}$ on $\Omega_{Spin}$, the corresponding expectations will then be denoted by $E_n^x$ and $\tilde{E}_n^x$, respectively. By virtue of the tightness of $\{\tilde{P}^x_n\}$ in $\Omega_{Spin}$ established in Corollary \ref{tightness}, we may extract a weakly convergent subsequence $\{\tilde{P}^x_{n_k}\}$ (by Prokhorov's theorem, see Da Prato and Zabczyk \cite[Theorem 2.3]{stochasticequationsininfinitedimensionsdaza2014}). For simplicity, we retain the original indexing and write the convergent subsequence simply as $\{\tilde{P}^x_n\}$. Denoting its weak limit by $\mathbf{P}^x$, we have: $\tilde{P}^x_{n}\stackrel{\text{w}}{\rightarrow} \mathbf{P}^x$ weakly as $k\rightarrow\infty$, where the convergence is in the topology of weak convergence of probability measures on $\Omega_{Spin}$. We denote by $\mathbf{E}^x$  the corresponding expectation with respect to $\mathbf{P}^x$.  We observe that $\tilde{P}_n^x=P_n^x \circ \psi_n^{-1}$. Indeed, for any measurable set $C \in \mathcal{F}^{Spin}$, we have
	$$
	\tilde{P}_n^x(C)=\mathbb{P}\left(\tilde{X}^{(n), x}(\cdot) \in C\right) =\mathbb{P}\left(\psi_n\left(X^{(n), x}\right) \in C\right) 
	=P_n^x \circ \psi_n^{-1}(C).
	$$
	Employing the notational framework introduced above and the characterization of the solution developed in Section 3.1, we are positioned to formulate the fundamental existence theorem.
	\begin{theorem}
		\label{existencesolution}
		Let $x\in  \ell_\rho^{2\theta}(\mathbb{Z}^+)$. Then there exists a probability measure $\mathbf{P}^x$ on $(\Omega_{Spin},\mathscr{B}^{Spin})$ which is a solution to the martingale problem associated to equation (\ref{INFSDE2}) with the initial law $\delta_x$.
		As a result, a weak martingale solution to  SDE (\ref{INFSDE2}) is available.
	\end{theorem}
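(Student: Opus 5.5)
We take $\mathbf{P}^x$ to be the weak limit of the subsequence $\tilde{P}^x_n$ from Corollary \ref{tightness} and check the three conditions of Definition \ref{solutiontothemartingaleproblemdef}. Theorem \ref{eqalweakandmartingale} then gives the weak martingale solution.

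\textbf{Initial law.} Condition (iii) is immediate. The map $\omega\mapsto\omega(0)$ is continuous on $\Omega_{Spin}$, and $\tilde{P}^x_n\circ X(0)^{-1}=\delta_{\chi_n\Pi_{\Lambda_n}x}$. Since $\chi_n\Pi_{\Lambda_n}x\to x$ in $\ell^2_\rho(\mathbb{Z}^+)$, we get $\mathbf{P}^x\circ X(0)^{-1}=\delta_x$.

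\textbf{Condition (i).} We use (\ref{ineq3}). The functional $\omega\mapsto\sup_{t\le T}\Vert\omega(t)\Vert_{2\theta,\rho}\wedge L$ is lower semicontinuous on $\Omega_{Spin}$. Indeed, $\Vert\cdot\Vert_{2\theta,\rho}$ is a supremum of finite partial sums, each continuous in the $\ell^2_\rho$ topology, and the measurability of the relevant set is Lemma \ref{borelmeasurablesetinomega}. By the portmanteau theorem and monotone convergence in $L$,
\begin{equation*}
\mathbf{E}^x\sup_{t\le T}\Vert X(t)\Vert_{2\theta,\rho}^p\le\liminf_n\tilde{E}^x_n\sup_{t\le T}\Vert X(t)\Vert_{2\theta,\rho}^p\le C(T,p)(1+\Vert x\Vert^p_{2\theta,\rho}).
\end{equation*}
This gives (i), together with uniform moment bounds that we will need below.

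\textbf{Condition (ii).} Fix $h=g\circ\Pi_\Lambda\in C_c^{2,Cyl}$ with $\Lambda\subset\Lambda_{n_0}$, times $0\le s<t$, and a bounded continuous $\mathscr{B}^{Spin}_s$-measurable $\Phi$ depending on $\omega$ only through finitely many times and coordinates. It suffices to show
\begin{equation*}
\mathbf{E}^x\Big[\Big(h(X(t))-h(X(s))-\int_s^t\mathscr{L}h(X(r))dr\Big)\Phi\Big]=0.
\end{equation*}
For $n>n_0+1$ the process $\tilde{X}^{(n)}$ solves (\ref{CUTSDE}). On the coordinates near $\Lambda$ its generator $\mathscr{L}_n$ agrees with $\mathscr{L}$, since $h$ only involves coordinates in $\Lambda$ and $\mathscr{L}h$ only involves coordinates in $\Lambda$ and their neighbours. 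Itô's formula then gives the identity with $\tilde{E}^x_n$ in place of $\mathbf{E}^x$, because the stochastic integral $\int\partial_1 g\,dW_1$ has a bounded integrand. We then pass $n\to\infty$.

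\textbf{Main obstacle.} The only delicate step is this limit, since $\mathscr{L}h(\omega)$ contains $f(\omega_i)\partial_ig$ and $a_{i,i\pm1}\omega_{i\pm1}\partial_ig$. The terms with $\partial_i g$ are compactly supported in $\omega_\Lambda$, so $f(\omega_i)\partial_i g$ is bounded. The neighbour terms $\omega_{i\pm1}\partial_i g$ are continuous but may be unbounded, because the neighbour coordinate is not controlled by the support of $g$. Hence the functional $\omega\mapsto\Phi(\omega)\int_s^t\mathscr{L}h(\omega(r))dr$ is continuous on $\Omega_{Spin}$, since it depends on finitely many coordinates continuously and uniformly on compacts, but it is only of linear growth. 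To handle this we truncate: we replace it by its composition with $\phi_L(z)=(z\wedge L)\vee(-L)$. The truncated functional is bounded and continuous, so weak convergence applies to it. The truncation error is controlled uniformly in $n$ and under $\mathbf{P}^x$ by Chebyshev's inequality together with the uniform second moments of $\sup_{r\le t}|X_{i\pm1}(r)|$ from (\ref{ineq4}) and the previous step. Letting $n\to\infty$ and then $L\to\infty$ proves the martingale identity. A monotone class argument extends it to all bounded $\mathscr{B}^{Spin}_s$-measurable $\Phi$, so $M^h$ is a continuous martingale under $\mathbf{P}^x$.

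Finally, Theorem \ref{eqalweakandmartingale} yields a weak martingale solution. The remaining requirement there, $X\in L^\infty(0,T;\ell^{2\theta}_\rho)$ almost surely, is exactly condition (i).
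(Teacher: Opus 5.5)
Your proposal is correct and follows the same route as the paper. You take the weak limit $\mathbf{P}^x$ of the tight approximations, pass the moment bound (\ref{ineq3}) to the limit for condition (i), get (iii) from convergence of the initial data, and obtain (ii) by carrying the finite-dimensional martingale identity through weak convergence. Your truncation argument with uniform second moments fills a step the paper skips: it passes to the limit in (\ref{limitExn}) by weak convergence alone, without noting that $\mathscr{L}h$ is unbounded in the neighbouring coordinates.
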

	\begin{proof}
		Having defined the probability measure $\mathbf{P}^x$ in the preceding construction, we now aim to demonstrate that $\mathbf{P}^x$ constitutes a solution to the martingale problem in the sense of Definition \ref{solutiontothemartingaleproblemdef}. 
        
        Step 1. For arbitrary but fixed parameters $R,N>0$, define the functional
        \begin{equation*}
            h_{N,R}(\omega)=\left[\sup\limits_{t\in[0,T]}\sum\limits_{i=1}^{N}\vert \omega_i(t)\vert^{2\theta}\rho(i)\right]\wedge R ,\quad \omega\in\Omega_{Spin}
        \end{equation*}
        which clearly belongs to $C_b(\Omega_{Spin})$. Under the weak convergence $\tilde{P}^x_{n}\stackrel{\text{w}}{\rightarrow} \mathbf{P}^x$, it follows that
        \begin{equation*}
            \lim\limits_{n\rightarrow\infty}\mathbb{E}h_{N,R}(\tilde{X}^{(n),x})=\lim\limits_{n\rightarrow\infty}\tilde{E}_n^x h_{N,R}(X)=\mathbf{E}^xh_{N,R}(X).
        \end{equation*}
        From (\ref{ineq3}) and
        \begin{equation*}
            h_{N,R}(X)\leq \sup\limits_{t\in[0,T]}\Vert X(t) \Vert_{2\theta,\rho}^{2\theta},
        \end{equation*}
        we obtain 
        \begin{equation*}
            \mathbf{E}^xh_{N,R}(X)\leq \lim\limits_{n\rightarrow\infty}\mathbb{E}\sup\limits_{t\in[0,T]}\Vert \tilde{X}^{(n),x}(t)\Vert_{2\theta,\rho}^{2\theta}\leq  C(T,2\theta)(1+\Vert x \Vert_{2\theta,\rho}^{2\theta}).
        \end{equation*}
        By additionally employing the monotone convergence theorem, we derive
        \begin{equation*}
            \mathbf{E}^x\sup\limits_{t\in[0,T]}\Vert X(t)\Vert_{2\theta,\rho}^{2\theta}\leq C(T,2\theta)(1+\Vert x \Vert_{2\theta,\rho}^{2\theta}).
        \end{equation*}
        Moreover, an application of Chebyshev's inequality yields
        \begin{equation*}
            \mathbf{P}^x\left(\sup\limits_{t\in[0,T]}\Vert X(t)\Vert_{2\theta,\rho}<\infty\right)=1,
        \end{equation*}
        which means $\mathbf{P}^x$ satisfies condition (i) of Definition \ref{solutiontothemartingaleproblemdef}.
        
		Step 2. Let $h \in C_c^{2, C y l}(\ell_\rho^{2}(\mathbb{Z}^+))$ be given. To prove that 
        \begin{equation*}
            M^h(t)=h(X(t))-h(X(0))-\int_0^t\mathscr{L}h(X(s))ds
        \end{equation*}
        is a martingale, i.e.
		$$
		\mathbf{E}^x\left[\left(h\left(X(t)\right)-h\left(X(s)\right)-\int_s^t \mathscr{L} h\left(X(u)\right) d u\right) \mid \mathcal{F}^{Spin}_s \right]=0 ,
		$$
		it suffices by  Lemma 3.1 of Hofmanov{\'a} and Seidler  \cite{hofmanova2012weak} to prove for any $G \in C(\bar{\Omega}_s,[0,1])$, where $\bar{\Omega}_s=C([0, s],\ell_\rho^{2}(\mathbb{Z}^+))$, $s<t$, that 
		\begin{equation}
			\label{SMPProof1}
			\mathbf{E}^x\left[\left(h\left(X(t)\right)-h\left(X(s)\right)-\int_s^t \mathscr{L} h\left(X(u)\right) d u\right) G(\varrho_s (X)))\right]=0,
		\end{equation} 	
		where $\varrho_s:\Omega^{Spin}\rightarrow\bar{\Omega}_s$ denotes the restriction to the interval $[0,s]$, i.e. $\varrho_s(X)=(X(t))_{t\in[0,s]}$. By weak convergence $\tilde{P}_n^x \stackrel{\text{w}}{\rightarrow} \mathbf{P}^x$, the formula in (\ref{SMPProof1}) is a limit of
		\begin{equation}
			\label{limitExn}
			\tilde{E}_n^x\left[\left(h\left(X(t)\right)-h\left(X(s)\right)-\int_s^t\mathscr{L} h\left(X(u)\right) d u\right) G(\varrho_s(X))\right].
		\end{equation}	
		Since $h \in C_c^{2, C y l}(\ell_\rho^{2}(\mathbb{Z}^+))$, the operator $\mathscr{L}$ acting on $h$ in fact reduces to $\mathscr{L}^h$, i.e. the operator
		$$
		\mathscr{L}^hu(x)=\sum_{i\in \Lambda}(a_{i,i-1}x_{i-1}+a_{i,i+1} x_{i+1}+f(x_i))\partial_{x_i}u(x)+ \frac{1}{2} \frac{\partial^2}{\partial_{x_1^2}}u(x),
		$$   	
		  where $\Lambda$ is the finite subset corresponding to $h$ in Definition \ref{clydinderfunctiononell2}. Consider that for $n$ large enough such that $\Lambda\subset \Lambda_n$ and hence $\mathscr{L}^h$ equals to $\mathscr{L}_n$ on $\Lambda$, where $\mathscr{L}_n$ is the operator corresponding to the cutoff SDEs (\ref{CUTSDE}). Consequently, by performing explicit calculations on (\ref{limitExn}), we derive
          \begin{equation*}
              \begin{split}
                  &\tilde{E}_n^x\left[\left(h\left(X(t)\right)-h\left(X(s)\right)-\int_s^t\mathscr{L} h\left(X(u)\right) d u\right) G(\varrho_s(X))\right]\\
                  &=\mathbb{E}\left[\left(h\left(\tilde{X}^{(n),x}(t)\right)-h\left(\tilde{X}^{(n),x}(s)\right)-\int_s^t\mathscr{L}^h h\left(\tilde{X}^{(n),x}\right) d u\right) G(\varrho_s(\tilde{X}^{(n),x}))\right]\\
                  &=\mathbb{E}\left[\left(h\circ\chi_n\left(X^{(n),x}(t)\right)-h\circ\chi_n\left(X^{(n),x}(t)(s)\right)-\int_s^t\mathscr{L}_n h\circ\chi_n\left(X^{(n),x}(u)\right) d u\right)\right.\\ 
                  &\quad\quad\quad \left. G(\varrho_s\circ\psi_n(X^{(n),x}))\right]\\
                  &=E_n^x\left[\left(h\circ\chi_n\left(X_n(t)\right)-h\circ\chi_n\left(X_n(t)(s)\right)-\int_s^t\mathscr{L}_n h\circ\chi_n\left(X_n(u)\right) d u\right) G(\varrho_s\circ\psi_n(X_n)\right]
              \end{split}
          \end{equation*}
		Since we know that $P_n^x$ solves the martingale problem for $\mathscr{L}_n$ on $\Omega_n$, this expression equals to zero and so does  (\ref{SMPProof1}). Our analysis demonstrates that the probability measure $\mathbf{P}^x$ fulfills condition (ii) specified in Definition \ref{solutiontothemartingaleproblemdef}.

        Step 3. With the aid of Portmanteau theorem
		\begin{equation*}
		    \begin{split}
		        \mathbf{P}^x\left(\Vert X(0)-x\Vert_{2\theta,\rho}=0\right)&= \mathbf{P}^x\left(\Vert X(0)-x\Vert_{2,\rho}=0\right)\geq1-\sum_k \mathbf{P}^x\left(\left\Vert X(0)-x\right\Vert_{2,\rho}>\frac{1}{k}\right) \\
			&\geq 1-\sum_k \liminf _n \mathbb{P}\left(\left\Vert \tilde{X}^{n, x}(0)-x\right\Vert_{2,\rho}>\frac{1}{k}\right)
		    \end{split}
		\end{equation*}
		since by construction $\liminf _n \mathbb{P}\left(\left\Vert\tilde{X}^{n, x}(0)-x\right\Vert_{2,\rho}>1 / k\right)=0$ for each $k\in\mathbb{N}$, we see that condition (iii) of Definition \ref{solutiontothemartingaleproblemdef} is indeed satisfied. In summary, $\mathbf{P}^x$ constitutes a solution to the martingale problem associated to (\ref{INFSDE2}) with initial law $\delta_x$. Moreover, by Theorem \ref{eqalweakandmartingale}, the existence of weak solutions to the equation (\ref{INFSDE2}) is guaranteed.
	\end{proof}
	  When the initial law is given by a general probability measure $\mu$ on $\ell_\rho^{2\theta}(\mathbb{Z}^+)$, suitable conditions must be imposed on $\mu$. By modifying our previous analysis accordingly, we obtain analogous existence results for solutions to the martingale problem associated to (\ref{INFSDE2}) with initial law $\mu$ and weak martingale solutions of equation (\ref{INFSDE2}) with the initial condition $\mu$. For conciseness, we present the theorem below while omitting the repetitive technical details.
      \begin{theorem}
          Let $\mu$ be a probability measure on $\ell_\rho^{2\theta}(\mathbb{Z}^+)$ such that $\int_{\ell_\rho^{2\theta}(\mathbb{Z}^+)}\Vert x\Vert_{2\theta,\rho}^{4\theta}\mu(dx)<\infty$. Then there exists at least one solution to the martingale problem defined by (\ref{INFSDE2}) with the initial law $\mu$. Consequently, there exists at least one weak martingale solution to equation (\ref{INFSDE2}) with the initial condition $\mu$.
      \end{theorem}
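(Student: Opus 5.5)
We mimic the proof of Theorem \ref{existencesolution}, replacing the deterministic initial datum $x$ by a random initial condition $\xi$ with law $\mu$. On a probability space carrying $\xi$ and an independent Wiener process $W_1$, we let $X^{(n),\mu}$ be the solution of the cutoff system (\ref{CUTSDE}) with $X^{(n),\mu}(0)=\Pi_{\Lambda_n}\xi$, and we set $\tilde{X}^{(n),\mu}=(X^{(n),\mu},0_{i\in\mathbb{Z}^+\backslash\Lambda_n})$. Conditionally on $\xi=x$, this process has law $\tilde{P}^x_n$, so $\tilde{P}^\mu_n=\int\tilde{P}^x_n\,\mu(dx)$. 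The plan has three stages: prove uniform-in-$n$ estimates by integrating the bounds of Lemma \ref{ineqXn} against $\mu$, deduce tightness as in Corollary \ref{tightness}, and then identify a weak limit as a solution of the martingale problem exactly as before.

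\emph{Moment and continuity estimates.} Integrating (\ref{ineq3}) with $p=2\theta$ gives
\[
\sup_n\mathbb{E}\sup_{t\le T}\Vert\tilde{X}^{(n),\mu}(t)\Vert^{2\theta}_{2\theta,\rho}\le C\Big(1+\int\Vert x\Vert^{2\theta}_{2\theta,\rho}\,\mu(dx)\Big).
\]
Integrating (\ref{ineq1}) gives
\[
\sup_n\mathbb{E}\Vert\tilde{X}^{(n),\mu}(t)-\tilde{X}^{(n),\mu}(s)\Vert^4_{2,\rho}\le C(T)\Big(1+\int\Vert x\Vert^{4\theta}_{2\theta,\rho}\,\mu(dx)\Big)|t-s|^2 .
\]
The hypothesis $\int\Vert x\Vert^{4\theta}_{2\theta,\rho}\,\mu(dx)<\infty$ is what makes the right-hand side of the second bound finite. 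Theorem \ref{Kolmogorovthm} then yields equicontinuity in probability, i.e. the analogue of (\ref{T1}).

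\emph{Tail estimate.} The bound (\ref{ineq2}) must be replaced by a uniform tail estimate under $\mu$. In Step 3 of Lemma \ref{ineqXn} we have $\mathbb{E}W(\tilde{X}^{(n)}(t))\le e^{ct}\,\mathbb{E}W(\tilde{X}^{(n)}(0))$, and the weight $N_1$ can be chosen deterministically, so the same bound holds with random initial data:
\[
\mathbb{E}\sum_{i>N_0}|\tilde{X}^{(n),\mu}_i(t)|^2\rho(i)\le \frac{e^{ct}}{N_1}\int\sum_{i\le N_0}(|x_i|^2+1)\rho(i)\,\mu(dx)+e^{ct}\int\sum_{i>N_0}(|x_i|^2+1)\rho(i)\,\mu(dx).
\]
By dominated convergence, since $\int\Vert x\Vert^2_{2,\rho}\,d\mu<\infty$ (which follows from the $4\theta$-moment and H\"older with the summable weight), we first choose $N_0$ so that the second term is below $\delta/2$ and then $N_1$ so that the first term is too. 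This is the only genuinely new point, and it is where I expect the main (though mild) obstacle. Lemma \ref{inequalityofP} and Corollary \ref{tightness} then go through verbatim, so $\{\tilde{P}^\mu_n\}$ is tight in $\Omega_{Spin}$ and has a weakly convergent subsequence with limit $\mathbf{P}^\mu$.

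\emph{Identification of the limit.} Step 1 of Theorem \ref{existencesolution}, using the truncation $h_{N,R}$ and monotone convergence, gives $\mathbf{E}^\mu\sup_{t\le T}\Vert X(t)\Vert^{2\theta}_{2\theta,\rho}<\infty$, which is condition (i) of Definition \ref{solutiontothemartingaleproblemdef}. For condition (ii), for $h\in C^{2,Cyl}_c$ and $G\in C(\bar\Omega_s,[0,1])$ the functional in (\ref{limitExn}) is bounded and continuous on $\Omega_{Spin}$. Indeed, $\mathscr{L}h$ involves only finitely many coordinates, with continuous coefficients multiplied by the compactly supported $\partial_ih$. Its $\tilde{E}^\mu_n$-expectation vanishes for large $n$, because each $\tilde{P}^x_n$ solves the finite-dimensional martingale problem and we can integrate in $x$; passing to the limit gives (\ref{SMPProof1}). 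For condition (iii), we show $\mathbf{P}^\mu\circ X(0)^{-1}=\mu$. Note that $\Pi_{\Lambda_n}\xi$, padded by zeros, converges to $\xi$ in $\ell^2_\rho$ almost surely, so the laws at time $0$ converge weakly to $\mu$; since evaluation at $0$ is continuous, we conclude that $\mu$ is the law of $X(0)$ under $\mathbf{P}^\mu$. Finally, Theorem \ref{eqalweakandmartingale} converts $\mathbf{P}^\mu$ into a weak martingale solution with initial condition $\mu$.
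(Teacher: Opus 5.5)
Your strategy is the one the paper intends: it omits this proof as a repetition of Theorem \ref{existencesolution} with a random initial datum. You also correctly see that the $4\theta$-moment is exactly what is needed to integrate (\ref{ineq1}) against $\mu$. Two steps, however, do not work as written, although both can be repaired with estimates you already have.

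\emph{The tail estimate.} Choosing $N_0$ first and then letting $N_1\to\infty$ requires the rate $c$ in $\mathbb{E}W(\tilde{X}^{(n)}(t))\le e^{ct}\,\mathbb{E}W(\tilde{X}^{(n)}(0))$ to be independent of $N_1$, and it cannot be.

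In $\mathscr{L}_nW$ the coupling term $2a_{N_0+1,N_0}y_{N_0}y_{N_0+1}\rho(N_0+1)$ produces $M\rho(N_0+1)\vert y_{N_0}\vert^2$. But $W$ contains $\vert y_{N_0}\vert^2$ only with weight $\rho(N_0)/N_1$, so $\mathscr{L}_nW\le cW$ forces $c$ to grow with $N_1$. This is not an artefact of the estimate. Take $f(z)=-\lambda z$, $x=Re^{N_0}$ and $N_1=R^2$. Then $W(x)$ stays bounded in $R$, while $\mathbb{E}\tilde{X}^{(n)}_{N_0+1}(t)=Re^{-\lambda t}(e^{tA_n})_{N_0+1,N_0}\approx a_{N_0+1,N_0}Rt$ for small $t$, so $\mathbb{E}W(\tilde{X}^{(n)}(t))$ grows like $R^2$. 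With $e^{c(N_1)t}$ in front, your first term no longer vanishes as $N_1\to\infty$.

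Instead of reopening Step 3 of Lemma \ref{ineqXn}, use the statement (\ref{ineq2}) pointwise in $x$. The function $g_{N_0}(x):=\sup_n\mathbb{E}\sum_{i>N_0}\vert\tilde{X}^{(n),x}_i(t)\vert^2\rho(i)$ decreases to $0$ as $N_0\to\infty$ for every $x$. By (\ref{ineq4}) it is dominated by $C(t,2)(1+\Vert x\Vert^2_{2,\rho})$, which is $\mu$-integrable. Dominated convergence then gives $\sup_n\mathbb{E}\sum_{i>N_0}\vert\tilde{X}^{(n),\mu}_i(t)\vert^2\rho(i)\le\int g_{N_0}\,d\mu\to0$.

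To put (\ref{ineq2}) itself on firm ground, run the energy estimate with a single weight $\rho(i)\phi(i)$, where $\phi$ is nondecreasing, $\phi\to\infty$ and $\phi(i+1)\le2\phi(i)$. The rate then depends only on $M,N,\eta$, not on $n$. Choosing $\phi$ with $\int\sum_i\rho(i)\phi(i)(\vert x_i\vert^2+1)\,\mu(dx)<\infty$ gives the $\mu$-version directly.

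\emph{Identification of the limit, and tightness.} The functional in (\ref{limitExn}) is continuous but not bounded. For $i=\max\Lambda$ (and for $i=\min\Lambda$ if $\min\Lambda>1$), the drift of $x_i$ contains a neighbour $x_{i\pm1}\notin\Lambda$. The compact support of $g$ in $\mathbb{R}^{\Lambda}$ gives no control over that coordinate. So one only has $\vert\mathscr{L}h(x)\vert\le C(1+\sum_{j\in\partial\Lambda}\vert x_j\vert)$, where $\partial\Lambda$ denotes the outer neighbours of $\Lambda$.

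Weak convergence alone therefore does not give convergence of the expectations; you need uniform integrability. This follows from $\sup_n\tilde{E}^\mu_n\sup_{u\le t}\Vert X(u)\Vert^2_{2,\rho}\le C(1+\int\Vert x\Vert^2_{2,\rho}\,\mu(dx))<\infty$, obtained by integrating (\ref{ineq4}) with $p=2$. The same bound justifies your Fubini step $\tilde{E}^\mu_n[\,\cdot\,]=\int\tilde{E}^x_n[\,\cdot\,]\,\mu(dx)=0$.

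Similarly, Corollary \ref{tightness} does not go through quite verbatim. Lemma \ref{Precompactcondition} also requires boundedness, and nothing in the definition of the set $K_\epsilon$ there prevents $\Vert\omega(t)\Vert_{2,\rho}$ from being arbitrarily large. Add an anchor such as $\{\Vert\omega(0)\Vert_{2,\rho}\le R_\epsilon\}$. Its complement has $\tilde{P}^\mu_n$-probability at most $\mu(\Vert x\Vert_{2,\rho}>R_\epsilon)$, because $\Vert\chi_n\Pi_{\Lambda_n}\xi\Vert_{2,\rho}\le\Vert\xi\Vert_{2,\rho}$. Combined with the equicontinuity, this bounds $\omega(t)$ uniformly on each $[0,k]$.
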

	%%%%%%%%%%%%%%%%%%%%%%%%%%%%%%%%%%%%%%
	%%%%%%%%%%%%%%%%%%%%%%%%%%%%%%%%%%%%%%%%%%%%%%%%%%%%
	%%%%%%%%%%%%%%%%%%%%%%%%%%%%%%%%%%%%%%%%%%%%%%%%%%%%%%%%%%%%%%%%%%%  
	
	\subsection{Uniqueness of solutions}
	Having demonstrated in Section 3.3 the existence of weak martingale solutions of equation (\ref{INFSDE2}), we presently address the question of uniqueness under suitable hypotheses - specifically, whether solution distributions coincide. The proof of uniqueness extends the method of {\v{Z}}{\'a}k \cite{vzak2016existence}, based on the properties of the finite-volume approximations. Notably, our approach does not require the nonlinear term to be of a linear growth.  To achieve the desired outcome, it is necessary to incorporate the following two assumptions into the solution space and the function $f$.
	\begin{condition}
		\label{uniquenesscondition}
		\begin{description}
			\item[(i)] There exist  constants $ r,M_1,M_2>0$ and $N_1\in \mathbb{Z}^+$ such that for any $n\geq N_1$
			$$
			\rho(n)\geq \frac{M_1}{e^{rn}}, \quad \limsup\limits_{n\rightarrow\infty}\frac{\rho(n)}{\inf\limits_{1\leq i\leq n}\rho(i)}\leq M_2.
			$$ 			
			\item[(ii)] The function $f$ can be decomposed into three parts, $f = f_{1} + f_{2} + f_{3}$, where $f_{2}$ is a globally Lipschitz function with Lipschitz constant $C_f$ (independent of i); $f_{3}$ satisfies that there exists a constant $\xi\in\mathbb{R}$ such that $f_{i,3}(z) + \xi z$ is monotonically decreasing in $z$; and $f_{1}$ is a locally Lipschitz function satisfying 
            \begin{equation*}
				|f_{1}(z)-f_{1}(\tilde{z})|\leq L(N)|z-\tilde{z}|,\quad  -N\leq z,\tilde{z}\leq N,
			\end{equation*}	
            with its local Lipschitz coefficients $L(\cdot):\mathbb{N}\rightarrow\mathbb{R}^+$ satisfying that for any constant $C>0$, there exists $\gamma=\gamma(C)>0$ such that 
             \begin{equation}\label{conditiononflipschitzL}
                \lim\limits_{n\rightarrow \infty}\frac{\exp\{CL^2(n)\}}{n^\gamma L(n)}=0.
            \end{equation}
		\end{description}
	\end{condition}
    \begin{remark}
    \begin{description}
        \item[(i)] It can be verified that Assumption \ref{A2} (ii) and \ref{uniquenesscondition} (i) hold for both categories of special weights,
        \begin{equation*}
            \rho_{\kappa}(i)=e^{-\kappa\vert i\vert},\quad i\in\mathbb{Z}^+,\quad \kappa>0,
        \end{equation*}
        and
        \begin{equation*}
            \rho_{r,\kappa}(i)=\frac{1}{1+\kappa\vert i\vert^r},\quad i\in\mathbb{Z}^+,\quad \kappa>0,\quad r>1.
        \end{equation*}
        \item[(ii)] While Assumption \ref{uniquenesscondition} requires $f$ to satisfy the locally Lipschitz condition, this property does not necessarily extend to $F$ defined in (\ref{AFBdef}). The logarithmic function $L(x)=\sqrt{log(x)}$ serves as a particular case satisfying (\ref{conditiononflipschitzL}), and in this special form we can deduce that $f$ exhibits growth behaviour satisfying $\vert f(x)\vert \leq C(1+\vert x  \sqrt{log(\vert x\vert )}\vert)$.
    \end{description}
    \end{remark}
    For the remainder of this paper, we will assume that Assumption \ref{uniquenesscondition} always holds. If we take some fixed finite set $\Lambda \subset \mathbb{Z}^+$ and two super-sets $\Lambda_n, \Lambda_k \supset \Lambda$, such that we have corresponding solutions $X^{(n),x}, X^{(k),x}$ of finite-dimensional SDEs (\ref{CUTSDE}) on $\Lambda_n$ and $\Lambda_k$ respectively. Due to the dependence of $(X^{(n),x}_i)_{i \in \Lambda}$ and $(X^{(k),x}_i)_{i \in \Lambda}$ on all $X^{(n),x}$ and $X^{(k),x}$ via $A_n$ and $A_k$, respectively, the identity between $(X^{(n),x}_i)_{i \in \Lambda}$ and $(X^{(k),x}_i)_{i \in \Lambda}$ cannot be established. Ideally, as the regions $\Lambda_n$ and $\Lambda_k$ become sufficiently large, the difference between their restrictions of the corresponding solutions to $\Lambda$ becomes arbitrarily small. This is rigorously established in the following lemma.
	\begin{lemma}
		\label{lemmalimitdifference}
        Given $x \in \ell_\rho^{2\theta}(\mathbb{Z}^+)$, let $\{ \Lambda_n \}$ be a sequence of finite sets of $\mathbb{Z}^+$ with $\{ X^{(n),x} \}$ being the solutions to the associated finite-dimensional SDEs (\ref{CUTSDE}).
		For $k\leq n$, we denote by $X^{(n), x}_{(k)}$ the part of $X^{(n), x}$ that lives in the finite set $\Lambda_k$, that is, $X^{(n), x}_{(k)} = (X^{(n),x}_{1}, \ldots, X^{(n),x}_{k})$. Then for any $k>0$, $\epsilon > 0$ and $T > 0$ there exists a constant $V(\epsilon,T,k) > 0$ such that for any $n, m \geq V(\epsilon,T,k)$
		\begin{align} \label{konvergencesamotneposl}
			\mathbb{E}\sup_{t \in [0, T]} \| X^{(n), x}_{(k)}(t) - X^{(m),x}_{(k)}(t) \|^2_{\mathbb{R}^{\Lambda_k}} \leq \epsilon.
		\end{align}
	\end{lemma}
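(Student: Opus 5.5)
The plan is to couple the two finite-volume solutions on the same probability space with the same Brownian motion $W_1$ and the same initial data $\Pi_{\Lambda_n}x$, $\Pi_{\Lambda_m}x$. Then compare them through a weighted energy of the difference that decays in space. Take $n\le m$ and set $D_i(t)=X^{(n),x}_i(t)-X^{(m),x}_i(t)$ for $i\le n$. Since the noise acts identically on the first coordinate, $D$ solves a random ODE with no stochastic integral:
\[
\dot D_i=a_{i,i-1}D_{i-1}+a_{i,i+1}D_{i+1}+f(X^{(n)}_i)-f(X^{(m)}_i),\quad i<n .
\]
At the boundary $i=n$ there is a defect term $-a_{n,n+1}X^{(m)}_{n+1}$, which comes from the truncation in (\ref{CUTSDE}). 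The idea is that information from this defect must travel $n-k$ sites to reach $\Lambda_k$, while being damped by the spatially decaying weight.

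Consider the functional $\Phi(t)=\sum_{i=1}^n |D_i(t)|^2 q(i)$, where $q$ is a decaying weight, say $q(i)=e^{-\beta i}$ or $\rho(i)$ itself. Differentiate it and treat each piece of the nonlinearity separately.
\begin{itemize}
\item The $f_3$ part gives $\le \xi|D_i|^2$ by monotonicity.
\item The $f_2$ part gives $\le C_f|D_i|^2$.
\item The local Lipschitz part gives $\le L(R)|D_i|^2$ on the event that both solutions stay in $[-R,R]$ at sites $\le n$ up to time $T$.
\item The nearest-neighbour terms give $\le 2MN'\Phi$ by the weight comparability in Assumption \ref{A2}(ii), for $q$ of ratio-bounded type.
\item The boundary defect gives $\le q(n)|D_n||X^{(m)}_{n+1}|$.
\end{itemize}
Gronwall on the good event then yields
\[
\sup_{t\le T}\Phi(t)\le C\,e^{C(1+L(R))T}\,q(n)\int_0^T (|X^{(n)}_n|^2+|X^{(m)}_{n+1}|^2)dt .
\]
From this, $\sup_t\|D_{(k)}\|^2\le \Phi/q(k)$.

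The delicate point is to handle the bad event where some coordinate exceeds $R$, and to choose $R=R(n)$. Bound the bad event with Lemma \ref{ineqXn}: coordinatewise, $|X_i|^{2\theta}\rho(i)\le V$. Chebyshev with high moments from (\ref{ineq3}) then gives $\mathbb{P}(\sup_{t\le T}\max_{i\le n}|X_i|>R)\lesssim \sum_{i\le n}\rho(i)^{-p/2\theta}R^{-p}$, which is polynomially controlled in $n$ through Assumption \ref{uniquenesscondition}(i), since $\rho(n)\ge M_1e^{-rn}$ and $\rho(n)/\inf_{i\le n}\rho(i)\le M_2$. Use Cauchy–Schwarz on the bad event together with the moment bounds (\ref{ineq3})–(\ref{ineq4}). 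On the good event, the boundary factor $q(n)\,\mathbb{E}|X_{n+1}|^2\lesssim q(n)/\rho(n+1)$ must beat $e^{CL(R)T}/q(k)$.

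I expect the main obstacle to be balancing three quantities: the growth $e^{CL^2(R)}$ (the square appears if we localize in probability via an exponential Gronwall estimate), the exponentially small weight $q(n)$, and the polynomial tail $R^{-\gamma}$. The plan is to take $R$ comparable to $n$ (or a power of $n$) and use condition (\ref{conditiononflipschitzL}): $e^{CL^2(n)}/(n^\gamma L(n))\to0$ is exactly what makes the good-event contribution vanish, once the decay $q(n)$ versus $\rho(n+1)$ is chosen with $q$ decaying faster than $\rho$ by a factor $n^{-\gamma}$.

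If this bookkeeping fails, I would try an alternative. Iterate the estimate site by site, in the style of Žák: the difference at distance $j$ from the boundary picks up a factor like $(CT)^j/j!$ from $j$ nested integrals. This yields superexponential decay in $n-k$, which easily absorbs $e^{CL(R)T}$. In either case, the result is that for $n,m\ge V(\epsilon,T,k)$ the expectation in (\ref{konvergencesamotneposl}) is below $\epsilon$.
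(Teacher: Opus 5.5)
Your main route is a pathwise energy estimate for $D=X^{(n),x}-X^{(m),x}$ with a decaying weight $q$, Gronwall on a localization event, and Cauchy--Schwarz off it. This differs from the paper's proof and can be made to work. However, the step you flag as delicate is wrong as written, and with your parameter choices the argument does not close.

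\textbf{The gap.} The bound $\sum_{i\le n}\rho(i)^{-p/2\theta}R^{-p}$ is not polynomial in $n$. Assumption \ref{uniquenesscondition}(i) only gives the \emph{lower} bound $\rho(n)\ge M_1e^{-rn}$, so $\rho(n)^{-p/2\theta}$ can be of size $e^{rpn/2\theta}$; this happens for the admissible weight $\rho(i)=e^{-\kappa i}$. With $R$ a power of $n$ the bad event is then not small, and can even have probability one. Take $\rho(i)=e^{-i}$ and $x_i=e^{i/(2\theta)}/i$, which lies in $\ell^{2\theta}_\rho(\mathbb{Z}^+)$ since $\theta\ge 1$. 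Then $|X^{(n),x}_n(0)|=|x_n|>n^a$ for all large $n$. So your good event is empty, and the Cauchy--Schwarz term reduces to $(\mathbb{E}\sup_{t\le T}\Vert D_{(k)}(t)\Vert^4)^{1/2}$, which does not decay. For the same reason, the balance you propose on the good event is at the wrong scale: a polynomial gap $n^{-\gamma}$ between $q$ and $\rho$, set against $e^{CL^2(R)}$ with $R\sim n$.

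\textbf{The repair.} It stays inside your framework.
\begin{itemize}
\item Localize at $R_n=e^{cn}$ with $2\theta c>r$. No union bound is needed: $\min_{i\le n}\rho(i)\,\sup_{t\le T}\max_{i\le n}|X_i(t)|^{2\theta}\le\sup_{t\le T}\Vert\tilde X(t)\Vert_{2\theta,\rho}^{2\theta}$, and $\min_{i\le n}\rho(i)\ge c_0e^{-rn}$. Chebyshev and (\ref{ineq3}) then give $\mathbb{P}(\text{bad})\lesssim e^{(r-2\theta c)n}\to 0$.
\item On the good event your differential inequality is \emph{linear} in the local Lipschitz constant: $\Phi'\le (A_\beta+2L(R_n))\Phi+Mq(n)|X^{(m),x}_{n+1}|^2$, with $A_\beta$ depending only on $M,\beta,C_f,\xi$. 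So no $L^2$ arises.
\item Condition (\ref{conditiononflipschitzL}) with $C=1$ gives $e^{L^2(N)}\le N^{\gamma(1)}L(N)$ for large $N$. Hence $L(N)=O(\sqrt{\log N})$ and $L(R_n)=O(\sqrt n)$.
\item Take $q(i)=e^{-\beta i}$ with $\beta>r$, and use $\mathbb{E}|X^{(m),x}_{n+1}(t)|^2\le C/\rho(n+1)$ from (\ref{ineq4}). The good part is then $\lesssim e^{A_\beta T}e^{\beta k}e^{O(T\sqrt n)}e^{-(\beta-r)n}\to 0$, and you get $\mathbb{E}\sup_{t\le T}$ directly.
\end{itemize}

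\textbf{Comparison with the paper.} The paper never localizes pathwise. It splits $f_1$ at level $N$ inside the expectation at each site, with a Chebyshev factor $N^{-\gamma}$. It then iterates the integral recursion (\ref{oneprocedureestimate}) $l$ times to gain $t^{2l}/(2l-1)!!$, and uses (\ref{ineqdoubleseries}) and (\ref{conditiononflipschitzL}) to choose $N$ independently of $l$. This is your fallback, which as sketched still has to confront the same localization issue. The repaired energy route uses the exponential lower bound on $\rho$ only once, at the boundary site, rather than through the accumulated factors $\rho^{-(2\theta+\gamma)}(k+i-1)$. It also needs only $L(N)=O(\log N)$, choosing $\beta$ large depending on $T$.
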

	\begin{proof}
		Let $x \in \ell_\rho^{2\theta}(\mathbb{Z}^+)$ be given. We write $x_{(k)} =\Pi_kx= (x_1, \ldots, x_k)$ for the restriction of $x$ to $\mathbb{R}^{\Lambda_k}$. For any positive integer $d$, the norm of the finite-dimensional space $\mathbb{R}^{\Lambda_d}$ will be denoted by $\Vert\cdot\Vert_d$ for notational convenience.  Given arbitrary $T,N>0$ and $n,m\in\mathbb{N}$, through successive applications of Jensen's inequality, H\"older's inequality, Assumption \ref{uniquenesscondition} and Assumption \ref{A2}, we obtain the following estimate for all $t\in[0,T]$ and $i<n\wedge m$,
		\begin{align*} &\mathbb{E} \vert X^{(n), x}_{i}(t) - X^{(m),x}_{i}(t) \vert^2 
			\\
            &\leq \tilde{C}\tilde{M}t\left(\int_0^t\mathbb{E}\sum\limits_{j=i-1}^{i+1}\left\vert X^{(n),x}_{j}(t_1)-X^{(m),x}_{j}(t_1)\right\vert^2dt_1\right)\\
            &\quad +\tilde{C}t\int_0^t\mathbb{E}\left\vert \mathbf{1}_{\{\vert X^{(n),x}_i(t_1)\vert\vee \vert X^{(m),x}_i(t_1)\vert\leq N\}}\left( f_1(X_{i}^{(n),x}(t_1))-f_1(X^{(m),x}_{i}(t_1))\right) \right.\\
            &\quad \left. \times \left( X_{i}^{(n),x}(t_1)- X^{(m),x}_{i}(t_1)\right)\right\vert dt_1\\
            &\quad +\tilde{C}t\int_0^t\mathbb{E}\left\vert \mathbf{1}_{\{\vert X^{(n),x}_i(t_1)\vert\vee \vert X^{(m),x}_i(t_1)\vert> N\}}\left( f_1(X_{i}^{(n),x}(t_1))-f_1(X^{(m),x}_{i}(t_1))\right)\right.\\
            &\quad \left. \times \left( X_{i}^{(n),x}(t_1)- X^{(m),x}_{i}(t_1)\right)\right\vert dt_1\\
            &\leq \tilde{C}\tilde{M}t\left(\int_0^t\mathbb{E}\sum\limits_{j=i-1}^{i+1}\left\vert X^{(n),x}_{j}(t_1)-X^{(m),x}_{j}(t_1)\right\vert^2dt_1\right)\\
            &\quad +\tilde{C}tL^2(N)\int_0^t\mathbb{E}\left\vert \mathbf{1}_{\{\vert X^{(n),x}_i(t_1)\vert\vee \vert X^{(m),x}_i(t_1)\vert\leq N\}}\left(X_{i}^{(n),x}(t_1)-X^{(m),x}_{i}(t_1)\right)\right\vert^2dt_1\\
            &\quad +\tilde{C}t\int_0^t\mathbb{E}\left\vert \mathbf{1}_{\{\vert X^{(n),x}_i(t_1)\vert\vee \vert X^{(m),x}_i(t_1)\vert> N\}}\left(1+\vert X_{i}^{(n),x}(t_1)\vert^{2\theta}+\vert X^{(m),x}_{i}(t_1)\vert^{2\theta}\right)\right\vert dt_1\\
            &   \leq \tilde{C}\tilde{M}t\left(\int_0^t\mathbb{E}\sum\limits_{j=i-1}^{i+1}\left\vert X^{(n),x}_{j}(t_1)-X^{(m),x}_{j}(t_1)\right\vert^2dt_1\right)\\
            &\quad +\tilde{C}tL^2(N)\int_0^t\mathbb{E}\left\vert X_{i}^{(n),x}(t_1)-X^{(m),x}_{i}(t_1)\right\vert^2dt_1+\frac{\tilde{C}t}{N^{\gamma}}\int_0^t\mathbb{E}\left\vert \mathbf{1}_{\{\vert X_i^{(n),x}(t_1)\vert\vee \vert X_i^{(m),x}(t_1)\vert> N\}}\right.\\
            &\quad \left.\left(1+\vert X_{i}^{(n),x}(t_1)\vert^{2\theta}+\vert X^{(m),x}_{i}(t_1)\vert^{2\theta}\right)\left(\vert X^{(n),x}_i(t_1)\vert^{\gamma}+\vert X^{(m),x}_i(t_1)\vert^{\gamma}\right)\right\vert dt_1\\
            &   \leq \tilde{C}\tilde{M}t\left(\int_0^t\mathbb{E}\sum\limits_{j=i-1}^{i+1}\left\vert X^{(n),x}_{j}(t_1)-X^{(m),x}_{j}(t_1)\right\vert^2dt_1\right)\\
            &\quad +\tilde{C}tL^2(N)\int_0^t\mathbb{E}\left\vert X_{i}^{(n),x}(t_1)-X^{(m),x}_{i}(t_1)\right\vert^2dt_1 \\
            &\quad +\frac{\tilde{C}t}{N^{\gamma}}\int_0^t\mathbb{E}\left\vert 1+\vert X_{i}^{(n),x}(t_1)\vert^{4\theta+2\gamma}+\vert X^{(m),x}_{i}(t_1)\vert^{4\theta+2\gamma}\right\vert dt_1,
		\end{align*}
        where $\tilde{M}=M^2+C_f-\xi$, and $\gamma$ is a positive constant to be determined later in the proof. Here and in what follows, we denote by $\tilde{C}$ a generic positive constant that may change from line to line, incorporating all constant factors without introducing a new notation. We thereby derive
        \begin{align}
            &\mathbb{E} \left[\left\Vert X^{(n), x}_{(k)}(t) - X^{(m),x}_{(k)}(t) \right\Vert_k^2 \right]=\mathbb{E}\left[\sum\limits_{i\in\Lambda_k}\vert X^{(n), x}_{i}(t) - X^{(m),x}_{i}(t) \vert^2 \right]\nonumber\\
           &\leq \tilde{C}\tilde{M}t\int_0^t\mathbb{E}\left[\left\Vert X^{(n),x}_{(k+1)}(t_1)-X^{(m),x}_{(k+1)}(t_1)\right\Vert^2_{k+1}\right]dt_1\nonumber\\
               &\quad +\tilde{C}tL^2(N)\int_0^t\mathbb{E}\left[\left\Vert X_{(k)}^{(n),x}(t_1)-X^{(m),x}_{(k)}(t_1)\right\Vert^2_{k}\right]dt_1\nonumber\\
               &\quad +\frac{\tilde{C}t}{N^{\gamma}}\mathbb{E}\left[\sum\limits_{i\in\Lambda_k}\int_0^t\left\vert 1+\vert X_{i}^{(n),x}(t_1)\vert^{4\theta+2\gamma}+\vert X^{(m),x}_{i}(t_1)\vert^{4\theta+2\gamma} \right\vert dt_1\right]\nonumber\\
               &\leq  \tilde{C}(\tilde{M}+L^2(N))t\int_0^t\mathbb{E}\left[\left\Vert X^{(n),x}_{(k+1)}(t_1)-X^{(m),x}_{(k+1)}(t_1)\right\Vert^2_{k+1}\right]dt_1\nonumber\\
               &\quad +\frac{\tilde{C}t}{N^{\gamma}}\mathbb{E}\left[\sum\limits_{i\in\Lambda_k}\int_0^t\left\vert 1+\vert X_{i}^{(n),x}(t_1)\vert^{4\theta+2\gamma}+\vert X^{(m),x}_{i}(t_1)\vert^{4\theta+2\gamma} \right\vert dt_1\right].\nonumber
        \end{align}
        We claim that there exists a constant $C_1>0$, independent of $k$,  such that 
        \begin{equation}
        \label{rhosumkestimat}
            \rho^{2\theta+\gamma}(k)\mathbb{E}\left[\sup\limits_{0\leq s\leq T}\sum\limits_{i\in\Lambda_k} 1+\vert X_{i}^{(n),x}(s)\vert^{4\theta+2\gamma}+\vert X^{(m),x}_{i}(s)\vert^{4\theta+2\gamma}\right]\leq C_1.
        \end{equation}
        By Assumption \ref{A2} (iii), we have $\theta\geq 1$. Following an argument similar to the proof of (\ref{ineq3}), we first establish the bound that
        \begin{align*}
            &\mathbb{E}\left[\sup\limits_{0\leq s\leq T}\sum\limits_{i\in\Lambda_k}\vert X_i^{(n),x}(s)\vert^{4\theta+2\gamma}\rho^{2\theta+\gamma}(k)\right]\\
            &\leq \mathbb{E}\left[\sup\limits_{0\leq s\leq T}\left(\sum\limits_{i\in\Lambda_k}\vert X_i^{(n),x}(s)\vert^{2}\rho(k)\right)^{2\theta+\gamma}\right]\\
            &\leq \mathbb{E}\left[\sup\limits_{0\leq s\leq T}\left(\sum\limits_{i\in\Lambda_k}\vert X_i^{(n),x}(s)\vert^2\rho(k)+\sum\limits_{i=k+1}^{\infty}\vert X_i^{(n),x}(s)\vert^2\rho(i)\right)^{2\theta+\gamma}\right]\\
            &\leq C(T,2\theta+\gamma)\left(1+\left(\sum\limits_{i\in\Lambda_k}\vert x_i \vert^2\rho(k)+\sum\limits_{i=k+1}^{\infty}\vert x_i\vert^2\rho(i)\right)^{2\theta+\gamma}\right).
        \end{align*}
        Moreover, under Assumption \ref{uniquenesscondition} (i), there exists a constant $C_2$ such that
        \begin{equation*}
            \lim\limits_{k\rightarrow\infty}\sum\limits_{i\in\Lambda_k}\vert x_i\vert^2\rho(k)=\lim\limits_{k\rightarrow\infty}\sum\limits_{i\in\Lambda_k}\vert x_i\vert^2\rho(i)\frac{\rho(k)}{\rho(i)}\leq \lim\limits_{k\rightarrow\infty}\sum\limits_{i\in\Lambda_k}\vert x_i\vert^2\rho(i)\frac{\rho(k)}{\inf\limits_{1\leq i\leq k}\rho(i)}\leq C_2\Vert x\Vert_{2,\rho}^2.
        \end{equation*}
        Combining these results establishes the claimed estimate. Similar calculations show that there exists a constant $C_2$ satisfying for any $k$ and $l$,
        \begin{align}
            &\rho(k+l)\mathbb{E}\left[\sup\limits_{0\leq s\leq T}\sum\limits_{i\in\Lambda_{k+l}}\left(\vert X_i^{(n),x}(s)\vert^2-\vert X_i^{(m),x}(s)\vert^2\right) \right]\nonumber\\
            &\leq \rho(k+l)\mathbb{E}\left[\sup\limits_{0\leq s\leq T}\sum\limits_{i\in\Lambda_{k+l}}\left(\vert X_i^{(n),x}(s)\vert^2+\vert X_i^{(m),x}(s)\vert^2\right) \right]\leq C_2.\label{rhosumkestimat2}
        \end{align}
        As a consequence of (\ref{rhosumkestimat}), we obtain
        \begin{align}
            \mathbb{E} \left\Vert X^{(n), x}_{(k)}(t) - X^{(m),x}_{(k)}(t) \right\Vert_k^2& \leq \tilde{C}(\tilde{M}+L^2(N))t\int_0^t\mathbb{E}\left\Vert X^{(n),x}_{(k+1)}(t_1)-X^{(m),x}_{(k+1)}(t_1)\right\Vert^2_{k+1}dt_1 \nonumber\\
            &\quad +\frac{\tilde{C}t^2}{N^{\gamma}\rho^{2\theta+\gamma}(k)}. \label{oneprocedureestimate}
        \end{align}
        Given any $l\in\mathbb{N}$ such that $k+l<n\wedge m$, iterating (\ref{oneprocedureestimate}) and applying (\ref{rhosumkestimat2}), we derive
        \begin{align}
            & \mathbb{E} \left\Vert X^{(n), x}_{(k)}(t) - X^{(m),x}_{(k)}(t)\right\Vert_{k}^2 \nonumber\\
                &\leq  \tilde{C}(\tilde{M}+L^2(N))t\int_0^t\mathbb{E}\left\Vert X^{(n),x}_{(k+1)}(t_1)-X^{(m),x}_{(k+1)}(t_1)\right\Vert^2_{k+1}dt_1+\frac{\tilde{C}t^2}{N^{\gamma}\rho^{2\theta+\gamma}(k)}\nonumber\\
                &\leq \left[\tilde{C}(\tilde{M}+L^2(N))\right]^2t\int_0^tt_1\int_0^{t_1}\mathbb{E}\left\Vert X^{(n),x}_{(k+2)}(t_2)-X^{(m),x}_{(k+2)}(t_2)\right\Vert^2_{k+2}dt_2dt_1\nonumber\\
                &\quad +\tilde{C}(\tilde{M}+L^2(N))t\frac{\tilde{C}}{N^{\gamma}\rho^{2\theta+\gamma}(k+1)} \int_0^tt_1^2dt_1
                 +\frac{\tilde{C}t^2}{N^{\gamma}\rho^{2\theta+\gamma}(k)}\nonumber\\
                 &\leq \cdots \nonumber\\
                 &\leq  \left[\tilde{C}(\tilde{M}+L^2(N))\right]^l t\int_0^tt_1\int_0^{t_1}t_2\cdots t_{l-1}\int_0^{t_{l-1}}\mathbb{E}\left\Vert X^{(n),x}_{(k+l)}(t_l)-X^{(m),x}_{(k+l)}(t_l)\right\Vert^2_{k+l}dt_{l}\cdots dt_2dt_1\nonumber\\
                &\quad +\frac{\tilde{C}}{N^{\gamma}}\sum\limits_{i=1}^{l} \left[\tilde{C}(\tilde{M}+L^2(N))\right]^{i-1}\frac{1}{\rho^{2\theta+\gamma}(k+i-1)}\frac{t^{2i}}{(2i-1)!!}  \nonumber\\
                & \leq \mathbf{V}_1(N)+\mathbf{V}_2(N),\label{repeatestimationoflamdak}
        \end{align}
		where $(2n-1)!! = (2n-1) \cdot (2n-3) \cdots 3\cdot 1$ denotes the odd factorial and 
        \begin{align*}
            &\mathbf{V}_1(N):=\left[\tilde{C}(\tilde{M}+L^2(N))\right]^l\frac{t^{2l}}{(2l-1)!!}\frac{1}{\rho(k+l)},\\
            &\mathbf{V}_2(N):=\frac{\tilde{C}}{N^{\gamma}}\sum\limits_{i=1}^{l} \left[\tilde{C}(\tilde{M}+L^2(N))\right]^{i-1}\frac{1}{\rho^{2\theta+\gamma}(k+i-1)}\frac{t^{2i}}{(2i-1)!!}.
        \end{align*}
        By (\ref{repeatestimationoflamdak}), to establish the lemma, it suffices to prove that for sufficiently large $l < n\wedge m$ (thereinafter, large $n$ and $m$), there exists $N$ independent of $l$ such that both $\mathbf{V}_1(N)$ and $\mathbf{V_2}(N)$ are sufficiently small.  By Andrews et al. \cite{andrews1999special}, the Taylor series of the error function $\operatorname{erf}(z)=\frac{2}{\sqrt{\pi}}\int_0^{z}e^{-t^2}dt$ is given by
        \begin{equation*}
            \operatorname{erf}(z)=\frac{2}{\sqrt{\pi}}\sum\limits_{n=0}^{\infty}\frac{(-1)^{-n}}{2n+1}\frac{z^{2n+1}}{n!}.
        \end{equation*}
        Using the series expansions of $e^{z^2/2}$ and $\operatorname{erf}(z/\sqrt{2})$, multiplying them term by term, and rearranging the resulting expression, we ultimately obtain
        \begin{equation}\label{ineqdoubleseries}
            \sum\limits_{i=0}^{\infty}\frac{z^{2i+1}}{(2i+1)!!}=\sqrt{\frac{\pi}{2}}e^{\frac{z^2}{2}}\operatorname{erf}(\frac{z}{\sqrt{2}})=e^{\frac{z^2}{2}}\int_0^{z}e^{-\frac{t^2}{2}}dt\leq \sqrt{\frac{\pi}{2}}e^{\frac{z^2}{2}}
        \end{equation}
        From (\ref{ineqdoubleseries}) together with Assumption \ref{uniquenesscondition}, we establish the following estimate for $\mathbf{V}_2(N)$:
        \begin{align}
             \mathbf{V}_2(N)&\leq \frac{\tilde{C}}{N^{\gamma}}\sum\limits_{i=1}^{l} \left[\tilde{C}(\tilde{M}+L^2(N))\right]^{i-1}e^{r(2\theta+\gamma)(k+i-1)}\frac{t^{2i}}{(2i-1)!!}\nonumber\\
            &=\frac{\tilde{C}}{N^{\gamma}}e^{r(2\theta+\gamma)}t^2\sum\limits_{i=1}^{\infty} \left[\tilde{C}(1+L^2(N))e^{r(2\theta+\gamma)}t^2\right]^{i-1}\frac{1}{(2i-1)!!}\nonumber\\
            &\leq \frac{\tilde{C}}{N^{\gamma}}\sum\limits_{i=1}^{\infty} \left[\tilde{C}(1+L^2(N))\right]^{i-1}\frac{1}{(2i-1)!!}\nonumber\\
            &=\frac{\tilde{C}}{\sqrt{\tilde{C}(1+L^2(N))}N^{\gamma}}\sum\limits_{i=0}^{\infty} \sqrt{\tilde{C}(1+L^2(N))}^{2i+1}\frac{1}{(2i+1)!!}\nonumber\\
            &\leq \frac{\tilde{C}}{\sqrt{\tilde{C}(1+L^2(N))}N^{\gamma}}e^{\frac{1}{2}\tilde{C}(1+L^2(N))}\rightarrow0, \quad  \text{as}\quad N\rightarrow\infty,\label{estimateofV2N}
        \end{align}
        where in line 3 we have rescaled $\tilde{C}$ to $\tilde{C}e^{r(2\theta+\gamma)}T^2$ (absorbing the constant factor) and $\gamma$ takes the value specified in Assumption \ref{uniquenesscondition} (ii) for the given constant $\tilde{C}$.
        Therefore, given any $\epsilon>0$, we can find $\tilde{N}=\tilde{N}(\epsilon,\theta,k,T)$ for which the following holds when $N\geq \tilde{N}$: $\mathbf{V}_2(N)\leq \frac{\epsilon}{2}$. With $\tilde{N}$ chosen as above, we can estimate $\mathbf{V}_1(\tilde{N})$ as follows:
        \begin{equation*}
            \mathbf{V}_1(\tilde{N})\leq \left[\tilde{C}(\tilde{M}+L^2(\tilde{N}))\right]^l\frac{t^{2l}}{(2l-1)!!}e^{r(k+l)}\leq \frac{[\tilde{C}(1+L^2(\tilde{N}))]^l}{(2l-1)!!}\rightarrow 0, \quad  \text{as}\quad l\rightarrow\infty.
        \end{equation*}
        Thus, for this fixed $\epsilon>0$, we can find a threshold $V(\epsilon,T,k)>0$ that ensures $\mathbf{V}_1(\tilde{N})\leq \frac{\epsilon}{2}$ for all $l\geq V(\epsilon,T,k)$. We have thus established the conclusion of the lemma.
	\end{proof}

	From Assumptions \ref{uniquenesscondition}, we obtain the property in Lemma \ref{lemmalimitdifference}, which states that when the dimension is sufficiently large, the difference between finite-dimensional equations on a fixed bounded set can be very small. For $x\in \ell_\rho^{2\theta}(\mathbb{Z}^+)$, this property enables us to deduce that in a sequence of probability measures $\{P_n^x\}$, any convergent subsequence weakly converges to a common probability measure. For the uniqueness proof, inspired by Hairer \cite[Proposition 3.31]{hairer2009introduction}, we show in the following lemma that a probability measure on $\Omega_{Spin}=C([0, \infty), \ell_\rho^{2}(\mathbb{Z}^+))$  is uniquely determined by its restriction to finite-dimensional cylindrical sets.
	\begin{lemma}
		\label{finitedimensional determin}
		Let $P$ and $Q$ be two probability measures on $\Omega_{Spin}$ and let $\mathcal{C}$ be the class of the finite-dimensional closed sets, that is to say, the sets of the form
		$$
		C=\left\{z\in C([0, \infty), \ell_\rho^{2}(\mathbb{Z}^+)); (z(t_1),\cdots,z(t_l))\in(q_m^{-1}\Gamma_1,\cdots,q_m^{-1}\Gamma_l) \right\}
		$$ 
		with  $l,m\geq 1$, $T>0$, $t_i\in[0,T]\bigcap \mathbb{Q}$, closed sets $\Gamma_j\subset \mathbb{R}^m$, for $j=1,\cdots,l$ and continuous linear functional $ q_m(x)=\left(x_1,x_2,\cdots,x_m,0_{i\in\mathbb{Z}^+\backslash \Lambda_m}\right)$ $:\ell_\rho^{2}(\mathbb{Z}^+)\rightarrow\mathbb{R}^m$. If $P(C)=Q(C)$ for every $C\in \mathcal{C}$, then $P=Q$.
	\end{lemma}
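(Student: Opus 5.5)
The plan is a $\pi$–$\lambda$ (Dynkin) argument, combined with the fact that the Borel $\sigma$-algebra $\mathscr{B}^{Spin}$ of the Polish space $\Omega_{Spin}=C([0,\infty),\ell_\rho^2(\mathbb{Z}^+))$ is generated by the sets in $\mathcal{C}$.

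First, $\mathcal{C}$ is closed under finite intersections, so it is a $\pi$-system. Take two sets with data $(t_1,\dots,t_l;m;\Gamma_j)$ and $(s_1,\dots,s_{l'};m';\Gamma'_j)$. Merge the time points into one finite set of rationals, and pass to $m''=\max(m,m')$. A condition $q_m z(t)\in\Gamma$ can be rewritten as $q_{m''}z(t)\in \Gamma\times\mathbb{R}^{m''-m}$, and this set is closed. At a time point missing from one family we insert $\mathbb{R}^{m''}$. Intersecting the constraints coordinatewise then gives a set of the same form. Since $\Omega_{Spin}\in\mathcal{C}$ (take all $\Gamma_j=\mathbb{R}^m$), the class $\{A\in\mathscr{B}^{Spin}:P(A)=Q(A)\}$ is a $\lambda$-system containing $\mathcal{C}$. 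By Dynkin's theorem, $P=Q$ on $\sigma(\mathcal{C})$.

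It remains to show $\sigma(\mathcal{C})=\mathscr{B}^{Spin}$. The inclusion $\sigma(\mathcal{C})\subset\mathscr{B}^{Spin}$ holds because each map $z\mapsto q_m z(t)$ is continuous on $\Omega_{Spin}$, so each $C\in\mathcal{C}$ is closed. For the converse, $\Omega_{Spin}$ is separable with the metric
\[
d(z,w)=\sum_{k\ge1}2^{-k}\Bigl(1\wedge\sup_{t\in[0,k]}\|z(t)-w(t)\|_{2,\rho}\Bigr),
\]
so every open set is a countable union of closed balls. It therefore suffices to show that $w\mapsto d(z,w)$ is $\sigma(\mathcal{C})$-measurable for each fixed $z$. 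By continuity of paths, the supremum over $[0,k]$ equals the supremum over $[0,k]\cap\mathbb{Q}$. For each fixed $t$,
\[
\|z(t)-w(t)\|_{2,\rho}^2=\lim_{m\to\infty}\sum_{i=1}^m |z_i(t)-w_i(t)|^2\rho(i)
\]
is a monotone limit of functions of $q_m w(t)$. Each such function is measurable with respect to $\sigma\{q_m w(t)\}$, and that $\sigma$-algebra is generated by the sets in $\mathcal{C}$ with $l=1$, since closed sets generate $\mathscr{B}(\mathbb{R}^m)$. Countable suprema and limits preserve measurability, so $d(z,\cdot)$ is $\sigma(\mathcal{C})$-measurable. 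Hence closed balls, and therefore all open sets, lie in $\sigma(\mathcal{C})$.

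The main obstacle is this generation step. The two delicate points are reducing the uncountable time supremum to rational times, which uses continuity of paths in $\ell^2_\rho$, and recovering the infinite-dimensional norm from the finite projections $q_m$ by monotone convergence. The constraint $t_i\in[0,T]$ with a single $T$ is harmless because $T$ may be chosen arbitrarily large. Once generation is established, the Dynkin step gives $P=Q$ on $\mathscr{B}^{Spin}$.
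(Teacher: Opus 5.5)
Your proof is correct. Its skeleton matches the paper's:
\begin{itemize}
\item a $\pi$-system uniqueness argument reduces the claim to $\sigma(\mathcal{C})=\mathscr{B}^{Spin}$;
\item that identity is proved by placing a countable base of balls inside $\sigma(\mathcal{C})$, after using path continuity to replace $[0,n]$ by rational times.
\end{itemize}

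Where you genuinely differ is in how the $\ell^2_\rho$-norm is recovered from the projections $q_m$. The paper follows Hairer's Proposition 3.31. It chooses a dense sequence of finitely supported unit vectors $x^j$, with norming functionals $\zeta_j$ supported on the first $m(j)$ coordinates. It then shows $\bar B(0,1)=\bigcap_j\{|\zeta_j(x)|\le 1\}$ by a duality argument, and writes the path-space ball as an explicit countable combination of cylinder sets. You instead prove that $w\mapsto d(z,w)$ is $\sigma(\mathcal{C})$-measurable, using that $\|z(t)-w(t)\|_{2,\rho}^2$ is a monotone limit of continuous functions of $q_m w(t)$.

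What your route buys:
\begin{itemize}
\item It is more elementary, since it needs no linear functionals at all.
\item It works with measurable functions rather than set identities, so it avoids the bookkeeping with strict versus non-strict inequalities that the paper's chain of equalities has to handle. There, the union over $u$ must sit outside the intersection over the finite time sets $Q_l$ for the identity with the open ball to hold, whereas the paper writes the two in the opposite order.
\item You check the $\pi$-system property explicitly, which the paper only asserts.
\item It carries over verbatim to $\ell^{p}_\rho(\mathbb{Z}^+)$ by replacing $|x_i|^2\rho(i)$ with $|x_i|^p\rho(i)$. This is exactly what the analogous determining-class lemma on $\ell^{2\theta}_\rho(\mathbb{Z}^+)$, which the paper states without proof, requires.
\end{itemize}

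What the paper's route buys: its functional-analytic argument is the standard template for separable Banach spaces, where cylinder sets are built from a countable norming family of functionals. It therefore does not rely on the norm having an explicit coordinatewise form.
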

	\begin{proof}
		Since $P(C)=Q(C)$ for every $C\in \mathcal{C}$, and $\mathcal{C}$ is a $\pi-$class, it follows from the basic uniqueness result in measure theory that $P(C)=Q(C)$ for every $C$ in the $\sigma$-algebra of $\mathcal{E}(\Omega_{Spin})$ generated by $\mathcal{C}$. Thus, it remains to show that $\mathcal{E}(\Omega_{Spin})$ coincides with the Borel $\sigma$-algebra of $\Omega_{Spin}$. As $\Omega_{Spin}$ is a separable metric space under the topology of uniform convergence on compact sets, there exists a countable basis for its topology, which is of the form 
		$$
		B_{\Omega_{Spin}}(\omega^0,\epsilon,n):=\{\omega\in\Omega_{Spin};\sup\limits_{0\leq t\leq n}\|\omega(t)-\omega^0(t)\|_{2,\rho}<\epsilon\}
		$$
		with $n\in\mathbb{N}$, $\omega^0$ belongs to a dense subset of $\Omega_{Spin}$, $\epsilon$ is a positive rational number. In fact, since every finite-dimensional closed set $C$ is a Borel set, it suffices to show that all sets $B_{\Omega_{Spin}}(\omega^0,\epsilon,n)$ (and therefore all open and Borel) sets are contained in $\mathcal{E}(\Omega_{Spin})$. 
		
		For the set $\{x\in\ell_\rho^{2}(\mathbb{Z}^+);\|x\|=1\}$, there exists a dense countable subset $\{x^j\}$ satisfying $x^j=(x^j_1,x^j_2,\cdots,x^j_{m(j)},0_{i\in \mathbb{Z}+ \backslash \Lambda_{m(j)}})$ for some $m(j)\in\mathbb{N}$. Let $\{\zeta_j\}$ be a sequence of continuous linear functions in $\ell_\rho^{2}(\mathbb{Z}^+)$ such that $\zeta_j=(z_1,\cdots,z_{m(j)},0_{i\in \mathbb{Z}+ \backslash \Lambda_{m(j)}})$, $\|\zeta_j\|_{2,\rho}=1$ and $\zeta_j(x^j)=1$ (existence follows from the Hahn-Banach extension theorem, see Brezis \cite{brezis2011functional}). Let $\bar{B}(x^0,a):=\{x\in\ell_\rho^{2}(\mathbb{Z}^+);\|x-x^0\|\leq a\}$ be the closed ball of radius $\epsilon$ centered at $x^0$ in $\ell_\rho^{2}(\mathbb{Z}^+)$. We claim $\bar{B}(0,1)=K:=\bigcap_{j\geq 0}\{x\in\ell_\rho^{2}(\mathbb{Z}^+);|\zeta_j(x)|\leq  1\}$. Since obviously $\bar{B}(0,1)\subset K$, it suffices to show that the reverse inclusion holds. Let $y\in\ell_\rho^{2}(\mathbb{Z}^+)$ with $\|y\|_{2,\rho}>1$ be arbitrary and set $\hat{y}=y/\|y\|_{2,\rho}$. By the density of $x^j$'s, there exists a subsequence $x^{k_j}$ such that $\|x^{k_j}-\hat{y}\|\leq \frac{1}{j}$. Since $1=\zeta_{k_j}(x^{k_j})=\zeta_{k_j}(x^{k_j}-\hat{y})+\zeta_{k_j}(\hat{y})$, we deduce that $\zeta_{k_j}(\hat{y})\geq1-\frac{1}{j}$. By linearity, this implies that $\zeta_{k_j}(y)\geq\|y\|(1-\frac{1}{j})$, so that there exists a sufficiently large $j$ such that $\zeta_{k_j}(y)>1$. This shows that $y\notin K$ and we conclude that $K\subset \bar{B}(0,1)$ as required.

		Let $\{Q_l:=\{t_1,\cdots,t_l\};l\in\mathbb{N}\}$ be a sequence of subsets of $[0,n]\bigcap\mathbb{Q}$ with finite elements such that $\bigcup_{l=1}^{\infty}Q_l=[0,n]\bigcap\mathbb{Q}$. Then we observe that
		\begin{align*}
				&B_{\Omega_{Spin}}(\omega^0,\epsilon,n)\\
                &=\{\omega\in\Omega_{Spin};\sup\limits_{0\leq t\leq n,t\in \mathbb{Q}}\|\omega(t)-\omega^0(t)\|<\epsilon\}\\
				&=\bigcap_{l=1}^{\infty}\{\omega\in\Omega_{Spin};\sup\limits_{0\leq t\leq n,t\in Q_l}\|\omega(t)-\omega^0(t)\|<\epsilon\}\\
				&=\bigcap_{l=1}^{\infty}\{\omega\in\Omega_{Spin};\|\omega(t_1)-\omega^0(t_1)\|<\epsilon,\cdots,\|\omega(t_l)-\omega^0(t_l)\|<\epsilon\}\\
				&=\bigcap_{l=1}^{\infty}\bigcup_{u=1}^{\infty}\{\omega\in\Omega_{Spin};\|\omega(t_1)-\omega^0(t_1)\|\leq\epsilon-\frac{1}{u},\cdots,\|\omega(t_l)-\omega^0(t_l)\|\leq\epsilon-\frac{1}{u}\}\\
				&=\bigcap_{l=1}^{\infty}\bigcup_{u=1}^{\infty}\{\omega\in\Omega_{Spin};\omega(t_1)\in \bar{B}(\omega^0(t_1),\epsilon-\frac{1}{u}),\cdots,\omega(t_l)\in \bar{B}(\omega^0(t_l),\epsilon-\frac{1}{u})\}\\
				&=\bigcap_{l=1}^{\infty}\bigcup_{u=1}^{\infty}\bigcap_{j=1}^{\infty}\{\omega\in\Omega_{Spin};\omega(t_1)\in K_{1,j},\cdots,\omega(t_l)\in K_{l,j}\}
		\end{align*}
	where $K_{i,j}=\{y=(\epsilon-\frac{1}{u})(x+\omega^0(t_i));x\in\ell_\rho^{2}(\mathbb{Z}^+),|\zeta_j(x)|\leq  1\}$ for $i=1,\cdots,l$. Therefore, it is clear that $B_{\Omega_{Spin}}(\omega^0,\epsilon,n)\in\mathcal{E}(\Omega_{Spin})$ and the lemma is established.
	\end{proof}
    \begin{definition}
    	\label{deffinitedimensionalclyinderfunction}
    	A function $h:\Omega_{Spin}\rightarrow\mathbb{R}$ is called a finite-dimensional cylinder function, if there exists $m,l\in \mathbb{N}$, a finite subset $\Lambda_m\subset\mathbb{Z}^+$ and $g:\mathbb{R}^{l\Lambda_m}\rightarrow\mathbb{R}$ such that $h(\omega)=g(\Pi_{\Lambda_m}\omega(t_1),\cdots,\Pi_{\Lambda_m}\omega(t_l))$. Moreover, if $g$ is bounded and Lipschitz i.e. there is a  constant $L>0$ such that 
    	$$
    	|g(z) - g(\tilde{z})| \leq L\|z - \tilde{z}\|_{\mathbb{R}^{l\Lambda_m}} \ \forall z, \tilde{z} \in \mathbb{R}^{l\Lambda_m},
    	$$
    	we say that $h$ is cylindrically bounded and  Lipschitz, such functions $h$ form a function space denoted as $C_{b,lip}^{Cly}(\Omega_{Spin})$.
    \end{definition}

	\begin{corollary}
		\label{corollarypequalq}
		Let $P$ and $Q$ be two probability measures on $\Omega_{Spin}$.  Then $P=Q$ if and only if $Ph=Qh$ for every $h\in C_{b,lip}^{Cly}(\Omega_{Spin})$.
	\end{corollary}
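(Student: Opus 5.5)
The forward direction is trivial: if $P=Q$ then $Ph=Qh$ for every bounded measurable $h$, and in particular for every $h\in C_{b,lip}^{Cly}(\Omega_{Spin})$. For the converse we reduce to Lemma \ref{finitedimensional determin}. It suffices to show that $P(C)=Q(C)$ for every finite-dimensional closed set
\[
C=\{z\in\Omega_{Spin};\ q_m z(t_1)\in\Gamma_1,\dots,q_m z(t_l)\in\Gamma_l\},
\]
with closed sets $\Gamma_j\subset\mathbb{R}^m$. To do this, we approximate the indicator of $C$ from above by cylindrically bounded Lipschitz functions.

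Set $\Gamma:=\Gamma_1\times\cdots\times\Gamma_l\subset\mathbb{R}^{lm}=\mathbb{R}^{l\Lambda_m}$, which is closed. For $k\in\mathbb{N}$ define
\[
g_k(z):=\max\{0,\,1-k\,\mathrm{dist}(z,\Gamma)\},\qquad z\in\mathbb{R}^{l\Lambda_m},
\]
where the distance is taken in the Euclidean norm. Each $g_k$ takes values in $[0,1]$ and is $k$-Lipschitz, because $\mathrm{dist}(\cdot,\Gamma)$ is $1$-Lipschitz. Put
\[
h_k(\omega):=g_k\bigl(\Pi_{\Lambda_m}\omega(t_1),\dots,\Pi_{\Lambda_m}\omega(t_l)\bigr).
\]
By Definition \ref{deffinitedimensionalclyinderfunction}, $h_k\in C_{b,lip}^{Cly}(\Omega_{Spin})$; note that $\Pi_{\Lambda_m}$ coincides with $q_m$ up to the trivial identification. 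Since $\Gamma$ is closed, $\mathrm{dist}(z,\Gamma)=0$ exactly when $z\in\Gamma$. Hence $g_k\downarrow\mathbf 1_\Gamma$ pointwise, and therefore $h_k\downarrow\mathbf 1_C$ pointwise on $\Omega_{Spin}$.

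By hypothesis $Ph_k=Qh_k$ for all $k$. Dominated convergence (the dominating function is the constant $1$) gives $P(C)=\lim_k Ph_k=\lim_k Qh_k=Q(C)$. Lemma \ref{finitedimensional determin} then yields $P=Q$.

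The only points needing care are measurability and the Lipschitz bound. Each $h_k$ is Borel on $\Omega_{Spin}$, since it is the composition of a continuous $g_k$ with the continuous evaluation maps $\omega\mapsto\Pi_{\Lambda_m}\omega(t_i)$; these are continuous because $q_m$ is a continuous linear functional on $\ell^2_\rho(\mathbb{Z}^+)$. The uniform Lipschitz constant required in Definition \ref{deffinitedimensionalclyinderfunction} holds with $L=k$ for each fixed $k$, which is all that is needed. I expect no real obstacle; the heavy lifting was done in Lemma \ref{finitedimensional determin}.
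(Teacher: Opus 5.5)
Your proof is correct and follows the paper's argument. Both reduce to Lemma \ref{finitedimensional determin}, approximate $\mathbf{1}_C$ from above by bounded Lipschitz cylinder functions of the form $[1-k\,\mathrm{dist}]\vee 0$, and conclude by dominated convergence. The only difference is cosmetic: you use the Euclidean distance to $\Gamma_1\times\cdots\times\Gamma_l$ in $\mathbb{R}^{l\Lambda_m}$, whereas the paper writes $\inf_{z\in C}\sum_{j}\|\omega(t_j)-z(t_j)\|_{2,\rho}$ and notes that it depends only on the first $m$ coordinates. Your choice makes the cylindrical Lipschitz structure of $h_k$ slightly more transparent.
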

    \begin{proof}
    	The necessity is evident, thus we only need to prove the sufficiency. According to Lemma \ref{finitedimensional determin}, it suffices to demonstrate that $P(C)=Q(C)$ for every $C\in \mathcal{C}$.  For the form of set $C\in \mathcal{C}$ given in Lemma \ref{finitedimensional determin}, we define a sequence of functions on $\Omega_{Spin}$:
    	 \begin{align}
    	 	h_n(\omega)&=[1-n\inf\limits_{z\in C}\sum\limits_{j=1}^l\|\omega(t_j)-z(t_j)\|_{2,\rho}]\vee 0 \nonumber\\
    	 	&=[1-n\inf\limits_{z\in C}\sum\limits_{j=1}^l(\sum\limits_{i=1}^{\infty}|\omega_i(t_j)-z_i(t_j)|^2\rho(i))^{1/2}]\vee 0 \nonumber\\
    	 	&=[1-n\inf\limits_{z\in C}\sum\limits_{j=1}^l(\sum\limits_{i=1}^{m}|\omega_i(t_j)-z_i(t_j)|^2\rho(i))^{1/2}]\vee 0 \label{compactlipschitzfunctionhn}
    	 \end{align}
    	By (\ref{compactlipschitzfunctionhn}), we observe that $h_n\in C_{b,lip}^{Cly}(\Omega_{Spin})$. Hence, the assumptions yield $Ph_n=Qh_n$. We notice that each $h_n$ is bounded and $\lim\limits_{n\rightarrow\infty}h_n(\omega)=\mathbf{1}_{C}(\omega)$, so, by the dominated convergence theorem, $P\mathbf{1}_{C}=Q\mathbf{1}_{C}$ holds for the $C$ mentioned above.
     \end{proof}
	\begin{theorem}
		\label{uniqunessof l2sspace}
		For $x \in \ell_\rho^{2\theta}(\mathbb{Z}^+)$, let $\tilde{X}^{(m), x}, \tilde{X}^{(n),x}$ be two different sequences of approximating processes on $\Omega_{Spin}$. Then there exists a probability measure $\mathbf{P}^x$ on $\Omega_{Spin}$ such that $$\lim_{m \rightarrow \infty} \mathbb{P} \circ (\tilde{X}^{(m), x})^{-1} = \lim_{n \rightarrow \infty} \mathbb{P} \circ (\tilde{X}^{(n), x})^{-1} = \mathbf{P}^x.$$ 
	\end{theorem}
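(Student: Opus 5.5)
We plan to show that the whole sequence $\tilde{P}^x_n=\mathbb{P}\circ(\tilde{X}^{(n),x})^{-1}$ converges weakly on $\Omega_{Spin}$. Then any two subsequences, in particular the two approximating sequences $\{\tilde{X}^{(m),x}\}$ and $\{\tilde{X}^{(n),x}\}$ of the statement, have the same limit $\mathbf{P}^x$. By Corollary \ref{tightness} the family $\{\tilde{P}^x_n\}$ is tight. By Prokhorov's theorem, every subsequence therefore has a further weakly convergent subsequence. It suffices to show that any two subsequential limits $P$ and $Q$ coincide: a tight sequence all of whose convergent subsequences share one limit converges to that limit.

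By Corollary \ref{corollarypequalq}, it is enough to check $Ph=Qh$ for every $h\in C^{Cly}_{b,lip}(\Omega_{Spin})$. Fix such an $h(\omega)=g(\Pi_{\Lambda_k}\omega(t_1),\dots,\Pi_{\Lambda_k}\omega(t_l))$, where $g$ is bounded with Lipschitz constant $L$; we may take the cylinder set to be $\Lambda_k$ for some $k$ and all $t_j\le T$. Since $h$ is bounded and continuous on $\Omega_{Spin}$ (point evaluation and finite-dimensional projection are continuous), weak convergence gives
\[
Ph=\lim_{j}\mathbb{E}\,h(\tilde{X}^{(n_j),x}),\qquad Qh=\lim_{j}\mathbb{E}\,h(\tilde{X}^{(m_j),x}).
\]
For $n,m\ge k$ we have $\Pi_{\Lambda_k}\tilde{X}^{(n),x}=X^{(n),x}_{(k)}$. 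The Lipschitz property and Jensen's inequality then give
\[
\bigl|\mathbb{E}h(\tilde{X}^{(n),x})-\mathbb{E}h(\tilde{X}^{(m),x})\bigr|\le L\sqrt{l}\,\Bigl(\mathbb{E}\sup_{t\in[0,T]}\|X^{(n),x}_{(k)}(t)-X^{(m),x}_{(k)}(t)\|^2_{\mathbb{R}^{\Lambda_k}}\Bigr)^{1/2}.
\]
By Lemma \ref{lemmalimitdifference}, the right-hand side is at most $L\sqrt{l\epsilon}$ whenever $n,m\ge V(\epsilon,T,k)$. Hence $\{\mathbb{E}h(\tilde{X}^{(n),x})\}_n$ is Cauchy, so the two limits along the subsequences agree and $Ph=Qh$.

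The main obstacle is entirely contained in Lemma \ref{lemmalimitdifference}, the uniform-in-dimension closeness of the finite-volume restrictions, which we take as given. The remaining care points are the following. First, $h$ must be continuous on $\Omega_{Spin}$, so that weak convergence applies. Second, the two subsequences must be interleaved correctly: both $n_j,m_j\to\infty$, so eventually both exceed $V(\epsilon,T,k)$. Third, we must note that Corollary \ref{corollarypequalq} only needs Lipschitz cylinder functions, which is exactly what the $L^2$ estimate controls. With this, $P=Q=:\mathbf{P}^x$, and by Theorem \ref{existencesolution} this common limit is the solution of the martingale problem.
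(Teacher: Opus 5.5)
Your proposal is correct and follows the paper's proof essentially step for step. Both arguments take tightness from Corollary \ref{tightness} plus Prokhorov, reduce via Corollary \ref{corollarypequalq} to bounded Lipschitz cylinder functions, and close with a Lipschitz/Jensen estimate fed by Lemma \ref{lemmalimitdifference}. Your version is slightly tidier: you show directly that $\{\mathbb{E}h(\tilde{X}^{(n),x})\}_n$ is Cauchy, and you carry the correct constant $L\sqrt{l}$ where the paper's squared estimate writes only $L$.
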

	\begin{proof}
		By Corollary \ref{tightness} we know that any two such sequences have weakly convergent subsequence. So, it remains to show that the limit point is the same for any two weakly convergent subsequences (to simplify the notation, we again call the convergent subsequences
$m$ and $n$) $\lbrace \mathbb{P} \circ (\tilde{X}^{(m), x})^{-1} \rbrace, \lbrace \mathbb{P} \circ (\tilde{X}^{(n), x})^{-1} \rbrace$. We denote the limit point by $\mathbf{P}^x$ and $\tilde{\mathbf{P}}^x$, respectively. By Corollary \ref{corollarypequalq}, to prove $\mathbf{P}^x=\tilde{\mathbf{P}}^x$, it suffices to show that for any $h \in C_{b,lip}^{Cly}(\Omega_{Spin})$ 
		\begin{align} \label{equalcondition1}
			E^{\mathbf{P}^x}h=E^{\tilde{\mathbf{P}}^x}h.
		\end{align}
	    By Portmanteau theorem, if 
	    \begin{align}
	    	\label{equalconditon2}
	    	\lim_m \mathbb{E} h(\tilde{X}^{(m), x} (\cdot))=\lim_n \mathbb{E} h(\tilde{X}^{(n), x}(\cdot)),
	    \end{align}
		then (\ref{equalcondition1}) holds. Let $g$ be the corresponding function that satisfies the conditions of Definition \ref{deffinitedimensionalclyinderfunction}. Then we get for $m, n$ large enough
		\begin{align*} 
			&|\mathbb{E} h(\tilde{X}^{(n), x}(\cdot)) - \mathbb{E} h(\tilde{X}^{(m), x} (\cdot))|^2\\
            &= |\mathbb{E} g(X^{(n), x}_{(k)}(t_1),\cdots,X^{(n), x}_{(k)}(t_l)) -  \mathbb{E} g(X_{(k)}^{(m), x} (t_1),\cdots,X_{(k)}^{(m), x} (t_l))|^2\\
			&\leq \mathbb{E}|g(X^{(n), x}_{(k)}(t_1),\cdots,X^{(n), x}_{(k)}(t_l)) -  g(X_{(k)}^{(m), x} (t_1),\cdots,X_{(k)}^{(m), x} (t_l))|^2 \\
			&\leq L\sup\limits_{j=1,\cdots,l} \mathbb{E}\|X^{(n), x}_{(k)}(t_j) -  X_{(k)}^{(m), x} (t_j)\|^2_{\mathbb{R}^{\Lambda^k}}.
		\end{align*} 
		With the aid of lemma \ref{lemmalimitdifference}, we can derive
		\begin{align*}
			&\lim\limits_{n,m}\sup\limits_{j=1,\cdots,l}\mathbb{E}\|X^{(n), x}_{(k)}(t_j) -  X_{(k)}^{(m), x} (t_j)\|^2_{\mathbb{R}^{\Lambda_k}}=0.
		\end{align*}
		Hence (\ref{equalconditon2}) holds for $h \in C_{b,lip}^{Cly}(\Omega_{Spin})$. 
	\end{proof}

	%%%%%%%%%%%%%%%%%%%%%%%%%%%%%%%%%%%
	%%%%%%%%%%%%%%%%%%%%%%%%%%%%%%%%%%%%%%%%%%%%
	%%%%%%%%%%%%%%%%%%%%%%%%%%%%%%%%%%%%%%%%%%%%%%%%%%%%%
	
	\subsection{Solution as a Markov process}
	This section is devoted to proving the Markov property for the probability measure $\{\mathbf{P}^x\}_{x\in\ell^{2\theta}_{\rho}(\mathbb{Z^+})}$ solving the associated martingale problem. As preliminary results for the main theorem, we first present two fundamental lemmas that will support our arguments. The first of these lemmas provides a rigorous characterization of how the solutions continuously depend on their initial values. 
    \begin{lemma}\label{contionuouswithinitialconditon}
        Let $k\in\mathbb{N}$, $x\in\ell^{2\theta}_{\rho}(\mathbb{Z}^+)$ and $T\geq0$ be given.  Then for all $\epsilon>0$, there exist $\delta>0$ and $V\in\mathbb{N}$ satisfying: whenever $n\geq V$ and $y\in\{z\in\ell^{2\theta}_{\rho}(\mathbb{Z}^+);\Vert z-x\Vert_{2,\rho}\leq\delta\}$, we have 
        \begin{equation*}
            \mathbb{E}\sup\limits_{t\in[0,T]}\Vert X_{(k)}^{(n),x}(t)-X_{(k)}^{(n),y}(t)\Vert_{\mathbb{R}^{\Lambda_k}}^2\leq \epsilon.
        \end{equation*}
    \end{lemma}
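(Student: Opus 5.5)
The proof will follow the scheme of Lemma \ref{lemmalimitdifference}. The only change is that the two processes being compared now live on the same lattice $\Lambda_n$ and are driven by the same Brownian motion $W_1$, but start from different initial data $\Pi_{\Lambda_n}x$ and $\Pi_{\Lambda_n}y$. Set $D_i(t)=X^{(n),x}_i(t)-X^{(n),y}_i(t)$. The noise cancels, so each $D_i$ satisfies a random ODE:
\begin{equation*}
D_i(t)=x_i-y_i+\int_0^t\big(a_{i,i-1}D_{i-1}+a_{i,i+1}D_{i+1}+f(X^{(n),x}_i)-f(X^{(n),y}_i)\big)\,du .
\end{equation*}
We split $f=f_1+f_2+f_3$ as in Assumption \ref{uniquenesscondition} (ii), and further split the $f_1$ contribution according to whether $|X^{(n),x}_i|\vee|X^{(n),y}_i|\le N$.

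\textbf{Step 1: one-step inequality.} Squaring, using H\"older, the Lipschitz bound for $f_2$, monotonicity for $f_3$, $L(N)$ on the truncated event, and the polynomial growth (\ref{fdissipa}) together with the Chebyshev factor $N^{-\gamma}$ off that event, gives the analogue of (\ref{oneprocedureestimate}):
\begin{equation*}
\mathbb{E}\|D_{(k)}(t)\|_k^2\le \tilde C\|x_{(k)}-y_{(k)}\|_k^2+\tilde C(\tilde M+L^2(N))\,t\int_0^t\mathbb{E}\|D_{(k+1)}(t_1)\|_{k+1}^2dt_1+\frac{\tilde C t^2}{N^\gamma\rho^{2\theta+\gamma}(k)} .
\end{equation*}
The moment factor in the last term is controlled by (\ref{rhosumkestimat}). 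This requires a uniform bound on $\|y\|_{2\theta,\rho}$, which we get by restricting to $\|y\|_{2\theta,\rho}\le\|x\|_{2\theta,\rho}+1$. For the supremum in time we keep the $\sup_{s\le t}$ inside from the start, since the equation has no stochastic integral.

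\textbf{Step 2: iteration.} Iterating $l$ times as in (\ref{repeatestimationoflamdak}) produces three contributions:
\begin{itemize}
\item the terms $\mathbf V_1(N)$ and $\mathbf V_2(N)$ exactly as before;
\item a new initial-data term $\tilde C\sum_{i=0}^{l}[\tilde C(\tilde M+L^2(N))]^i\frac{t^{2i}}{(2i-1)!!}\|x_{(k+i)}-y_{(k+i)}\|^2_{k+i}$.
\end{itemize}
Since $\|x_{(k+i)}-y_{(k+i)}\|^2\le \|x-y\|_{2,\rho}^2/\inf_{j\le k+i}\rho(j)\le C e^{r(k+i)}\delta^2$ by Assumption \ref{uniquenesscondition} (i), the new term is bounded by $C e^{rk}\delta^2\exp\{\tfrac12\tilde C(1+L^2(N))\}$ via (\ref{ineqdoubleseries}).

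\textbf{Step 3: choice of parameters.} The order matters. First choose $N$ so that $\mathbf V_2(N)\le\epsilon/3$, as in (\ref{estimateofV2N}). Then choose $l$ so that $\mathbf V_1(N)\le\epsilon/3$, and set $V=k+l+1$, so that every $n\ge V$ allows $l$ iterations. Finally choose $\delta$ so that the initial-data term is at most $\epsilon/3$. The bounds are independent of $n$ because all constants come from Lemma \ref{ineqXn} and from (\ref{rhosumkestimat}) and (\ref{rhosumkestimat2}), which are dimension-free.

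\textbf{Main obstacle.} The delicate step is that the tail term $\mathbf V_1$ involves $\mathbb{E}\|D_{(k+l)}\|^2$. This is bounded via (\ref{rhosumkestimat2}) only by $C/\rho(k+l)$, and that bound must be uniform in $y$ near $x$. Here one has to check that the constant in (\ref{rhosumkestimat2}) depends on $y$ only through $\|y\|_{2,\rho}$ and the ratio condition in Assumption \ref{uniquenesscondition} (i). The argument of Lemma \ref{ineqXn} indeed gives such a bound. One must also make sure the factor $e^{r(k+i)}$ coming from the initial data is absorbed into the $(2i-1)!!$ decay exactly as the $\rho^{-(2\theta+\gamma)}$ factors were in (\ref{estimateofV2N}).
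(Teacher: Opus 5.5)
Your scheme is the paper's: a one-step inequality with the extra term $\tilde C\Vert x_{(k)}-y_{(k)}\Vert_k^2$, then iteration into $\mathbf V_1$, $\mathbf V_2$ and an initial-data sum, the last controlled via Assumption \ref{uniquenesscondition}(i) and (\ref{ineqdoubleseries}). Choosing $l$ before $\delta$ (the paper fixes $\delta$ first) is immaterial. Your pathwise treatment of $\sup_{t\le T}$ (the difference solves a random ODE) supplies a step the paper omits, since the paper only bounds $\mathbb{E}\Vert\cdot(t)\Vert_k^2$ at fixed $t$.

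The genuine gap is the restriction $\Vert y\Vert_{2\theta,\rho}\le\Vert x\Vert_{2\theta,\rho}+1$. The lemma quantifies over \emph{all} $y\in\ell^{2\theta}_\rho(\mathbb{Z}^+)$ with $\Vert y-x\Vert_{2,\rho}\le\delta$. For $\theta>1$ such an $\ell^2_\rho$-ball is unbounded in $\ell^{2\theta}_\rho$: take $y=x+te^j$ with $\vert t\vert=\delta\rho(j)^{-1/2}$ and let $j\to\infty$. So you prove a strictly weaker statement. It is also not the one used in Theorem \ref{Markov property}, where $x_n\to x$ only in $\ell^2_\rho(\mathbb{Z}^+)$.

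The restriction is unnecessary. In the paper, (\ref{rhosumkestimat}) is derived from three ingredients:
\begin{itemize}
\item the inequality $\sum_{i\in\Lambda_k}\vert X_i\vert^{4\theta+2\gamma}\rho^{2\theta+\gamma}(k)\le\bigl(\sum_{i\in\Lambda_k}\vert X_i\vert^2\rho(k)\bigr)^{2\theta+\gamma}$;
\item an $\ell^2$-type moment bound as in (\ref{ineq4}) with $p=2\theta+\gamma$;
\item the ratio condition of Assumption \ref{uniquenesscondition}(i).
\end{itemize}
Its constant therefore depends on the initial datum only through $\Vert\cdot\Vert_{2,\rho}$, exactly as you correctly observe for (\ref{rhosumkestimat2}). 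This is the whole point of the factor $\rho^{2\theta+\gamma}(k)$. With $\delta\le 1$ you get $\Vert y\Vert_{2,\rho}\le\Vert x\Vert_{2,\rho}+1$ on the entire ball, which is all the uniformity you need. Drop the $\ell^{2\theta}_\rho$ restriction and argue this way.

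A secondary point, shared with the paper's own estimate: ``squaring and H\"older'' cannot be combined with ``monotonicity for $f_3$''. H\"older leaves $\int_0^t\vert f_3(X^{(n),x}_i)-f_3(X^{(n),y}_i)\vert^2du$, which a one-sided condition does not control. Since the noise cancels, work with $\frac{d}{dt}\vert D_i\vert^2=2D_i(\cdots)$ instead. On $\{\vert X^{(n),x}_i\vert\vee\vert X^{(n),y}_i\vert\le N\}$ this gives $2D_i\bigl(f(X^{(n),x}_i)-f(X^{(n),y}_i)\bigr)\le 2(L(N)+C_f-\xi)\vert D_i\vert^2$. Off that event, the growth bound (\ref{fdissipa}) for $f$ itself yields the $N^{-\gamma}$ term. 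The iteration then produces factors $(Ct)^i/i!$ in place of $t^{2i}/(2i-1)!!$, and the rest of your argument goes through as before.
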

    \begin{proof}
        By employing the same notation and estimation techniques as in Lemma \ref{lemmalimitdifference}, we can show that for arbitrary but fixed $k\in\mathbb{N}$, $x,y\in\ell^{2\theta}_{\rho}(\mathbb{Z}^+)$, $0\leq t\leq T$ and sufficiently large $n$, the following estimate holds:
        \begin{align*}
            &\mathbb{E}\left\Vert X^{(n),x}_{(k)}(t)-X^{(n),y}\right\Vert_{k}^2\\
            &\leq \tilde{C}\Vert x_{(k)}-y_{(k)}\Vert^2_{k}+\frac{\tilde{C}t^2}{N^{\gamma}\rho^{2\theta+\gamma}(k)}+\tilde{C}(\tilde{M}+L^2(N))t\int_0^t\mathbb{E}\left\Vert X^{(n),x}_{(k+1)}(t_1)-X^{(n),y}_{(k+1)}(t_1)\right\Vert_{k+1}^2dt_1\\
            &\leq \cdots\\
            &\leq \tilde{\mathbf{V}}_1(N)+\tilde{\mathbf{V}}_2(N)+\tilde{\mathbf{V}}_3(N).
        \end{align*}
        Here
        \begin{align*}
            &\tilde{\mathbf{V}}_1(N)=\frac{\left[\tilde{C}(\tilde{M}+L^2(N))\right]^{l}t^{2l}}{(2l-1)!!\rho(k+l)},\\
            &\tilde{\mathbf{V}}_2(N)=\sum\limits_{i=1}^{l}\frac{\left[\tilde{C}(\tilde{M}+L^2(N))\right]^{i-1}t^{2i}}{(2i-1)!!N^{\gamma}\rho^{2\theta+\gamma}(k+i-1)},\\
            &\tilde{\mathbf{V}}_3(N)=\sum\limits_{i=1}^{l}\tilde{C}\left[\tilde{C}(\tilde{M}+L^2(N))\right]^{i-1}\Vert x_{(k+i-1)}-y_{(k+i-1)}\Vert^2_{k+i-1}\frac{t^{2(i-1)}}{(2i-3)!!},
        \end{align*}
        where $\gamma$ is a positive constant to be determined. Following arguments analogous to those in Lemma \ref{lemmalimitdifference} and using Assumption \ref{uniquenesscondition}, we find that for any $\epsilon>0$, there exists $\tilde{N}$ such that $\tilde{\mathbf{V}}_2(N)\leq \frac{\epsilon}{3}$ holds for all $N\geq \tilde{N}$. Note that 
        \begin{align*}
            \rho(k+i-1)\Vert x_{(k+i-1)}-y_{(k+i-1)}\Vert_{k+i-1}^2
            &\leq \sum\limits_{j=1}^{k+i-1}\vert x_j-y_j\vert^2\rho(j)\frac{\rho(k+i-1)}{\inf\limits_{1\leq p\leq k+i-1}\rho(p)} \\
            &\quad +\sum\limits_{j=k+i}^{\infty}\vert x_j-y_j\vert^2\rho(j)\leq\tilde{C}\Vert x-y\Vert_{2,\rho}^2.
        \end{align*}
        Consequently, for this particular $\tilde{N}$, through computations and rescaling $\tilde{c}$ analogous to (\ref{estimateofV2N}), we obtain the estimate
        \begin{align*}
            \tilde{\mathbf{V}}_3(\tilde{N})&\leq\tilde{C}\sum\limits_{i=1}^{\infty}\frac{\left[\tilde{C}(\tilde{M}+L^2(\tilde{N}))\right]^{i-1}t^{2(i-1)}}{(2i-3)!!\rho(k+i-1)}\Vert x-y\Vert^2_{2,\rho}\\
            & \leq\tilde{C}\sum\limits_{i=1}^{\infty}\frac{\left[\tilde{C}(\tilde{M}+L^2(\tilde{N}))\right]^{i-1}t^{2(i-1)}}{(2i-3)!!}e^{r(k+i-1)}\Vert x-y\Vert^2_{2,\rho}\\
            &\leq \tilde{C}\sum\limits_{i=1}^{\infty}\frac{\left[\tilde{C}(1+L^2(\tilde{N}))\right]^{i-1}}{(2i-3)!!}\Vert x-y\Vert^2_{2,\rho}\\
            &\leq \tilde{C}\Vert x-y\Vert_{2,\rho}^2\left(1+\sqrt{\tilde{C}(1+L^2(\tilde{N}))}\sum\limits_{i=0}^{\infty}\frac{\sqrt{\tilde{C}(1+L^2(\tilde{N}))}^{2i+1}}{(2i+1)!!}\right)\\
            &\leq \tilde{C}\Vert x-y\Vert_{2,\rho}^2\left(1+\sqrt{\tilde{C}(1+L^2(\tilde{N}))}\exp\{\frac{\tilde{C}(1+L^2(\tilde{N}))}{2}\}\right).
        \end{align*}
        With 
        \begin{equation*}
            \delta=\frac{\epsilon}{3}\left[\tilde{C}\left(1+\sqrt{\tilde{C}(1+L^2(\tilde{N}))}\exp\{\frac{\tilde{C}(1+L^2(\tilde{N}))}{2}\}\right)\right]^{-1},
        \end{equation*}
          we have $\tilde{\mathbf{V}}_3(\tilde{N})\leq\frac{\epsilon}{3}$, $\forall y\in\ell^{2\theta}_{\rho}(\mathbb{Z}^+)$ with $\Vert x-y\Vert_{2,\rho}^2\leq \delta$. Following arguments analogous to those in the proof of Lemma \ref{lemmalimitdifference}, we conclude that there exists $V$ such that $\tilde{\mathbf{V}}_1(\tilde{N})\leq \frac{\epsilon}{3}$ holds for all $l\geq V$. The assertion of the lemma follows by synthesizing the above analysis.
    \end{proof}
    \begin{remark}\label{remarkoncontionuouswithinitialconditon}
        While Lemma \ref{contionuouswithinitialconditon} establishes the result for deterministic initial values, the same methodology applies to random initial variables. More precisely, for given $k\in\mathbb{N}$, $x\in\ell^{2\theta}_{\rho}(\mathbb{Z}^+)$ and $T\geq0$ and any $\epsilon>0$, there exist $\delta>0$ and $V\in\mathbb{N}$ such that for all $n\geq V$ and any $\ell^2_{\rho}(\mathbb{Z}^+) $-valued random variable $\xi$ satisfying $\mathbb{E}\Vert \xi-x\Vert_{2,\rho}\leq \delta$, we have 
        \begin{equation*}
            \mathbb{E}\sup\limits_{t\in[0,T]}\Vert X_{(k)}^{(n),x}(t)-X_{(k)}^{(n),\xi}(t)\Vert_{\mathbb{R}^{\Lambda_k}}^2\leq \epsilon.
        \end{equation*}
    \end{remark}
    The second lemma concerns the measurable modification of the canonical process. Following arguments analogous to Lemma 2.2 of Flandoli and Romito  \cite{FlandoliRomito2008Markovselections}, we omit its proof here.
    \begin{lemma}\label{pmodificationcondition}
        Let $P\in Pr(\Omega_{Spin})$ be such that 
        \begin{equation*}
            P(C([0,\infty);\ell^{2\theta}_{\rho,\sigma}(\mathbb{Z^+}))\bigcap \Omega_{Spin})=1.
        \end{equation*}
        Then, for any given $t\geq 0$, the canonical process $X(t,\omega)=\omega(t)$ has a $P$-modification on $\mathscr{B}_t$ which is $\mathscr{B}_t$-measurable with values in $(\ell^{2\theta}_{\rho}(\mathbb{Z^+}),\mathscr{B}(\ell^{2\theta}_{\rho}(\mathbb{Z^+})))$, where $\mathscr{B}(\ell^{2\theta}_{\rho}(\mathbb{Z^+}))$ is the Borel $\sigma$-field of $\ell^{2\theta}_{\rho}(\mathbb{Z^+})$.
    \end{lemma}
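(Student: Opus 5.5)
The plan follows Flandoli and Romito's Lemma 2.2, using Lemma \ref{borelmeasurablesetinomega} to identify the full-measure set. Let $E:=C([0,\infty);\ell^{2\theta}_{\rho,\sigma}(\mathbb{Z}^+))\cap\Omega_{Spin}$. By Lemma \ref{borelmeasurablesetinomega}, $E$ coincides with $L^\infty_{loc}([0,\infty);\ell^{2\theta}_\rho(\mathbb{Z}^+))\cap\Omega_{Spin}$ and is a Borel subset of $\Omega_{Spin}$; by hypothesis $P(E)=1$.

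The first step is to show that for fixed $t$ the set
\begin{equation*}
E_t:=\{\omega\in\Omega_{Spin};\ \sup_{s\in[0,t]}\Vert\omega(s)\Vert_{2\theta,\rho}<\infty\}
\end{equation*}
belongs to $\mathscr{B}_t$, and that $P(E_t)=1$. For the measurability, we use lower semicontinuity of $\Vert\cdot\Vert_{2\theta,\rho}$ on $\ell^2_\rho(\mathbb{Z}^+)$. Writing the norm as $\sup_N\big(\sum_{i\le N}|x_i|^{2\theta}\rho(i)\big)^{1/2\theta}$, each truncated sum is a continuous function of the coordinates. Moreover $s\mapsto\omega(s)$ is continuous in $\ell^2_\rho$, so on $E_t$ the supremum over $[0,t]$ may be restricted to rational $s$ together with $s=t$. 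Hence $\sup_{s\le t}\Vert\omega(s)\Vert_{2\theta,\rho}$ is a countable supremum of $\mathscr{B}_t$-measurable functions, which gives $E_t\in\mathscr{B}_t$. The inclusion $E\subset E_t$ then yields $P(E_t)=1$.

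Next I define the modification by $\tilde X(t,\omega):=\omega(t)$ for $\omega\in E_t$ and $\tilde X(t,\omega):=0$ otherwise. Since $P(E_t)=1$, $\tilde X(t)=X(t)$ holds $P$-a.s. For $\omega\in E_t$ we have $\omega(t)\in\ell^{2\theta}_\rho$. This holds because $\omega(t)$ is the $\ell^2_\rho$-limit of $\omega(s_k)$, which are bounded in $\ell^{2\theta}_\rho$, so by weak lower semicontinuity (equivalently, Fatou applied coordinatewise) $\Vert\omega(t)\Vert_{2\theta,\rho}\le\liminf_k\Vert\omega(s_k)\Vert_{2\theta,\rho}<\infty$. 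Thus $\tilde X(t)$ takes values in $\ell^{2\theta}_\rho$.

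It remains to check that $\tilde X(t)$ is $\mathscr{B}_t/\mathscr{B}(\ell^{2\theta}_\rho)$-measurable, and this is the main technical point. The space $\ell^{2\theta}_\rho$ is a separable Banach space, so its Borel $\sigma$-field is generated by the coordinate maps $x\mapsto x_i$, $i\in\mathbb{Z}^+$. One way to see this is that the norm is a countable supremum of coordinate functionals, so open balls are countable intersections of cylinder-type sets, and separability gives the rest. On $\ell^2_\rho$ the maps $\omega\mapsto\omega_i(t)$ are continuous, hence $\mathscr{B}_t$-measurable. Consequently each $\omega\mapsto\tilde X_i(t,\omega)=\mathbf 1_{E_t}(\omega)\,\omega_i(t)$ is $\mathscr{B}_t$-measurable, and therefore so is $\tilde X(t)$ as a map into $(\ell^{2\theta}_\rho,\mathscr{B}(\ell^{2\theta}_\rho))$. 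I expect the main care to lie in justifying that the Borel $\sigma$-field of $\ell^{2\theta}_\rho$ equals the cylinder $\sigma$-field, which we settle via separability and the countable-supremum representation of the norm.
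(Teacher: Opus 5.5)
Your proposal is correct. It is essentially the Flandoli--Romito argument that the paper cites without writing out: keep $X(t)$ on a full-measure set in $\mathscr{B}_t$ where it is $\ell^{2\theta}_\rho(\mathbb{Z}^+)$-valued, set it to $0$ elsewhere, and get $\mathscr{B}_t/\mathscr{B}(\ell^{2\theta}_\rho(\mathbb{Z}^+))$-measurability because $\mathscr{B}(\ell^{2\theta}_\rho(\mathbb{Z}^+))$ is generated by the coordinate maps (equivalently, is contained in $\mathscr{B}(\ell^{2}_\rho(\mathbb{Z}^+))$). The points to tidy are these:
\begin{itemize}
\item Your lower-semicontinuity argument gives $\sup_{s\in[0,t]}\Vert\omega(s)\Vert_{2\theta,\rho}=\sup_{s\in[0,t]\cap\mathbb{Q}}\Vert\omega(s)\Vert_{2\theta,\rho}$ for every $\omega\in\Omega_{Spin}$. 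That is what the measurability of $E_t$ needs; the identity only on $E_t$ is not enough.
\item The simpler set $\{\omega;\Vert\omega(t)\Vert_{2\theta,\rho}<\infty\}\in\sigma(X(t))\subset\mathscr{B}_t$ would already suffice.
\item The Fatou step is redundant, since $s=t$ already belongs to the supremum.
\end{itemize}
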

    The following is our main result on the Markov property of solutions to (\ref{INFSDE2}).
	\begin{theorem}
		\label{Markov property}
		Let $\mathbf{P}^x$ be the unique solution to the martingale problem. Then $x\mapsto\mathbf{P}^x$ is measurable on $\ell^{2\theta}_{\rho}(\mathbb{Z}^+)$ and $\{\mathbf{P}^x\}_{x\in\ell^{2\theta}_{\rho}(\mathbb{Z}^+)}$ is Markov.
	\end{theorem}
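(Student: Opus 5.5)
The plan is to obtain both measurability and the Markov property from the finite-volume approximations. For each fixed $n$, the family $\{P_n^x\}$ is a Markov family, and by Theorem \ref{uniqunessof l2sspace} the whole sequence $\tilde P^x_n$ converges to $\mathbf{P}^x$, with no subsequence needed.

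\textbf{Measurability.} I would show that for every $h\in C_{b,lip}^{Cly}(\Omega_{Spin})$ the map $x\mapsto \mathbf{E}^x h = \lim_n \tilde E^x_n h$ is Borel on $\ell^{2\theta}_\rho(\mathbb{Z}^+)$.
1. For fixed $n$, $x\mapsto \tilde E^x_n h=\mathbb{E}\,g(X^{(n),\Pi_{\Lambda_n}x}_{(k)}(t_1),\dots)$ is continuous. This follows from continuous dependence on initial data for the finite-dimensional SDE (\ref{CUTSDE}) with locally Lipschitz coefficients, together with the moment bounds of Lemma \ref{ineqXn} and a stopping argument.
2. A pointwise limit of measurable maps is measurable, so $x\mapsto \mathbf{E}^x h$ is Borel.
3. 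To pass from this class to all bounded Borel $h$, I would use a monotone class argument. Indicators of the sets in $\mathcal C$ of Lemma \ref{finitedimensional determin} are bounded pointwise limits of such $h$, as in Corollary \ref{corollarypequalq}. The sets in $\mathcal C$ form a $\pi$-system generating $\mathscr{B}^{Spin}$, and the class of sets $C$ with $x\mapsto \mathbf{P}^x(C)$ measurable is a $\lambda$-system.
4. In fact Lemma \ref{contionuouswithinitialconditon} gives more: $x\mapsto\mathbf{E}^x h$ is continuous in the $\ell^2_\rho$ topology.

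\textbf{Markov property.} The claim to prove is
\begin{equation*}
\mathbf{E}^x\big[h(X(s+\cdot))\,G(\varrho_s X)\big]=\mathbf{E}^x\big[\mathbf{E}^{X(s)}[h]\,G(\varrho_s X)\big]
\end{equation*}
for $h\in C^{Cly}_{b,lip}$ and $G\in C_b(\bar\Omega_s)$. Lemma \ref{pmodificationcondition} guarantees that $X(s)$ has a $\mathscr{B}_s$-measurable $\ell^{2\theta}_\rho$-valued version, which makes $\mathbf{E}^{X(s)}[h]$ well defined; the Step 1 bound of Theorem \ref{existencesolution} gives $\mathbf{P}^x(C([0,\infty);\ell^{2\theta}_{\rho,\sigma})\cap\Omega_{Spin})=1$, which is what that lemma needs. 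The finite-volume identity $\tilde E^x_n[h(X(s+\cdot))G]=\tilde E^x_n[\Phi_n(X(s))G]$, with $\Phi_n(y):=\tilde E^{y}_n h$, holds by the strong Markov property of the SDE on $\mathbb{R}^n$. The left side converges to $\mathbf{E}^x[h(X(s+\cdot))G]$ by weak convergence. For the right side I would use two facts:
\begin{itemize}
\item $\Phi_n\to\Phi:=\mathbf{E}^{\cdot}h$ pointwise, by Theorem \ref{uniqunessof l2sspace}.
\item This convergence is locally uniform, and the family $\{\Phi_n\}_n$ is equicontinuous in $\ell^2_\rho$ near each point, by Lemma \ref{contionuouswithinitialconditon} together with the random initial version in Remark \ref{remarkoncontionuouswithinitialconditon}. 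For the convergence itself, the threshold $V(\epsilon,T,k)$ in Lemma \ref{lemmalimitdifference} can be taken uniform over an $\ell^2_\rho$-neighbourhood of $x$ on which $\Vert y\Vert_{2\theta,\rho}$ is bounded.
\end{itemize}
Then $\tilde E^x_n[\Phi_n(X(s))G]-\tilde E^x_n[\Phi(X(s))G]\to0$. This uses tightness, so that $X(s)$ lies in a compact $K\subset\ell^2_\rho$ with high probability, uniform convergence of $\Phi_n$ on $K$, and the bound $|\Phi_n|\le\|g\|_\infty$. Since $\Phi$ is continuous and bounded, $\tilde E^x_n[\Phi(X(s))G]\to\mathbf{E}^x[\Phi(X(s))G]$. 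Extending from this class of $h$ and $G$ to all bounded measurable functionals is a monotone class argument as before, and the identity is exactly the Markov property.

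\textbf{Main obstacle.} The hard step is the uniform-on-compacts convergence $\Phi_n\to\Phi$, because the random point $X^{(n)}(s)$ moves with $n$. It is also delicate that compacts of $\ell^2_\rho$ need not be bounded in $\ell^{2\theta}_\rho$, while the constants in Lemma \ref{lemmalimitdifference} depend on $\Vert x\Vert_{2\theta,\rho}$. I would handle this by intersecting $K$ with the ball $\{\Vert y\Vert_{2\theta,\rho}\le R\}$, which has large probability uniformly in $n$ by (\ref{ineq3}). On this set, Lemma \ref{contionuouswithinitialconditon} combined with a finite $\delta$-net gives the needed uniformity.
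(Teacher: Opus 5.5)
Your proposal is correct. The Markov-property half has the same skeleton as the paper's proof:
\begin{itemize}
\item the finite-volume identity $\tilde E^x_n[h(X(s+\cdot))G]=\tilde E^x_n[\Phi_n(X(s))G]$;
\item weak convergence for the left side;
\item Lemma~\ref{contionuouswithinitialconditon} as the key estimate for the right side.
\end{itemize}
The difference lies in how you pass to the limit in $\tilde E^x_n[\Phi_n(X(s))G]$. The paper uses the Skorohod representation. It then proves only a continuous-convergence statement: $x_n\to x$ in $\ell^2_\rho$ implies $\varphi_n(x_n)\to\varphi(x)$. This needs Lemma~\ref{contionuouswithinitialconditon} only at the single limit point $x$, plus weak convergence at $x$. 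It also handles automatically the fact that $\Phi$ is defined only on $\ell^{2\theta}_\rho$, since the Skorohod limit $\bar X(t)$ lies there almost surely.

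You instead prove uniform convergence on compacts (equicontinuity from the lemma, pointwise convergence, a finite $\delta$-net) and $\ell^2_\rho$-continuity of $\Phi$, and combine these with tightness. This works but costs more, and one step needs an extra sentence. $\Phi$ is not a bounded continuous function on $\ell^2_\rho$, so you cannot directly conclude $\tilde E^x_n[\Phi(X(s))G]\to\mathbf{E}^x[\Phi(X(s))G]$. The fix is:
\begin{itemize}
\item restrict to $K\cap\{\Vert y\Vert_{2\theta,\rho}\le R\}$, which is closed in $\ell^2_\rho$ by Fatou's lemma;
\item extend $\Phi$ from this set by Tietze;
\item control the complement uniformly in $n$ by tightness and (\ref{ineq3}), and under $\mathbf{P}^x$ by the Portmanteau theorem.
\end{itemize}
You also do not need the uniform-in-a-neighbourhood version of Lemma~\ref{lemmalimitdifference} that you mention. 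Asymptotic equicontinuity plus pointwise convergence at the finitely many centres of the net already gives uniformity on the compact.

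For measurability the routes genuinely differ. The paper simply cites the Stroock--Varadhan argument (well-posedness implies measurability of $x\mapsto\mathbf{P}^x$) and does not check that it transfers to this infinite-dimensional path space. You prove measurability directly from the approximation:
\begin{itemize}
\item $x\mapsto\tilde E^x_nh$ is continuous, because it factors through $\Pi_{\Lambda_n}$ and finite-dimensional continuous dependence on initial data;
\item the whole sequence converges, by the uniqueness theorem of Section 3.4, so $x\mapsto\mathbf{E}^xh$ is Borel for cylindrical bounded Lipschitz $h$;
\item a $\pi$--$\lambda$ argument over $\mathcal C$ finishes.
\end{itemize}
This is self-contained. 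As a by-product it gives $\ell^2_\rho$-continuity of $x\mapsto\mathbf{E}^xh$ on $\ell^{2\theta}_\rho$. That Feller-type property is used only implicitly in the paper, later, in the proof of Theorem~\ref{invariantmessureforinfinitsystem}.
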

	\begin{proof}
        Through reasoning similar to Exercise 6.7.4 in Stroock and Varadhan  \cite{StroockVaradhan1979Multidimensional}, the well-posedness of the martingale problem guarantees that for each $\Gamma\in\mathscr{B}^{Spin}$, the mapping $x\mapsto\mathbf{P}^x(\Gamma)$ is  $\mathscr{B}(\ell^{2\theta}_{\rho}(\mathbb{Z}^+))$-measurable. As for the Markov property of $\{\mathbf{P}^x\}_{x\in\ell^{2\theta}_{\rho}(\mathbb{Z}^+)}$, we need only demonstrate that
        \begin{equation}\label{Markovc2}
            \ \mathbf{P}^x\left(X(\cdot+t) \in C \mid \mathscr{B}^{Spin}_t\right)=\phi\left(X(t)\right),\quad \mathbf{P}^x-a.s. \quad  \forall C \in \mathscr{B}^{Spin},\, t\geq 0,
        \end{equation}
        where $\phi(y)=\mathbf{P}^{y}\left(X(\cdot) \in C\right)$. By combining Lemma \ref{borelmeasurablesetinomega}, Lemma \ref{pmodificationcondition} with the definition of solutions to the martingale problem, we conclude that for arbitrary $t\geq 0$, the canonical process $X(t)$ admits a $\mathbf{P}^x$-modification on $\mathscr{B}^{Spin}_t$ which is $\mathscr{B}^{Spin}_t$-measurable with values in $(\ell^{2\theta}_{\rho}(\mathbb{Z^+}),\mathscr{B}(\ell^{2\theta}_{\rho}(\mathbb{Z^+})))$. We shall denote the modified process again by $X$, and all subsequent occurrences of $X$ in what follows refer to this  modified process. Hence, $\phi(X(t))$ in (\ref{Markovc2}) is $\mathbf{P}^x$-a.s. well-defined.  
		
		It is obvious that (\ref{Markovc2}) is the same as the statement that for any $B\in \mathscr{B}^{Spin}_t$,
		\begin{align*}
			\mathbf{E}^x(\mathbf{1}_C(X(\cdot+t))\mathbf{1}_B)=\mathbf{E}^x(\phi\left(X(t)\right)\mathbf{1}_B).
		\end{align*}
		Since the collection of sets $C\in \mathscr{B}(\ell^{2\theta}_{\rho}(\mathbb{Z^+})$ for which (\ref{Markovc2}) holds forms a Dynkin system, to prove (\ref{Markovc2}), it suffices to prove that for any $C\in \mathcal{C}$ (defined in Lemma \ref{finitedimensional determin}),
		\begin{align} 
			\label{MarkovDynkinsystemset}
			\mathbf{E}^x[\mathbf{1}_{C}(X(\cdot+t))|\mathscr{B}^{Spin}_t] = \mathbf{E}^{X(t)} [\mathbf{1}_C(X(\cdot))]. 
		\end{align} 
	    Owing to the approximation of $\mathbf{1}_{C}$ by $h_n\in C_{b,lip}^{f,Cyl}(\Omega_{Spin})$, the equation (\ref{MarkovDynkinsystemset}) can be equivalent to  that for any  $h\in C_{b,lip}^{f,Cyl}(\Omega_{Spin})$,
	    \begin{align}
	    	\label{MarkovClipfunction}
	    	\mathbf{E}^x[h(X(\cdot+t))|\mathscr{B}^{Spin}_t] = \mathbf{E}^{X(t)} h(X(\cdot)). 
	    \end{align}
		If we denote $\varphi(x) = \mathbf{E}^x[h(X(\cdot))]$, it is obvious that (\ref{MarkovClipfunction}) is the same as the statement that for any $B\in \mathscr{B}_t^{Spin}$,
		\begin{align}
			\label{integral equation of condition expetation}
			\int_B h(X(\cdot+t)) d\mathbf{P}^x = \int_B \varphi(X(t)) d\mathbf{P}^x.
		\end{align}
        As the indicator function $\mathbf{1}_B$ admits approximation by continuous bounded measurable functions, it suffices to establish that
        \begin{equation}\label{Markovequalcondition}
            \mathbf{E}^x[h(X(\cdot+t,\omega))Y(\omega|_{[0,t]})]=\mathbf{E}^x [\varphi(X(t,\omega))Y(\omega|_{[0,t]})],
        \end{equation}
		where $Y:C([0,t],\ell^{2}_{\rho}(\mathbb{Z}^+))\rightarrow\mathbb{R}$ is arbitrary, but fixed continuous bounded and $\mathscr{B}^{Spin}_t/\mathscr{B}(\mathbb{R})$-measurable function. Recall the definitions of $x_n$ and $\psi$ in (\ref{chidefinition}). Given a function $h\in C_{b,lip}^{f,Cyl}(\Omega_{Spin})$ that fulfills the requirements of Definition \ref{deffinitedimensionalclyinderfunction}, the weak convergence $\tilde{P}^x_{n}\stackrel{\text{w}}{\rightarrow} \mathbf{P}^x$ together with the Markov property for the finite-dimensional system yields:
        \begin{align}
            &\mathbf{E}^x[h(X(\cdot+t,\omega))Y(\omega|_{[0,t]})]\nonumber\\
            &=\mathbf{E}^x[h(\omega(\cdot+t))Y(\omega|_{[0,t]})]\nonumber\\
            &=\lim\limits_{n\rightarrow\infty}\tilde{E}^x_n[h(\omega(\cdot+t))Y(\omega|_{[0,t]})]\nonumber\\
            &=\lim\limits_{n\rightarrow\infty}\tilde{E}^x_n[g(\Pi_{\Lambda_m}\omega(t_1+t),\cdots,\Pi_{\Lambda_m}\omega(t_l+t))Y(\omega|_{[0,t]})]\nonumber\\
            &=\lim\limits_{n\rightarrow\infty}E^x_n[g(\Pi_{\Lambda_m}\chi_n \omega_n(t_1+t),\cdots,\Pi_{\Lambda_m}\chi_nX(t_l+t))Y((\psi_n\omega_n)|_{[0,t]})]\nonumber\\
            &=\lim\limits_{n\rightarrow\infty}E^x_n[E^{\omega_n(t)}_n[g(\Pi_{\Lambda_m}\chi_n \omega_n(t_1),\cdots,\Pi_{\Lambda_m}\chi_n\omega_n(t_l))]Y((\psi_n\omega_n)|_{[0,t]})]\nonumber\\
            &=\lim\limits_{n\rightarrow\infty}\tilde{E}^x_n[\tilde{E}^{\omega(t)}_n[g(\Pi_{\Lambda_m} \omega(t_1),\cdots,\Pi_{\Lambda_m}\omega(t_l))]Y(\omega|_{[0,t]})]\nonumber\\
            &=\lim\limits_{n\rightarrow\infty}\tilde{E}^x_n[\tilde{E}^{\omega(t)}_n[h(\omega(\cdot))]Y(\omega|_{[0,t]})]\label{Markovpropertylimt1}
        \end{align}
        Let $\varphi_n(x)=\tilde{E}^x_n[h(\omega(\cdot))]$. In order to establish (\ref{Markovequalcondition}), we observe from (\ref{Markovpropertylimt1}) that it suffices to prove
        \begin{equation*}
            \lim\limits_{n\rightarrow\infty}\tilde{E}^x_n[\varphi_n(\omega(t))Y(\omega|_{[0,t]})]=\mathbf{E}^x [\varphi(X(t,\omega))Y(\omega|_{[0,t]})].
        \end{equation*}
        By the Skorohod representation theorem (see Da Prato and Zabczyk \cite[Theorem 2.4]{stochasticequationsininfinitedimensionsdaza2014}), since $\tilde{P}^x_{n}\stackrel{\text{w}}{\rightarrow} \mathbf{P}^x$, we can obtain that on some probability space $(\bar{\Omega},\bar{\mathscr{F}},\bar{\mathbb{P}})$, there exist $\Omega_{Spin}$-valued random variables $\bar{X}_n$ and $\bar{X}$ such that
        \begin{itemize}
            \item[(i)] $\bar{X}_n\rightarrow\bar{X}$, in $\Omega_{Spin}$, $\bar{\mathbb{P}}$-a.s.,
            \item[(ii)] $\tilde{P}^x_{n}=\bar{\mathbb{P}}\circ\bar{X}_n^{-1}$, $\mathbf{P}^x=\bar{\mathbb{P}}\circ\bar{X}^{-1}$.
        \end{itemize}
         We claim that
         \begin{equation*}
             x_n,x\in\ell^{2\theta}_{\rho}(\mathbb{Z}^+)\quad\text{and}\quad  x_n\rightarrow x, \quad \text{in}\quad \ell^2_{\rho}(\mathbb{Z}^+) \quad \text{implies}\quad \varphi_n(x_n)\rightarrow\varphi(x),\quad \text{as}\quad n\rightarrow \infty.
         \end{equation*}
        By this claim, since $\bar{X}_n(t),\bar{X}(t)\in\ell^{2\theta}_{\rho}(\mathbb{Z}^+)$ and $\bar{X}_n(t)\rightarrow\bar{X}(t)$ in $\ell^2_{\rho}(\mathbb{Z}^+)$, $\bar{\mathbb{P}}$-a.s., we obtain
        \begin{equation*}
            \varphi_n(\bar{X}_n(t))\rightarrow\varphi(\bar{X}(t)),\quad \bar{\mathbb{P}}-a.s.
        \end{equation*}
        Consequently, an application of the dominated convergence theorem gives
        \begin{align*}
            \lim\limits_{n\rightarrow\infty}\tilde{E}^x_n[\varphi_n(\omega(t))Y(\omega|_{[0,t]})]&=\lim\limits_{n\rightarrow\infty}\bar{\mathbb{E}}[\varphi_n(\bar{X}_n(t))Y(\bar{X}_n|_{[0,t]})]\\
            &=\bar{\mathbb{E}}[\varphi(\bar{X}(t))Y(\bar{X}|_{[0,t]})]=\mathbf{E}^x [\varphi(X(t,\omega))Y(\omega|_{[0,t]})].
        \end{align*}
        To complete the proof of the theorem, it suffices to verify the above claim.  By Lemma \ref{contionuouswithinitialconditon}, for any $\epsilon>0$, there exist $\delta>0$ and $N_1\in\mathbb{N}$ such that when $n>N_1$ and $y\in\{z\in\ell^{2\theta}_{\rho}(\mathbb{Z}^+);\Vert z-x\Vert_{2,\rho}\leq\delta\}$, 
        \begin{equation}\label{limitphiineq1}
            \mathbb{E}\sup\limits_{t\in[0,T]}\Vert X_{(k)}^{(n),x}(t)-X_{(k)}^{(n),y}(t)\Vert_{\mathbb{R}^{\Lambda_k}}^2\leq \frac{\epsilon}{4L^2},
        \end{equation}
        where $L$ is the Lipschitz constant of $g$ in Definition \ref{deffinitedimensionalclyinderfunction}. 
        Since $x_n\rightarrow x$ in $\ell^{2}_{\rho}(\mathbb{Z}^+)$, for the given $\delta>0$, there exists $N_2\in\mathbb{N}$ such that when $n>N_2$, $\Vert x_n-x\Vert_{2,\rho}\leq \delta$. 
        On the other hand, the weak convergence implies that for this $\epsilon$, there exists $N_3\in\mathbb{N}$ such that whenever $n>N_3$,
        \begin{equation}\label{limitphiineq2}
            \left\vert \tilde{E}^x_n[h(\omega(\cdot))]-\mathbf{E}^x[h(X(\cdot))] \right\vert\leq\frac{\epsilon}{4}.
        \end{equation}
        From (\ref{limitphiineq1}) and (\ref{limitphiineq2}), it follows that for this $\epsilon$, whenever $n>N_1\vee N_2\vee N_3$,
        \begin{align*}
            &\vert \varphi_n(x_n)-\varphi(x)\vert^2\\
            &\leq 2\vert \varphi_n(x_n)-\varphi_n(x)\vert^2+2\vert \varphi_n(x)-\varphi(x)\vert^2\\
            &=2\vert \tilde{E}^{x_n}_n[h(\omega(\cdot))]- \tilde{E}^{x}_n[h(\omega(\cdot))]\vert^2+2\vert \tilde{E}^{x}_n[h(\omega(\cdot))]-\mathbf{E}^x[h(X(\cdot))]\vert^2\\
            &\leq 2\mathbb{E}[\vert g(\Pi_{\Lambda_m} \tilde{X}^{(n),x_n}(t_1),\cdots,\Pi_{\Lambda_m}\tilde{X}^{(n),x_n}(t_l))-g(\Pi_{\Lambda_m} \tilde{X}^{(n),x}(t_1),\cdots,\Pi_{\Lambda_m}\tilde{X}^{(n),x}(t_l))\vert^2]\\
            &\quad+2\vert \tilde{E}^{x}_n[h(\omega(\cdot))]-\mathbf{E}^x[h(X(\cdot))]\vert^2\\
            &\leq 2L^2\mathbb{E}[\sup\limits_{t\in[0,T]}\Vert X_{(k)}^{(n),x}(t)-X_{(k)}^{(n),x_n}(t)\Vert_{\mathbb{R}^{\Lambda_k}}^2]+2\vert \tilde{E}^{x}_n[h(\omega(\cdot))]-\mathbf{E}^x[h(X(\cdot))]\vert^2\\
            &\leq \epsilon.
        \end{align*} 	
	\end{proof}
	%%%%%%%%%%%%%%%%%%%
	%%%%%%%%%%%%%%%%%%%%%%%%%%%%%%
	%%%%%%%%%%%%%%%%%%%%%%%%%%%%%%%%%%%%%%%%%%%%%%  
	
 	\subsection{Existence of invariant measure for the transition semi-group}
	From Theorem \ref{Markov property},  it follows that $\{\mathbf{P}^x\}_{x\in\ell^{2\theta}_{\rho}(\mathbb{Z}^+)}$ and the modified process $X=\{X(t),\mathscr{B}^{Spin}_t;$ $t\geq 0\}$ constitutes a Markov process on $(\Omega_{Spin},\mathscr{B}^{Spin})$. We denote by $\mathcal{P}_t$ and $\mathcal{P}_t(x,\Gamma)$, $t\geq  0$, $x\in \ell^{2\theta}_{\rho}(\mathbb{Z}^+)$, $\Gamma\in\mathscr{B}(\ell^{2\theta}_{\rho}(\mathbb{Z}^+))$  the associated transition semi-group on $B_b(\ell^{2\theta}_{\rho}(\mathbb{Z}^+))$ the space of bounded measurable functions on $(\ell^{2\theta}_{\rho}(\mathbb{Z}^+),\mathscr{B}(\ell^{2\theta}_{\rho}(\mathbb{Z}^+))$, and the transition probability kernel, which are defined as  
    \begin{equation*}
        (\mathcal{P}_t\phi)(x)=\mathbf{E}^x[\phi(X(t))],\quad t\geq 0,x\in\ell^{2\theta}_{\rho}(\mathbb{Z}^+), \phi\in B_b(\ell^{2\theta}_{\rho}(\mathbb{Z}^+)),
    \end{equation*}
    and
    \begin{equation*}
        \mathcal{P}_t(x,\Gamma)=\mathcal{P}_t\mathbf{1}_{\Gamma}(x),\quad t\geq 0,x\in \ell^{2\theta}_{\rho}(\mathbb{Z}^+), \Gamma\in\mathscr{B}(\ell^{2\theta}_{\rho}(\mathbb{Z}^+)).
    \end{equation*}
    For any $t\geq 0$, we define the adjoint operator $\mathcal{P}^*_t$ acting on $\mathscr{P}(\ell^{2\theta}_{\rho}(\mathbb{Z}^+))$, the space of probability measures on $(\ell^{2\theta}_{\rho}(\mathbb{Z}^+),\mathscr{B}(\ell^{2\theta}_{\rho}(\mathbb{Z}^+))$  by
    \begin{equation*}
        \mathcal{P}^*_t\mu(\Gamma)=\int_{\ell^{2\theta}_{\rho}(\mathbb{Z}^+)}\mathcal{P}_t(x,\Gamma)\mu(dx),\quad \Gamma\in\mathscr{B}(\ell^{2\theta}_{\rho}(\mathbb{Z}^+)).
    \end{equation*}
    This section focusses on the existence problem of invariant measures $\nu\in\mathscr{P}(\ell^{2\theta}_{\rho}(\mathbb{Z}^+))$ for the transition semi-group $\mathcal{P}_t$, which are defined by the requirement $\mathcal{P}^*_t\nu=\nu$ for each $t\geq 0$. We derive that the tightness of measures $\left\{\nu_n\right\}=\left\{\mu_n \circ \chi^{-1}_n\right\}$ ($\chi^{-1}_n$ is given by (\ref{chidefinition}) on $\ell^{2\theta}_{\rho}(\mathbb{Z}^+)$,  where $\mu_n$ is the invariant measure for the finite-dimensional process $X^{(n)}$ on $(\Omega_n,\mathscr{B}(\Omega_n))$, and consequently show that any limit point is an invariant measure for the transition semi-group $\mathcal{P}_t$.
	To establish the tightness of finite-dimensional invariant measures, we impose the following assumptions on the space weights.
    \begin{condition}\label{invariantmeasuretightness}
    \begin{itemize}
        \item[(i)] Suppose that 
        $$\lambda>\frac{2MN+2(2\theta-1)M}{2\theta}+\theta+\eta+\frac{1}{2\theta}-\frac{1}{2},$$
        where $\theta,\lambda,M,N,\eta$ are given in Assumption \ref{A2};
        \item[(ii)] There exists $v:\mathbb{Z^+}\rightarrow\mathbb{R}^+ \quad  \text{such that}\quad  v(i)>0, i \in \mathbb{Z}^+, \sum_i v(i)<+\infty, \sum_i \frac{\rho(i)}{v(i)}<+\infty.$
    \end{itemize}   
    \end{condition}
	In this subsection, let us work under Assumptions \ref{A2}, \ref{uniquenesscondition}, and \ref{invariantmeasuretightness}.
	\begin{lemma}\label{tightnessinvariantmeasure}
		The sequence of measures $\left\{\nu_n\right\}$ is tight in $\ell^{2\theta}_{\rho}(\mathbb{Z}^+)$.
	\end{lemma}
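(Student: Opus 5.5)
The plan is to obtain a moment bound on $\nu_n$ that is uniform in $n$ and uses a stronger weight than $\rho$. Compact sets in $\ell^{2\theta}_\rho(\mathbb{Z}^+)$ can then be built from Lemma \ref{Precompactcondition}.

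\textbf{Step 1: compact sets.} Choose $v$ as in Assumption \ref{invariantmeasuretightness}(ii) and set
\[
K_R:=\Big\{x:\ \sum_i |x_i|^{2\theta}v(i)\le R\Big\}.
\]
This set is bounded in $\ell^{2\theta}_\rho$. Its tails are uniformly small, since for $x\in K_R$
\[
\sum_{i\ge n_0}|x_i|^{2\theta}\rho(i)\le R\,\sup_{i\ge n_0}\frac{\rho(i)}{v(i)},
\]
and $\rho(i)/v(i)\to0$ because $\sum_i\rho(i)/v(i)<\infty$. By Lemma \ref{Precompactcondition}, $K_R$ is therefore precompact. Fatou's lemma shows it is closed in $\ell^{2\theta}_\rho$, so it is compact. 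By Chebyshev's inequality it then suffices to show
\[
\sup_n\int \sum_{i=1}^n |x_i|^{2\theta}v(i)\,\mu_n(dx)<\infty .
\]
If the weight $v$ is awkward for the neighbour terms, I would instead run the argument with a weight comparable to $\rho$ under the neighbour-ratio bound $N$ (for instance $\rho$ itself, or $\sqrt{\rho v}$). For $\rho$ itself, the extra decay for tightness would then have to come from a separate tail estimate.

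\textbf{Step 2: Lyapunov estimate uniform in $n$.} Take
\[
U_n(x)=\sum_{i=1}^n|x_i|^{2\theta}w(i)
\]
for the chosen weight $w$ and apply $\mathscr{L}_n$, exactly as in (\ref{ineqCVx}) but now keeping the dissipative term instead of discarding it. This gives
\[
\mathscr{L}_nU_n\le \sum_i 2\theta w(i)\Big[M\big(|x_{i-1}||x_i|^{2\theta-1}+|x_{i+1}||x_i|^{2\theta-1}\big)+\eta|x_i|^{2\theta-2}-\lambda|x_i|^{2\theta}\Big]+\theta(2\theta-1)w(1)|x_1|^{2\theta-2}.
\]
Young's inequality $ab^{2\theta-1}\le \frac1{2\theta}a^{2\theta}+\frac{2\theta-1}{2\theta}b^{2\theta}$ handles the neighbour terms, and shifting indices with $w(i\pm1)\le Nw(i)$ produces the factor $2MN+2(2\theta-1)M$. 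The $\eta|x_i|^{2\theta-2}$ term and the Itô correction are absorbed via $|x|^{2\theta-2}\le |x|^{2\theta}+1$, which contributes constants $\eta+\theta$ and so on. Assumption \ref{invariantmeasuretightness}(i) is precisely the condition making the net coefficient negative. The result should be
\[
\mathscr{L}_nU_n\le -\kappa U_n+C,
\]
where $\kappa>0$ and $C=C'\sum_i w(i)$ are independent of $n$.

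\textbf{Step 3: transfer to $\mu_n$.} Itô's formula with the localization used in Lemma \ref{ineqXn} gives
\[
\mathbb{E}U_n(X^{(n),0}(t))\le \frac{C}{\kappa}\quad\text{for all } t .
\]
The time-averaged laws $\frac1t\int_0^t\mathcal{P}^{(n)}_s(0,\cdot)\,ds$ converge to $\mu_n$ by Theorem \ref{finitdimensionalergodicresult}, since the unique invariant measure is the limit. Lower semicontinuity together with truncation $U_n\wedge R$ then yields $\int U_n\,d\mu_n\le C/\kappa$. Alternatively, one can integrate $\mathscr{L}_n(U_n\wedge\cdot)$ against $\mu_n$ after first checking integrability of $U_n$ through the geometric ergodicity bound.

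\textbf{Main obstacle.} The hardest step is Step 2, for two reasons. The boundary/index shifting must be done with the neighbour-ratio constant $N$ without losing uniformity in $n$. In addition, a weight that is stronger than $\rho$ (so that compactness holds) must still satisfy the neighbour-ratio condition. I would first try $w=\sqrt{\rho v}$, or otherwise replace $v$ by a regularized version satisfying the ratio bound.
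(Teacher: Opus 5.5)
Your route is the paper's route. Both arguments use a weighted Lyapunov function $\sum_{i\le n}|x_i|^{2\theta}w(i)$ and a drift bound $\mathscr{L}_nU_n\le-\kappa U_n+C$ with $C$ independent of $n$, obtained via Assumption \ref{invariantmeasuretightness}(i). This gives a uniform bound on $\int U_n\,d\mu_n$, and compactness then comes from Lemma \ref{Precompactcondition}. The paper takes $w=v$, quotes $\mu_n(\mathscr{L}_nV_n)=0$ from Liggett, and argues by contradiction on the box $\{|x_i|^{2\theta}\le c_\epsilon/v(i)\ \forall i\}$, which is compact because $\sum_i\rho(i)/v(i)<\infty$. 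This is equivalent to your moment bound plus Chebyshev on the ball $K_R$. Your Step 3 (It\^o with localization, then the limit via Theorem \ref{finitdimensionalergodicresult} and truncation) is sound, and more careful than quoting $\mu_n(\mathscr{L}_nV_n)=0$ for an unbounded $V_n$.

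The step you flag as the main obstacle is real, and your proposal does not close it. In the Young step the paper writes, for example, $v(1)|x_2|^{2\theta}\le N v(2)|x_2|^{2\theta}$. That is, it uses a neighbour-ratio bound for $v$, which neither Assumption \ref{invariantmeasuretightness}(ii) nor Assumption \ref{A2}(ii) (which concerns $\rho$ only) provides. Neither of your candidates repairs this:
\begin{itemize}
\item $w=\rho$ has the ratio bound, but its sublevel sets are only bounded in $\ell^{2\theta}_\rho$, not compact.
\item $w=\sqrt{\rho v}$ is summable with $\rho/w=\sqrt{\rho/v}\to0$, but $w(i+1)/w(i)=\sqrt{\rho(i+1)/\rho(i)}\,\sqrt{v(i+1)/v(i)}$ is unbounded whenever $v$ oscillates.
\end{itemize}
So as written, Step 2 has no admissible weight.

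A weight that works can be built from $\rho$ alone. Set $r_i:=\sum_{j\ge i}\rho(j)$, fix $\delta\in(0,1)$, and put $w(i):=\rho(i)\,r_i^{-\delta}$. Since $r_i$ is decreasing and $r_i=\rho(i)+r_{i+1}\le(1+N)r_{i+1}$, you get $w(i-1)\le N\,w(i)$ and $w(i+1)\le N(1+N)^{\delta}w(i)$. Moreover $\sum_i w(i)\le\int_0^{r_1}s^{-\delta}\,ds<\infty$, and $\rho(i)/w(i)=r_i^{\delta}\to0$. The neighbour coefficient in the drift becomes $M(N+N(1+N)^{\delta})$ instead of $2MN$. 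Because Assumption \ref{invariantmeasuretightness}(i) is a strict inequality, you can take $\delta$ small enough that the drift is still $\le-\kappa U_n+C$. With this $w$, your set $K_R=\{\sum_i|x_i|^{2\theta}w(i)\le R\}$ is compact in $\ell^{2\theta}_\rho$: its tails are bounded by $r_{n_0}^{\delta}R$, and it is closed by Fatou. The rest of your argument then goes through, and Assumption \ref{invariantmeasuretightness}(ii) is not even needed.
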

	\begin{proof}
        We want to show that for a given $\epsilon>0$ there is a compact set $K_\epsilon$ in $\ell_\rho^{2\theta}(\mathbb{Z}^+)$ such that for any $ n \in \mathbb{N}$ one has $\nu_n\left(K_\epsilon\right) \geq 1-\epsilon$. 
	
		Set
        \begin{equation*}
            V_n(x^{(n)}):=\sum\limits_{i=1}^{n}(x_i^{(n)})^{2\theta}v(i), \quad  x^{(n)}\in\mathbb{R}^n.
        \end{equation*}
        An application of Assumptions \ref{A2} (ii), (iii) and \ref{invariantmeasuretightness} (i) combined with computational techniques parallel to (\ref{Lyapucon}) yields
        \begin{align}
            &\mathscr{L}_nV_n(x^{(n)})\nonumber\\
            &=2\theta(a_{1,2}x^{(n)}_2+f(x_1^{(n)}))(x^{(n)}_1)^{2\theta-1}v(1)+2\theta(a_{2,1}x^{(n)}_1+a_{2,3}x^{(n)}_3+f(x^{(n)}_2))(x^{(n)}_2)^{2\theta-1}v(2)+\cdots\nonumber\\
            &\quad +2\theta(a_{n,n-1}x^{(n)}_{n-1}+f(x_n))(x^{(n)}_n)^{2\theta-1}v(n)+\frac{2\theta(2\theta-1)}{2}(x^{(n)}_1)^{2\theta-2}v(1)\nonumber\\
            & \leq 2\theta M\left(\frac{N(x_2^{(n)})^{2\theta}}{2\theta}v(2)+\frac{(2\theta-1)(x_1^{(n)})^{2\theta}}{2\theta}v(1)+\frac{N(x_1^{(n)})^{2\theta}}{2\theta}v(1)+\frac{(2\theta-1)(x_2^{(n)})^{2\theta}}{2\theta}v(2)\right.\nonumber\\
            &\quad \left.+\cdots +\frac{N(x_{n-1}^{(n)})^{2\theta}}{2\theta}v(n-1)+\frac{(2\theta-1)(x_n^{(n)})^{2\theta}}{2\theta}v(n)\right)+2\theta\left(\eta\sum\limits_{i=1}^{n}(x_i^{(n)})^{2\theta-2}v(i)\right.\nonumber\\
            &\quad \left.-\lambda\sum\limits_{i=1}^n(x_i^{(n)})^{2\theta}v(i)\right)+\frac{2\theta(2\theta-1)}{2}(x_1^{(n)})^{2\theta-2}v(1) \nonumber\\
            &\leq (2MN+2(2\theta-1)M)V_n(x^{(n)})-2\theta\lambda V_n(x^{(n)})+(\theta(2\theta-1)+2\theta\eta)(V_n(x^{(n)})+\sum\limits_{i=1}^{n}v(i))\nonumber\\
            &\quad +\theta(2\theta-1)(x^{(n)}_1)^{2\theta-2}v(1)\nonumber\\
            &\leq -V_n(x^{(n)})+2\theta\eta\sum\limits_{i=1}^{\infty}v(i),\label{invarianttight1}
        \end{align}
	    where $M$ and $N$ are constants in Assumption \ref{A2}. Let $C=2\theta\eta\sum\limits_{i=1}^{\infty}v(i)$ for simplicity of notation. From Theorem 3.37 of Liggett \cite{liggett2010continuoustimemarkov}, we have for the invariant measure $\mu_n$ of the finite-dimensional processes,
		$$
		\mu_n\left(\mathscr{L}_n V_n\right)=0.
		$$	
		Clearly
		$$
		\mu_n\left(\mathscr{L}_n V_n\right)=\mu_n\left(\mathscr{L}_n V_n \mathbf{1}_{\mathscr{L}_n V_n>0}\right)+\mu_n\left(\mathscr{L}_n V_n \mathbf{1}_{\mathscr{L}_n V_n \leq 0}\right),
		$$
		and  it follows from (\ref{invarianttight1}) that $\mu_n\left(\mathscr{L}_n V_n \mathbf{1}_{\mathcal{L}_n V_n>0}\right) \leq C$, so 
		\begin{equation}
			\label{contradiction}
			\mu_n\left(\mathscr{L}_n V_n \mathbf{1}_{\mathscr{L}_n V_n \leq 0}\right) \geq-C.
		\end{equation}	
		For a given $\epsilon>0$, we define the set $K_\epsilon$ as
		$$
		K_\epsilon=\left\{x \in \ell_\rho^{2\theta}(\mathbb{Z}^+);\forall i \in \mathbb{Z}^+,\left|x_i\right|^{2\theta}  \leq \left(\frac{C+1}{ \epsilon v(i)}+\frac{C}{ v(i)}\right)\right\} .
		$$	
		We can verify that $K_\epsilon$ is compact in $\ell^{2\theta}_{\rho}(\mathbb{Z}^+)$. Indeed, Assumption \ref{invariantmeasuretightness} (ii) yields: (1) for any $x\in K_\epsilon$, we have 
        \begin{equation*}
            \Vert x\Vert_{2\theta,\rho}^{2\theta}=\sum\limits_{i=1}^{\infty}\vert x_i\vert^{2\theta}\rho(i)\leq \sum\limits_{i=1}^{\infty}\left(\frac{C+1}{ \epsilon v(i)}+\frac{C}{ v(i)}\right)\rho(i),
        \end{equation*}
        hence $K_\epsilon$ is bounded; (2) for any $\epsilon_1>0$, there exists $N\in\mathbb{N}$ such that for $n_0>N$, 
        \begin{equation*}
            \sum\limits_{i=n_0}^{\infty}\vert x_i\vert^{2\theta}\rho(i)\leq \sum\limits_{i=n_0}^{\infty}\left(\frac{C+1}{ \epsilon v(i)}+\frac{C}{ v(i)}\right)\rho(i)\leq \epsilon_1.
        \end{equation*}
        Moreover, $K_\epsilon$ is clearly closed, and thus Lemma \ref{Precompactcondition} establishes its compactness. 
        
        For the particular set $K_\epsilon$ under consideration, we have
        \begin{align*}
            \nu_n\left(K_\epsilon^c\right)&=\mu_n\left(\chi^{-1}_n\left(K_\epsilon^c\right)\right)=\mu_n\left(x^{(n)}\in\mathbb{R}^{\Lambda_n};(x^{(n)},0_{i\in\mathbb{Z}^+\backslash\Lambda_n})\in K_\epsilon^c\right)\\
            &=\mu_n\left(x^{(n)} \in\mathbb{R}^{\Lambda_n}; \exists i \in \Lambda_n,|x^{(n)}_i|^{2\theta}>\frac{C+1}{ \epsilon v(i)}+\frac{C}{ v(i)}\right).
        \end{align*}
		Hence, for $x^{(n)} \in \chi^{-1}_n\left(K_\epsilon^c\right)=\left\{x^{(n)} \in\mathbb{R}^{\Lambda_n}; \exists i \in \Lambda_n,|x^{(n)}_i|^{2\theta}>\frac{C+1}{ \epsilon v(i)}+\frac{C}{ v(i)}\right\}$, we have
		$$
		\mathscr{L}_n V_n \left(x^{(n)}\right) \leq-\left(\frac{C+1}{ \epsilon v(i)}+\frac{C}{ v(i)}\right) v(i)+C \leq-\frac{C+1}{\epsilon} .
		$$	
		Therefore, if $\nu_n\left(K_\epsilon^c\right)>\epsilon$ hold, we would get a contradiction to (\ref{contradiction}).
	\end{proof}
    Following arguments similar to those in the proof of Lemma \ref{corollarypequalq}, we obtain without a proof a determining class of probability measures on the measurable space $(\ell^{2\theta}_\rho(\mathbb{Z}^+),\mathscr{B}(\ell^{2\theta}_\rho(\mathbb{Z}^+)))$, which we state as the following lemma.
    \begin{lemma}\label{determingclassell2s}
        Let $P$ and $Q$ be two probability measures on $(\ell^{2\theta}_\rho(\mathbb{Z}^+),\mathscr{B}(\ell^{2\theta}_\rho(\mathbb{Z}^+)))$. Then $P=Q$ if and only if $Ph=Qh$ for every $h\in C_{b,  lip }^{C y l}(\ell_\rho^{2\theta}(\mathbb{Z}^+))$. 
    \end{lemma}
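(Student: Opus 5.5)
The argument mirrors Lemma \ref{finitedimensional determin} and Corollary \ref{corollarypequalq}, transplanted from path space to the state space $\ell^{2\theta}_\rho(\mathbb{Z}^+)$. Here $C_{b,lip}^{Cyl}(\ell_\rho^{2\theta}(\mathbb{Z}^+))$ is understood as the class of functions $h(x)=g(\Pi_{\Lambda_m}x)$ with $g:\mathbb{R}^{\Lambda_m}\to\mathbb{R}$ bounded and Lipschitz. Necessity is trivial, so only sufficiency needs work, and it splits into two steps: a $\pi$-system argument and a Lipschitz approximation of indicators.

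\textbf{Step 1: cylinder sets generate the Borel $\sigma$-algebra.} Let $\mathcal{C}_0$ be the class of closed cylinder sets $C=\{x\in\ell^{2\theta}_\rho(\mathbb{Z}^+);\Pi_{\Lambda_m}x\in\Gamma\}$, where $m\ge1$ and $\Gamma\subset\mathbb{R}^m$ is closed. This class is closed under finite intersections: pass to the larger $m$ and take preimages. So by Dynkin's $\pi$-$\lambda$ theorem, $P=Q$ on $\sigma(\mathcal{C}_0)$ as soon as $P=Q$ on $\mathcal{C}_0$. Each coordinate map $x\mapsto x_i$ is continuous on $\ell^{2\theta}_\rho(\mathbb{Z}^+)$ because $\rho(i)>0$, so $\sigma(\mathcal{C}_0)\subset\mathscr{B}(\ell^{2\theta}_\rho(\mathbb{Z}^+))$.

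For the reverse inclusion, note that $\ell^{2\theta}_\rho(\mathbb{Z}^+)$ is separable, since finitely supported rational sequences are dense. Hence every open set is a countable union of closed balls $\bar B(x^0,a)$, and it suffices to show that these balls lie in $\sigma(\mathcal{C}_0)$. Rather than the Hahn--Banach argument used in Lemma \ref{finitedimensional determin}, which is awkward since $\ell^{2\theta}_\rho$ is not Hilbert, I would write the norm directly:
\[
\|x-x^0\|^{2\theta}_{2\theta,\rho}=\sup_{m}\sum_{i=1}^m|x_i-x^0_i|^{2\theta}\rho(i).
\]
This gives
\[
\bar B(x^0,a)=\bigcap_{m\ge1}\Big\{x;\ \sum_{i=1}^m|x_i-x^0_i|^{2\theta}\rho(i)\le a^{2\theta}\Big\}.
\]
Each set on the right is a closed cylinder set: $\Gamma$ is the preimage of a closed set under a continuous function on $\mathbb{R}^m$, and the map $x^0\mapsto\Pi_{\Lambda_m}x^0$ is fixed. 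Therefore $\bar B(x^0,a)\in\sigma(\mathcal{C}_0)$, and the two $\sigma$-algebras coincide. This is the only step needing care, namely handling the monotone limit in $m$ and separability. Expressing the norm as a supremum of finite sums avoids the duality argument entirely.

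\textbf{Step 2: approximate indicators by cylindrical Lipschitz functions.} For $C\in\mathcal{C}_0$ with base $\Gamma$, define
\[
h_n(x)=\big[1-n\,\mathrm{dist}_{\mathbb{R}^m}(\Pi_{\Lambda_m}x,\Gamma)\big]\vee0 .
\]
This $h_n$ is bounded by $1$ and of the form $g_n(\Pi_{\Lambda_m}x)$ with $g_n$ Lipschitz with constant $n$, so $h_n\in C_{b,lip}^{Cyl}(\ell_\rho^{2\theta}(\mathbb{Z}^+))$. Since $\Gamma$ is closed, $h_n\downarrow\mathbf{1}_C$ pointwise. By hypothesis $Ph_n=Qh_n$, and dominated convergence gives $P(C)=Q(C)$. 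Combined with Step 1, this yields $P=Q$.
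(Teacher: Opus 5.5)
Your proof is correct. Its skeleton is the one the paper intends; the paper states this lemma without proof and points back to Lemma~\ref{finitedimensional determin} and Corollary~\ref{corollarypequalq}. The three steps are the same: a $\pi$-system of closed cylinder sets, the fact that this $\pi$-system generates the Borel $\sigma$-algebra, and approximation of $\mathbf{1}_C$ by $[1-n\,\mathrm{dist}]\vee 0$ followed by dominated convergence.

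The real difference is in how you show that closed balls lie in the cylinder $\sigma$-algebra. The paper's path-space argument picks a dense sequence of finitely supported unit vectors $x^j$ and norming functionals $\zeta_j$, and writes $\bar B(0,1)=\bigcap_j\{|\zeta_j(x)|\le 1\}$. For this to yield cylinder sets, each $\zeta_j$ must depend on only finitely many coordinates. Hahn--Banach alone does not give that. In $\ell^{2\theta}_\rho(\mathbb{Z}^+)$ it does hold, by the explicit choice $\zeta_j(y)=\sum_{i\le m(j)} y_i\,\mathrm{sgn}(x^j_i)|x^j_i|^{2\theta-1}\rho(i)$, which has norm one by H\"older. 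So the paper's route transplants, but it needs this extra check.

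You instead use that the norm is the monotone supremum of its finite partial sums. Then $\bar B(x^0,a)$ is directly a countable intersection of closed cylinder sets, with no duality involved. Your version is shorter and more elementary, and it avoids the finite-support issue. The paper's version is the one that extends to general separable Banach spaces with a countable norming family of cylindrical functionals. On a weighted sequence space, though, your argument is the natural one.
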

    Applying Lemmas \ref{tightnessinvariantmeasure} and \ref{determingclassell2s}, we can now establish the following main theorem of this section concerning the existence of invariant measures.
	\begin{theorem}\label{invariantmessureforinfinitsystem}
		There exists an invariant measure for the Markov transition 
        semi-group $\mathcal{P}_t$.
	\end{theorem}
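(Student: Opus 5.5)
The plan is to pass to the limit in the finite-volume invariance identities. By Lemma \ref{tightnessinvariantmeasure} the sequence $\nu_n=\mu_n\circ\chi_n^{-1}$ is tight in $\ell^{2\theta}_\rho(\mathbb{Z}^+)$. By Prokhorov's theorem we extract a subsequence, still denoted $\nu_n$, with $\nu_n\stackrel{\text{w}}{\rightarrow}\nu$ in $\ell^{2\theta}_\rho(\mathbb{Z}^+)$, and hence also in $\ell^2_\rho(\mathbb{Z}^+)$, because the embedding is continuous: $\rho$ is summable, so Hölder gives $\Vert x\Vert_{2,\rho}\le C\Vert x\Vert_{2\theta,\rho}$. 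We must show $\mathcal{P}_t^*\nu=\nu$ for every $t\ge0$. By Lemma \ref{determingclassell2s} it suffices to prove $\int \mathcal{P}_t\phi\,d\nu=\int\phi\,d\nu$ for every $\phi\in C^{Cyl}_{b,lip}(\ell^{2\theta}_\rho(\mathbb{Z}^+))$. Such a $\phi$ has the form $\phi(x)=g(\Pi_{\Lambda_k}x)$ with $g$ bounded and Lipschitz on $\mathbb{R}^{\Lambda_k}$, so it is continuous on $\ell^2_\rho$. Invariance of $\mu_n$ for $X^{(n)}$ gives, for $n\ge k$,
\begin{equation*}
\int \mathcal{P}^{(n)}_t(\phi\circ\chi_n)\,d\mu_n=\int\phi\circ\chi_n\,d\mu_n=\int\phi\,d\nu_n\longrightarrow\int\phi\,d\nu .
\end{equation*}
Set $\varphi_n(y):=\tilde{E}^y_n[\phi(X(t))]=\mathcal{P}^{(n)}_t(\phi\circ\chi_n)(\Pi_{\Lambda_n}y)$ and $\varphi(y):=\mathbf{E}^y[\phi(X(t))]=\mathcal{P}_t\phi(y)$. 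The left-hand side then equals $\int\varphi_n\,d\nu_n$, since $\nu_n$ is supported on $\chi_n(\mathbb{R}^{\Lambda_n})$. It remains to show $\int\varphi_n\,d\nu_n\to\int\varphi\,d\nu$.

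For this we reuse the claim proved inside Theorem \ref{Markov property}: if $y_n,y\in\ell^{2\theta}_\rho(\mathbb{Z}^+)$ and $y_n\to y$ in $\ell^2_\rho(\mathbb{Z}^+)$, then $\varphi_n(y_n)\to\varphi(y)$. The ingredients are Lemma \ref{contionuouswithinitialconditon}, the Lipschitz property of $g$, and Theorem \ref{uniqunessof l2sspace}, which gives $\tilde P^y_n\stackrel{\text{w}}{\rightarrow}\mathbf{P}^y$ along the full sequence. This last point is essential: we cannot select a subsequence depending on $y$, and uniqueness removes that need. Next, Skorohod's representation theorem applied to $\nu_n\stackrel{\text{w}}{\rightarrow}\nu$ on the Polish space $\ell^{2\theta}_\rho(\mathbb{Z}^+)$ gives random variables $\xi_n\sim\nu_n$, $\xi\sim\nu$ with $\xi_n\to\xi$ in $\ell^{2\theta}_\rho$ almost surely, hence in $\ell^2_\rho$. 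The claim yields $\varphi_n(\xi_n)\to\varphi(\xi)$ almost surely, and since $|\varphi_n|,|\varphi|\le\Vert g\Vert_\infty$, dominated convergence gives $\int\varphi_n\,d\nu_n=\bar{\mathbb{E}}\varphi_n(\xi_n)\to\bar{\mathbb{E}}\varphi(\xi)=\int\mathcal{P}_t\phi\,d\nu$. Combining the two limits gives $\int\mathcal{P}_t\phi\,d\nu=\int\phi\,d\nu$, and Lemma \ref{determingclassell2s} yields $\mathcal{P}_t^*\nu=\nu$. Measurability of $\varphi$ comes from Theorem \ref{Markov property}.

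I expect the main obstacle to be the continuity claim $\varphi_n(y_n)\to\varphi(y)$. Lemma \ref{contionuouswithinitialconditon} is stated with $\delta$ and $V$ depending on the fixed point $x$, through tail sums of $x$ in constants such as (\ref{rhosumkestimat}). Here $x$ is the random limit $\xi(\omega)$, so we apply the lemma pathwise for each fixed $\omega$ with $y=\xi(\omega)$, which is legitimate because the claim is a pointwise statement. We must also check that $\xi(\omega)\in\ell^{2\theta}_\rho$, which holds since $\nu$ is a measure on $\ell^{2\theta}_\rho(\mathbb{Z}^+)$. A secondary point is the time horizon: $\phi$ is evaluated at the single time $t$, so Lemma \ref{contionuouswithinitialconditon} is used with $T=t$, and no uniformity in $t$ is needed.
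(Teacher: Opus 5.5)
Your proposal is correct and follows essentially the same route as the paper. Both arguments use tightness of $\{\nu_n\}$ (Lemma~\ref{tightnessinvariantmeasure}), reduction to cylindrical bounded Lipschitz test functions (Lemma~\ref{determingclassell2s}), finite-volume invariance of $\mu_n$, Skorohod representation, and the continuity claim $\varphi_n(x_n)\to\varphi(x)$ from the proof of Theorem~\ref{Markov property} together with dominated convergence. Your extra remarks, namely the continuous embedding $\ell^{2\theta}_\rho\hookrightarrow\ell^2_\rho$ and the need for full-sequence convergence $\tilde P^y_n\to\mathbf P^y$ supplied by Theorem~\ref{uniqunessof l2sspace}, make explicit points the paper leaves implicit.
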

	\begin{proof}
    From Lemma \ref{tightnessinvariantmeasure}, we know that the sequence of measures $\{\nu_n\}$ is tight. Consequently, there exists a weakly convergent subsequence, which we still denote by $\{\nu_n\}$ without loss of generality, such that $\nu_n\stackrel{\text{w}}{\rightarrow} \nu$.  To show that $\nu$ is invariant, by Lemma \ref{determingclassell2s} it suffices to show that for any $h \in C_{b,  lip }^{C y l}(\ell_\rho^{2\theta}(\mathbb{Z}^+))$,
		\begin{equation}
			\label{invariantmeasurecondition}
			\int_{\ell_\rho^{2\theta}(\mathbb{Z}^+)} \mathcal{P}_t h(x) d \nu(x)=\int_{\ell_\rho^{2\theta}(\mathbb{Z}^+)} h(x) d \nu(x).
		\end{equation}	
		 Recall that $\nu_n=\mu_n \circ \chi^{-1}_n$ and that $\mu_n$ is the invariant measure for process $X^{(n)}$ on $\mathbb{R}^{\Lambda_n}$, so that the equality
		\begin{equation}
		    \label{finitedimensioninvariantequality}
			\int_{\mathbb{R}^{\Lambda_n}} E_n^{x_n} h\circ\chi_n\left(\omega_n(t)\right) d \mu_n\left(x_n\right)=\int_{\mathbb{R}^{\Lambda_n}} h\circ\chi_n\left(x_n\right) d \mu_n\left(x_n\right), \quad \forall h \in C_b\left(\mathbb{R}^{\Lambda_n}\right)
		\end{equation}
		holds. Therefore, we have
		\begin{align}
			\int_{\ell_\rho^{2\theta}(\mathbb{Z}^+)} h(x) d \nu(x)&=\lim _n \int_{\ell_\rho^{2\theta}(\mathbb{Z}^+)} h(x) d \nu_n(x)=\lim _n \int_{\mathbb{R}^{\Lambda_n}}\left(h \circ \chi_n\right)\left(x_n\right) d \mu_n\left(x_n\right) \nonumber\\
			& \overset{(\ref{finitedimensioninvariantequality})}{=}\lim _n \int_{\mathbb{R}^{\Lambda_n}} E_n^{x_n}[\left(h \circ \chi_n\right)\left(\omega_n(t)\right)] d \mu_n\left(x_n\right)\nonumber\\
            &=\lim _n \int_{\ell_\rho^{2\theta}(\mathbb{Z}^+)} \tilde{E}_n^x[\left(h \circ \chi_n\right)\left(\omega_n(t)\right)] d \nu_n(x) \nonumber\\
			& =\lim _n \int_{\ell_\rho^{2\theta}(\mathbb{Z}^+)} \tilde{E}_n^x h\left(\omega(t)\right) d \nu_n(x).\label{limiteqaulity1}
		\end{align}
		 By the Skorohod representation theorem (see Da Prato and Zabczyk \cite[Theorem 2.4]{stochasticequationsininfinitedimensionsdaza2014}), since $\nu_n\stackrel{\text{w}}{\rightarrow} \nu$, we can obtain that on some probability space $(\bar{\Omega},\bar{\mathscr{F}},\bar{\mathbb{P}})$, there exist $\ell_\rho^{2\theta}(\mathbb{Z}^+)$-valued random variables $Z_n$ and $Z$ such that
        \begin{itemize}
            \item[(i)] $Z_n\rightarrow Z$, in $\ell_\rho^{2\theta}(\mathbb{Z}^+)$, $\bar{\mathbb{P}}$-a.s.,
            \item[(ii)] $\nu_{n}=\bar{\mathbb{P}}\circ Z_n^{-1}$, $\nu=\bar{\mathbb{P}}\circ Z^{-1}$.
        \end{itemize}
        From the proof of Theorem \ref{Markov property}, we know $\tilde{E}_n^{Z_n} h\left(\omega(t)\right)\rightarrow \mathbf{E}^{Z} h\left(X(t)\right)$, $\bar{\mathbb{P}}$-a.s., hence the dominated convergence theorem gives
        \begin{align}
            \lim\limits_n \int_{\ell_\rho^{2\theta}(\mathbb{Z}^+)} \tilde{E}_n^x h\left(\omega(t)\right) d \nu_n(x)&=\lim\limits_n \int_{\bar{\Omega}} \tilde{E}_n^{Z_n(\bar{\omega})} h\left(\omega(t)\right) d \bar{\mathbb{P}}_n(\bar{\omega})=\int_{\bar{\Omega}} \mathbf{E}^{Z(\bar{\omega})} h\left(\omega(t)\right) d \bar{\mathbb{P}}(\bar{\omega})\nonumber\\
            &=\int_{\ell_\rho^{2\theta}(\mathbb{Z}^+)} E^x h\left(X(t)\right) \nu(x)=\int_{\ell_\rho^{2\theta}(\mathbb{Z}^+)} \mathcal{P}_t h(x) d \nu(x).\label{limiteqaulity2}
        \end{align}
        A combination of (\ref{limiteqaulity1}) with (\ref{limiteqaulity2}) yields (\ref{invariantmeasurecondition}), thus establishing the theorem.
	\end{proof}
	
	%%%%%%%%%%%%%%%%%%%%%%
	%%%%%%%%%%%%%%%%%%%%%%%%%%%%%%%%%%%%%%%
	%%%%%%%%%%%%%%%%%%%%%%%%%%%%%%%%%%%%%%%%%%%%%%%%%%%%%%%%%%%
    \section{Time-inhomogeneous systems}
    In this section, we address the case where the coefficients of the equation depend on time in a periodic manner. Under such circumstances, the resulting solutions become time-inhomogeneous, rendering traditional methods based on invariant measures inadequate for studying long-time behaviour. To characterize the asymptotic dynamics of the system, we thus employ the concept of periodic measures (Feng and Zhao \cite{fengzhao2020randomperiodic}). This necessitates an investigation into the existence of periodic measures for the equation.
    \subsection{Time-inhomogeneous finite-dimensional systems: preliminary result}
    In Section 2, we investigated the existence and uniqueness of invariant measures for time-homogeneous finite-dimensional SDEs. The current section focusses on time-inhomogeneous finite-dimensional SDEs with time-periodic coefficients, examining the existence and uniqueness of periodic measures. Let us first introduce some notation. Given a locally compact separable metric space $(E, \mathscr{B}(E))$, consider a time-inhomogeneous SDE with Markovian solution $\{X^{s,x}(t); t\geq s\}$ starting at time $s \geq  0$ from an initial position $x \in E$. We define
    \begin{description}
        \item[(i)] The two-parameter transition semi-group $\{\mathcal{P}(s,t);0\leq s \leq t\}$ acting on $B_b(E)$ by
        \begin{equation*}
            \mathcal{P}(s,t)\phi(x)=\mathbb{E}[\phi(X^{s,x}(t))],\quad 0\leq s \leq t,x\in E,\phi \in B_b(E);
        \end{equation*}
        \item[(ii)]  The transition probability kernel $\mathcal{P}(s, t, x, dy)$ on $E$ satisfying
        \begin{equation*}
            \mathcal{P}(s,t,x,\Gamma)=\mathcal{P}(s,t)\mathbf{1}_{\Gamma}(x),\quad 0\leq s \leq t,x\in E,\Gamma\in \mathscr{B}(E);
        \end{equation*}
        \item[(iii)] The adjoint semi-group $\{\mathcal{P}^*(s,t);0\leq s \leq t\}$ acting on $\mathscr{P}(E)$, the space of all probability measures on $E$, by
        \begin{equation*}
            \mathcal{P}^*(s,t)\mu(\Gamma)=\int_E \mathcal{P}(s,t,x,\Gamma)\mu(dx),\quad 0\leq s \leq t, \Gamma\in \mathscr{B}(E).
        \end{equation*}
    \end{description} 
    The concept of periodic measures, originally defined by Feng and Zhao  \cite{fengzhao2020randomperiodic}, is formally stated below.
    \begin{definition}
        Let $(E,\mathscr{B}(E))$ be a locally compact separable metric space. A measure-valued function $\mu:\mathbb{T}\rightarrow\mathscr{P}(E)$ is called a T-periodic measure with respect to the transition semi-group $\mathcal{P}(\cdot,\cdot)$ if for all $0\leq s\leq t$
        \begin{equation*}
            \mu_{s+T}=\mu_s,\quad \mu_{t}=\mathcal{P}^*(s,t)\mu_s.
        \end{equation*}
    \end{definition}
    We investigate the following $\mathbb{R}^n$-valued SDEs
    \begin{equation}
		\label{periodicFSDE}
		\begin{cases}
			dX^{(n)}(t)=\left(A_n(t)X^{(n)}(t)+F_n(t,X^{(n)}(t))\right)dt+B_ndW^{(n)}(t) ,  \quad t\geq s,\\
			X^{(n)}(s)=x^{(n)} \in \mathbb{R}^n,
		\end{cases}
	\end{equation}
	where $W^{(n)}(t)$ is an $n$-dimensional Wiener process on a probability space $(\Omega,\mathcal{F},\mathbb{P})$ with identity covariance and $B_n$ is given in (\ref{AnFnBn}). The time-dependent operator $A_n$ and nonlinear function $F_n$ are given by 
	\begin{equation}
		\label{periodicAnFnBn}
			A_n(t)=\begin{pmatrix}
				0 & a_{1,2}(t) & & & & \\
				a_{2,1}(t) & 0 & a_{2,3}(t)& & & \\
				& \ddots &\ddots& \ddots& \\
				& & \ddots& \ddots& a_{n-1,n}(t) \\
				& & & a_{n,n-1}(t)&0&\\
		
			\end{pmatrix},
			F_n(t,x^{(n)})=\begin{pmatrix}
				f(t,x^{(n)}_1)\\
				f(t,x^{(n)}_2)\\
				\vdots \\
				f(t,x^{(n)}_n)\\
			\end{pmatrix}, 
	\end{equation}
	where $f$ is a function that maps $\mathbb{R}^+\times\mathbb{R}$ to $\mathbb{R}$.

    After appropriate modifications to our assumptions, the same methodology yields the existence and uniqueness of solutions. We state our conclusions without proof below.
    \begin{condition}
		\label{periodicA1}
		\begin{description}
        \item[(i)] The coefficients $\{a_{i,i-1}(t),a_{i-1,i}(t);2\leq i\leq n\}$ and $f(t,z)$ are T-periodic in $t$ for any fixed $z\in\mathbb{R}$.
			\item [(ii)] Matrix $A_n$ has elements satisfying: $a_{i,i-1}(t)\neq 0,2\leq i\leq n,t\geq 0$ and there exists a constant $M>0$ such that
			$$
			\max \left\{\vert a_{i-1,i}(t)\vert ,\vert a_{i,i-1}(t) \vert; 2\leq i\leq n \right\}\leq M,\quad \forall t\geq 0. 
			$$
			\item [(iii)] The function $f\in C^\infty (\mathbb{R}^+\times\mathbb{R} , \mathbb{R})$ is weakly dissipative, i.e. there exist constants $\eta >0$ and
			$2M< \lambda-\frac{1}{2}$ such that
			$$
			f(t,z)z \leq \eta -\lambda \vert z \vert^2 ,\quad \forall(t,z)\in \mathbb{R}^+\times\mathbb{R}.
			$$
		\end{description}
	\end{condition} 
     Based on the existence and uniqueness of strong solutions $X^{(n),s,x}$ with initial time $s$ and initial value $x$, and by exploiting the periodicity of the equation coefficients, we conclude that the transition probability kernel $\mathcal{P}_n(s,t,x,\Gamma)=\mathbb{E}[X^{(n),s,x}(t)\in \Gamma]$ is periodic (see Feng et al.  \cite[Definition 2.1]{fengzhaozhong2023existence}). This leads to the following theorem. 
     \begin{theorem}
         \label{periodicmeasureexistence}
         Under Assumption \ref{periodicA1}, there exists a unique T-periodic measure $\mu_{n,\cdot}$.
     \end{theorem}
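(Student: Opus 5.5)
We work with the Poincaré (period) map. Since the coefficients are $T$-periodic, $\mathcal{P}_n(s+T,t+T)=\mathcal{P}_n(s,t)$. So for each fixed $s\in[0,T)$ the discrete-time Markov chain on $\mathbb{R}^n$ with kernel $\Pi_s:=\mathcal{P}_n(s,s+T,\cdot,\cdot)$ is time-homogeneous. We use the Harris theorem (the Meyn--Tweedie framework used for Theorem \ref{finitdimensionalergodicresult}) in the form given by Feng, Zhao and Zhong \cite{fengzhaozhong2023existence}. This form states that if the one-period kernels admit a Lyapunov function and satisfy a minorization on its sublevel sets, then there is a unique $T$-periodic measure. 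Concretely, we will show three things. First, for each $s$, $\Pi_s$ has a unique invariant measure $\mu_{n,s}$. Second, setting $\mu_{n,t}:=\mathcal{P}_n^*(s,t)\mu_{n,s}$ for $t\ge s$ is consistent: by Chapman--Kolmogorov and periodicity, $\mathcal{P}_n^*(s,s+T)\mathcal{P}_n^*(s',s)\mu_{n,s'}=\mathcal{P}_n^*(s',s)\mathcal{P}_n^*(s',s'+T)\mu_{n,s'}$, so $\mathcal{P}^*_n(s',s)\mu_{n,s'}$ is $\Pi_s$-invariant and equals $\mu_{n,s}$ by uniqueness. Third, uniqueness of the periodic measure follows from uniqueness of the invariant measure of $\Pi_s$, because any $T$-periodic measure $\tilde\mu$ has $\tilde\mu_s$ invariant for $\Pi_s$.

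\textbf{Lyapunov drift.} Take $V_n(x)=1+|x|^2$ and the time-dependent generator $\mathscr{L}_n(t)$. By Assumption \ref{periodicA1}(ii),(iii), with the same computation as in the time-homogeneous case,
\[
\mathscr{L}_n(t)V_n\le 2\cdot 2M|x|^2+2n\eta-2\lambda|x|^2+1\le -\kappa V_n+C_n,
\]
uniformly in $t$, with $\kappa=2\lambda-4M>1>0$ by $2M<\lambda-\tfrac12$. Itô's formula with localization (Khasminskii's argument, as in Lemma \ref{ineqXn}) then gives
\[
\mathbb{E}V_n(X^{(n),s,x}(s+T))\le e^{-\kappa T}V_n(x)+C_n/\kappa .
\]
This is the geometric drift condition for $\Pi_s$, uniform in $s$.

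\textbf{Minorization.} This is the main obstacle, since the noise is one-dimensional. We need, for every $R$, a probability measure $\nu$ and a constant $\delta>0$ with $\Pi_s(x,\cdot)\ge\delta\nu$ for $|x|\le R$. We follow the route postponed to Section 6 for the homogeneous case. \emph{Hypoellipticity:} the fields $\partial_1$ and $[\partial_1, b(t,x)]$ include $a_{2,1}(t)\partial_2+f'(x_1)\partial_1$. Iterated brackets with the drift produce $\partial_3,\dots,\partial_n$ with nonvanishing coefficients $\prod a_{i,i-1}(t)$, by the tridiagonal structure and $a_{i,i-1}(t)\ne0$. Hence the parabolic Hörmander condition holds, since $f$ and the coefficients are smooth in $x$ (we will need the $a$'s smooth, or at least $C^n$, in $t$; otherwise we use the time-dependent Hörmander/Malliavin version). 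So $\Pi_s(x,dy)$ has a density $p_s(x,y)$, jointly continuous in $x$. \emph{Irreducibility:} by the Stroock--Varadhan support theorem, we solve the control problem $\dot x=A_n(t)x+F_n(t,x)+u(t)e_1$. Given a target, we choose a smooth path for $x_n$, solve successively for $x_{n-1},\dots,x_1$ using $a_{i,i-1}(t)\ne0$, and read off $u$ from the first equation. This shows every open set is reachable over time $T$ from every $x$. Combining continuity of $p_s$ with positivity of the density at some point $y_0$, we get the minorization on compact sets. Uniformity in $s\in[0,T]$ follows from continuity in $s$, or is not needed since we fix $s$.

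With the drift and the minorization in hand, Harris' theorem gives the unique invariant measure $\mu_{n,s}$ of $\Pi_s$, with geometric convergence. The consistency argument above then yields the unique $T$-periodic measure $\mu_{n,\cdot}$.
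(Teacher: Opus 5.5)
Your proposal is correct and follows essentially the paper's route: H\"ormander's condition plus control-based irreducibility give the local Doeblin (minorization) condition for the period map $\mathcal{P}_n(s,s+T)$, and the quadratic Lyapunov function gives the drift; where the paper simply cites Theorems 3.4 and 3.6 of \cite{fengzhaozhong2023existence}, you unpack them as Harris's theorem for the period map together with the Chapman--Kolmogorov consistency and uniqueness argument. Two small remarks. First, the minorization must hold uniformly over a compact set $K$, and positivity of $p_s$ at a single point does not give this directly; it follows from splitting $\mathcal{P}_n(s,s+T)=\mathcal{P}_n(s,s+t_1)\mathcal{P}_n(s+t_1,s+T)$ and using irreducibility and lower semicontinuity of $x\mapsto\mathcal{P}_n(s,s+t_1,x,B)$ on $K$, which is what their Theorem 3.4 provides. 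Second, your caveat that the $a_{i,i-1}$ need regularity in $t$ is well taken: Assumption \ref{periodicA1} does not supply it, and the paper's appeal to Theorem \ref{hormandertheorem} needs it as well.
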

     \begin{proof}
         The proof follows similar arguments to those in Section 2, so we outline only the key steps while omitting technical details:
     \begin{itemize}
         \item [1.]Following analogous reasoning to Section 2 establishes both the irreducibility of $\mathcal{P}_n(s,t,x,\cdot)$ and the validity of the H\"ormander condition;
         \item[2.] By Theorem 3.4 of \cite{fengzhaozhong2023existence}, the local Doeblin condition holds;
         \item[3.] Verification of the Lyapunov function confirms that all conditions of Theorem 3.6 in \cite{fengzhaozhong2023existence} are satisfied.
     \end{itemize}
     The conclusion then follows.
     \end{proof}

     First, adopting estimation methods analogous to Lemma 2.5 of Feng et al. \cite{fengquzhao2021randomquasiperiodic}, we derive distance estimates for finite-dimensional systems over infinite time horizons.
    \begin{lemma}\label{infinithrizonestimate}
        For any given initial condition $x^{(n)}\in\mathbb{R}^n$, let $X^{(n),s,x^{(n)}}$ denote the solution to equation (\ref{periodicFSDE}) with the initial value $X^{(n),s,x^{(n)}}(s)=x^{(n)}$. Under Assumption \ref{periodicA2}, for any $p\geq 2$,  there exists a constant $C=C(p,\eta,\lambda,M)$ such that for all $t>s$, the following estimate holds:
        $$\mathbb{E}\Vert \tilde{X}^{(n),s,x^{(n)}}(t)\Vert^{p}_{\mathbb{R}^n}\leq C(1+\Vert x^{(n)}\Vert^{p}_{\mathbb{R}^n}).$$
    \end{lemma}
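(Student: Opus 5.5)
The plan is a Lyapunov argument with an exponentially weighted Itô formula, in the spirit of Lemma 2.5 of Feng et al. \cite{fengquzhao2021randomquasiperiodic}. The point is to obtain a bound uniform in $t-s$; the Gronwall-type bound $V^p(x)e^{CT}$ from Step 1 of Lemma \ref{ineqXn} is not uniform. I will work with $V(x)=\Vert x\Vert^2_{\mathbb{R}^n}$ and apply Itô's formula to $e^{\kappa(t-s)}V^{p/2}(X^{(n),s,x^{(n)}}(t))$ for some $\kappa>0$ to be chosen.

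The key step is a strict drift inequality, uniform in $t$ and in $n$:
\begin{equation*}
\mathscr{L}_n(t)V^{p/2}(x)\le -\kappa V^{p/2}(x)+C_0 .
\end{equation*}
To get it, compute
\begin{equation*}
\mathscr{L}_n(t)V^{p/2}=\tfrac{p}{2}V^{p/2-1}\Bigl(2\langle A_n(t)x+F_n(t,x),x\rangle+1\Bigr)+\tfrac{p}{2}\bigl(\tfrac{p}{2}-1\bigr)V^{p/2-2}\,2x_1^2 .
\end{equation*}
The terms are then bounded as follows.
\begin{itemize}
\item Since $A_n(t)$ is tridiagonal with entries bounded by $M$, Young's inequality gives $2\langle A_n(t)x,x\rangle\le 4M\Vert x\Vert^2$, uniformly in $t$ and $n$.
\item Weak dissipativity from Assumption \ref{periodicA1}(iii), applied coordinatewise, gives $2\langle F_n(t,x),x\rangle\le 2n\eta-2\lambda\Vert x\Vert^2$.
\item The second-order term is bounded by $p(p/2-1)V^{p/2-1}$.
\end{itemize}
Since $2M<\lambda-\tfrac12$, these combine to $V^{p/2-1}\bigl(-(2\lambda-4M)\tfrac p2 V+c_p\bigr)$ with $2\lambda-4M>1$. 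Absorbing the lower-order power by Young's inequality gives the drift inequality with $\kappa=\tfrac p2(2\lambda-4M-1)>0$.

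The constant $C_0$ contains $n\eta$, so the estimate as written is uniform in $t$ but not in $n$. The statement only asks for $C=C(p,\eta,\lambda,M)$; if $n$-independence is intended, I would replace $V$ by the weighted norm $\sum_i x_i^2\rho(i)$ as in Step 3 of Lemma \ref{ineqXn}, so that the constant becomes $\eta\sum\rho(i)$, and I would flag this in the write-up. Obtaining a constant independent of $n$ is the main delicate point. Note that here the noise enters only through the $x_1$ coordinate, which is harmless.

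Next, localize with the exit times $\tau_R$ from balls. Itô's formula and the drift inequality give
\begin{equation*}
\mathbb{E}\bigl[e^{\kappa(t\wedge\tau_R-s)}V^{p/2}(X(t\wedge\tau_R))\bigr]\le V^{p/2}(x^{(n)})+\frac{C_0}{\kappa}\bigl(e^{\kappa(t-s)}-1\bigr),
\end{equation*}
because the stochastic integral stopped at $\tau_R$ is a true martingale. Non-explosion, as in Khasminskii \cite{khasminskii2011stochastic}, gives $\tau_R\to\infty$ as $R\to\infty$, and Fatou's lemma then yields
\begin{equation*}
\mathbb{E}V^{p/2}(X(t))\le e^{-\kappa(t-s)}V^{p/2}(x^{(n)})+\frac{C_0}{\kappa}\le C\bigl(1+\Vert x^{(n)}\Vert^p\bigr),
\end{equation*}
which is the claim for $\tilde X^{(n)}$, since padding with zeros does not change the norm.
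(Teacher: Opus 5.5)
Your argument is essentially the paper's proof. The paper applies It\^o's formula to $e^{\beta t}\Vert X^{(n),s,x^{(n)}}(t)\Vert^p_{\mathbb{R}^n}$, bounds the drift by $p(2M-\lambda)\Vert X\Vert^2$ using the tridiagonal structure and weak dissipativity, absorbs the $\Vert X\Vert^{p-2}$ term with Young's inequality, and takes expectations. Your localization by $\tau_R$ and Fatou's lemma simply makes rigorous the step where the paper drops the stochastic integral directly.

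Your remark about $n$ is correct and points to a slip in the paper. The paper writes $p\eta$, but summing the dissipativity over coordinates only gives $pn\eta$. In fact no $n$-independent constant is possible for the Euclidean norm: for $f(z)=-(\lambda+1)z+2\sqrt{\eta}$ the equation is linear, and the mean started from $0$ tends to a vector of norm of order $\sqrt{n}$. This is harmless for the paper, because the lemma is only used at fixed $n$.

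Your weighted-norm fallback, however, would not work under Assumption \ref{periodicA2} alone. In $\sum_i x_i^2\rho(i)$ the neighbour couplings are controlled only through the ratio bound $N$, and cost about $4\sqrt{N}M$ instead of $4M$. A bound uniform in $t$ would then need an extra condition such as $\lambda>2\sqrt{N}M$, which $2M<\lambda-\frac12$ does not give.
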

    \begin{proof}
        While the case $p = 2$ admits a similar proof, we shall restrict our attention to the situation where $p > 2$. For any fixed $\beta$ and $p> 2$, applying It$\hat{\text{o}}$'s formula to $e^{\beta t}\Vert X^{(n),s,x^{(n)}}(t)\Vert^p_{\mathbb{R}^n}$ and invoking the conditions in Assumption \ref{periodicA2}, we obtain
        \begin{align*}
            &e^{\beta t}\Vert X^{(n),s,x^{(n)}}(t)\Vert^p_{\mathbb{R}^n}\\
            &\leq e^{\beta s}\Vert x^{(n)}\Vert_{\mathbb{R}^n}^p+\int_s^te^{\beta r}\Vert X^{(n),s,x^{(n)}}(r)\Vert_{\mathbb{R}^n}^{p-2}\left(\beta \Vert X^{(n),s,x^{(n)}}(r)\Vert^2_{\mathbb{R}^n}\right.\\
            &\left.\quad +pX^{(n),s,x^{(n)}}(r)\cdot \left(A_n(r)X^{(n),s,x^{(n)}}(r) +F_n(r,X^{(n),s,x^{(n)}}(r))\right)+\frac{p(p-1)}{2}tr(B_nB_n^*)\right)dr\\
            &\quad +\int_s^t pe^{\beta r}\Vert X^{(n),s,x^{(n)}}(r)\Vert^{p-2}X^{(n),s,x^{(n)}}(r)B_ndW^{(n)}(r)\\
            &\leq e^{\beta s}\Vert x^{(n)}\Vert_{\mathbb{R}^n}^p+\int_s^te^{\beta r}\Vert X^{(n),s,x^{(n)}}(r)\Vert_{\mathbb{R}^n}^{p-2}\left((\beta+p(2M-\lambda) \Vert X^{(n),s,x^{(n)}}(r)\Vert^2_{\mathbb{R}^n}+p\eta \right.\\
            &\left.\quad +\frac{p(p-1)}{2}\right)dr+\int_s^t pe^{\beta r}\Vert X^{(n),s,x^{(n)}}(r)\Vert^{p-2}X^{(n),s,x^{(n)}}(r)B_ndW^{(n)}(r).
        \end{align*}
        Since the condition $\lambda>2M+\frac{1}{2}$ holds, we first set $2\epsilon:=p(\lambda-2M)>0$ and choose $\beta=p(\lambda-2M)-\epsilon>0$, yielding 
        \begin{align*}
            &e^{\beta t}\Vert X^{(n),s,x^{(n)}}(t)\Vert^p_{\mathbb{R}^n}\\
            &\leq e^{\beta s}\Vert x^{(n)}\Vert_{\mathbb{R}^n}^p+\int_s^te^{\beta r}\left(-\epsilon\Vert X^{(n),s,x^{(n)}}(r)\Vert_{\mathbb{R}^n}^{p}+C(p,\eta)\Vert X^{(n),s,x^{(n)}}(r)\Vert_{\mathbb{R}^n}^{p-2}\right)dr\\
            & \quad +\int_s^t pe^{\beta r}\Vert X^{(n),s,x^{(n)}}(r)\Vert^{p-2}X^{(n),s,x^{(n)}}(r)B_ndW^{(n)}(r),
        \end{align*}
        where $C(p,\eta)=p\eta+\frac{p(p-1)}{2}$. By an application of Young's inequality $ab\leq \frac{\epsilon^{\prime}a^{p^\prime}}{p^\prime}+\frac{b^{p^\prime}}{q^\prime(\epsilon^{\prime})^{q^\prime/p^\prime}}$, $\frac{1}{p^\prime}+\frac{1}{q^\prime}=1$, we establish
        \begin{align*}
            C(p,\eta)\Vert X^{(n),s,x^{(n)}}(r)\Vert_{\mathbb{R}^n}^{p-2}&\leq \epsilon \Vert X^{(n),s,x^{(n)}}(r)\Vert_{\mathbb{R}^n}^{p}+\frac{2}{p}C^{\frac{p}{2}}(p.\eta)(\frac{\epsilon p}{p-2})^{-\frac{p-2}{2}}\\
            &=\epsilon \Vert X^{(n),s,x^{(n)}}(r)\Vert_{\mathbb{R}^n}^{p}+C(p,\eta,\lambda,M).
        \end{align*}
        This immediately gives
        \begin{align*}
            &e^{\beta t}\Vert X^{(n),s,x^{(n)}}(t)\Vert^p_{\mathbb{R}^n}\\
            &\leq e^{\beta s}\Vert x^{(n)}\Vert_{\mathbb{R}^n}^p+C(p,\eta,\lambda,M)\int_s^te^{\beta r}dr +\int_s^t pe^{\beta r}\Vert X^{(n),s,x^{(n)}}(r)\Vert^{p-2}X^{(n),s,x^{(n)}}(r)B_ndW^{(n)}(r)\\
            &\leq e^{\beta s}\Vert x^{(n)}\Vert_{\mathbb{R}^n}^p+C(p,\eta,\lambda,M)e^{\beta t} +\int_s^t pe^{\beta r}\Vert X^{(n),s,x^{(n)}}(r)\Vert^{p-2}X^{(n),s,x^{(n)}}(r)B_ndW^{(n)}(r),
        \end{align*}
        where $C(p,\eta,\lambda,M)$ denotes a generic positive constant whose value may change from line to line. Taking expectations on both sides, we obtain
        \begin{equation*}
            e^{\beta t}\mathbb{E}\Vert X^{(n),s,x^{(n)}}(t)\Vert^p_{\mathbb{R}^n}\leq e^{\beta s}\Vert x^{(n)}\Vert_{\mathbb{R}^n}^p+C(p,\eta,\lambda,M)e^{\beta t}.
        \end{equation*}
        Then
        \begin{equation*}
            \mathbb{E}\Vert X^{(n),s,x^{(n)}}(t)\Vert^p_{\mathbb{R}^n}\leq \Vert x^{(n)}\Vert_{\mathbb{R}^n}^p+C(p,\eta,\lambda,M),
        \end{equation*}
        thus proving the lemma.
    \end{proof}
     The finite-dimensional periodic measures are proved to be continuous in time under the weak topology, as precisely formulated in the following proposition.
    \begin{proposition}
        For any bounded cylindrically Lipschitz function $h\in C_{b,Lip}(\mathbb{R}^n)$ and arbitrary $s\geq 0$, there holds
         \begin{equation*}
             \lim\limits_{t\rightarrow0} \mu_{n,s+t}h=\mu_{n,s}h.
         \end{equation*}
    \end{proposition}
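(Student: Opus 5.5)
We want weak continuity of $t\mapsto\mu_{n,t}$ at $s$, both from the right and (if one wishes) from the left. Fix $h\in C_{b,Lip}(\mathbb{R}^n)$ with bound $\Vert h\Vert_\infty$ and Lipschitz constant $L$. The plan is to use the periodic-measure identity $\mu_{n,s+t}=\mathcal{P}_n^*(s,s+t)\mu_{n,s}$ for $t\ge 0$, which gives
\begin{equation*}
\mu_{n,s+t}h-\mu_{n,s}h=\int_{\mathbb{R}^n}\mathbb{E}\left[h(X^{(n),s,x}(s+t))-h(x)\right]\mu_{n,s}(dx).
\end{equation*}
It then suffices to show two things. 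First, the integrand tends to $0$ as $t\downarrow0$ for every $x$. Second, it is bounded by $2\Vert h\Vert_\infty$, so dominated convergence applies.

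For the pointwise convergence we use $|h(y)-h(x)|\le L\Vert y-x\Vert\wedge 2\Vert h\Vert_\infty$. From the integral form of (\ref{periodicFSDE}),
\begin{equation*}
\mathbb{E}\Vert X^{(n),s,x}(s+t)-x\Vert\le \int_s^{s+t}\mathbb{E}\left(M\sqrt{2}\Vert X^{(n),s,x}(r)\Vert+\Vert F_n(r,X^{(n),s,x}(r))\Vert\right)dr+\sqrt{t}.
\end{equation*}
The term involving $F_n$ has to be controlled by a growth bound. Since $f$ is smooth and $T$-periodic in time, it is bounded on $[0,T]\times[-R,R]$ for each $R$. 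Assumption \ref{periodicA1} gives no polynomial growth bound, so we localize. Let $\tau_R$ be the exit time of the ball of radius $R$. On $\{\tau_R>s+t\}$ the integrand is bounded by a constant $C_R$, so the displacement there is at most $C_Rt+\sqrt{t}$. Moreover $\mathbb{P}(\tau_R\le s+t)\to0$ as $R\to\infty$, uniformly in $t\le 1$. This follows from the sup-moment bound obtained as in Step 1 of Lemma \ref{ineqXn}: use Itô's formula for $\Vert X\Vert^2$ with the weak dissipativity of $f$, then BDG. Combining the two pieces gives $\mathbb{E}|h(X^{(n),s,x}(s+t))-h(x)|\le L(C_Rt+\sqrt t)+2\Vert h\Vert_\infty\mathbb{P}(\tau_R\le s+1)$. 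Letting $t\to0$ and then $R\to\infty$ gives pointwise convergence. Alternatively, continuity of the paths of the strong solution together with bounded convergence already gives it directly, which is simpler, and we will use that route.

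For the left limit ($t\uparrow0$, i.e. $\mu_{n,s-\delta}\to\mu_{n,s}$) we write
\begin{equation*}
\mu_{n,s}h-\mu_{n,s-\delta}h=\int\mathbb{E}[h(X^{(n),s-\delta,x}(s))-h(x)]\mu_{n,s-\delta}(dx).
\end{equation*}
Now the integrating measure moves with $\delta$, so dominated convergence alone is not enough. We need uniformity in $x$ over a set of large $\mu_{n,s-\delta}$-mass, uniformly in $\delta$. Tightness of $\{\mu_{n,r}\}_{r\in[0,T]}$ follows from Lemma \ref{infinithrizonestimate}: since $\mu_{n,r}=\mathcal{P}_n^*(r-kT,r)\mu_{n,r}$ by periodicity, the second moments are uniformly bounded. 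On the ball $\{\Vert x\Vert\le R\}$, the local bound above gives $\sup_{\Vert x\Vert\le R}\mathbb{E}|h(X^{(n),s-\delta,x}(s))-h(x)|\to0$ as $\delta\to0$. This bound is uniform in the starting time because the coefficients are periodic, hence bounded in time on compacts. The complement contributes at most $2\Vert h\Vert_\infty\sup_r\mu_{n,r}(\Vert x\Vert>R)$.

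The main obstacle is this uniform-in-initial-point estimate of the small-time displacement. It requires the exit-time bound $\sup_{\Vert x\Vert\le R,\,r}\mathbb{P}(\sup_{u\le\delta}\Vert X^{(n),r,x}(r+u)\Vert>R')\to0$ as $R'\to\infty$. I would obtain it from Itô's formula applied to $1+\Vert X\Vert^2$ with the dissipativity of $f$, combined with Doob's or BDG inequality, exactly as in Step 1 of Lemma \ref{ineqXn}.
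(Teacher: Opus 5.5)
Your argument is correct, but it is organised differently from the paper's.

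\textbf{The paper's route.} The paper proves a quantitative estimate. It replaces $\mu_{n,s}$ by the weak limit of $\mathcal{P}_n(s,s+mT,0,\cdot)$, using Theorem 3.6 of Feng, Zhao and Zhong; the strong Feller property makes the integrand $x\mapsto\mathbb{E}h(X^{(n),s,x}(s+t))-h(x)$ bounded and continuous. It then uses the Lipschitz constant of $h$ and Jensen to reduce to the mean-square increment bound (\ref{periodicineq1}). Finally it controls $\int(1+\Vert x\Vert^{2\theta})\,\mathcal{P}_n(s,s+mT,0,dx)$ by Lemma \ref{infinithrizonestimate}, ending with $|\mu_{n,s+t}h-\mu_{n,s}h|^2\le Ct$.

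This gives a H\"older-$\frac12$ modulus uniform in $s$. Because the bound sees the integrating measure only through its $2\theta$-moment, the left limit is the same computation with $\mu_{n,s-\delta}$ in place of $\mu_{n,s}$. The cost is that (\ref{periodicineq1}) uses the polynomial growth bound on $f$ from Assumption \ref{periodicA2}, which Assumption \ref{periodicA1} does not contain.

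\textbf{Your route.} Your right limit uses path continuity, bounded convergence, then dominated convergence against the fixed measure $\mu_{n,s}$. It needs no growth condition, no moment bounds, and only $h\in C_b(\mathbb{R}^n)$. It yields no rate, however, and, as you note, it does not transfer to the left limit. There you need extra localisation: uniform tightness of $\{\mu_{n,r}\}_r$, plus a displacement estimate uniform over balls of initial points and over starting times. The uniformity in starting time is legitimately supplied by periodicity and continuity of $f$.

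\textbf{One justification needs repair.} The uniform second-moment bound for $\{\mu_{n,r}\}$ does not follow from Lemma \ref{infinithrizonestimate} as stated. Combining $\mathbb{E}\Vert X^{(n),r,x}(r+kT)\Vert^2\le C(1+\Vert x\Vert^2)$ with invariance only gives $\int\Vert x\Vert^2\,d\mu_{n,r}\le C\bigl(1+\int\Vert x\Vert^2\,d\mu_{n,r}\bigr)$, which is circular and possibly reads $\infty\le\infty$.

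Either of two fixes works:
\begin{itemize}
\item Use the contractive form $\mathbb{E}\Vert X^{(n),r,x}(r+kT)\Vert^2\le e^{-\beta kT}\Vert x\Vert^2+C$ that the proof of that lemma actually produces. Apply it to $\Vert y\Vert^2\wedge K$ via $\mu_{n,r}=\mathcal{P}_n^*(r,r+kT)\mu_{n,r}$. Write this forward in time: the kernel is not defined from time $r-kT<0$. Then let $k\to\infty$ by dominated convergence, and afterwards $K\to\infty$.
\item As the paper does, use $\mathcal{P}_n(r,r+mT,0,\cdot)\Rightarrow\mu_{n,r}$ together with lower semicontinuity of $\nu\mapsto\int\Vert y\Vert^2\,\nu(dy)$ under weak convergence.
\end{itemize}
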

    \begin{proof}
        Theorem \ref{periodicmeasureexistence} guarantees that the transition probability kernel $\mathcal{P}_n(s,t)$ of equation (\ref{periodicFSDE}) admits a unique periodic measure $\mu_{n,\cdot}$. By Theorem 3.6 of Feng, Zhao and Zhong  \cite{fengzhaozhong2023existence}, $\mathcal{P}_n(s,s+mT,0,\cdot)$ converges weakly to $\mu_{n,s}$ as $m\rightarrow \infty$. Moreover, since $\mathcal{P}_n(s,t)$ is strong Feller, it follows that for any bounded and  Lipschitz continuous function $h\in C_{b,Lip}(\mathbb{R}^n)$ and any $s \geq 0$, $t > 0$, there exists a constant $C$ such that
         \begin{align*}
             &\vert \mu_{n,s+t}h-\mu_{n,s}h \vert^2\\
             &=\left\vert \int_{\mathbb{R}^n}h(x^{(n)})\mu_{n,s+t}(dx^{(n)})-\int_{\mathbb{R}^n}h(x^{(n)})\mu_{n,s}(dx^{(n)})\right\vert^2\\
             &=\left\vert \int_{\mathbb{R}^n}h(x^{(n)})\mathcal{P}^*(s,s+t)\mu_{n,s}(dx^{(n)})-\int_{\mathbb{R}^n}h(x^{(n)})\mu_{n,s}(dx^{(n)})\right\vert^2\\
             &=\left\vert \int_{\mathbb{R}^n} \int_{\mathbb{R}^n} h(y^{(n)})\mathcal{P}(s,s+t,x^{(n)},dy^{(n)})\mu_{n,s}(dx^{(n)})-\int_{\mathbb{R}^n}h(x^{(n)})\mu_{n,s}(dx^{(n)})\right\vert^2\\
             &=\left\vert \int_{\mathbb{R}^n} \left[\int_{\mathbb{R}^n} h(y^{(n)})\mathcal{P}(s,s+t,x^{(n)},dy^{(n)})-h(x^{(n)})\right]\mu_{n,s}(dx^{(n)})\right\vert^2\\
             &= \lim\limits_{m\rightarrow\infty}\left\vert \int_{\mathbb{R}^n} \left[\int_{\mathbb{R}^n} h(y^{(n)})\mathcal{P}(s,s+t,x^{(n)},dy^{(n)})-h(x^{(n)})\right]\mathcal{P}_n(s,s+mT,0,dx^{(n)})\right\vert^2\\
             &\leq  \varliminf\limits_{m\rightarrow\infty}\int_{\mathbb{R}^n} \mathbb{E}\left[ \left\vert h(X^{(n),s,x^{(n)}}(s+t))-h(x^{(n)})\right\vert^2\right] \mathcal{P}_n(s,s+mT,0,dx^{(n)})\\
             &\leq  \varliminf\limits_{m\rightarrow\infty}\int_{\mathbb{R}^n} C\mathbb{E}\left[ \left\Vert X^{(n),s,x^{(n)}}(s+t)-x^{(n)}\right\Vert^2_{\mathbb{R}^n}\right]\mathcal{P}_n(s,s+mT,0,dx^{(n)})
         \end{align*}
         Following an analogous computation to that for (\ref{ineq1}), we obtain the estimate
         \begin{equation}\label{periodicineq1}
             \forall 0\leq s \leq t\leq T_1: \mathbb{E}\Vert X^{(n),s,x^{(n)}}(t)-x^{(n)} \Vert_{\mathbb{R}^n}^2 \leq C(T_1)(1+ \Vert x^{(n)} \Vert_{\mathbb{R}^n}^{2s})\vert t-s \vert.
         \end{equation}
         Then, under the condition $s + t < T_1$ for some fixed $T_1 > 0$, the combined application of (\ref{periodicineq1}) and Lemma \ref{infinithrizonestimate} yields 
         \begin{align*}
             \vert \mu_{n,s+t}h-\mu_{n,s}h \vert^2&\leq \varliminf\limits_{m\rightarrow\infty}\int_{\mathbb{R}^n} C(1+ \Vert x^{(n)} \Vert_{\mathbb{R}^n}^{2\theta})\vert t-s \vert \mathcal{P}_n(s,s+mT,0,dx^{(n)})\\
             &=\varliminf\limits_{m\rightarrow\infty}C\vert t-s \vert \mathbb{E}(1+\Vert X^{(n),s,0}(s+mT)\Vert^{2\theta}_{\mathbb{R}^n})\\
             &\leq C \vert t-s \vert,
         \end{align*}
         which proves the right-continuity of $\mu_{n,\cdot}$. The left-continuity can be verified through analogous arguments by making appropriate modifications to the preceding analysis.
    \end{proof}

    %%%%%%%%%%%%%%%%%%%%%%%%%
    \subsection{Time-inhomogeneous infinite-dimensional systems: periodic measures}
	In this section, we consider infinite-dimensional equations with time-periodic coefficients. Under modified assumptions, arguments analogous to those in Section 3 yield the existence and uniqueness of weak solutions. Furthermore, we establish the periodicity of transition semi-group, from which we deduce the existence of periodic measures.   The equations considered are:
    \begin{equation}
		\label{periodicINFSDE2}
		\begin{cases}
			dX(t)=\left(A(t)X(t)+F(t,X(t))\right)dt+BdW(t) ,  \ t\geq s,\\
			X(s)=x \in \ell_\rho^{2s}(\mathbb{Z}^+),
		\end{cases}
	\end{equation}
	where $W(t)$ is a Wiener process on a probability space $(\Omega,\mathcal{F},\mathbb{P})$ taking values in $\ell^2 (\mathbb{Z}^+)$ and with identity covariance, $B$ is defined as in (\ref{AFBdef}) and the operators $A(t)$, $F(t,x)$ are given by the following formula 
	$$
	\begin{cases}
		A(t)x=\left(a_{\gamma,\gamma-1}(t)x_{\gamma-1}+a_{\gamma,\gamma+1}(t)x_{\gamma+1}\right)_\gamma,x=\left(x_\gamma\right)_\gamma,\\
		F(t,x)=\left(f(t,x_\gamma)\right)_\gamma,x=\left(x_\gamma\right)_\gamma.
	\end{cases}
	$$ 
    We impose the following assumptions on the coefficients of (\ref{periodicINFSDE2}) and the solution space:
    \begin{condition}
		\label{periodicA2}
		\begin{description}
			\item [(i)]  The coefficients $\{a_{i,i-1}(t),a_{i-1,i}(t); i\geq 2 \}$ and $f(t,z)$ are T-periodic in $t$ for any fixed $z\in\mathbb{R}$.
            \item[(ii)]  All the elements of the operator $A(t)$ satisfy:  $a_{i,i-1}(t)\neq  0,i\geq 2,t\geq 0$ and there is a constant $M>0$ such that
			$$
			\max \left\{\vert a_{i-1,i}(t)\vert ,\vert a_{i,i-1}(t) \vert\right\}\leq M,\quad    i\geq 2,\quad t\geq 0.
			$$
			\item[(iii)] There exist constants $R,N>0$ such that
			$$
			\left\vert \frac{\rho(\gamma)}{\rho(j)} \right\vert\leq N \ \text{if} \ \vert\gamma-j\vert \leq R,\quad \sum\limits_{i=1}^\infty \rho(i)< +\infty,\quad \rho(i)>0. 
			$$
			\item [(iii)] The function $f\in C^\infty (\mathbb{R}^+\times\mathbb{R} , \mathbb{R})$ is weakly dissipative, i.e. there exists constants $\eta >0$ and
			$$
			2M< \lambda-\frac{1}{2}
			$$ 
			such that
			$$
			  f(t,z)z \leq \eta -\lambda \vert z \vert^2 ,\quad (t,z)\in\mathbb{R}^+\times \mathbb{R}.
			$$
			Moreover there exists some constants $\theta\geq 1$, and $\eta_0\ge 0$ such that
			\begin{equation*}
				\left|f(t,z)\right|\leq \eta_0\left(1+\left|z\right|^\theta \right), \quad 
                (t,z)\in\mathbb{R}^+ \times\mathbb{R}.
			\end{equation*}
		\end{description}
	\end{condition} 
While Assumption \ref{periodicA2} guarantees the existence of weak solutions, the following supplementary conditions ensure the uniqueness.
   \begin{condition}
		\label{periodicuniquenesscondition}
		\begin{description}
			\item[(i)] There exist  constants $ r,M_1,M_2>0$ and $N_1\in \mathbb{Z}^+$ such that for any $n\geq N_1$
			$$
			\rho(n)\geq \frac{M_1}{e^{rn}}, \quad \limsup\limits_{n\rightarrow\infty}\frac{\rho(n)}{\inf\limits_{1\leq i\leq n}\rho(i)}\leq M_2.
			$$ 			
			\item[(ii)] There exists a function $L:\mathbb{N}\rightarrow\mathbb{R}^+$ such that for all $N>0$, the function $f$ satisfies 
            \begin{equation*}
				|f(t,z)-f(t,\tilde{z})|\leq L(N)|z-\tilde{z}|, -N\leq z,\tilde{z}\leq N,t\geq0.
			\end{equation*}	
             where $L$ further satisfies: for any constant $C>0$, there exists $\gamma=\gamma(C)>0$ such that 
             \begin{equation}
                \lim\limits_{n\rightarrow \infty}\frac{\exp\{CL^2(n)\}}{n^\gamma L(n)}=0.
            \end{equation}
		\end{description}
	\end{condition}
    Under Assumptions \ref{periodicA2} and \ref{periodicuniquenesscondition}, we obtain the existence and uniqueness of weak solutions. Following arguments similar to those in Section 3.5, one can verify that the solutions possess the Markov property. This allows us to define the transition probability kernel $\mathcal{P}(s,t,x,\Gamma)=\mathbf{P}^{s,x}[X^s(t)\in\Gamma]$, where $X^s(\cdot)$ is the modified canonical process on $\Omega_{Spin}^s=C([s,\infty),\ell^2_{\rho}(\mathbb{Z}^+))$, $\Gamma\in \mathscr{B}(\ell^{2\theta}_{\rho}(\mathbb{Z}^+))$ and $\mathbf{P}^{s,x}$ is the solution to the martingale problem associated with (\ref{periodicINFSDE2}) with the initial condition $x\in\ell^{2\theta}_{\rho}(\mathbb{Z}^+)$. The following lemma establishes the periodicity of $\mathcal{P}(s,t)$.
    \begin{lemma}
        Assuming the validity of Assumptions \ref{periodicA2} and \ref{periodicuniquenesscondition}, the transition probability kernel $\mathcal{P}(\cdot,\cdot,\cdot,\cdot)$ possesses T-periodicity, namely $\mathcal{P}(s,t,x,\cdot)=\mathcal{P}(s+T,t+T,x,\cdot)$, for all $x\in\ell^{2\theta}_{\rho}(\mathbb{Z}^+),s\leq t$.
    \end{lemma}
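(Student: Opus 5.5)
The plan is to reduce periodicity of $\mathcal{P}$ to uniqueness of the time-inhomogeneous martingale problem, via a time-shift of path space. Fix $s\le t$ and $x\in\ell^{2\theta}_{\rho}(\mathbb{Z}^+)$. Let $\mathbf{P}^{s,x}$ be the unique solution on $\Omega^s_{Spin}$ of the martingale problem for (\ref{periodicINFSDE2}) started at time $s$ from $x$. Define the shift map $\Theta_T:\Omega^s_{Spin}\to\Omega^{s+T}_{Spin}$ by $(\Theta_T\omega)(u)=\omega(u-T)$ for $u\ge s+T$; it is a homeomorphism. Set $\mathbf{Q}:=\mathbf{P}^{s,x}\circ\Theta_T^{-1}$. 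The goal is to show that $\mathbf{Q}$ solves the martingale problem started at time $s+T$ from $x$.

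The time-dependent Kolmogorov operator is
\[
\mathscr{L}_u h(y)=\sum_i\bigl(a_{i,i-1}(u)y_{i-1}+a_{i,i+1}(u)y_{i+1}+f(u,y_i)\bigr)\partial_i h(y)+\tfrac12\partial_1\partial_1 h(y),
\]
which satisfies $\mathscr{L}_{u+T}=\mathscr{L}_u$ by Assumption \ref{periodicA2}(i). Then for $h\in C_c^{2,Cyl}$,
\[
h(X(t'+T))-h(X(s+T))-\int_{s+T}^{t'+T}\mathscr{L}_u h(X(u))\,du
\]
under $\mathbf{Q}$ is, after the substitution $u\mapsto u+T$, the image under $\Theta_T$ of the corresponding $\mathbf{P}^{s,x}$-martingale. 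The filtrations correspond: $\Theta_T^{-1}\mathscr{B}^{s+T}_{r+T}=\mathscr{B}^s_r$. Hence the martingale property transfers, for instance by checking it against continuous bounded functionals of the past, as in Step 2 of Theorem \ref{existencesolution}. The moment condition (i) and the initial law $\delta_x$ are also preserved.

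By uniqueness (the periodic analogue of Theorem \ref{uniqunessof l2sspace}, valid under Assumption \ref{periodicuniquenesscondition}), $\mathbf{Q}=\mathbf{P}^{s+T,x}$. Evaluating at time $t+T$ gives
\[
\mathcal{P}(s+T,t+T,x,\Gamma)=\mathbf{Q}(X(t+T)\in\Gamma)=\mathbf{P}^{s,x}(X(t)\in\Gamma)=\mathcal{P}(s,t,x,\Gamma).
\]
Here the modified canonical process of Lemma \ref{pmodificationcondition} is used so that the events are Borel in $\ell^{2\theta}_\rho$, and the modification commutes with the shift.

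An alternative route, perhaps more in the paper's spirit, is to argue at the level of the approximations. For the finite-dimensional system (\ref{periodicFSDE}), strong uniqueness and periodic coefficients give $X^{(n),s+T,x}(\cdot+T)\overset{d}{=}X^{(n),s,x}(\cdot)$: the shifted Brownian motion $W(\cdot+T)-W(T)$ is again Brownian, so by pathwise uniqueness the laws coincide. The embedded laws $\tilde P^{s,x}_n$ and $\tilde P^{s+T,x}_n$ are therefore shifts of each other, and passing to the weak limit with $\Theta_T$ continuous gives $\mathbf{P}^{s+T,x}=\mathbf{P}^{s,x}\circ\Theta_T^{-1}$. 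Either route works.

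The main obstacle is bookkeeping rather than analysis. One must make sure uniqueness and Markov-type identifications hold for arbitrary starting times $s$, not just $s=0$; the Section 3 arguments carry over verbatim because the constants depend only on $M,\eta,\lambda,\theta,\rho$ and not on $s$. The second route avoids invoking infinite-dimensional uniqueness beyond the fact that the whole sequence converges, so I would present that one, citing Theorem \ref{uniqunessof l2sspace} for convergence of the full sequence.
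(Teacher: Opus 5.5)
Your second route, the one you say you would present, is essentially the paper's proof: the paper also combines the $T$-periodicity of the finite-dimensional laws with weak convergence of the full approximating sequence, the only difference being that it tests the one-time marginals at $t$ and $t+T$ against $h\in C_{b,lip}^{Cyl}(\ell^{2\theta}_\rho(\mathbb{Z}^+))$ and invokes Lemma \ref{determingclassell2s}, rather than pushing the whole path law forward through the shift $\Theta_T$. You are right to prefer it over your first route, because Theorem \ref{uniqunessof l2sspace} only shows that all limits of the approximation scheme coincide; it does not give uniqueness among arbitrary solutions of the martingale problem, which is exactly what the first route needs to conclude $\mathbf{Q}=\mathbf{P}^{s+T,x}$.
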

    \begin{proof}
        In order to prove the result of this lemma, application of Lemma \ref{determingclassell2s} suggests we only need to verify that for any $h \in C_{b,  lip }^{C y l}(\ell_\rho^{2\theta}(\mathbb{Z}^+))$,
		\begin{equation}
			\label{tperiodictransitionkernel}
			\int_{\ell_\rho^{2\theta}(\mathbb{Z}^+)} h(a)\mathcal{P}(s,t,x,da)=\int_{\ell_\rho^{2\theta}(\mathbb{Z}^+)} h(a) \mathcal{P}(s+T,t+T,x,da).
		\end{equation}	
        From the weak convergence property combined with the periodicity of transition probability kernels for finite-dimensional SDEs, we derive
        \begin{align*}
            \int_{\ell_\rho^{2\theta}(\mathbb{Z}^+)} h(a)\mathcal{P}(s,t,x,da)&=\int_{\ell_\rho^{2\theta}(\mathbb{Z}^+)} h(a) \mathbf{P}^{s,x}\circ X^s(t)=\int_{\Omega^s_{Spin}} h(X^s(t)) d\mathbf{P}^{s,x}\\
            &=\lim\limits_{n} \int_{\Omega^s_{Spin}}h(\omega(t))d\tilde{P}^{s,x}_n=\lim\limits_{n} \int_{\Omega}h(\tilde{X}^{(n),s,x}(t))d\mathbb{P}\\
            &=\lim\limits_{n} \int_{\Omega}h(\tilde{X}^{(n),s+T,x}(t+T))d\mathbb{P}=\lim\limits_{n} \int_{\Omega^s_{Spin}}h(\omega(t+T))d\tilde{P}^{s+T,x}_n\\
            &=\int_{\Omega^s_{Spin}} h(\omega(t+T)) d\mathbf{P}^{s+T,x}=\int_{\ell_\rho^{2\theta}(\mathbb{Z}^+)} h(a) \mathcal{P}(s+T,t+T,x,da),
        \end{align*}
        which yields (\ref{tperiodictransitionkernel}).
    \end{proof}
    
    Before establishing the existence of periodic measures for $\mathcal{P}(s,t)$, we present an auxiliary lemma. 
    \begin{lemma}\label{inttighttotntight}
    Let $(E,\mathscr{B}(E))$ be a complete separable metric space. Given a family $\{\mu_{n,s}:0\leq s\leq T\}_n$ of the $\mathscr{P}(E)$-valued functions that are measurable with respect to $s$. If a family of average measures $\mu_n:=\frac{1}{T}\int_0^T \mu_{n,s} ds, n\in\mathbb{N}$, forms a tight sequence in $\mathscr{P}(E)$,  then there exists a sequence $s_n\in [0,T]$ such that $\{\mu_{n,s_n}\}_n$ is tight.
    \end{lemma}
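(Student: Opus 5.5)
The plan is a Markov/Chebyshev-type selection argument applied to the time variable. Tightness of $\{\mu_n\}$ gives compact sets $K_j\subset E$ with
\[
\sup_n \mu_n(K_j^c)\le 2^{-j}\epsilon_j ,
\]
where the summable sequence $\epsilon_j$ will be fixed later. Unwinding the average gives, for every $n$,
\[
\frac1T\int_0^T \mu_{n,s}(K_j^c)\,ds\le 2^{-j}\epsilon_j .
\]
This is meaningful because $s\mapsto \mu_{n,s}(K_j^c)$ is measurable by hypothesis, since evaluation on a Borel set is measurable when $\mu_{n,\cdot}$ is measurable into $\mathscr{P}(E)$.

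The aim is to choose one $s_n$ that is good for all $j$ at once. I will use a single weighted functional rather than a union bound over $s$. For each $n$ set
\[
\Phi_n(s):=\sum_{j\ge1}\mu_{n,s}(K_j^c)/\epsilon_j ,
\]
which is a measurable function of $s$. Monotone convergence gives
\[
\frac1T\int_0^T\Phi_n(s)\,ds\le\sum_j 2^{-j}=1 .
\]
Hence there is some $s_n\in[0,T]$, in fact a set of positive measure of them, with $\Phi_n(s_n)\le 1$. For this $s_n$ we get $\mu_{n,s_n}(K_j^c)\le\epsilon_j$ for every $j$ simultaneously, and this bound is uniform in $n$.

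To conclude, I take $\epsilon_j:=2^{-j}$. Given $\epsilon>0$, choose $j$ with $2^{-j}<\epsilon$. Then the compact set $K_j$ satisfies $\sup_n\mu_{n,s_n}(K_j^c)\le 2^{-j}<\epsilon$, which is the tightness of $\{\mu_{n,s_n}\}_n$.

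The step needing care is the simultaneity across $j$. A naive choice gives, for each $j$ separately, a large-measure set of good $s$ whose intersection over $j$ could be empty. The weighted sum $\Phi_n$ avoids this, because choosing $\epsilon_j$ and the tightness level $2^{-j}\epsilon_j$ together keeps $\int\Phi_n$ bounded. The measurability of $s\mapsto\mu_{n,s}(B)$ for Borel $B$ I would justify from the assumed measurability with respect to the Borel $\sigma$-algebra of the weak topology on $\mathscr{P}(E)$, since $\mu\mapsto\mu(B)$ is measurable there.
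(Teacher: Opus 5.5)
Your proof is correct and is essentially the paper's argument. The paper takes compact sets $K_k$ with $\mu_n(K_k^c)\le k^{-4}$ and applies Chebyshev to get $m(\{s:\mu_{n,s}(K_k^c)\ge k^{-2}\})\le Tk^{-2}$. It then picks $s_n$ in the intersection over $k\ge 2$ of the good sets, which by a union bound has measure at least $T(2-\pi^2/6)>0$. Your weighted functional $\Phi_n$ just repackages that union bound as a single Markov inequality, and the paper shows that the plain per-$j$ intersection you worried about is nonempty once the tightness levels are chosen summably.
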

    \begin{proof}
        Since $\{\mu_n\}_n$ is tight, for each $k\in\mathbb{N}$, there exists a compact set $K_k$ such that for all $n\in\mathbb{N}$, $\mu_n(K_k^c) \leq \frac{1}{k^4}$. For fixed $n\in\mathbb{N}$, define the sets $O_{n,k}=\{s\in[0,T];\mu_{n,s}(K_k^c)<\frac{1}{k^2}\}$. By the definition of $\mu_n:=\frac{1}{T}\int_0^T \mu_{n,s} ds$ and Fubini's theorem, we obtain
        \begin{equation*}
            \frac{1}{k^4}>\mu_n(K_k^c)=\frac{1}{T}\int_0^T \mu_{n,s}(K_k^c) ds\geq \frac{1}{Tk^2}m(O_{n,k}^c),
        \end{equation*}
        where $m$ denotes the Lebesgue measure. For each $n\in\mathbb{N}$, define $U_n=\bigcap\limits_{k=2}^{\infty}O_{n,k}$. Then we have the estimate
        \begin{equation*}
            m(U_n)=m([0,T]\backslash U_n^c)=T-m(\bigcup\limits_{k=2}^{\infty}O_{n,k}^c)
            \geq T-\sum\limits_{k=2}^{\infty}m(O_{n,k}^c)\geq T-T\sum\limits_{k=2}^{\infty}\frac{1}{k^2}>0,
        \end{equation*}
        which implies $U_n\neq \emptyset$. Choosing $s_n\in U_n$, we now prove that the sequence $\{\mu_{n,s_n}\}, n\geq 0$, is tight. In fact, for any $\epsilon>0$, select $k\in \mathbb {N}$ such that $\frac{1}{k^2}<\epsilon$. Then for the compact set $K_k$, we have for all $n\in\mathbb{N}$, $\mu_{n,s_n}(K_k^c)<\frac{1}{k^2}<\epsilon$, completing the proof.
    \end{proof} 
    
    We now investigate the existence of periodic measures for the transition semi-group $\mathcal{P}(s,t)$.
    \begin{theorem}
        Let Assumptions \ref{periodicA2}, \ref{periodicuniquenesscondition} and \ref{invariantmeasuretightness} hold. Then there exists a T-periodic measure $(\nu_{s})_{s\geq 0}$ for the transition semi-group $\mathcal{P}(s,t)$.
    \end{theorem}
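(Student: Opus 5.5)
We follow the strategy sketched in the introduction. Let $\mu_{n,\cdot}$ be the unique $T$-periodic measure of the truncated system (\ref{periodicFSDE}) given by Theorem \ref{periodicmeasureexistence}. Set $\nu_{n,s}:=\mu_{n,s}\circ\chi_n^{-1}$ on $\ell^{2\theta}_\rho(\mathbb{Z}^+)$, and form the averages $\bar\nu_n:=\frac1T\int_0^T\nu_{n,s}\,ds$.

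\textbf{Step 1: tightness of the averages.} We first prove that $\{\bar\nu_n\}$ is tight. For this we lift the truncated equation to the time-homogeneous Markov process $(t \bmod T, X^{(n)}(t))$ on the cylinder $\mathbb{T}\times\mathbb{R}^n$. Its generator is $\partial_t+\mathscr{L}_n(t)$, and the measure $\tilde\mu_n(dt,dx)=\frac1T\,dt\,\mu_{n,t}(dx)$ is invariant for it. We use the Lyapunov function $V_n(x)=\sum_{i\le n}x_i^{2\theta}v(i)$, which does not depend on $t$. The computation (\ref{invarianttight1}) goes through uniformly in $t$ because the bounds in Assumption \ref{periodicA2} are uniform in $t$. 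Combined with Assumption \ref{invariantmeasuretightness} (i), this gives
\[
\partial_tV_n+\mathscr{L}_n(t)V_n\le -V_n+C,
\qquad C=2\theta\eta\textstyle\sum_i v(i),
\]
with $C$ independent of $n$ and $t$. Invariance of $\tilde\mu_n$ yields $\tilde\mu_n\big((\partial_t+\mathscr{L}_n)V_n\big)=0$. We justify this by localisation with stopping times together with the uniform moment bounds of Lemma \ref{infinithrizonestimate}. We then repeat the contradiction argument of Lemma \ref{tightnessinvariantmeasure} with the same compact set $K_\epsilon$; this gives $\bar\nu_n(K_\epsilon^c)\le\epsilon$ for all $n$.

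\textbf{Step 2: a tight sequence of sections and its limit.} The map $s\mapsto\nu_{n,s}$ is measurable, since it is weakly continuous by the proposition on continuity of $\mu_{n,\cdot}$. Lemma \ref{inttighttotntight} therefore provides $s_n\in[0,T]$ such that $\{\nu_{n,s_n}\}$ is tight. Passing to a subsequence, we may assume $s_n\to s_*\in[0,T]$ and $\nu_{n,s_n}\stackrel{w}{\to}\nu_{s_*}$.

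\textbf{Step 3: invariance of the limit under one period.} We show that $\nu_{s_*}$ is invariant for $\mathcal{P}(s_*,s_*+T)$. By Lemma \ref{determingclassell2s} it suffices to test against $h\in C^{Cyl}_{b,lip}(\ell^{2\theta}_\rho(\mathbb{Z}^+))$. Periodicity of the finite-dimensional measures gives
\[
\nu_{n,s_n}(h)=\int \tilde E^{s_n,x}_n\, h(\omega(s_n+T))\,\nu_{n,s_n}(dx).
\]
We then use Skorohod's representation $Z_n\to Z$ a.s., as in Theorem \ref{invariantmessureforinfinitsystem}. This reduces the claim to the convergence
\[
\tilde E^{s_n,Z_n}_n h(\omega(s_n+T))\to \mathbf{E}^{s_*,Z}h(X(s_*+T)).
\]
This convergence is the \textbf{main obstacle}, because both the initial time and the initial point move with $n$. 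Our approach is as follows. First, use the time-periodicity and continuity in $t$ of the coefficients to compare starting at $s_n$ with starting at $s_*$. On the finite block $\Lambda_k$, the estimate (\ref{periodicineq1}) and Lemma \ref{infinithrizonestimate}, applied to a window of length $|s_n-s_*|$, show that the two solutions differ by $O(|s_n-s_*|^{1/2})$ in $L^2$. The truncation errors in the neighbouring coordinates are then handled by the iteration of Lemma \ref{lemmalimitdifference}. Second, handle the moving initial point via Lemma \ref{contionuouswithinitialconditon} and Remark \ref{remarkoncontionuouswithinitialconditon}, redone for the time-dependent drift under Assumption \ref{periodicuniquenesscondition}. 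Third, pass to the infinite system through Theorem \ref{uniqunessof l2sspace}, exactly as in the claim inside the proof of Theorem \ref{Markov property}. Dominated convergence then gives $\nu_{s_*}(\mathcal{P}(s_*,s_*+T)h)=\nu_{s_*}(h)$.

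\textbf{Step 4: building the periodic measure.} Define
\[
\nu_s:=\mathcal{P}^*(s_*,s)\,\nu_{s_*}\quad\text{for } s\ge s_*,
\]
and extend to $s<s_*$ by $\nu_s:=\nu_{s+kT}$ with $k$ chosen so that $s+kT\ge s_*$. Periodicity of the kernel (the preceding lemma) together with the Chapman–Kolmogorov property coming from the Markov property gives $\nu_{s+T}=\mathcal{P}^*(s,s+T)\nu_s=\nu_s$ and $\nu_t=\mathcal{P}^*(s,t)\nu_s$. Hence $(\nu_s)_{s\ge 0}$ is a $T$-periodic measure, in the sense of Feng et al.
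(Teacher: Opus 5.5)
Your proposal is correct and follows the paper's proof essentially step by step: lift to $[0,T)\times\mathbb{R}^n$, get tightness of the averaged measures via the Lyapunov argument of Lemma \ref{tightnessinvariantmeasure}, extract tight sections with Lemma \ref{inttighttotntight}, prove one-period invariance of $\nu_{s_*}$ by controlling the moving initial time and point, and build the periodic measure; your explicit Step 4 is exactly the content of Lemma 3.1 of \cite{fengzhaozhong2023existence}, which the paper invokes. The one point to correct is the comparison of the starting times $s_n$ and $s_*$, where the time-increment bound must be uniform in $n$: use the weighted $\ell^2_\rho$ estimate of the type in Lemma \ref{ineqXn}, as in (\ref{inftnitperiodict-sinq}), together with the flow property $X^{(n),s_n,x}(s_*+T)=X^{(n),s_*,X^{(n),s_n,x}(s_*)}(s_*+T)$ and Remark \ref{remarkoncontionuouswithinitialconditon}, rather than (\ref{periodicineq1}) and Lemma \ref{infinithrizonestimate}, which are stated in the unweighted norm of $\mathbb{R}^n$ and are not uniform in $n$ (already $\Vert\Pi_n x\Vert_{\mathbb{R}^n}$ may blow up for $x\in\ell^{2\theta}_\rho(\mathbb{Z}^+)$), and note that this route also makes continuity of the coefficients in $t$ unnecessary.
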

    \begin{proof}
    We begin by lifting the semi-flow of the finite-dimensional SDEs (\ref{periodicFSDE}) to $\tilde{\mathbb{X}}=[0,T)\times \ell^{2\theta}_{\rho}(\mathbb{Z}^+)$:  
     \begin{equation}
		\label{liftperiodicFDSDE}
		\begin{cases}
            dZ_0^{(n)}(t)=dt \quad \text{mod}\quad T,\\
			dZ_i^{(n)}(t)=\left(A_n(Z_0^{(n)}(t))Z^{(n)}(t)+F_n(Z_0^{(n)}(t),Z^{(n)}(t))\right)dt+B_ndW^{(n)}(t) ,  \\
               t\geq 0, \quad 1\leq i\leq n, \\
			Z_0^{(n)}(0)=0,\quad (Z_i^{(n)}(0))_{1\leq i\leq n}=\Pi_nx \in \mathbb{R}^n.
		\end{cases}
	\end{equation}
    The corresponding transition probability kernel is defined by $\tilde{\mathcal{P}}_n(t,(s,x),\tilde{\Gamma}):=\mathbb{P}[(Z^{(n)}_0(t),$ $Z^{(n)}(t))\in \tilde{\Gamma}]$, $\tilde{\Gamma}\in \mathscr{B}([s,T])\times\mathscr{B}(\mathbb{R}^n)$. By adapting the proof methodology of Theorem 3.13 in Feng, Qu and Zhao \cite{fengquzhao2021randomquasiperiodic}, we derive that $\tilde{\mu}_{n,s}(\tilde{\Gamma}):=[\delta_{s}\times\mu_{n,s}](\tilde{\Gamma})$ is a periodic measure for $\tilde{\mathcal{P}}_n$. It follows from Theorem 3.6 of Feng and Zhao \cite{fengzhao2020randomperiodic} that $\bar{\tilde{\mu}}_{n}:=\frac{1}{T}\int_0^T \tilde{\mu}_{n,s} ds$ is an invariant measure for $\tilde{\mathcal{P}}_n$.

    Noting that the generator of equation (\ref{liftperiodicFDSDE}) is 
    \begin{equation*}
        \begin{split}
            \tilde{\mathscr{L}}_n\tilde{h}(z^{(n)})=&\partial_t\tilde{h}(z^{(n)})+(a_{1,2}(z^{(n)}_0)z_2+f(z^{(n)}_0,z^{(n)}_1))\partial_{z^{(n)}_1}\tilde{h}(z^{(n)})\\
            &+\cdots+(a_{n,n-1}(z^{(n)}_0)z^{(n)}_{n-1}+f(z^{(n)}_0,z^{(n)}_n))\partial_{z^{(n)}_n}\tilde{h}(z^{(n)}),\quad z^{(n)}\in [0,T]\times\mathbb{R}^n,
        \end{split}
    \end{equation*} 
    we adapt methods similar to Lemma \ref{tightnessinvariantmeasure} to show $\bar{\tilde{\nu}}_n:=\bar{\tilde{\mu}}_{n}\circ\tilde{\chi}_n^{-1}$ is tight in $[0,T]\times\ell^{2\theta}_{\rho}(\mathbb{Z}^+)$, where $\tilde{\chi}_nz^{(n)}=(z^{(n)}, 0_{i\in \mathbb{Z}^+\backslash\Lambda_n})$.   By Lemma \ref{inttighttotntight}, there exists a sequence $\{t_n\}_n$ such that $\{\tilde{\mu}_{n,t_n}\circ\tilde{\chi}_n^{-1}\}$ is tight in space $[0,T]\times\ell^{2\theta}_{\rho}(\mathbb{Z}^+)$. Consequently, the projected measures $\mu_{n,t_n}\circ\tilde{\chi}_n^{-1}(\cdot)=\tilde{\mu}_{n,t_n}\circ\tilde{\chi}_n^{-1}([0,T]\times \cdot)$ form a tight family in the space $\ell^{2\theta}_{\rho}(\mathbb{Z}^+)$. Since $[0,T]$ is compact, we may extract a convergent subsequence (still denoted by $\{t_n\}_n$) with $t_n\rightarrow s_*\in[0,T]$. Furthermore, by Prokhorov's theorem, there exists a further subsequence of $\{\mu_{n,t_n}\}_n$ (which we still denote by $\{\mu_{n,t_n}\}_n$ for simplicity) that converges weakly to some probability measure $\nu_{s_*}$. We now verify that $\nu_{s_*}$ is an invariant measure for the one-step transition probability kernel $\mathcal{P}(s_*,s_*+T)$. It suffices to show that for any $h \in C_{b,  lip }^{C y l}(\ell_\rho^{2\theta}(\mathbb{Z}^+))$,  
    \begin{equation}
        \label{invriantforonestep}
        \mathcal{P}^*(s_*,s_*+T)\nu_{s_*} h=\int_{\ell^{2\theta}_{\rho}(\mathbb{Z}^+)}\int_{\ell^{2\theta}_{\rho}(\mathbb{Z}^+)}h(y)\mathcal{P}(s_*,s_*+T,x,dy)\nu_{s_*}(dx)=\nu_{s_*} h.
    \end{equation}
    Observing the periodicity of $\mu_{n,\cdot}$, we have $\mathcal{P}_n^*(s,s+T)\mu_{n,s}(h\circ\chi_n)=\mu_{n,s}(h\circ\chi_n)$ for $s\geq 0$, which yields 
    \begin{align}
        &\int_{\ell^{2\theta}_{\rho}(\mathbb{Z}^+)}\int_{\ell^{2\theta}_{\rho}(\mathbb{Z}^+)}h(y)\mathcal{P}_n(s_n,s_n+T,x,dy)\mu_{n,s_n}\circ\chi_n^{-1}(dx) \nonumber\\
        &=\int_{\mathbb{R}^n}\int_{\mathbb{R}^n}h\circ\chi_n(y^{(n)})\mathcal{P}_n(s_n,s_n+T,x^{(n)},dy^{(n)})\mu_{n,s_n}(dx^{(n)})\nonumber\\
        &=\int_{\mathbb{R}^n}h\circ\chi_n(x^{(n)})\mu_{n,s_n}(dx^{(n)}). \label{finiteonestepinvariant}
    \end{align}
    Note that the limit of the right-hand side of Equation (\ref{finiteonestepinvariant}) is $\nu_{s_*} h$. To establish (\ref{invriantforonestep}), we only need to prove that the limit of the left-hand side of (\ref{finiteonestepinvariant}) is $\int_{\ell^{2\theta}_{\rho}(\mathbb{Z}^+)}\mathcal{P}^*(s_*,s_*+T)\nu_{s_*} h$. For this purpose, we adopt the proof technique of Theorem \ref{Markov property} to demonstrate
    \begin{equation*}
             x_n,x\in\ell^{2\theta}_{\rho}(\mathbb{Z}^+)\quad\text{and}\quad  x_n\rightarrow x, \quad \text{in}\quad \ell^2_{\rho}(\mathbb{Z}^+) \quad \text{implies}\quad H_n(x_n)\rightarrow H(x),\quad \text{as}\quad n\rightarrow \infty,
    \end{equation*}
    where $H_n(x):=\int_{\ell^{2\theta}_{\rho}(\mathbb{Z}^+)}h(y)\tilde{\mathcal{P}}_n(s_n,s_n+T,x,dy)$ and $H(x):=\int_{\ell^{2\theta}_{\rho}(\mathbb{Z}^+)}h(y)\mathcal{P}(s_*,s_*+T,x,dy)$. We observe that by an argument analogous to Lemma \ref{contionuouswithinitialconditon}, for any $\epsilon>0$, there exist $\delta_1>0$ and $N_0\in \mathbb{N}$  such that for all $n>N_0$ and $y\in\{z\in\ell^{2\theta}_{\rho}(\mathbb{Z}^+);\Vert z-x\Vert_{2,\rho}\leq \delta_1\}$, the following estimate holds: 
    \begin{equation}
             \label{infiniteperiodicinq1}
             \mathbb{E}\Vert X_{(k)}^{(n),s_n,x}(s_n+T)-X_{(k)}^{(n),s_n,y}(s_n+T)\Vert_{\mathbb{R}^{\Lambda_k}}^2\leq \frac{\epsilon}{64L^2},
    \end{equation}
    where $L$ is the Lipschitz constant for $g$, the finite-dimensional mapping corresponding to $h$. Furthermore, since $x_n\rightarrow x$, there exists $N_1\in\mathbb{N}$ such that $\vert x_n-x\vert <\delta_1$ for all $n>N_1$. Using a method similar to Lemma \ref{ineqXn}, we obtain the following estimate,
    \begin{align}
             \mathbb{E}\Vert\tilde{X}_{(k)}^{(n),s_n,x}(t)- \tilde{X}_{(k)}^{(n),s_n,x}(s)\Vert^2_{\mathbb{R}^k}\nonumber&\leq \mathbb{E}\Vert \tilde{X}^{(n),s_n,x}(t)- \tilde{X}^{(n),s_n,x}(s)\Vert^{2}_{2,\rho} \frac{1}{\inf\limits_{1\leq i\leq k}\rho(i)}\nonumber\\
             &\leq C(1+\Vert x\Vert^{2\theta}_{2\theta,\rho})\vert t-s\vert.\label{inftnitperiodict-sinq}
    \end{align}
    Consequently, there exists $\delta_2=\frac{\epsilon}{64LC(1+\Vert x\Vert^{2\theta}_{2\theta,\rho})}$ such that whenever $\vert t-s\vert <\delta_2$, we have 
    \begin{equation}
              \label{infiniteperiodicinq2}
              \mathbb{E}\Vert\tilde{X}_{(k)}^{(n),s_n,x}(t)- \tilde{X}_{(k)}^{(n),s_n,x}(s)\Vert^2_{\mathbb{R}^k}\leq \frac{\epsilon}{64L^2}.
    \end{equation}
    Finally, given that $s_n\rightarrow s_*$, we can find $N_2\in\mathbb{N}$ for which $\vert s_n-s_*\vert <\delta_2$ holds for all $n>N_2$. Moreover, in the case where $s_n < s_*$ (the situation $s_n > s_*$ can be handled similarly), we obtain that
    \begin{equation*}
              \mathbb{E}\Vert\tilde{X}_{(k)}^{(n),s_n,x}(s_*+T)- \tilde{X}_{(k)}^{(n),s_*,x}(s_*+T)\Vert^2_{\mathbb{R}^k}= \mathbb{E}\Vert\tilde{X}_{(k)}^{(n),s_*,\tilde{X}_{(k)}^{(n),s_n,x}(s_*)}(s_*+T)- \tilde{X}_{(k)}^{(n),s_*,x}(s_*+T)\Vert^2_{\mathbb{R}^k}.
    \end{equation*}
    Combining this observation with Remark \ref{remarkoncontionuouswithinitialconditon} and (\ref{infiniteperiodicinq2}), we conclude that there exists a constant $N_3\in\mathbb{N}$ such that 
    \begin{equation}\label{infiniteperiodicinq3}
        \mathbb{E}\Vert\tilde{X}_{(k)}^{(n),s_n,x}(s_*+T)- \tilde{X}_{(k)}^{(n),s_*,x}(s_*+T)\Vert^2_{\mathbb{R}^k}\leq \frac{\epsilon}{64L^2},
    \end{equation}
    holds for all $n>N_3$. Furthermore, the weak convergence property provides another threshold $N_4\in\mathbb{N}$ that ensures
    \begin{equation}
        \label{infiniteperiodicinq4}
        \vert\mathbb{E} h(\tilde{X}^{(n),s_*,x}(s_*+T))- \mathbf{E}^{s_*,x}h(X(s_*+T))\vert^2\leq \frac{\epsilon}{64},
    \end{equation}
    when $n>N_4$.
    Synthesizing results (\ref{infiniteperiodicinq1}), (\ref{infiniteperiodicinq2}), (\ref{infiniteperiodicinq3}), and (\ref{infiniteperiodicinq4}), we conclude that for the given $\epsilon>0$, we have whenever $n>N_0\vee N_1\vee N_2\vee N_3\vee N_4$
         \begin{align*}
             \vert H_n(x_n)-H(x) \vert^2&=\vert \mathbb{E}h(\tilde{X}^{(n),s_n,x_n}(s_n+T))-\mathbf{E}^{s_*,x}h(X(s_*+T))\vert^2\\
             &\leq 16\left( L^2\mathbb{E}\Vert\tilde{X}_{(k)}^{(n),s_n,x_n}(s_n+T)- \tilde{X}_{(k)}^{(n),s_n,x}(s_n+T)\Vert^2_{\mathbb{R}^k}\right.\\
             &\quad+L^2\mathbb{E}\Vert\tilde{X}_{(k)}^{(n),s_n,x}(s_n+T)- \tilde{X}_{(k)}^{(n),s_n,x}(s_*+T)\Vert^2_{\mathbb{R}^k}\\
             &\quad+L^2\mathbb{E}\Vert\tilde{X}_{(k)}^{(n),s_n,x}(s_*+T)- \tilde{X}_{(k)}^{(n),s_*,x}(s_*+T)\Vert^2_{\mathbb{R}^k}\\
             &\quad +\left. \vert\mathbb{E} h(\tilde{X}^{(n),s_*,x}(s_*+T))- \mathbf{E}^{s_*,x}h(X(s_*+T))\vert^2 \right)\\
             &\leq \epsilon.
         \end{align*}
         Following the proof of Theorem \ref{Markov property}, we establish (\ref{invriantforonestep}). By Lemma 3.1 of Feng, Zhao and Zhong \cite{fengzhaozhong2023existence}, there exists a $T$-periodic measure for $\mathcal{P}(s,t)$, which completes the proof.
    \end{proof}
    \section{Proof of Theorem \ref{finitdimensionalergodicresult}}
    In this section, our aim is to utilize the method proposed by Meyn and Tweedie to establish the geometric ergodicity of the SDE (\ref{FSDE}). Their theorem is stated as follows (see  Meyn and Tweedie \cite{meyn2012markov} and Mattingly, Stuart and Higham \cite[Theorem 2.5]{mattingly2002ergodicity}).
	\begin{theorem}
		\label{MeynT}
		(Harris-Meyn-Tweedie). Let $X(t)$ be a time-homogeneous Markov process on $\mathbb{R}^{n}$ with transition probability kernel $\mathcal{P}_t$ and infinitesimal generator $\mathscr{L}$. Suppose that following hypotheses are satisfied
		\item[(i)] The Markov process is irreducible, that is, there exists $t_0$(and then for all $t > t_0 $ ) such that
		$$
		\mathcal{P}_{t_0}(x,\Gamma)> 0,
		$$
		for all $x \in \mathbb{R}^n$ and nonempty open sets $\Gamma\subset\mathbb{R}^n$.
		\item[(ii)] For any $t> 0$ the Markov semi-group $\mathcal{P}_t(x,dy)$ has a density $p_t(x,y)$, which is a continuous function of $(x,y)$.\\
		Assume there exists a Lyapunov function
		$$
		V:\mathbb{R}^n \rightarrow \left[ 0,\infty\right) \xrightarrow{\Vert x \Vert_{\mathbb{R}^n} \rightarrow +\infty}+\infty
		$$
		and constants $C,c> 0$ such that 
		$$
		\mathscr{L}V+cV\leq C.
		$$
		Then there exists unique invariant measure $\mu$ for the process $X(t)$ and there exist constants $K,\alpha > 0$ such that
		$$
		\sup\limits_{\{f:\vert f(x) \vert \leq V(x)\}}\vert \mathcal{P}_tf(a)-\mu (f) \vert \leq KV(a)e^{-\alpha t}
		$$
		for any $a\in \mathbb{R}^n$.
	\end{theorem}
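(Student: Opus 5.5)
The plan is to reduce the continuous-time statement to Harris' theorem for the discrete skeleton chain $\mathcal{P}_{T}$, for a suitable $T>0$, and then interpolate in time. Harris' theorem is used in the weighted-total-variation form of Hairer–Mattingly, or equivalently Meyn–Tweedie's geometric ergodic theorem. It needs two ingredients: a geometric drift condition for $\mathcal{P}_T$, and a uniform minorization of $\mathcal{P}_T(x,\cdot)$ on a sublevel set of $V$.

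\emph{Drift.} Apply Dynkin's formula to $e^{ct}V(X(t))$, localized by the exit times $\tau_R$ from balls. Together with $\mathscr{L}V+cV\le C$ this gives
\[
\mathbb{E}\,e^{c(t\wedge\tau_R)}V(X(t\wedge\tau_R))\le V(a)+\tfrac{C}{c}\bigl(e^{ct}-1\bigr).
\]
Fatou's lemma as $R\to\infty$ then yields
\[
\mathcal{P}_tV\le e^{-ct}V+\tfrac{C}{c}\qquad\text{for all } t\ge0.
\]
In particular, for any $T$ we have $\mathcal{P}_TV\le\gamma V+K_0$ with $\gamma=e^{-cT}<1$. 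Since $V\to\infty$ at infinity, each sublevel set $\mathcal{K}_R=\{V\le R\}$ is bounded, so its closure is compact.

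\emph{Minorization.} This is the step I expect to be the main obstacle, because (i) gives positivity only pointwise in $x$, while Harris' theorem needs a lower bound uniform over $x\in\mathcal{K}_R$.
\begin{enumerate}
\item Since $\int p_{t_0}(z,y)\,dy=1$, pick $(z_0,y_0)$ with $p_{t_0}(z_0,y_0)>0$. Continuity of the density (hypothesis (ii)) gives $a,r>0$ with $p_{t_0}\ge a$ on $B(z_0,r)\times B(y_0,r)$.
\item The map $x\mapsto\mathcal{P}_{t_0}(x,B(z_0,r))=\int_{B(z_0,r)}p_{t_0}(x,z)\,dz$ is lower semicontinuous, by Fatou and continuity of $p_{t_0}$.
\item By irreducibility (i) this map is strictly positive. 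A positive lower semicontinuous function on the compact set $\overline{\mathcal{K}_R}$ attains a positive minimum, so it is bounded below by some $\delta>0$ there.
\item Chapman–Kolmogorov then gives, for $x\in\mathcal{K}_R$ and $T=2t_0$,
\[
\mathcal{P}_{T}(x,\cdot)\ge\int_{B(z_0,r)}p_{t_0}(x,z)\,p_{t_0}(z,\cdot)\,dz\ge\delta a\,\mathrm{Leb}|_{B(y_0,r)}(\cdot).
\]
\end{enumerate}
This is a minorization $\mathcal{P}_T(x,\cdot)\ge\alpha\nu$ on $\mathcal{K}_R$, with $\alpha=\delta a\,\mathrm{Leb}(B(y_0,r))$ and $\nu$ normalized Lebesgue measure on $B(y_0,r)$. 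Choose $R>2K_0/(1-\gamma)$, as Harris' theorem requires.

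\emph{Conclusion.} Harris' theorem now gives $\bar\alpha<1$ and $\beta>0$ such that $\mathcal{P}_T^*$ is a strict contraction in the metric
\[
\rho_\beta(\mu_1,\mu_2)=\int(1+\beta V)\,d|\mu_1-\mu_2| .
\]
Because the space of probability measures integrating $V$ is complete under $\rho_\beta$, this yields a unique invariant measure $\mu$ for $\mathcal{P}_T$ and geometric convergence $\rho_\beta(\mathcal{P}_{kT}^*\delta_a,\mu)\le\bar\alpha^k\rho_\beta(\delta_a,\mu)\le K'V(a)\bar\alpha^k$. Note that $\mu(V)<\infty$ by the drift bound. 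Next, $\mathcal{P}_s^*\mu$ is also invariant for $\mathcal{P}_T$, since the two semigroups commute, so uniqueness forces $\mathcal{P}_s^*\mu=\mu$ for every $s$; hence $\mu$ is invariant for the whole semigroup, and any invariant measure of $\mathcal{P}_t$ is invariant for $\mathcal{P}_T$, which gives uniqueness. Finally, write $t=kT+s$ with $s\in[0,T)$. Using $\sup_{|f|\le V}|\mathcal{P}_tf(a)-\mu(f)|\le\rho_1(\mathcal{P}^*_{kT}(\mathcal{P}_s^*\delta_a),\mu)$, the contraction property, and the uniform bound $\mathcal{P}_sV\le V+C/c$, we obtain the claimed estimate $KV(a)e^{-\alpha t}$ with $\alpha=-\log(\bar\alpha)/T$.
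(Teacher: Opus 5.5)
The paper gives no proof of this theorem; it quotes it from Meyn--Tweedie and Mattingly--Stuart--Higham. Your argument is essentially the standard proof behind that citation:
\begin{itemize}
\item a Dynkin-formula drift bound for the skeleton $\mathcal{P}_T$;
\item a uniform minorization on a compact sublevel set of $V$, obtained from the continuous density and irreducibility via Chapman--Kolmogorov;
\item a discrete-time Harris theorem;
\item interpolation between multiples of $T$.
\end{itemize}
Using the Hairer--Mattingly $\rho_\beta$-contraction instead of the Meyn--Tweedie discrete theorem is only a difference in packaging. It delivers existence, uniqueness and the rate at once, and it makes the passage from $\mathcal{P}_T$ to $\mathcal{P}_t$ transparent. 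Your drift, lower-semicontinuity and minorization steps are all correct, and so is the order of choices: first $T=2t_0$, then $R>2K_0/(1-\gamma)$, then $\delta=\delta(R)$.

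The one step that fails as written is the last one: the inequality $\rho_\beta(\delta_a,\mu)\le K'V(a)$, and hence the bound $KV(a)e^{-\alpha t}$. What your estimates actually give is $\rho_\beta(\mathcal{P}_s^*\delta_a,\mu)\le 2+\beta\bigl(V(a)+C/c+\mu(V)\bigr)$, that is, a bound of the form $K(1+V(a))e^{-\alpha t}$.

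This cannot be upgraded to $KV(a)e^{-\alpha t}$ when $V$ may vanish, which the statement's $V:\mathbb{R}^n\to[0,\infty)$ permits. Take the Ornstein--Uhlenbeck process $dX=-X\,dt+dW$ on $\mathbb{R}$ with $V(x)=x^2$. Then $\mathscr{L}V+2V=1$, and hypotheses (i)--(ii) hold. Yet at $a=0$ with $f=V$ one gets $|\mathcal{P}_tV(0)-\mu(V)|=\tfrac12 e^{-2t}>0=KV(0)e^{-\alpha t}$.

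So you should do one of two things. Either assume $V\ge 1$, in which case $1+V\le 2V$ and your bound gives the stated form; this is the case $V_n=1+|x|^2$ that the paper actually uses. Or state the conclusion with $1+V(a)$, as the standard formulations do. With that adjustment your proof is complete.
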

	
	In the remainder part of this section, we will show that the transition probability kernel $\mathcal{P}^{(n)}_t(x,\cdot)$ of the SDE (\ref{FSDE}) indeed satisfies the conditions of the above theorem. Thus, the ergodicity of the SDE in the finite-dimension case is obtained.
	
	\subsection{Strong Feller property} 
In this subsection, we prove that the transition probability kernel $\mathcal{P}^{(n)}_t(x,\cdot)$ associated with SDE (\ref{FSDE}) admits a smooth density function, which is the immediate consequence of the H\"ormander theorem in probabilistic settings (see Ichihara and Kunita \cite{ichihara1974classification}). 
	
	\begin{theorem}
		\label{hormandertheorem}
		(H\"ormander probabilistic setting, Ichihara - Kunita). Assume $X(t)$ is the strong unique  solution to the Stratonovich SDE
		$$
		dX_i(t) = b_i(t,X(t))dt + \sum_{j=1}^{d}\sigma_{i,j}(t,X(t))\circ dW_j(t),\quad t\geq 0,\quad i=1,\cdots, n
		$$
		where $b_i,\sigma_{i,j}\in C^{\infty}(\mathbb{R}^+\times\mathbb{R}^n,\mathbb{R}), 1\leq i,j \leq n,$. Suppose that following (H\"ormander) condition is satisfied
		$$
		dim(Lie\{A_0,A_1...,A_n\})=n+1, \ \forall x\in \mathbb{R}^n,
		$$
        where $A_0(t,x)=(1,b_1(t,x),\cdots,b_n(t,x))^T$ and $A_j(t,x)=(0,\sigma_{1,j},\cdots,\sigma_{n,j})^T$ for $1\leq j\leq n$.
		Then the transition probability kernel $\mathcal{P}_t(x,\cdot)$ associated with $X(t)$ admits a density function $p_t(x, y)$ such that 
        \begin{equation*}
            p_t(x, y)\in C^\infty (\mathbb{R}^+\times\mathbb{R}^n\times\mathbb{R}^n,\mathbb{R}).
        \end{equation*}
	\end{theorem}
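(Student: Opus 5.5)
The plan is to follow the Malliavin-calculus route to H\"ormander's theorem rather than the PDE (hypoelliptic-regularity) route, since it gives directly the smoothness of the law of $X(t)$ for each fixed starting point, and then obtain joint smoothness in $(t,x,y)$ from the time--space formulation encoded in $A_0=(1,b)^T$. First I would reduce to the time-homogeneous case by adjoining the clock coordinate $X_0(t)=t$, so that $\hat X=(X_0,X)$ solves an autonomous Stratonovich SDE on $\mathbb{R}^{n+1}$ with drift field $A_0$ and diffusion fields $A_j$. The hypothesis $\dim \mathrm{Lie}\{A_0,\dots,A_n\}=n+1$ is exactly the parabolic H\"ormander condition for the operator $\partial_t+\mathscr{L}$ on $\mathbb{R}^{n+1}$, because $A_0$ is the only field with a nonzero time component. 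To avoid integrability problems from nonglobally-Lipschitz coefficients, I would localize with a smooth cutoff outside a large ball; the density is local in $y$, and strong uniqueness makes the localized and original processes agree up to the exit time.

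Second, with the localized coefficients (smooth with bounded derivatives), $X(t)$ is in $\mathbb{D}^\infty$, its Malliavin derivative is $D_rX(t)=J_{t}J_r^{-1}\sigma(r,X(r))$ with $J$ the Jacobian flow, and the reduced Malliavin matrix is
\begin{equation*}
C_t=\int_0^t J_r^{-1}\sigma\sigma^T(r,X(r))(J_r^{-1})^T\,dr .
\end{equation*}
The key estimate to aim for is $\mathbb{E}[\|C_t^{-1}\|^p]<\infty$ for all $p$, uniformly for $x$ in compacts and $t$ in compact subsets of $(0,\infty)$. Given this, the standard integration-by-parts criterion yields a density $p_t(x,\cdot)\in C^\infty$, and differentiating under the integration-by-parts formula in $x$ (via derivatives of the flow) and in $t$ (via the Kolmogorov equation $\partial_t p=\mathscr{L}^*_y p$ in the distributional sense, upgraded by hypoellipticity of $\partial_t-\mathscr{L}^*_y$, which holds under the same bracket condition) gives $p\in C^\infty(\mathbb{R}^+\times\mathbb{R}^n\times\mathbb{R}^n)$.

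The main obstacle is the moment bound on $C_t^{-1}$, i.e.\ showing $\mathbb{P}(\inf_{|v|=1}\langle C_tv,v\rangle<\varepsilon)\le c_p\varepsilon^p$. I would argue by contradiction along the bracket hierarchy: if $\langle C_tv,v\rangle$ is small then $\int_0^t\langle v,J_r^{-1}A_j(r,X_r)\rangle^2dr$ is small for each $j$, and applying It\^o's formula to $\langle v,J_r^{-1}V(r,X_r)\rangle$ for a vector field $V$ produces a semimartingale whose drift is $\langle v,J_r^{-1}[A_0,V]\rangle$ (plus second-order Stratonovich corrections) and whose martingale integrands are $\langle v,J_r^{-1}[A_j,V]\rangle$. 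Norris' lemma then propagates smallness from $V$ to all brackets $[A_k,V]$, with polynomial loss in $\varepsilon$. After finitely many steps (finite by compactness and the rank condition at $x$, using continuity to get a uniform lower bound $\sum_{V}\langle v,V(x)\rangle^2\ge c>0$ near $r=0$) one contradicts the spanning condition, giving the required small-probability estimate. Care is needed because the time component: spanning of the $n+1$-dimensional space must be projected to the spatial $\mathbb{R}^n$ directions, which is where using brackets involving $A_0$ (and discarding its $\partial_t$ part, which commutes trivially) enters.
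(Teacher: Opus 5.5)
\textbf{Gap: the localization step.} The paper does not prove this statement; it quotes Ichihara--Kunita, whose mechanism is H\"ormander's hypoellipticity theorem applied to the Kolmogorov equations. Your reading of the bracket condition is right: only $A_0$ has a time component, so the condition says the ideal generated by $A_1,\dots,A_d$ spans the spatial directions. Your Malliavin skeleton (clock coordinate, reduced matrix of the spatial component only, Norris-lemma induction along that ideal) is the standard one and is sound for coefficients with bounded derivatives. The problem is the localization step that is supposed to remove that restriction. Agreement of $X$ and $X^R$ up to $\tau_R$ only gives $\sup_\Gamma|\mathcal{P}_t(x,\Gamma)-\mathcal{P}^R_t(x,\Gamma)|\le\mathbb{P}(\tau_R\le t)$. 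That transfers absolute continuity, not smoothness. The law of $X_t$ near $y$ also contains paths that leave $B_R$ and come back; their contribution is small in total variation but carries no regularity information. So ``the density is local in $y$'' is exactly what would have to be proved. To let $R\to\infty$ in $C^k_{\mathrm{loc}}$ you would need $R$-uniform bounds on $\|X^R_t\|_{\mathbb{D}^{k,p}}$, on $\mathbb{E}\|(C^R_t)^{-1}\|^p$, and on the flow derivatives you use for $x$-regularity. All of these depend on global bounds for derivatives of the cut-off coefficients, which grow with $R$. In the paper's application $f$ is only smooth and weakly dissipative, so $f'$ need not be bounded above and even $\mathbb{E}|J_t|^p$ is unavailable. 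As written, your argument proves the result only for coefficients with bounded derivatives, which does not cover the use made of it in Section 6.1.

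\textbf{The fix.} Regularity has to come from a \emph{local} statement about the operator, not about the law. You already invoke hypoellipticity of $\partial_t-\mathscr{L}^*$ for the time regularity; applied directly to the original process, it does all the work in $(t,y)$. For $\phi\in C_c^\infty((0,\infty)\times\mathbb{R}^n)$, It\^o's formula (the stochastic integral has bounded integrand, hence is a true martingale) and non-explosion give $(\partial_t-\mathscr{L}_t^*)\mu=0$ in $\mathcal{D}'$ for $\mu(dt,dy)=\mathcal{P}_t(x,dy)\,dt$. This needs no growth assumptions, and hypoellipticity is local. Equivalently, your localized laws solve the same equation on any fixed compact set once $R$ is large, and they converge to $\mu$ in total variation on bounded time intervals; so the limit is a distributional solution there, which salvages your localization. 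Once this step is in place, the Malliavin estimate is no longer needed for $(t,y)$-smoothness, unless you use it to prove hypoellipticity itself. Smoothness in $x$ likewise needs a local argument of this type, which is what the paper takes from the cited reference, rather than derivatives of a localized flow.
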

	Since SDE (\ref{FSDE}) in the Stratonovich sense is actually the same as in the It$\hat{\text{o}}$ form, we can calculate that the corresponding $A_0,A_i,1\leq i \leq d$ in Theorem \ref{hormandertheorem} can be represented as

	$$
	A_0(x)=\begin{pmatrix}
	    1\\
        A_nx
	\end{pmatrix}+\begin{pmatrix}
		0\\
        f(x_1)\\
		f(x_2)\\
		\vdots \\
		f(x_n)
	\end{pmatrix}, \quad
	A_1(x)=\begin{pmatrix}
		0\\
        1\\
		0\\
		\vdots \\
		0
	\end{pmatrix}.
	$$
	Let 
	$$
	[V,W]\equiv DV(x)W(x)-DW(x)V(x)
	$$
	denote the Lie bracket of two $C^1$ vector fields $V$ and $W$. We can calculate that
	$$
	[A_0,A_1]=DA_0(x)A_1(x)-DA_1(x)A_0(x)=\begin{pmatrix}
		0\\
        b_{1,2} \\
		a_{2,1} \\
		0\\
		\vdots \\
		0\\
	\end{pmatrix},
	[A_0 ,  [A_0,A_1] ]= \begin{pmatrix}
		0\\
        b_{1,3}\\
		b_{2,3} \\
		a_{2,1}a_{3,2}\\
		0 \\
		\vdots \\
		0\\
	\end{pmatrix},
	$$
	where 
    \begin{equation*}
        b_{1,2}=f^\prime(x_1),b_{1,3}=\left(f^\prime(x_1)\right)^2+a_{1,2}a_{2,1}-f^{\prime \prime}(x_1)\left(a_{1,2}x_2+f (x_1)\right),b_{2,3}=a_{2,1}\left(f^\prime(x_1)+f^\prime(x_2)\right).
    \end{equation*} By iterating, we can get
	$$
	\underbrace{[A_0,[A_0,\cdots[A_0,[A_0,A_1]]]]}_{i}=\begin{pmatrix}
		0\\
        b_{1,i}\\
		b_{2,i}\\
		\vdots \\
		b_{i-1,i}\\
		a_{2,1}a_{3,2}\cdots a_{i,i-1}\\
		0\\
		\vdots \\
		0\\
	\end{pmatrix},1< i\leq n.
	$$
	The form of $b_{k,i}$ has no effect on verifying the H\"ormander condition, so we will not give specific formulas here. According to Assumptions \ref{A1} $(i)$, the above calculations show that SDE (\ref{FSDE}) satisfies the H\"ormander condition, i.e.
	$$
	dim(Lie\{A_0(x),A_1(x)\})=n+1, \ \forall x\in \mathbb{R}^n.
	$$
	As a consequence of Ichihara-Kunita's result, we conclude that the density function of the transition probability kernel corresponding to Equation (\ref{FSDE}) is smooth.
	
	\subsection{Irreducibility}
	To investigate the irreducibility of $X^{(n),x}(t)$, we would like to use ``approximate controllability" method. The question is whether we can solve the corresponding control problem,
	\begin{equation}
		\label{CP}
		\begin{cases}
			y^\prime (t) = A_ny(t) +F_n(y(t))+B_nu(t), \\
			y(0)=x\in \mathbb{R}^n,\\
		\end{cases}  
	\end{equation}
	where $A_n,F_n,B_n$ is given by (\ref{AnFnBn}), and $u$ is the control. We denote by $y(\cdot , x;u)$ the solution of (\ref{CP}). We recall that the system (\ref{CP}) is approximately controllable in time $T> 0$ if, for arbitrary $x, z \in \mathbb{R}^n$ and $\epsilon > 0$, there exists $u \in L^2(0,T;\mathbb{R}^n)$ such that $\vert y(T,x;u)-z \vert \leq \epsilon$. To prove the controllability of (\ref{CP}), we need the following lemma.
	\begin{lemma}
		\label{lemmaonderivative}
		For arbitrary constants $q_1,q_2,a_1,b_1,a_2,b_2$, there exists a function $g\in$  $C^\infty([0,T];\mathbb{R})$ such that
		\begin{equation}
			\label{lemmacondition}
			g(0)=q_1, g(T)=q_2,g^\prime(0)=a_1,g^\prime(T)=a_2,g^{\prime\prime}(0)=b_1,g^{\prime\prime}(T)=b_2.
		\end{equation}
	\end{lemma}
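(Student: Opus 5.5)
The most direct route is an explicit construction: take a polynomial of degree at most five, which is automatically in $C^\infty([0,T];\mathbb{R})$, and fit its six coefficients to the six conditions in (\ref{lemmacondition}). This is Hermite interpolation with two nodes, each carrying three conditions (value, first and second derivative).

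\emph{Step 1: match the conditions at $0$.} Write
\begin{equation*}
g(t)=q_1+a_1t+\tfrac{b_1}{2}t^2+c_3t^3+c_4t^4+c_5t^5 .
\end{equation*}
For any choice of $c_3,c_4,c_5$ this gives $g(0)=q_1$, $g^\prime(0)=a_1$ and $g^{\prime\prime}(0)=b_1$.

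\emph{Step 2: match the conditions at $T$.} The three requirements $g(T)=q_2$, $g^\prime(T)=a_2$, $g^{\prime\prime}(T)=b_2$ become a $3\times3$ linear system for $(c_3,c_4,c_5)$. Its coefficient matrix is
\begin{equation*}
\begin{pmatrix} T^3 & T^4 & T^5\\ 3T^2 & 4T^3 & 5T^4\\ 6T & 12T^2 & 20T^3\end{pmatrix},
\end{equation*}
with right-hand side determined by $q_1,q_2,a_1,a_2,b_1,b_2$ and $T$.

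\emph{Step 3: check solvability.} This is the only step with any content, and it is a one-line computation. Factoring $T^3$ out of the first row, $T^2$ out of the second and $T$ out of the third leaves a scaled copy of
\begin{equation*}
\begin{pmatrix}1&1&1\\3&4&5\\6&12&20\end{pmatrix},
\end{equation*}
and the scaling is $T$ in the second column and $T^2$ in the third. Hence the determinant equals $T^9$ times the determinant of this matrix. The latter is
\begin{equation*}
(80-60)-(60-30)+(36-24)=2\neq0 ,
\end{equation*}
so the full determinant is $2T^9\neq0$ for $T>0$. The system therefore has a unique solution $(c_3,c_4,c_5)$.

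\emph{Alternative construction.} One can avoid the linear algebra by using a smooth cutoff $\phi\in C^\infty$ that equals $1$ near $0$ and $0$ near $T$. Set
\begin{equation*}
g(t)=\phi(t)\,p_0(t)+(1-\phi(t))\,p_T(t),
\end{equation*}
where $p_0(t)=q_1+a_1t+\tfrac{b_1}{2}t^2$ and $p_T(t)=q_2+a_2(t-T)+\tfrac{b_2}{2}(t-T)^2$ are the Taylor quadratics at the two endpoints. Near each endpoint $g$ coincides with the corresponding quadratic, so all six conditions hold. Either construction proves the lemma.
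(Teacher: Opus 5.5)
Your proposal is correct, and your main construction differs from the paper's. The paper takes the Taylor quadratics at $0$ and at $T$ on $[0,t_1]$ and $[t_2,T]$ and joins them by a linear segment on $[t_1,t_2]$. It then mollifies this continuous function on a middle interval and patches the three pieces with a smooth partition of unity, so that $g$ agrees with the endpoint quadratics near $0$ and $T$.

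Your quintic Hermite interpolant replaces all of this with one $3\times 3$ linear system. Your determinant computation, $2T^9\neq 0$, is right, and it gives an explicit polynomial (hence real-analytic) $g$ with no auxiliary choices. You could also avoid the determinant: a polynomial of degree at most five with triple zeros at $0$ and $T$ is divisible by $t^3(t-T)^3$, so it vanishes. The square map from coefficients to the six data is therefore injective, hence onto. This version extends at once to prescribing derivatives up to any order $k$ at both endpoints, using degree $2k+1$, which is what the general $n$-dimensional controllability argument would need.

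Your alternative cutoff construction $\phi p_0+(1-\phi)p_T$ is essentially the paper's argument with a superfluous step removed. Only the behaviour near the endpoints matters, so the linear bridge and its mollification add nothing; a single cutoff already does the gluing.

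The gluing approach gives localisation: $g$ equals the endpoint Taylor polynomials on whole neighbourhoods of $0$ and $T$ and is free in between. The polynomial approach gives a closed-form answer.
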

	\begin{proof}
		According to the setting of this lemma, we construct $\tilde{g}$ piecewise in $0<t_1<t_2<T$ as follows
		\begin{equation*}
			\tilde{g}(t)=\begin{cases}
				k_1t^2+k_2t+k_3,t\in [0,t_1],\\
				l_1t^2+l_2t+l_3,t\in[t_2,T],\\
				m_1+\frac{t-t_1}{t_2-t_1}(m_2-m_1),t\in[t_1,t_2],\\
			\end{cases}
		\end{equation*}
		where $m_1=k_1t_1^2+k_2t_1+k_3,m_2=l_1t_2^2+l_2t_2+l_3$ and $k_1,k_2,k_3,l_1,l_2,l_3$ are coefficients to be determined. In order for $\tilde{g}$ to satisfy (\ref{lemmacondition}), we have
		\begin{equation}
			\label{linearequation}
			\begin{cases}
				\tilde{g}(0)=k_3=q_1,\\
				\tilde{g}(T)=l_1T^2+l_2T+l_3=q_2,\\
				\tilde{g}^\prime(0)=k_2=a_1,\\
				\tilde{g}^\prime(T)=2l_1T+l_2=a_2,\\
				\tilde{g}^{\prime\prime}(0)=2k_1=b_1,\\
				\tilde{g}^{\prime\prime}(T)=2l_1=b_2.\\
			\end{cases}
		\end{equation}
		Solving the linear equation (\ref{linearequation}), we get
		\begin{equation*}
			k_1=\frac{b_1}{2},k_2=a_1,k_3=q_1,l_1=\frac{b_2}{2},l_2=a_2-b_2T,l_3=q_2-a_2T-\frac{b_2T^2}{2}+b_2T^2.
		\end{equation*}
		In the following, the function $g$ is obtained by the mollifier method. We can split the interval $[0,T]$ into three parts, $V_1=[0,t_1^\prime),H_2=(t_2^\prime,t_3^\prime),V_3=(t_4^\prime,T]$, where $0<t_2^\prime<t_1^\prime<t_1<t_2<t_4^\prime<t_3^\prime<T$. The mollification of $\tilde{g}$ in $V_2=(t_2^\prime+\epsilon,t_3^\prime-\epsilon)$ is defined by $\tilde{g}^\epsilon\coloneqq \eta_\epsilon\star\tilde{g}$. By the property of mollifiers, we have $\tilde{g}^\epsilon\in C^\infty(V_2)$. We can choose $\epsilon$ small enough such that $t_2^\prime+\epsilon<t_1^\prime, t_4^\prime<t_3^\prime-\epsilon$. Now let $\{\zeta_i\}_{i=1}^3$ be a smooth partition of unity subordinate to the sets $V_1,V_2,V_3$; that is, suppose
		\begin{equation*}
			\begin{cases}
				0\leq\zeta_i\leq1,\ \zeta_i\in C_c^\infty(V_i)\\
				\zeta_1+\zeta_2+\zeta_3=1, \ \  \text{on}\ [0,T].
			\end{cases}
		\end{equation*}
		We define $g=\zeta_1\tilde{g}\mathbf{1}_{V_1}+\zeta_2\tilde{g}^\epsilon\mathbf{1}_{V_2}+\zeta_3\tilde{g}\mathbf{1}_{V_3}$. Then clearly $g\in C^\infty([0,T];\mathbb{R})$ and $g$ satisfies (\ref{lemmacondition}).
	\end{proof}

	\begin{lemma}
    \label{lemmaapproximatelycontrolable}
		Let Assumption \ref{A1} hold. For every time $T> 0$,  the system (\ref{CP}) is approximately controllable in time $T$. 
	\end{lemma}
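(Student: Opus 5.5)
We will prove approximate (in fact exact) controllability by a flatness/backstepping construction along the chain $y_1\to y_2\to\cdots\to y_n$. The control $u$ enters only the first equation, and each $a_{i,i-1}\neq 0$ by Assumption \ref{A1}(i). So we prescribe the \emph{last} coordinate $y_n$ as a smooth function of time. We then recover $y_{n-1},\dots,y_1$ successively from the equations, and finally read off $u$ from the first equation. Concretely, the equation for $y_n$ reads $y_n'=a_{n,n-1}y_{n-1}+f(y_n)$, so
\begin{equation*}
y_{n-1}=a_{n,n-1}^{-1}\bigl(y_n'-f(y_n)\bigr).
\end{equation*}
For $2\le i\le n-1$ the equation for $y_i$ gives
\begin{equation*}
y_{i-1}=a_{i,i-1}^{-1}\bigl(y_i'-a_{i,i+1}y_{i+1}-f(y_i)\bigr),
\end{equation*}
and the first equation gives $u=y_1'-a_{1,2}y_2-f(y_1)$. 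Since $f\in C^\infty$, an induction shows that each $y_{n-k}$ is a smooth function $\Phi_k\bigl(y_n,y_n',\dots,y_n^{(k)}\bigr)$. The dependence on the top derivative is affine with nonzero coefficient $\prod_{j=n-k+1}^{n}a_{j,j-1}^{-1}$. Consequently $u$ is smooth, hence in $L^2(0,T)$, and $y$ solves (\ref{CP}) by construction.

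It remains to impose boundary conditions. The requirement $y(0)=x$ and $y(T)=z$ translates into prescribing the jets $\bigl(y_n(0),y_n'(0),\dots,y_n^{(n-1)}(0)\bigr)$ and the same at $T$. By the triangular structure these are uniquely solvable. First, $y_n(0)=x_n$. Next, $y_{n-1}(0)=x_{n-1}$ determines $y_n'(0)=a_{n,n-1}x_{n-1}+f(x_n)$. Then $y_{n-2}(0)=x_{n-2}$ determines $y_n''(0)$, since the other terms involve only lower derivatives that are already fixed. Continuing, we obtain $y_n^{(k)}(0)$ for $k\le n-1$ recursively, and likewise at $T$ from $z$. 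We therefore need a function $g\in C^\infty([0,T])$ with arbitrary prescribed values of $g,g',\dots,g^{(n-1)}$ at both endpoints. This is a direct generalization of Lemma \ref{lemmaonderivative}, which handles derivatives up to order two. We prove it the same way, using polynomials of degree $n-1$ near each endpoint (the Taylor polynomials with the prescribed jets) glued by a smooth partition of unity. A cleaner version is $g=\zeta P_0+(1-\zeta)P_T$ with $\zeta\equiv1$ near $0$ and $\zeta\equiv0$ near $T$; the mollification step is then unnecessary. Setting $y_n=g$ completes the construction and gives $y(T,x;u)=z$ exactly.

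The main obstacle is bookkeeping rather than analysis. We must make sure the recursive reconstruction never divides by zero, which holds because only the $a_{i,i-1}$ appear as divisors. We must also make sure the endpoint-jet map is a bijection. This follows because, at each step, the new derivative $y_n^{(k)}$ appears linearly with coefficient $\prod a_{j,j-1}^{-1}\neq0$ while everything else is already known. Global existence is not an issue on $[0,T]$ because the trajectory is explicitly defined. Note that no dissipativity (Assumption \ref{A1}(ii),(iii)) is needed for this lemma; only smoothness of $f$ and the nonvanishing subdiagonal are used.
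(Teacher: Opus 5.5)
Your proposal is correct and follows essentially the same route as the paper: prescribe the last coordinate as a smooth function with prescribed endpoint jets, back-solve for $y_{n-1},\dots,y_1$ using $a_{i,i-1}\neq 0$, and read off the control from the first equation, which gives exact controllability. The paper writes this out only for $n=3$ (jets up to order two, via its Lemma \ref{lemmaonderivative} with a piecewise-quadratic-plus-mollifier construction), whereas you treat general $n$ with an explicit induction and use the simpler gluing $\zeta P_0+(1-\zeta)P_T$ for the order-$(n-1)$ jet interpolation.
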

	\begin{proof}
		For simplicity, we only prove the three-dimensional case. For arbitrary $T> 0$, $x=(x_1 , x_2 , x_3)^\top \in \mathbb{R}^3$, $z=(z_1 , z_2 , z_3)^\top \in \mathbb{R}^3$ and $u=(u_1 , u_2, u_3)^\top \in L^2(0,T;\mathbb{R}^3)$ , we can write $y(T , x;u)$ as
		\begin{equation}
			\label{lemmaequation3dimensional}
			y(T,x;u)=x+ \begin{pmatrix}
				\int_0^T a_{1,2} y_2(t)+f(y_1(t))dt \\
				\int_0^T a_{2,1}y_1(t)+a_{2,3}y_3(t)+f(y_2(t))dt\\
				\int_0^T a_{3,2}y_2(t)+f(y_3(t))dt\\
			\end{pmatrix} + \begin{pmatrix}
				\int_{0}^{T}u_1(t)dt\\
				0\\
				0\\
			\end{pmatrix}.	
		\end{equation}
		We will construct $\bar{y}(t)=(\bar{y}_1(t) ,\bar{y}_2(t) , \bar{y}_3(t))^\top\in C^\infty([0,T];\mathbb{R}^3)$ to satisfy the following conditions:
		\begin{equation}
			\label{conditiony}
			\begin{cases}
				\bar{y}_1(0)=x_1, \bar{y}_1(T)=z_1, \bar{y}_2(0)=x_2, \bar{y}_2(T)=z_2, \bar{y}_3(0)=x_3, \bar{y}_3(T)=z_3,	\\
				a_{2,1}\bar{y}_1(t)=\bar{y}^{\prime}_2(t)-a_{2,3}\bar{y}_3(t)-f(\bar{y}_2(t)),\\
				a_{3,2}\bar{y}_2(t)=\bar{y}^{\prime}_3(t)-f(\bar{y}_3(t)).\\	
			\end{cases}		
		\end{equation}	
		Observe that we only need to construct $\bar{y}_3(t)$. According to the above conditions we deduce that
		\begin{equation}
			\label{conditiony2}
			\begin{cases}
				a_{2,1}x_1=\bar{y}^{\prime}_2(0)-a_{2,3}x_3-f(x_2),\\
				a_{2,1}z_1=\bar{y}^{\prime}_2(T)-a_{2,3}z_3-f(z_2).\\  		
			\end{cases}
		\end{equation}
		Therefore, in order to construct $\bar{y}(t)$ we just need construct $\bar{y}_3(t)$ satisfying
		\begin{equation}
			\label{conditiony3}
			\begin{cases}
				a_{3,2}\bar{y}^{\prime}_2(0)=\bar{y}^{\prime\prime}_3(0)-f^\prime(x_3)\bar{y}^{\prime}_3(0),\\
				a_{3,2}\bar{y}^{\prime}_2(T)=\bar{y}^{\prime\prime}_3(T)-f^\prime(z_3)\bar{y}^{\prime}_3(T),\\
				a_{3,2}x_2=\bar{y}^{\prime}_3(0)-f(x_3),\\
				a_{3,2}z_2=\bar{y}^{\prime}_3(T)-f(z_3).\\
			\end{cases}
		\end{equation}
		By (\ref{conditiony}), (\ref{conditiony2}) and (\ref{conditiony3}), we have
		\begin{equation}
			\label{ybar3condition}
			\begin{cases}
				\bar{y}_3(0)=x_3, \bar{y}_3(T)=z_3,	\\
				\bar{y}_3^{\prime}(0)=a_{3,2}x_2+f(x_3),\\
				\bar{y}_3^{\prime}(T)=a_{3,2}z_2+f(z_3),\\		
                \bar{y}^{\prime\prime}_3(0)=a_{3,2}\left(a_{2,1}x_1+a_{2,3}x_3+f(x_2)\right)+f^\prime(x_3)\left(a_{3,2}x_2+f(x_3)\right),\\
				\bar{y}^{\prime\prime}_3(T)=a_{3,2}\left(a_{2,1}z_1+a_{2,3}z_3+f(z_2)\right)+f^\prime(z_3)\left(a_{3,2}z_2+f(z_3)\right).\\
			\end{cases}
		\end{equation}
		By lemma \ref{lemmaonderivative}, there exists a function $\bar{y}_3\in C^\infty([0,T];\mathbb{R})$ satisfying (\ref{ybar3condition}). Then, let 
        \begin{equation*}
			\begin{cases*}
				\bar{y}_2(t)=\frac{1}{a_{3,2}}\left(\bar{y}^{\prime}_3(t)-f(\bar{y}_3(t))\right),\\
				\bar{y}_1(t)=\frac{1}{a_{2,1}}\left(\bar{y}^{\prime}_2(t)-a_{2,3}\bar{y}_3(t)-f(\bar{y}_2(t))\right).\\
			\end{cases*}
		\end{equation*}
		Thus, we have constructed the desired $\bar{y}(t)$. Next, if we set
		\begin{equation*}
			u_1(t)=\bar{y}^{\prime}_1(t)-a_{1,2}\bar{y}_2(t)-f(\bar{y}_1(t))
		\end{equation*}
		Notice that $\bar{y}$ is a solution to Equation (\ref{lemmaequation3dimensional}). By the properties of $\bar{y}$ , we deduce the approximate controllability of Equation (\ref{lemmaequation3dimensional}).	
	\end{proof}
    \begin{remark}
        In the proof of Lemma \ref{lemmaapproximatelycontrolable}, we can derive a stronger result than that stated in the lemma itself: for any given $x,z\in \mathbb{R}^n$, there exists a control $u \in L^2(0,T;\mathbb{R}^n)$ such that the system (\ref{CP}) satisfies $y(0)=x$, $y(T)=z$.
    \end{remark}
    According to Theorem 7.4.1 of Da Prato and Zabczyk   \cite{da1996ergodicity}, we can conclude that the transition probability kernel $\mathcal{P}^{(n)}_t(x,\cdot)$ corresponding to (\ref{FSDE}) is irreducible.
	\subsection{Lyapunov condition}
	The existence of the Lyapunov function for the corresponding infinitesimal generator of the SDE (\ref{FSDE}) can be proved by structural calculation. 
	\begin{lemma}
		Let $\mathscr{L}_n$ be the infinitesimal generator of  SDE (\ref{FSDE}) under Assumption \ref{A1}. For the function $V_n(x)=1+x_1^2+\cdots +x_n^2$, there exist constants $C,c> 0$ such that 
		\begin{equation}
			\label{Lyapucon}
			\mathscr{L}_nV_n(x)+cV_n(x)\leq C , \forall x\in \mathbb{R}^n.
		\end{equation}   	
	\end{lemma}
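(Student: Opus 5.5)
The plan is to compute $\mathscr{L}_nV_n$ directly and control each piece with Assumption \ref{A1}. Since $V_n(x)=1+\|x\|^2_{\mathbb{R}^n}$, we have $\partial_iV_n=2x_i$ and $\partial_1\partial_1V_n=2$, while all other second derivatives play no role because the noise acts only on the first coordinate. Hence
\begin{equation*}
\mathscr{L}_nV_n(x)=2\sum_{i=1}^n\left(a_{i,i-1}x_{i-1}+a_{i,i+1}x_{i+1}\right)x_i+2\sum_{i=1}^n f(x_i)x_i+1,
\end{equation*}
with the conventions $a_{1,0}=0$ and $a_{n,n+1}=0$. The linear interaction part and the nonlinear part will be bounded separately.

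For the interaction part, we use $|a_{i,j}|\le M$ together with Young's inequality $|x_{i\pm1}x_i|\le \frac12(x_{i\pm1}^2+x_i^2)$. Each $x_j^2$ appears at most a bounded number of times, namely from the two neighbours $j-1$ and $j+1$, each contributing $\frac12$ twice counting both factors. This gives
\begin{equation*}
2\sum_{i=1}^n\left(a_{i,i-1}x_{i-1}+a_{i,i+1}x_{i+1}\right)x_i\le 4M\|x\|^2_{\mathbb{R}^n}.
\end{equation*}
A sharper count gives $2M\|x\|^2$ via $2\sum_i |a_{i,i+1}||x_ix_{i+1}|\cdot 2$, but the cruder constant $4M$ is enough. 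For the nonlinear part, weak dissipativity (\ref{weaklydissipative}) gives $2\sum_i f(x_i)x_i\le 2n\eta-2\lambda\|x\|^2_{\mathbb{R}^n}$.

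Combining the two bounds yields
\begin{equation*}
\mathscr{L}_nV_n(x)\le -(2\lambda-4M)\|x\|^2_{\mathbb{R}^n}+2n\eta+1.
\end{equation*}
By Assumption \ref{A1}(iii), $2\lambda-4M>1>0$. Set $c:=2\lambda-4M$. Then $cV_n(x)=c+c\|x\|^2$, so
\begin{equation*}
\mathscr{L}_nV_n(x)+cV_n(x)\le 2n\eta+1+c=:C,
\end{equation*}
which is exactly (\ref{Lyapucon}).

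The only delicate point is the bookkeeping of the neighbour terms: each diagonal square $x_j^2$ must be counted with a coefficient independent of $n$, which is what makes $c$ independent of the dimension and lets the assumption $2M<\lambda-\frac12$ be used. The constant $C$ may depend on $n$, and that is harmless for Theorem \ref{MeynT}. Since $V_n\to\infty$ as $\|x\|\to\infty$, this supplies the Lyapunov hypothesis of that theorem.
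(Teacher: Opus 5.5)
Your argument is correct and is essentially the paper's proof. Both compute $\mathscr{L}_nV_n$ directly, bound the neighbour terms by $4M\|x\|^2_{\mathbb{R}^n}$ and apply weak dissipativity; the only difference is that the paper takes $c=1$ (discarding the quadratic term via $4M\le 2\lambda-1$) while you take $c=2\lambda-4M>1$. Your parenthetical claim that a sharper count gives $2M\|x\|^2_{\mathbb{R}^n}$ is false (take $a_{i,i\pm1}=M$, all $x_i=1$ and $n\ge 3$), but you only use the $4M$ bound, so this does not affect the proof.
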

	% the result to V of the form $V(x)=1+\sum_{i=1}^{n}\rho(i)\vert x_i \vert^2$
	\begin{proof}
		The corresponding infinitesimal generator of SDE (\ref{FSDE}) is of the following form
		\begin{align*}
		    \mathscr{L}_nh(x)&=(a_{1,2} x_2+f(x_1))\partial_{x_1}h(x)+(a_{2,1}x_1+a_{2,3} x_3+f(x_2))\partial_{x_2}h(x)+\cdots \\
            &\quad +(a_{n,n-1} x_{n-1} +f(x_n))\partial_{x_n}h(x) + \frac{1}{2} \frac{\partial^2}{\partial_{x_1^2}}h(x), \quad h\in C^2(\mathbb{R}^n),\quad x\in\mathbb{R}^n.
		\end{align*}
		We compute that 
		\begin{align*}
		    \mathscr{L}_nV_n(x)&=2(a_{1,2} x_2+f(x_1))x_1+\cdots+2(a_{n,n-1} x_{n-1} +f(x_n))x_n+1\\
			&\leq 4M\left(\vert x_1\vert \vert x_2\vert + \cdots+\vert x_{n-1}\vert \vert x_n\vert \right)  +2f(x_1)x_1+\cdots +2f(x_n)x_n+1,
		\end{align*}
		where $M=\max \left\{\vert a_{i-1,i}\vert ,\vert a_{i,i+1} \vert; 2\leq i\leq n \right\}$.
		According to Assumption \ref{A1} (ii), we get
        \begin{equation*}
            \begin{split}
                &\mathscr{L}_nV_n(x)+V_n(x)\\
                &\leq 4M\left(\vert x_1\vert \vert x_2\vert + \cdots+\vert x_{n-1}\vert \vert x_n\vert \right)  -(2\lambda-1)\left(\vert x_1\vert^2 +\cdots+\vert x_n\vert^2 \right)+2n\eta+2\\
			&\leq4M\left(\vert x_1\vert^2+  \cdots+ \vert x_n\vert^2 \right)  -(2\lambda-1)\left(\vert x_1\vert^2 +\cdots+\vert x_n\vert^2 \right)+2n\eta +2\\
			&\leq2n\eta +2.
            \end{split}
        \end{equation*}
		We take $c=1$ and $C=2n\eta+2$, then the result of the lemma follows.
	\end{proof}
	\begin{remark}
		We can get a more general form of the Lyapunov function $ V_n(x)=1+\sum_{i=1}^{n} \rho(i)\vert x_i \vert^{2\theta}$, where $\rho(i)>0,\forall i$, and $\theta\in \mathbb{N}$. 
	\end{remark}
    \begin{proof}[Proof of Theorem \ref{finitdimensionalergodicresult}]
        Combining the discussions in Subsections 6.1, 6.2 and 6.3, Theorem \ref{MeynT} (the Meyn-Tweedie theory) applies to give that the transition probability kernel $\mathcal{P}^{(n)}_t(x,\cdot)$ of SDE (\ref{FSDE}) exponentially converges to a unique invariant measure. The demonstration of Theorem \ref{finitdimensionalergodicresult} is hereby concluded.
    \end{proof}
	
	\appendix
\renewcommand\thesection{\normalsize Acknowledgements}
\section{}
We acknowledge the financial supports of EPSRC grant ref EP/S005293/2 and Royal Society Newton Fund grant (ref. NIF\textbackslash R1\textbackslash 221003).
\newpage
	%%%%%%%%%%%%%%%%%%%%%%%%%%%
    \bibliographystyle{siam}

    \addcontentsline{toc}{chapter}{\\ {\bf References} \hskip13.4cm}
    
%\newpage
\footnotesize
%\bibliography{ref}
\bibliography{bmyref}

\end{document}